  \crefname{theorem}{Theorem}{Theorems}
  \crefname{thm}{Theorem}{Theorems}
  \crefname{lemma}{Lemma}{Lemmas}
  \crefname{lem}{Lemma}{Lemmas}
  \crefname{remark}{Remark}{Remarks}
  \crefname{prop}{Proposition}{Propositions}
\crefname{notation}{Notation}{Notations}
\crefname{claim}{Claim}{Claims}
  \crefname{defn}{Definition}{Definitions}
  \crefname{corollary}{Corollary}{Corollaries}
  \crefname{section}{Section}{Sections}
  \crefname{figure}{Figure}{Figures}
    \crefname{assumption}{Assumption}{Assumptions}
\newtheorem{thm}{Theorem}[section]
\newtheorem{lemma}[thm]{Lemma}
\newtheorem{corollary}[thm]{Corollary}
\newtheorem{prop}[thm]{Proposition}
\newtheorem{assumption}[thm]{Assumption}
\numberwithin{equation}{section}
\theoremstyle{definition}
\newtheorem{remark}[thm]{Remark}
\def \min {\text{min}}
\def\cY{\mathcal{Y}}
\def\cV{\mathcal{V}}
\def\cT{\mathcal{T}}
\def\cS{\mathcal{S}}
\def\cR{\mathcal{R}}
\def\cQ{\mathcal{Q}}
\def\cP{\mathcal{P}}
\def\cI{\mathcal{I}}
\def\cH{\mathcal{H}}
\def\cG{\mathcal{G}}
\def\cE{\mathcal{E}}
\def\cD{\mathcal{D}}
\def\cC{\mathcal{C}}
\def\cB{\mathcal{B}}
\def\cA{\mathcal{A}}
\def \ve {\varepsilon}
\def \Gd {G^{\#\delta}}
\def\P{\mathbb{P}}
\def\E{\mathbb{E}}
\def\C{\mathbb{C}}
\def\R{\mathbb{R}}
\def\Z{\mathbb{Z}}
\def\N{\mathbb{N}}
\def\D{\mathbb{D}}
\def\H{\mathbb{H}}
\def  \p- {p\textunderscore}
\def\eps{\varepsilon}
\def\ph{\varphi}
\DeclareMathOperator{\gff}{GFF}
\def \hg {h_{ \gff }}
\def \d {{\# \delta}}
\DeclareMathOperator{\var}{Var}
\DeclareMathOperator{\Arg}{Arg}
\DeclareMathOperator{\dist}{dist}
\DeclareMathOperator{\Leb}{Leb}
\DeclareMathOperator{\diam}{Diam}
\DeclareMathOperator{\harm}{Harm}
\newcommand{\note}[1]{{\color{red}{[note: #1]}}}
\newcommand{\notet}[1]{{\color{red}{[#1]}}}
\newcommand{\noteb}[1]{{\color{blue}{[#1]}}}
\newcommand{\abs}[1]{ \lvert #1 \rvert}
\newcommand{\norm}[1]{ \lVert #1 \rVert}
\def\mn{\medskip \noindent}
\begin{document}

\begin{frontmatter}

\title{Dimers and imaginary geometry}
\runtitle{Dimers and imaginary geometry}

\begin{aug}
 \author{\fnms{Nathana\"el}  \snm{Berestycki}\thanksref{t1}\ead[label=e1]{N.Berestycki@statslab.cam.ac.uk}}
 ,
  \author{\fnms{Beno\^{i}t} \snm{Laslier}\thanksref{t2} \ead[label=e2]{laslier@math.univ-paris-diderot.fr}}
  \and
  \author{\fnms{Gourab} \snm{Ray}\thanksref{t3} \ead[label=e3]{gourab1987@gmail.com}}

  \thankstext{t1}{Supported in part by EPSRC grants EP/L018896/1 and EP/I03372X/1. On leave from the University of Cambridge.}

    \thankstext{t2}{Supported in part by EPSRC grant EP/I03372X/1}

    \thankstext{t3}{Supported in part by EPSRC grant EP/I03372X/1, NSERC 50311-57400 and University of Victoria start-up 10000-27458}

  \runauthor{Berestycki, Laslier, Ray}

  \affiliation{Universit\"at Wien, Universit\'e Paris Diderot and University of Victoria}

\end{aug}

\begin{abstract}
We show that the winding of the branches in a uniform spanning tree on a planar graph converge in the limit of fine mesh size to a Gaussian free field. The result holds assuming only convergence of simple random walk to Brownian motion and a Russo--Seymour--Welsh type crossing estimate, thereby establishing a strong form of universality. 
As an application, we prove universality of the fluctuations of the height function associated to the dimer model, in several situations.

 The proof relies on a connection to imaginary geometry, where the scaling limit of a uniform spanning tree is viewed as a set of flow lines associated to a Gaussian free field. In particular, we obtain an explicit construction of the a.s. unique Gaussian free field coupled to a continuum uniform spanning tree in this way, which is of independent interest.

\end{abstract}

\end{frontmatter}

\renewcommand{\i}{\text{\textnormal{int}}}



\section{Introduction}\label{sec:introduction}

\subsection{Main results}


Let $G$ be a finite bipartite planar graph. A dimer covering of $G$ is a set
of edges such that each vertex is incident to exactly one edge; in other words
it is a perfect edge-matching of its vertices. The \textbf{dimer model} on $G$
is simply a uniformly chosen dimer covering of $G$. It is a classical model of
statistical physics, going back to work of Kasteleyn \cite{Kasteleyn} and Temperley--Fisher \cite{TemperleyFisher}
who computed its partition function. It is the subject of an extensive physical
and mathematical literature; we refer the reader to \cite{KenyonSurvey} for a
relatively recent discussion of some of the most important progress.
A key feature of this model is its ``exact solvability" which comes from its determinantal structure (\cite{Kasteleyn}) and brings in tools from subjects such as discrete complex analysis, algebraic combinatorics and algebraic geometry.
This is one reason the study of this model has been so successful.

An important tool for the dimer model is a notion of height function introduced by Thurston \cite{Thurston} which turns a dimer configuration into a discrete random surface in $\R^3$ (i.e., a random function indexed by the faces of $G$ with values in $\R$). Therefore a key question concerns the large-scale behaviour of this height function. It is widely believed that in the planar case and under very general assumptions, the fluctuations of the height function are described by (a variant of) the \textbf{Gaussian free field}.

\begin{figure}
\begin{center}
\includegraphics[height=5.5cm]{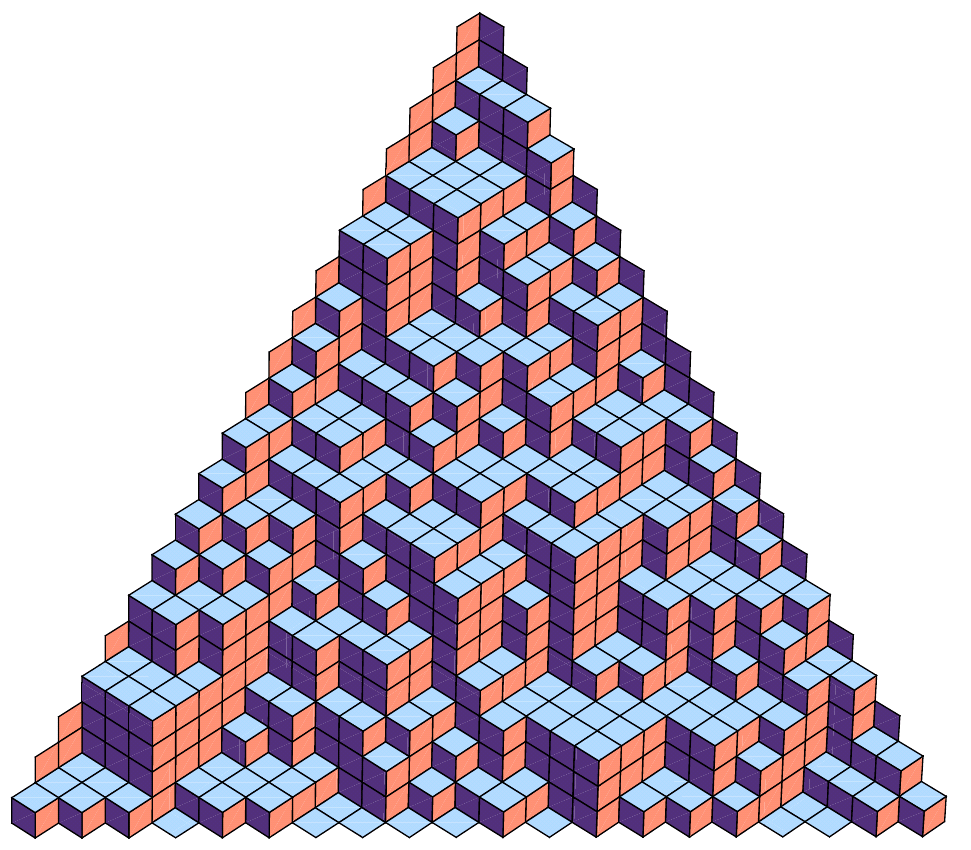}
\caption{Height function of a dimer model (or lozenge tiling) with planar boundary conditions on a triangle. Picture by R. Kenyon. \label{F:simulation}}
\end{center}
\end{figure}

In this paper we present a robust approach to proving
such results. We now state an example of application of this technique.
Consider lozenge tiling of the plane by lozenges with angles $\pi/3$ and $2\pi/3$ and sidelength $\delta$, which can be seen equivalently as dimer configurations on the hexagonal lattice of mesh size $\delta$, or stack of cubes in $\R^3$ of size $\delta$. As usual we describe a tiling by its height function $h^\d$ which we can take to be the $z$ coordinate in the stack of cubes at each point of the tiling. Given a bounded domain that can be tiled with lozenges, we define the \textbf{boundary height} to be the curve in $\R^3$ obtained by considering the height function along the outermost lozenges (which does not depend on the tiling configuration). Set $\chi = 1/\sqrt{2}$ (this is the parameter in imaginary geometry associated with $\kappa = 2$, see \cite{IG1,IG4}).

\begin{thm}\label{T:plane}
Let $P$ be a plane in $\R^3$ whose normal vector has positive coordinates, and let $D \subset \R^2$ be a simply connected bounded domain with locally connected boundary. Then there exists a sequence of domains $U^\d \subset \R^2$, which can be tiled by lozenges of size $\delta$, with the following properties.
The boundary height of $\partial U^\d$ stays at distance $o(1)$ of $P$, $\partial U^\d$ converges to $\partial D$ in Hausdorff sense, and
$$
\frac{h^\d - \E(h^\d) }{\delta} \xrightarrow[\delta \to 0]{}  \frac1{2\pi\chi} \hg^0 \circ \ell,
$$
in distribution where $\ell $ is an explicit linear map determined by $P$ and $\hg^0$ is
a Gaussian free field with Dirichlet boundary conditions in $\ell(D)$.
\end{thm}
Note that convergence holds in distribution on the Sobolev space $H^{-1 - \eta}(D)$ for all $\eta >0$, once $h^\d$ has been extended to a continuous function on $D$ (essentially by interpolation), see \cref{S:main_winding_full} for more details.  We emphasise here that in Theorem \ref{T:plane} above we only prove the existence of a sequence of domains $U^\d$ such that the result holds. See Section 4.2 of \cite{BLRannex} for details of the construction of $U^\d$.

{Theorem \ref{T:plane} is the consequence of a more general theorem (Theorem \ref{T:winding_intro}) which will be the focus of this article. The connection between these two theorems is explained in \cite{BLRannex} and exploits a relation between the dimer model and the uniform spanning tree model on a modified graph called the T-graph introduced in \cite{dimer_tree}. More precisely this connection is a generalisation of Temperley's celebrated bijection, which equates the height function of a dimer configuration to the winding of branches in an associated uniform spanning tree.
}

\begin{figure}
\centering
\includegraphics[width=.65 \textwidth]{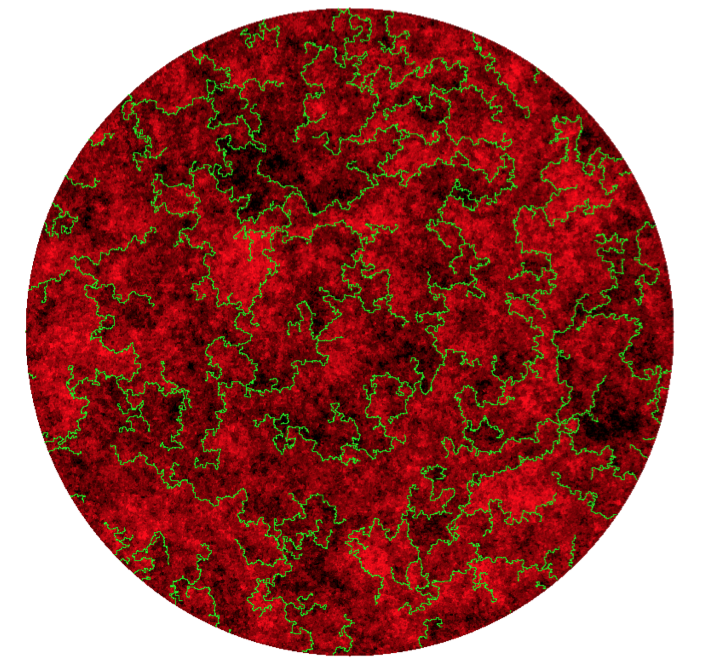}
\caption{
A Uniform spanning tree and its winding field.}
\end{figure}

\medskip We now state the general theorem which concerns the winding of branches in a uniform spanning tree.
Let
$G^\d$ be a sequence of planar (possibly directed) graphs properly embedded in the plane. We assume
that $G^\d$ satisfies some natural conditions (stated precisely in \cref{sec:assumption}). In particular, the two main assumptions are: (i)
 simple random walk on $G^\d$ converges to a Brownian motion as $\delta \to 0$, and (ii) a Russo--Seymour--Welsh type
crossing condition, namely, simple random walk can cross any rectangle of fixed aspect ratio and size at least $\delta$, with a probability uniformly positive over the position, orientation and scale of the rectangle.

Let $D \subset \C$ be a
bounded domain with locally connected boundary. 
Let $D^\d$ be the
graph induced by the vertices of $G^\d$ in $D$ with boundary $\partial D^\d$ (the precise description is in \cref{sec:discrete_wind_def}). 
{Recall also that a wired uniform spanning tree is simply the uniform spanning tree on the graph obtained from $D^\d$ by identifying all the boundary vertices of $D^\d$.
For more details on this topic, see Section \ref{sec:Wilson_supp} as well as \cite{LP:book}} for (much) more background.


\begin{thm}\label{T:winding_intro}
Let $\cT^\d$ be a wired uniform spanning tree on $D^\d$, and for any $v \in D^\d$ let $h^\d(v)$ denote
the  winding of the branch of $\cT^\d$ connecting $v$ and $\partial D^\d$. Then
$$
h^\d - \E( h^\d) \xrightarrow[\delta \to 0]{} \frac 1 \chi \hg^0
$$
in the sense of distributions, where $\hg^0$ is a Gaussian free field with Dirichlet boundary conditions in $D$.
\end{thm}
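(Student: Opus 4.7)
The natural approach, already signalled in the introduction, is to couple convergence of the discrete UST to a continuum SLE$_2$-based scaling limit with the imaginary-geometry (IG) description of Miller--Sheffield, in which SLE$_2$ curves are realised as flow lines of a Gaussian free field $\tilde h$ with angle parameter $\chi=1/\sqrt{2}$. Heuristically the tangent direction of a flow line at $z$ is $e^{i\tilde h(z)/\chi}$, so the total winding of the branch joining $z$ to $\partial D$ equals $\chi^{-1}\tilde h(z)$ plus a deterministic boundary contribution; subtracting the expectation kills the deterministic piece and converts $\tilde h$ into a zero-boundary GFF $\hg^0$, matching the statement.

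First, under the assumptions of \cref{sec:assumption}, I would establish that a single branch of $\cT^\d$ from an interior vertex to $\partial D^\d$ converges in law to radial SLE$_2$ in $D$. The convergence of SRW to Brownian motion is the input that drives the Loewner equation along the lines of Lawler--Schramm--Werner, while RSW provides the tightness and equicontinuity needed to identify subsequential limits. Iterating Wilson's algorithm then upgrades this to joint convergence of finitely many branches towards the IG flow-line system generated by a common GFF $\tilde h$.

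Second, I would work purely in the continuum to make rigorous the identity ``winding of the flow line from $z$'' $= \chi^{-1}\tilde h(z)$. Both sides are ill-defined pointwise (flow lines are fractal, the GFF is a distribution) and must be regularised: truncate the winding of $\gamma_z$ at the first exit from a small ball $B(z,\eps)$ and replace $\tilde h(z)$ by its circle average $\tilde h_\eps(z)$. The IG coupling should then identify these two quantities up to an error which vanishes after integration against a smooth test function and then sending $\eps\to 0$.

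Finally, I would transfer this identification back to the discrete side: test $h^\d - \E h^\d$ against a smooth compactly supported function $f$, and show the pairing converges to $\chi^{-1}(\hg^0,f)$. The main obstacle is the microscopic-scale winding close to the base point $v$: on the discrete side it is a bounded integer-valued contribution with a logarithmically large mean, while on the continuum side the corresponding quantity is formally infinite. A two-scale decomposition is therefore needed, isolating a mesoscopic/macroscopic part that the IG coupling identifies with the regularised GFF (producing the correct covariance $\chi^{-2} G_D(z,w)$ from the Green's-function representation of the relevant hitting densities) from a near-$v$ remainder whose \emph{centred} fluctuations must be shown to be negligible when averaged against $f$, using the RSW-based uniform estimates on LERW and on the UST. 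Assembling these pieces yields convergence of joint moments to Gaussian moments, hence convergence of finite-dimensional marginals; a standard argument should then promote this to convergence in $H^{-1-\eta}$ as announced in \cref{S:main_winding_full}.
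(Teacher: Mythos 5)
Your overall architecture — continuum SLE$_2$/IG identification first, then a two-scale decomposition on the discrete side — is the same as the paper's (\cref{S:overview}, \cref{E:overview}), and you correctly identify the near-$v$ microscopic contribution as the main obstacle. But there is a genuine gap in the way you propose to dispose of that error term.

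You write that the near-$v$ remainder ``whose \emph{centred} fluctuations must be shown to be negligible when averaged against $f$, using the RSW-based uniform estimates on LERW and on the UST.'' Uniform moment bounds alone cannot do this. After extending $h^\d$ to a function on $D$ and pairing with $f$, the error contributes roughly $\delta^{2}\sum_{v}f(v)\,\epsilon^\d(v)$. If the centred $\epsilon^\d(v)$ merely have bounded moments but are strongly correlated across neighbouring $v$ (which is a priori plausible: nearby branches share long initial segments), the variance of this sum can be of order one, i.e.\ comparable to the GFF term itself, and nothing vanishes. What actually makes the argument close is that the $\epsilon^\d(v_i)$ at distinct points are \emph{approximately independent} — the contribution near $v_i$ can be coupled to the winding of a branch of an independent full-plane UST — together with the fact that their mean $m^\d(v)$ depends only on the graph and not on $D$. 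Establishing this requires the multiscale coupling of \cref{sec:coupling} (a modification of Schramm's finiteness argument, \cref{lem:exp_tail}, \cref{cor:abort}, plus the capacity-comparison and Poisson-coupling machinery of \cref{SS:capacity}). Your proposal does not name or replace this ingredient, and without an independence (or decorrelation) mechanism for the near-base errors the concluding step does not go through.

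A secondary, more stylistic point: you propose to regularise on the GFF side by circle averages $\tilde h_\eps$ and identify these with $\eps$-truncated winding. The paper instead truncates the branch at fixed \emph{capacity} $t$, proves directly that the resulting winding field is Cauchy in the sense of $k$-point moments (\cref{lem:quantitative_two_pt}), and then identifies the limit through the conditional law given finitely many branches and the IG change-of-coordinates for topological winding (\cref{lem:image_intrinsic}, \cref{sec:IG}). The circle-average route may well work, but the pointwise identity ``winding $= \chi^{-1}\tilde h$'' is never literally true; the paper's route makes the comparison precise by moment estimates and by proving the IG coordinate-change formula from scratch for topological windings, and you would need an equivalent quantitative comparison rather than invoking the IG coupling as if it gave a pointwise identity for free.
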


 By \emph{winding} in \cref{T:winding_intro}, we mean the intrinsic winding, i.e., the sum of the turning angles along the path. See \cref{E:windingsmooth} for precise definition.
 Note that the scaling is somewhat different from \cref{T:plane} (there is no renormalisation here) because in that theorem we measure the height defined by lozenges of diameter $O(\delta)$ whereas here we measure the winding (unnormalised) along paths in the tree.

 A more precise form of \cref{T:winding_intro} is stated later on in \cref{thm:main_details}.
 Furthermore, in \cref{thm:jt_conv} we prove a stronger version of this theorem: we obtain the joint convergence of the winding function \emph{and} spanning tree to a pair  (GFF, continuum spanning tree) which are coupled together according to the imaginary geometry coupling. The connection to the theory of \textbf{imaginary geometry}, initiated in \cite{Dub1} and further developed in a sequence of papers of which \cite{IG1} and \cite{IG4} will be the most relevant here, will in fact play a crucial role in this work.
Very informally, imaginary geometry provides a coupling between a Gaussian free field and an SLE curve so that the ``pointwise values" of the field along the curve are given by the ``intrinsic winding" of the SLE curve. Hence this coupling can be viewed as a continuum analogue of Temperley's bijection, an observation already alluded to in \cite{Dub1}. In particular, our approach provides an explicit construction of the a.s. unique Gaussian free field associated to a continuum uniform spanning tree which may be of independent interest: see \cref{T:winding_continuum} for a statement and the discussion immediately below.

\cref{T:winding_intro} may be applied to various other dimer models to show Gaussian free field fluctuations. We give a brief overview of such examples:

\begin{itemize}
\item generalised Temperleyan domains, as described in \cite{KPWtemperley}, on graphs which satisfy the assumptions of \cref{sec:assumption},

\item dimers on double isoradial graphs with uniformly elliptic angles. This recovers and in fact significantly strengthens a result of Li \cite{Li} as her work requires the discrete boundary of the domain to contain a macroscopic straight line. (Note that the assumptions in Theorem \ref{T:winding_intro} are satisfied in this this case by results of Chelkak and Smirnov \cite{ChelkakSmirnov}: for instance, the crossing assumption is an easy consequence of Theorem 3.10 in \cite{ChelkakSmirnov}.)



\item dimers in random environment: e.g., on $\mathbb{Z}^2$ with random i.i.d.\ weights on the even edges of $\mathbb{Z}^2$, (in which case the law of the dimers is simply proportional to the product of the weights). We restrict the randomness of the weights to the even sublattice in order to apply the Temperley bijection, and assume for instance the weights to be balanced and uniformly elliptic.

\item dimers with a defect line: suppose the weight of all the edges in a horizontal line of $\mathbb{Z}^2$ is changed from 1 to $z>0$.

\end{itemize}

\subsection{Discussion of the results}

\paragraph{Mean height in dimer models and spanning trees}
\cref{T:plane} describes the limiting distribution of $h^\d - \E( h^\d)$ and the reader might be interested to know what can be said about the mean itself, $\E( h^\d)$. First, we point out that on the law of large number scale, the mean height of the lozenge tiling is known by a result of Cohn, Kenyon and Propp \cite{CohnKenyonPropp} to converge to a deterministic function which here is simply an affine function (due to our assumptions about the boundary values of the height function).

Our approach yields further information about $\E(h^\d)$. In the spanning tree setting,
if $h^\d$ is the winding of branches in a uniform spanning tree (as in the setup of \cref{T:winding_intro}) from a fixed marked point $x$ on the boundary then we obtain
\[
\E(h^\d) = m^\d + u_{D,x} + \frac{\pi}{2} + o(1)
\]
where $u_{D,x}$ is the harmonic extension of the anticlockwise winding from $x$ (see \cref{E:uDx} for a precise definition) and $m^\d$ depends only on the graph and the vertex $v$ at which we are computing the winding (but interestingly not the domain in which the spanning tree/dimer configuration is being sampled). Note that a consequence of the above mentioned result of Cohn--Kenyon--Propp \cite{CohnKenyonPropp} is that $m^\d = o(1/\delta)$ uniformly over the graph; in fact much better bounds can be derived.

For many ``reasonable" graphs we suspect that $m^\d$ actually converges to 0, as it is essentially the expected winding of a path converging to a full-plane SLE$_2$.
Nevertheless some assumptions are clearly needed, as the fact that random walk converges to Brownian motion alone is \emph{not} enough to give control on the mean winding in a UST. For an example, take the usual square grid and add a spiral path at every vertex.
This example shows that it is only the \textbf{fluctuations} which may be hoped to be universal, while the mean itself will usually depend on the microscopic details of the graph.

\paragraph{Relation to earlier results on fluctuations of dimer models}

The study of fluctuations in dimer models has a long and distinguished history, which it is not the purpose of this paper to recall, see \cite{KenyonSurvey} for references. However, we mention a few highlights.
 In \cite{Kenyon_ci,KenyonGFF}, Kenyon showed that the height function on the square lattice for Temperleyan domains (for which the boundary conditions are planar of slope 0) converge to a multiple of the Gaussian free field with Dirichlet boundary conditions. The study of dimers on graphs more general than the square or hexagonal lattices was initiated in \cite{KenyonOkounkovSheffield} where they consider tilings on arbitrary periodic bipartite planar graphs. The non-periodic case was first mentioned in \cite{Kenyon_iso}, also in the whole plane setting. Convergence to the full plane Gaussian free field on isoradial periodic bipartite graphs (including ergodic lozenge tilings of arbitrary slope), as well as on Temperleyan superpositions of isoradial (not necessarily periodic) graphs, is a consequence of a remarkable work by De Tili\`{e}re \cite{Tiliere07}.

The interest in the role of boundary conditions was sparked by the observation of the arctic circle phenomenon: for some domains, in the limit the dimer configuration outside of some region (the liquid or temperate region) is deterministic (also called frozen). This was first identified in the case of the aztec diamond by Jokusch, Propp and Shor \cite{Arctic} (see also the more recent paper \cite{Romik} by Romik for a different approach and fascinating connections to alternating sign matrices). The case of general boundary conditions for the hexagonal lattice was solved later by Cohn, Kenyon and Propp \cite{CohnKenyonPropp} who obtained a variational problem determining the law of large numbers behaviour for the height function. This variational principle was studied by Kenyon and Okounkov in \cite{KenyonOkounkov} who discovered that in polygonal domains the boundary between the frozen and liquid regions are always explicit algebraic curves.
 In this direction we also point out the recent paper by Petrov \cite{Petrov} and by Bufetov--Gorin \cite{BufetovGorin} who obtained convergence of the height function fluctuations to the GFF in liquid regions for some polygonal domains.

A paper by Kenyon \cite{KenyonHex} discusses the question of fluctuations, with the goal of proving convergence of the centered height function to a (deformation of) the Gaussian free field in the liquid region. Unfortunately, the crucial argument in his proof, Lemma 3.6, is incomplete and at this point it is unclear how to fix it\footnote{We thank Fabio Toninelli and Rick Kenyon for helpful discussions regarding this lemma.}. The issue is the following. The central limit theorem proved in \cite{LaslierCLT} provides an information about convergence of discrete harmonic functions to continuous harmonic functions. However what is needed in \cite{KenyonHex} is an estimate on the discrete derivative of such functions (i.e., the entries of the inverse Kasteleyn matrix) as well as a control on the speed of convergence so that the errors can be summed when integrating along paths. (There is a more general question here, which is to better understand the links between discrete and continuous harmonic functions on quasi-periodic graphs.) Our work can be seen as a way to get around these issues but more importantly provides a unified and robust approach to the convergence of fluctuations.

Finally let us mention that all the above works on fluctuations rely on writing an exact determinantal formula for the correlations between dimers. The main body of work is then to find the asymptotic of the entries of these determinants using either exact combinatorics or discrete complex analytic techniques. Our approach is completely orthogonal, relying on properties of the limiting objects in the continuum rather than exact computations at the microscopic level. This is one reason why the results we obtain are valid under less restrictive conditions on the regularity of the boundary (while such assumptions are typically needed for the tools of discrete complex analysis). In particular, we do not assume the domain to be Jordan or smooth, only to have a locally connected boundary. This is the condition required so that the conformal map from the unit disc to the domain extends to the boundary (Theorem 2.1 in \cite{Pommerenke}). It is plausible that even this mild condition can be relaxed by appealing to a suitable notion of conformal boundary (e.g., prime ends, see Section 2.4 in \cite{Pommerenke}) but we did not pursue this here in an attempt to keep the paper at a reasonable length.

\subsection{A conjecture}

\cref{T:winding_continuum} provides a continuum analogue of Theorem \ref{T:winding_intro}, in the sense that the continuum field is regularised by truncating the SLE branches rather than discretisation.
As already mentioned this is of independent interest since it gives an explicit construction of the GFF coupled to a uniform spanning tree according to imaginary geometry. We strongly believe that the same result holds for other values of $\kappa$. Our proof of \cref{T:winding_continuum} is written in a way that is mostly independent of the value of $\kappa$ except for a few lemmas, gathered in \cref{S:SLE}. These lemmas concern fairly basic properties of flow lines which seem very plausible for arbitrary values of $\kappa$. However we did not try to establish them, preferring to focus on the case $\kappa =2$ only since we also need the analogous discrete statements later on in the paper.


The above discussion suggests a number of results concerning interacting dimers recently introduced by Giuliani, Mastropietro and Toninelli \cite{InteractingDimers}. We conjecture that if one applies Temperley's bijection to a configuration of interacting dimers as in \cite{InteractingDimers}, the Peano curve of the resulting tree converges to certain space-filling SLE$_{\kappa'}$ defined by Miller and Sheffield \cite{IG4} in these cases and that by adjusting the interaction parameter one can at least obtain any $\kappa' \in (8-\eps, 8+\eps)$. However it is quite speculative at the moment as we lack tools (like Wilson's algorithm) to study interacting dimers or corresponding Temperleyan spanning trees. See \cite{giuliani2017haldane} (which appeared after a draft of our paper was first put on arxiv) for additional support for our conjecture, and see \cite{SixVertexSLE} for a related question.

\subsection{Overview of the proof}
\label{S:overview}

For the convenience of the reader, we summarise briefly the main steps of the proof of \cref{T:winding_intro}.

\textbf{Step 1}. We first formulate in \cref{T:winding_continuum} a continuous analogue of this theorem, where we study the winding of truncated branches in a continuum wired Uniform Spanning Tree. Branches of this tree are SLE$_2$ curves, and therefore a \textbf{key idea} is to introduce a suitable notion of (intrinsic) winding. To do so we rely on a simple deterministic observation, see \cref{lem:intrinsic->top}, which shows that the intrinsic winding of a smooth simple curve is equal to the sum of its topological winding with respect to either endpoints.  After that, we prove by hand a version of the change of coordinate formula in imaginary geometry:
$$
\tilde h \circ \ph - \chi \arg \ph' =  h
$$
 where $\ph: D \to \tilde D$ is a conformal mapping, $\chi = \tfrac{2}{\sqrt{\kappa}} - \tfrac{\sqrt{\kappa}}{2}$ is the constant of imaginary geometry (note that $\chi = 1/\sqrt{2}$ for $\kappa =2$), and $h, \tilde h$ are GFF with appropriate boundary conditions in the domains $D,\tilde D$. This equation is taken as the starting point of the theory of imaginary geometry (see e.g. \cite{IG1,zipper}) but here it must be derived from the model and our definition of winding. Together with the domain Markov property of the GFF and of the continuum UST (inherited from the domain Markov property of SLE), this implies that the winding of a continuum UST is a Gaussian free field with appropriate boundary conditions.

\textbf{Step 2.} After \cref{T:winding_continuum} is proved, we return to the discrete UST, and we write
 \begin{equation}\label{E:overview}
 h^\d = h_t^\d + \epsilon^\d
 \end{equation}
where $h^\d$ is the winding of the branches of the discrete tree, $h_t^\d$ is the winding of the branches truncated at capacity $t$, and $\epsilon^\d$ is the difference. When $t$ is fixed and $\delta \to 0$ there is no problem in showing that $h^\d_t$ converges to the regularised winding of the continuum UST (this follows from results of Yadin and Yehudayoff \cite{YY} and results about winding in Step 1). By \cref{T:winding_continuum} mentioned above, we also know that as $t\to \infty$, $h_t$ converges to a GFF.

\textbf{Step 3.} It remains to deal with the error term $\epsilon^\d$.
The main idea for this is to construct a multiscale coupling (Theorem \ref{lem:exp_tail}) with independent full plane USTs, which relies on a modification of a lemma of Schramm \cite{SLE}. This allows us to show that the terms $\epsilon^\d$ from point to point have a fixed mean and are independent of each other, even if the points come close to each other. This is enough to show that when we integrate against a test function, the contribution of these terms will vanish.

\textbf{Step 4.} In order to do so, we need to evaluate the moments of $h^\d$ integrated against a test function; however this requires precise a priori bounds on the moments of the discrete winding to deal with bad events when the coupling fails. We therefore first derive a priori tail estimates on the winding of loop-erased random walks (\cref{prop:tail_winding2}). This is where we make use of our RSW crossing assumptions.

\subsection{Organisation of the paper}

 The paper is organised as follows. In \cref{sec:background}, some background and definitions are provided. In \cref{sec:continuum} we formulate and prove the continuum analogue of \cref{T:winding_intro}, \cref{T:winding_continuum}. In \cref{S:UI} we derive the required a priori estimates on winding and describe the multiscale coupling. We put all those ingredients together in \cref{S:proof_discrete}, which completes the proof of \cref{T:winding_intro}.

 {The paper includes fairly technical proofs related to several different areas. This makes a full account quite long. For the sake of brevity and readability, we will defer the proofs of some technical statements to a separate file containing these supplementary materials, which has been appended after the end of the paper in this version, with lettered sections.
}


Throughout the paper, $c, C, c', C'$ etc. will denote constants whose numerical value may change from line to line.  $\Arg$ will denote the principal branch of argument with branch cut $(-\infty,0]$. Also all our domains are bounded unless explicitly stated.

Throughout this paper, universal constants mean constants which do not depend upon anything else in consideration. This should not be confused with our results of ``universality" which is the main topic of this article.

\paragraph{Acknowledgements} We are grateful to a number of people for useful and stimulating discussions, including: Dima Chelkak, Julien Dub\'edat, Laure Dumaz, Alexander Glazman, Rick Kenyon, Jason Miller, Marianna Russikh, Xin Sun, Vincent Tassion and Fabio Toninelli. Special thanks to Vincent Beffara for useful discussions on the winding of SLE towards the beginning of this project.
This work started while visiting the \emph{Random Geometry} programme at the Isaac Newton Institute. We wish to express our gratitude for the hospitality and the stimulating atmosphere.

\section{Background}
\label{sec:background}

For background on SLE and Gaussian free field we refer the readers to Section \ref{sec:background_supp}. Our normalisation of the Gaussian free field is such that the two point function blows up like $-\log |z-w|$ as $w \to z$.

\paragraph{Notation} For $z \in D$, we denote by $R(z,D)$ the conformal radius of $z$ in the domain $D$. That is, if $g$ is any conformal map sending $D$ to the unit disc $\D$ and $z
$ to $0$, then $R(z, D) = \abs{g'(z)}^{-1}$.

\subsection{Winding of curves}
%
%
In this section, we recall simple facts about the winding of smooth curves, which we think are important motivations for the definitions we will use later. Let $\gamma : [0, 1] \to \C $ be a (continuous) curve. For $0 \leq s < t \leq 1$, we will write $\gamma[s, t]$ for the curve $\gamma|_{[s, t]}$.

\paragraph{Topological winding} The \emph{topological} winding of a curve
around a point $p \notin \gamma[0,1]$ is defined as follows. We can write
\begin{equation}
 \gamma(t)  - p = r(t)e^{i \theta(t)}
\end{equation}
where the function $\theta(t): [0,\infty) \mapsto [0,\infty)$ is taken to
be continuous. We define the
winding of $\gamma$ around $p$, denoted $W(\gamma, p)$, to be $\theta(1) - \theta(0)$.
We extend this definition to $p = \gamma(0)$ or $p=\gamma(1)$ by the following formulas when they make sense (that is, the limits exist):
\begin{gather*}
W( \gamma, \gamma(1) )  = \lim_{t \to 1} W(\gamma[0, t], \gamma(1)) ; \quad
W( \gamma, \gamma(0) )  = \lim_{s \to 0} W(\gamma[s, 1], \gamma(0)) .
\end{gather*}

\paragraph{Intrinsic winding}
The \emph{intrinsic} winding of a (smooth) curve is defined as follows. Suppose that $\gamma$ is continuously differentiable and $\forall t, \,\gamma'(t) \neq 0$, and write
\begin{equation}
\gamma'(t) = r_\i(t)e^{i \theta_\i(t)}
\end{equation}
where again
$\theta_\i (t): [0,\infty) \mapsto [0,\infty)$ is taken to be continuous. We define the intrinsic winding of $\gamma$ to be
\begin{equation}\label{E:windingsmooth}
W_{\i}(\gamma) :
= \theta_\i(1)- \theta_\i(0).
\end{equation}

The definition can be extended to piecewise smooth paths by summing the intrinsic winding of each smooth piece together with the jumps in between these pieces. In general, these two definitions are very different, think for example of an ``8'' curve whose intrinsic winding is $0$ while its topological winding is either $-1$, $0$, or $1$ depending on the point. For simple curves however they are related by the following topological lemma which in a sense says that the only amount of nontrivial winding that a simple curve can accumulate is near its endpoints -- anything else has to be unwinded (cancelled out).
Its proof can be found in the supplementary (Lemma B.1).

\begin{lemma}\label{lem:intrinsic->top}
Let $\gamma : [0, 1] \mapsto \C $ be a smooth simple curve with $\gamma'(s) \neq 0$ for all $s$. We have
\begin{equation}
W_{\i}(\gamma) = W(\gamma, \gamma(1) ) + W( \gamma, \gamma(0) ).\label{eq:int->top_gen}
\end{equation}
\end{lemma}

A further important fact is that the topological winding of any path around a boundary point can only arise due to winding of the domain itself. To state this precisely we recall the following notion of argument $\arg_{D;x}$ in $D$ with respect to a boundary point $x$.
This is defined so that
$$
\arg_{D; x} (b) - \arg_{D;x} (a) = \Im ( \int_p \frac{dz}{z - x} )
$$
over any smooth path $p \subset D$ going from $a $ to $b$. In other words $x$ is taken to be the origin and the argument is determined in a continuous way in the simply connected domain $D$. A priori this is defined only up to a global additive constant, whose choice for now can be made in an arbitrary way. Note that if the boundary is locally smooth at $x$, and if $\gamma$ is a smooth path in $\bar D$ such that $\gamma(0) = x$ and $\gamma(0,1] \subset D$ then we can define with an abuse of notation $\arg_{D;x} (\gamma'(0))$ as $\lim_{\eps \to 0} \arg_{D;x} (\gamma(\eps))$, up to the same global additive constant. With these definitions we have the following obvious lemma:

\begin{lemma}\label{lem:wind_start}
Let $D$ be a simply connected domain and let $x$ be a fixed boundary point. Let $\gamma$ be a smooth curve with $\gamma(0) = x$ and $\gamma(0, 1] \in D$. We have
\[
W(\gamma, x) = \arg_{D;x}(\gamma(1) ) - \arg_{D;x}( \gamma'(0) ),
\]
In particular if $\gamma$ is in addition simple:
\begin{equation*}
W_{\i}(\gamma) = W(\gamma, \gamma(1) ) + \arg_{D;x}(\gamma(1) ) - \arg_{D;x}( \gamma'(0) )
\end{equation*}
\end{lemma}

\begin{remark}\label{rq:intrinsic->top}
We will be interested in branches of the uniform spanning trees which are rough  self avoiding curves between the boundary of a fixed domain and an inside point. Furthermore in the discrete, the natural relation is between the intrinsic winding of branches (it is easily extended to piecewise smooth curves) and the height function so we want to make sense of the intrinsic winding of an SLE curve.
\cref{lem:intrinsic->top} will be crucial because it motivates the definition of intrinsic winding for a simple curve using only regularity at the endpoint. Actually as long as we work in a fixed domain the second formula in \cref{lem:wind_start} will allow us to think that $W_{\i}( \gamma ) = W(\gamma, \gamma(1) )$ losing only unimportant deterministic correction terms.
\end{remark}


We now state a lemma showing how the intrinsic winding behaves under conformal maps. This is one
of the key
deterministic statements used in this paper:  it states that the change in winding under an
application of conformal map $\psi$ is roughly $\arg \psi'$.
See \cref{R:ig} below for a clean corresponding statement, which however is only valid for smooth curves.

\begin{lemma}\label{lem:det_winding}
Let $D,D'$ be bounded domains with locally connected boundary and let $\psi$ be conformal map sending $D$ to $D'$. Let $\gamma : [0,1] \mapsto \bar D$ be a curve in $\bar D$. Assume further that $\arg(\psi')$ extends continuously to $\gamma(0)$ and $\gamma(1)$. Let $z$ be a point in $D\setminus \gamma[0, 1]$ and let $R = R(z, D)$ be its conformal radius and assume that $\abs{z-\gamma(1)}\leq R/8$. Then, letting $x = \gamma(0) $ and $x' = \psi (x)$,
\begin{multline}
W\left( \psi(\gamma), \psi(z) \right)  - W(\gamma, z )\\
= \arg_{\psi'(D)} (\psi'(z)) + \arg_{D;x}(z) - \arg_{D';x'}(\psi(z)  ) +O(\abs{z-\gamma(1)}/R),\label{eq:det_winding}
\end{multline}
where the implicit constant in the $O(\abs{z-\gamma(1)}/R)$ is universal and we \emph{choose} the global constants defining the arguments so that the chain rule holds at $x =\gamma(0)$, i.e.,
\begin{equation}
\arg_{D';x'} ( (\psi \circ \gamma)'(0)) = \arg_{D,x} (\gamma'(0)) + \arg_{\psi'(D)} (\psi'(x)).
\label{eq:arg_condition}
\end{equation}
 Furthermore if $\arg(\psi')$ does not extend to $x$, the formula still holds up to a global constant in $\R$ depending on the choice of the constants for the arguments and not on $\gamma$.
\end{lemma}
The proof of this lemma can be found in the supplementary (Corollary B.7). See also Lemma B.8 in the supplementary for a simple geometric condition guaranteeing that $\arg \psi'$ extends continuously near some fixed boundary point $x$: essentially all that is required, beyond local connectedness, is a bit of smoothness for $\partial D$ locally around $x$. It is this condition which explains why without smoothness, the height function is only defined up to a global additive constant (see \eqref{eq:general_D_rough}).

\begin{remark} \label{R:ig} By letting $z \to \gamma(1)$, for a smooth curve $\gamma$ in $D$, we deduce the following somewhat cleaner statement:
\begin{equation*}
W_{\i}( \psi( \gamma ) )=  W_{\i}(\gamma) + \arg_{\psi'(D)}(\psi'(\gamma(1) ) ) - \arg_{\psi'(D)}( \psi'(\gamma(0) ) )
\end{equation*}
where $\arg_{\psi'(D)}$ here is any determination of the argument on the image of $\psi'$. This is significant for the following reasons. The SLE/GFF coupling results developed by Dub\'{e}dat, Miller and
  Sheffield \cite{Dub1,IG4} (referred to as imaginary
  geometry) was defined using a change in coordinate formula
  under conformal map using $\arg \psi'$. \cref{lem:det_winding}
  shows that
  this definition is consistent with the idea that along a branch, the field takes values equal to the intrinsic winding of the branch. In that setting, a key insight is that while the intrinsic winding itself doesn't make sense, its harmonic extension does and this is the only information needed for the GFF.
\end{remark}

\subsection{Continuum uniform spanning tree and coupling with
  GFF}\label{sec:UST}

The breakthrough papers of Schramm \cite{SLE} followed by the paper of Lawler, Schramm and Werner \cite{LSW} established, among other things, the existence and a precise description of the scaling limit of a uniform spanning tree of a domain on a square lattice. We call this limit the \emph{continuum uniform spanning tree}. The following lemma is a consequence of their work which relies on the major result in \cite{LSW} that loop erased random walk when rescaled converges to a SLE$_2$ curve and Wilson's algorithm (see Section A.3 for background on Wilson's algorithm). For now we state the following proposition which is a simple consequence of their work.

\begin{prop}[Wilson's algorithm in the continuum]\label{prop:Wilson_cont}
Let $D$ be a simply connected domain and $z_1,\ldots, z_k \in D$.
We can sample the (a.s. unique) branches of the continuum wired UST in a domain
$D$ from
$z_1,\ldots, z_k$ as follows. Given the branches $\gamma_i$ from $z_i$ for $1\le
i < j$, we inductively sample the branch from $z_j$ as follows. We pick a
point $p$ from the boundary of $D' := D \setminus \cup_{1\le i < j} \gamma_i$
according to harmonic measure from $z_j$ and draw a radial SLE$_2$ curve in $D'$ from $p$ to
$z_{j}$. The joint law of the branches does not depend on the order in
which we sample the branches.
\end{prop}

Readers interested in a more precise exposition are referred to Section A.4 of the supplementary.

\paragraph{Coupling with a GFF}\label{sec:GFF_imaginary}


Let $D$ be a simply connected domain whose boundary $\beta$ is a smooth closed curve and let $x$ be a marked point in the boundary of
the domain. Let us parametrise the boundary $\beta$ of $D$ in an
anticlockwise direction (meaning
that $D$ lies to left of the curve) and such that $\beta(0) = x$. We define \textbf{intrinsic
  winding boundary condition} on $(D,x)$ to be a function $u$ defined on the boundary
by $u_{(D,x)}(\beta(t)) : = W_{\i}(\beta[0,t])$. We call $u_{(D,x)}$ the intrinsic
winding boundary function and extend it harmonically to $D$.

We extend this definition to any simply connected domain $D$ smooth in a neighbourhood
of a marked point $x$ (but not necessarily smooth elsewhere on the
boundary and possibly unbounded) as follows. Let $\varphi: \D \to D$ be a conformal map which
maps $x$ to $1$. Let $u_{(\D,1)}$ be the intrinsic winding boundary
function on $(\D,1)$. Define $u_{(D,x)}$ on $D$ by
\begin{equation}\label{E:uDx}
u_{(\D,1)}= u_{(D,x)} \circ \varphi -\arg_{\varphi'(\D)} \varphi'.
\end{equation}
where we define $\arg_{\varphi'(\D)}$ as the argument defined continuously
in $\varphi'(\D)$ (note $\varphi'(\D)$ do not contain $0$ since $\varphi$ is
conformal) with the global constant chosen such that $u_{(D,x)}$ jumps
from $2\pi$ to $0$ at $x$. One can check that this choice is such that $\arg_{\varphi'(\D)} \varphi'$ verifies the chain rule at $x$ as in \eqref{eq:arg_condition}.
It is elementary but tedious to check that this definition is unambiguous in
the sense that it does not in fact depend on the choice of the conformal map
$\ph$: indeed, if one applies a M\"obius transform of the disc, winding
boundary conditions are changed into winding boundary conditions.

\begin{remark}\label{R:rough_everywhere}
We can still define $u_{(D,x)}$ up to a global constant for domains with general boundary.
\end{remark}





\begin{thm}[Imaginary geometry coupling]
  \label{thm:coupling_intro}
Let $D$ be a simply connected domain with a marked point $x$ on the boundary and let $\chi = \frac1{\sqrt{2}}$. Let $ h = \chi u_{(D,x)} +  h^0_D$ where $ h^0_D$ is a GFF with Dirichlet boundary conditions in $D$ and $u_{(D,x)}$ is defined as in \eqref{E:uDx} and \cref{R:rough_everywhere}.
There
exists a coupling between the continuum wired UST on $D$ and $h$ such that the
following is
true. Let $\{\gamma_i\}_{1 \le i \le k}$ be the branches of the continuum wired
UST
from points $\{z_i\}_{1 \le i \le k}$ in $D$ and let $D' = D \setminus \cup_{1 \le i \le k}
\gamma_i$ . Then the conditional law
of $h$ given $\{\gamma_i\}_{1 \le i \le k}$ is the same as $\chi u_{(D', x)} + h^0_{D'}$ where $h^0_{D'}$ is a GFF with Dirichlet boundary condition in $D'$. Furthermore, $h$ is completely determined by the UST and vice-versa.
\end{thm}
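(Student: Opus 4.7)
The plan is to build the coupling in two stages: first for a single branch, then iterate using Wilson's algorithm in the continuum (\cref{prop:Wilson_cont}). Consider first a single branch $\gamma$ of the continuum wired UST from an interior point $z_1$; by \cref{prop:Wilson_cont} this is a radial SLE$_2$ curve in $D$ targeted at $z_1$ and started from a boundary point $p$ sampled from harmonic measure seen from $z_1$. The first key step is to couple $\gamma$ with $h = \chi u_{(D,x)} + \hg^0$ so that conditional on $\gamma[0,t]$, the restriction of $h$ to $D_t := D \setminus \gamma[0,t]$ has the law of $\chi u_{(D_t, x)} + \hg^0_{D_t}$. Following the Dub\'edat--Miller--Sheffield framework \cite{Dub1, IG1, IG4}, this reduces to checking that for every smooth compactly supported test function $f$, the pairing $(\chi u_{(D_t, x)}, f)$ is a continuous local martingale in $t$ whose quadratic variation matches the covariance drop $\iint (G_D - G_{D_t})(y,z) f(y) f(z) \, dy \, dz$. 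This is a direct application of It\^o's formula to the radial Loewner ODE \eqref{eq:rad_SLE}, leveraging the conformal covariance built into definition \eqref{E:uDx}; the harmonic-measure distribution of $p$ corresponds to the correct starting law of the driving process in this radial coupling.

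Having the single-branch coupling, we extend it to $k$ branches by induction. Assume we have coupled $\gamma_1, \ldots, \gamma_{j-1}$ with $h$ so that the conditional law of $h$ given these branches is $\chi u_{(D',x)} + \hg^0_{D'}$, with $D' = D \setminus \cup_{i<j} \gamma_i$. Apply the single-branch coupling inside the random domain $D'$ to the target $z_j$: sample $\gamma_j$ as a radial SLE$_2$ in $D'$ started from its boundary according to harmonic measure from $z_j$. The domain Markov property of the GFF, namely that $\hg^0_{D'}$ decomposes as an independent $\hg^0_{D' \setminus \gamma_j}$ plus the harmonic extension in $D' \setminus \gamma_j$ of its trace on $\gamma_j$, combined with the compatibility of the winding boundary functions $u_{(D',x)}$ and $u_{(D'\setminus \gamma_j, x)}$ under the definition \eqref{E:uDx}, shows that the conditional law given all $j$ branches is the claimed $\chi u_{(D'\setminus \gamma_j, x)} + \hg^0_{D' \setminus \gamma_j}$. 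Order independence on the tree side is \cref{prop:Wilson_cont}, and on the field side it is automatic from the conditional-law formula.

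Finally, the measurability claims. The field determines the tree by imaginary geometry \cite{IG1, IG4}: for $\kappa = 2$, the flow lines of angle $\pi/2$ of $h/\chi$ emanating from boundary points are measurable functions of $h$, and in this coupling they are exactly the UST branches. The main obstacle is the converse direction, that the tree determines the field. The idea is to sample branches from a countable dense collection $\{z_i\}_{i \ge 1}$ of targets in $D$; almost surely $\overline{\cup_i \gamma_i}$ is dense in $D$, so the conformal radius of any component of $D \setminus \cup_{i \le k} \gamma_i$ as seen from any fixed interior point tends to $0$ as $k \to \infty$. By the conditional-law formula, $h - \chi u_{(D \setminus \cup_{i \le k} \gamma_i, x)}$ has the law of $\hg^0_{D \setminus \cup_{i \le k} \gamma_i}$, so for any fixed test function $f$,
\[
\var\big( (h,f) \,\big|\, \gamma_1,\dots,\gamma_k \big) \;\le\; \iint G_{D \setminus \cup_{i \le k} \gamma_i}(y,z) f(y) f(z) \, dy\, dz \;\longrightarrow\; 0
\]
as $k \to \infty$. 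Hence $(h,f)$ is an $L^2$-limit of tree-measurable random variables, yielding the desired measurability and completing the proof.
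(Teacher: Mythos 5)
Your proposal is correct in outline but follows a genuinely different route than the paper. The paper's proof goes \emph{top-down} through the space-filling picture: the discrete Peano curve converges to a space-filling loop, which is identified as a \emph{counterflow line} of the field (\cite{IG1}, Theorem 1.1), and the UST branches are then extracted as the left boundaries of this counterflow line stopped at each $z_i$, using the duality result of \cite{IG4}, Theorem 1.8. The wired boundary condition is handled by a limiting argument where a mixed (partially free) boundary condition degenerates to fully wired, cf.\ \cref{lem:mixed->free}. Your proof is \emph{bottom-up}: couple a single radial SLE$_2$ branch to the field via the Schramm--Sheffield/Dub\'edat martingale characterisation, then iterate via continuum Wilson's algorithm (\cref{prop:Wilson_cont}), using the domain Markov property of the GFF. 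This avoids the Peano-curve machinery and the $b \to a$ limit entirely, and your conditional-variance argument for ``tree determines field'' is a pleasant elementary substitute for \cite{IG4}, Theorem 1.10. What your route pays for this economy is that the single-branch coupling is not quite ``a direct application of It\^o's formula'': unlike the standard chordal set-up, your radial SLE$_2$ starts from a \emph{random} boundary point $p$, so the filtration at time zero already contains $\sigma(p)$; one must either reparametrise from the interior target $z_1$ outward, or check explicitly that integrating the $p$-conditional coupling against harmonic measure reproduces the unconditioned GFF. Likewise, the ``compatibility of $u_{(D',x)}$ and $u_{(D'\setminus\gamma_j,x)}$'' that you invoke in the induction step hides the genuine content that flow lines of the same angle merge (so boundary conditions on the two sides of a coalescence point agree); this is what allows the iteration to continue in $D'$ with the $\pm\lambda$ jumps and winding terms lining up. Both of these can be repaired, but they are the places where your argument actually uses the imaginary-geometry machinery rather than just It\^o's formula, so they deserve to be flagged rather than absorbed into a parenthetical.
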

The proof of \cref{thm:coupling_intro} is implicit in \cite{LSW,IG1,IG4,Dub1}. We provide a detailed proof in Section C (Theorem C.2) of the supplementary.

\subsection{SLE$_2$ estimates}\label{S:SLE}

In this section we gather some estimates purely about SLE which are needed for \cref{sec:continuum}. We note that these estimates are the only place in \cref{sec:continuum} where we need to restrict ourself to $\kappa = 2$. These lemmas are no doubt true for SLE$_\kappa$ curves with $\kappa \in (0,8)$ and seem fairly well known in the folklore; however we could not find precise references. Since in any case we will need the corresponding discrete statement for loop-erased random walk, we prefer to provide discrete proofs and deduce the continuum statements below from the known convergence of loop-erased random walk to SLE$_2$ (\cite{LSW}). Since these are the only estimates specific to the case $\kappa = 2$, \cref{T:winding_continuum} extends immediately to other values of $\kappa$ if the statements below are generalised to the corresponding flow lines. (Note however that when $\kappa \neq 2$ flow lines are not simply SLE$_\kappa$ curves but rather specific types of SLE$_{\kappa}(\rho )$ with marked points).


The first estimate controls the probability that an SLE targeted to a point $w$ comes close to another point $z$ in a uniform way and follows from \cref{prop:discrete_coming_close}, \cref{lem:SRW_hit} and \cite{LSW}.
\begin{lemma}
 \label{cor:SLE_distance}
Let $D$ be a domain in $\C$. There exists a universal constant
$c_0>0$ such that the following holds. Let $z,w \in D$ and let $\gamma_w$ be a radial
SLE$_2$ started from a point on the boundary picked according to harmonic
measure from $w$ and targeted at $w$. Let $r = |z-w| \wedge
\dist(z, \partial D) \wedge \dist(w, \partial D)$. Then for all $0<\ve< 1/4$,
\[
 \P(|\gamma_w-z|<\ve r) < \ve ^{c_0}.
\]
There also exists absolute constants $c, c'$ such that if $ r' := \dist(w, \partial D) < \diam(D)/10$, then for all $R> r'$
\[
 \P(\gamma_w \subset B(w,R))\ge 1-c\left(\frac{r'}{R}\right)^{c'}
\]

\end{lemma}

When we work in general domains with possibly rough boundaries, we also need \emph{a priori} bounds on moments of the winding, which follow directly from \cref{prop:tail_winding2} and \cite{LSW}.

\begin{lemma}
 \label{lem:tail_winding2:SLE}
Let $D$ be a simply connected domain, and let $z \in D$.
Let $\gamma_z$ be radial SLE$_2$ towards $z$, started from a point chosen according to harmonic measure on $\partial D$ viewed from $z$. There exist constants $C, c > 0$
such that the following holds.
 For all $t \geq 0$ and $n \ge 1$,
\[
 \P\Big (\sup_{t \leq t_1, t_2 \leq t+1}\abs{W(\gamma_z[0,t_1],z) - W(\gamma_z[0,t_2],z)} >n \Big) < Ce^{-cn} .
\]
\end{lemma}

In a reference domain such as the unit disc, the winding of a single SLE branch has been studied extensively starting with the original paper of Schramm \cite{SLE} itself. In particular, Schramm obtained the following result, which will be used to say that arbitrary moments of the winding at a fixed point blow up at most logarithmically.
\begin{thm}[\cite{SLE}, Theorem 7.2]\label{thm:Schramm}
 Suppose $D$ is the unit disc and let $\gamma_0$ be a radial SLE$_2$ to $0$ started from a point chosen according to the harmonic measure (which is just the uniform measure in this case). We have the following equalities in law
\[
 W(\gamma_0[0,t],0) = B(2t) + y_t, \quad \quad \gamma_0(0) = e^{i \Theta}
\]
where $B(\cdot)$ is a standard Brownian motion started from $0$, $y_t$ is a
random variable having uniform exponential tail and $\Theta \sim Unif[0,2\pi)$. In fact $e^{i(B(2t) + \Theta)}$
is the driving function of $\gamma_0$. Also there exists constants $C, c$ such that for all $s>0$,
\begin{equation}
 \P(|\gamma_0(t) | > e^{-t+s}) \le Ce^{-cs} . \label{eq:Schramm_dist}
\end{equation}
\end{thm}

As a side note, we remark that it is precisely this observation which led Schramm to conjecture that loop-erased random walk converges to SLE$_2$, by combining this result together with Kenyon's work on the dimer model and his computation of the asymptotic pointwise variance of the height function.

\section{Continuum windings and GFF}\label{sec:continuum}

The goal of this section is to show that the winding of the branches in a continuum
UST gives a Gaussian free field.
By analogy with the discrete, we wish to show that the \emph{intrinsic winding} (in the sense of earlier definitions) of the branches of
the continuum UST up to the end points is the Gaussian free field. However
there are two obstacles if we want to deal with this. Firstly, the branches are
rough and hence intrinsic winding does not make sense. Secondly, the winding up
to the end point blows up because the branches wind infinitely often in every
neighbourhood of their endpoints (indeed this should be the case since the GFF
is not defined pointwise).

To tackle the first problem, we note that the topological winding is well defined even for rough curves. We will therefore study the topological winding and add the correction term from \cref{lem:wind_start} by hand (see \cref{rq:intrinsic->top,R:ig} for additional details).

We address the second problem by regularising the winding to obtain a well defined function. The regularisation we use is simply to truncate the UST branches at some point. We will therefore have to show that this regularised winding field converges to a GFF as the regularisation is removed.

\subsection{Winding in the continuum and statement of the result}\label{sec:wind_statement}


Let $D$ be a bounded simply connected domain with a locally connected boundary and a marked point $x$ on its boundary. Let $\cT$ be a continuum wired uniform spanning tree
in $D$. Recall that viewed as a random variable in Schramm's space, a.s. for Lebesgue-almost every $z\in D$ there is a unique branch connecting $z$ to $\partial D$ and for a fixed $z$ this has the law of a radial SLE$_2$.
For $z \in \D$, let $\gamma_z$ be the UST branch starting from $z$ to the boundary (let $z^*$ be the point where it hits $\partial D$), continued by going clockwise along $\partial D$ from $z^*$ to $x$.
Note that since $\partial U$ is locally connected, we can think of $\partial U$ as a curve with some parametrisation \cite{Pommerenke} and hence this description indeed makes sense.
Recall that for any point
$z$, $z^*$ has the distribution of a sample from the harmonic
measure on the boundary seen from $z$, which we denote by $\harm_D(z,
\cdot)$. Also given $z^*$, the
part of the
curve from $z^*$ to $z$ is a radial
SLE$_2$ curve in $D$ from $z^*$ to $z$ in law. We parametrise the part of $\gamma_z$ which lies in $\partial D$ by $[-1, 0]$ so that $\gamma_z(-1) = x$ and $\gamma_z(0) = z^*$. We parametrise the rest of the curve by capacity, i.e for all $t \geq 0$, $-\log ( R(z, D \setminus \gamma_z[-1, t] ) = t - \log( R(z, D) )$, where note that the term $- \log( R(z, D) )$ is necessary for continuity.

If the boundary of $D$ is smooth in a neighbourhood of $x$, then $\gamma$ is smooth near $-1$ and we can define
\begin{equation}
 h_t^D(z) = W(\gamma_{z}[-1,t],z) +\arg_{D;x}(z) - \arg_{D;x}(\gamma'(-1)) \label{eq:general_D}
\end{equation}
where $\arg_{D;x}$ is defined as in \cref{lem:wind_start}.
 The intuition
behind adding these extra terms is to work with (an emulation of) the intrinsic winding rather
than the topological one, see \cref{lem:wind_start}. Note that $h^D_t$ is defined almost surely as an almost everywhere function and hence in particular can be viewed (a.s.) as a random distribution.

For a domain $D$ with general (not necessarily smooth) boundary, the additive constant $\arg_{D;x} (\gamma'(-1))$ might becomes ambiguous. We can nevertheless define $h_t^D$ (and write simply $h_t$ when there is no chance of confusion) as follows:
\begin{equation}
h_t^D(z) = W(\gamma_{z}[-1,t],z) +
\arg_{D;x}(z)  \text{ up to a global constant in }\R \label{eq:general_D_rough}
\end{equation}

For a.e. $z$, we get a branch $\gamma_z$ which is an SLE$_2$ and to which \cref{thm:Schramm} naturally applies. In particular, for a.e. $z$ we get a driving Brownian motion $B_z(2t)$, which forms a Gaussian stochastic process when indexed by $\D$. Informally, the next result, which is the main result of this section, says that this Gaussian process converges to the Gaussian free field as $t \to \infty$. (In fact, the result below even deals with the error term $y_t$). Recall that $u_{(D,x)}$ is the function which gives the intrinsic winding of the boundary curve $\partial D$, harmonically extended to $D$ (see \eqref{E:uDx}).

\begin{thm}\label{T:winding_continuum}
Let $D$ be a bounded simply connected domain with locally connected boundary and a marked point $x \in \partial D$.
As $t \to \infty$, we have the following convergence in probability:
$$
h_t \xrightarrow[t \to \infty]{} \hg.
$$
The convergence is in the Sobolev space $H^{-1 - \eta}$ for all $\eta>0$, and holds almost surely along the set of integers ,i.e, if we only take a limit with $t \in \Z$. Moreover, $\E (\| h_t - \hg \|_{H^{-1- \eta}}^k ) \to 0$ for any $k \ge 1$.
The limit $\hg$ is a Gaussian free field with variance $2$ and winding boundary conditions: i.e., we have
$$
\hg = (1/\chi)\hg^0 + \pi/2 + u_{(D,x)}
$$
where $\hg^0$ is a GFF with Dirichlet boundary conditions on $D$ and $u_{(D,x)}$ is defined as in \cref{E:uDx,R:rough_everywhere}. When the boundary is rough everywhere, the above convergence should be viewed up to a global constant in $\R$.
\end{thm}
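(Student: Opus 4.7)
The plan is to work inside the imaginary geometry coupling from \cref{thm:coupling_intro}, which already provides a GFF $h = \chi u_{(D,x)} + \hg^0$ coupled to the continuum UST. Since $\hg = \chi^{-1}\hg^0 + \pi/2 + u_{(D,x)}$ satisfies $\chi\hg = h + \chi\pi/2$, the goal is to show that $\chi h_t \to h + \chi\pi/2$ in the Sobolev topology. The argument combines two deterministic ingredients (an identification of $h_t$ with a boundary value on the slit domain, and the change of coordinates identity \cref{E:uDx}) with one analytic ingredient (control of the residual Dirichlet GFF on the slit domain as $t \to \infty$).

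The deterministic heart is to identify $h_t$, up to evaluating at the tip $\gamma_z(t)$ instead of at $z$, with the intrinsic winding boundary value of the slit domain $D_t = D \setminus \gamma_z[-1,t]$. Using \cref{lem:intrinsic->top} together with \cref{lem:wind_start} (equivalently \cref{eq:intrinsic->top}), one checks by direct computation that
\[
h_t\bigl(\gamma_z(t)\bigr) \;=\; W_{\i}(\gamma_z[-1,t]),
\]
which is by construction the boundary value of $u_{(D_t,x)}$ at the tip from the appropriate side. By \cref{thm:coupling_intro}, conditioning on $\gamma_z[-1,t]$ makes $h$ on $D_t$ equal to $\chi u_{(D_t,x)} + \hg^0_{D_t}$. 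Hence $\chi h_t(\gamma_z(t))$ equals the conditional expectation of $h$ at the tip, up to the global $\chi\pi/2$ shift inherited from the angle-$\pi/2$ flow line interpretation of SLE$_2$ branches used in the proof sketch of \cref{thm:coupling_intro}.

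Given this identification, I would compute second moments and pass to the limit. For two points $z_1, z_2 \in D$, continuum Wilson's algorithm (\cref{prop:Wilson_cont}) describes the joint law of the truncated branches: the branch from $z_2$ runs as an SLE$_2$ in $D \setminus \gamma_{z_1}$ until it merges with $\gamma_{z_1}$, after which the two curves coincide. Combining this with \cref{thm:Schramm} (the topological winding of an SLE$_2$ is a Brownian motion with variance $2t$ plus a uniformly bounded error), the contributions from the distinct portions of the two branches become approximately independent Brownian motions while the common portion contributes identically to both. Summing gives $\cov(h_t(z_1), h_t(z_2)) \to \chi^{-2} G_D(z_1,z_2)$ as $t \to \infty$, which is precisely the covariance of $\chi^{-1}\hg^0$, and Gaussianity of the limit follows from a CLT applied to winding increments along nested sub-arcs of the branches, or equivalently from the Gaussianity built into the coupling.

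The final step is upgrading finite-dimensional convergence to convergence in $H^{-1-\eta}$ with the claimed moment bounds. I would prove tightness via uniform bounds on $\E(\|h_t\|_{H^{-1-\eta}}^k)$ obtained from the second-moment computation together with standard regularity of the Green function, and then use uniform integrability to pass from convergence in probability to convergence of the moments $\E(\|h_t - \hg\|_{H^{-1-\eta}}^k) \to 0$. The principal obstacle is the quantitative comparison between $h_t(z)$ and $h_t(\gamma_z(t))$: by \cref{eq:Schramm_dist}, the tip sits within distance $e^{-t+O(1)}$ of $z$ with overwhelming probability, and the topological winding $W(\gamma_z[-1,t],\cdot)$ is only logarithmically H\"older in its base point, so that this error has uniformly bounded higher moments and vanishes in $L^k$ after integration against a smooth compactly supported test function. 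The rough-boundary case is reduced to the smooth case by conformally mapping to $\D$ and tracking the additive constant via \cref{E:uDx}; the ambiguity in this constant for boundaries that are rough everywhere matches the ambiguity stated in the conclusion.
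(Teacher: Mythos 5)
Your overall strategy---working inside the imaginary geometry coupling of \cref{thm:coupling_intro}, matching $h_t$ against the conditional expectation of the GFF given revealed branches, and controlling the residual Dirichlet field on the slit domain---is aligned with the paper's. But the deterministic identity at the heart of your argument, $h_t(\gamma_z(t))=W_{\i}(\gamma_z[-1,t])$, is not well posed, and this is not a technical wrinkle.

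Both sides of that identity fail to exist for an SLE$_2$ path. The intrinsic winding $W_{\i}$ is defined in \cref{E:windingsmooth} only for $C^1$ curves with non-vanishing derivative, which $\gamma_z$ is not. Reinterpreting $W_{\i}$ through \cref{eq:intrinsic->top}, as you do, reduces it to $W(\gamma_z[-1,t],\gamma_z(t))$, i.e.\ to $\lim_{s\to t^-}\arg\bigl(\gamma_z(s)-\gamma_z(t)\bigr)$. For a rough fractal curve this limit does not exist: the curve approaches its own tip spiralling, which is exactly the phenomenon the paper flags as the reason the regularisation is needed in the first place (``the branches wind infinitely often in every neighbourhood of their endpoints''). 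The paper's $h_t(z)$ is built from $W(\gamma_z[-1,t],z)$, the winding around the \emph{fixed interior} point $z$, which is always defined, and the ``distance to the tip'' error is controlled not by any H\"older modulus of the winding in its base point (which would also blow up as the base point approaches the tip) but by the Koebe distortion estimate of \cref{lem:det_winding}, which gives an $O(|z-\gamma_z(t)|/R(z,D_t))$ bound and is applied on the high-probability event $\cA(t,z)$ from \cref{lem:one_point_ht}.

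A second gap: your covariance computation glosses over the conformal-distortion terms. When you sample the branch from $z_2$ by Wilson's algorithm in $D\setminus\gamma_{z_1}$, its winding in $D$ differs from its winding in the slit domain by an $\arg\psi'$ term, and tracking these is precisely what \cref{lem:pointwise}, \cref{lem:det_winding} and \cref{lem:distortion2} are for; ``approximately independent Brownian motions, common portion contributes identically'' omits all of it (and the common portion does \emph{not} contribute identically, since the winding is measured around two different points). The paper explicitly declines to compute $\lim_t\E[\prod h_t(x_i)]$ directly, remarking at the end of \cref{sec:wind_statement} that such a computation would yield a self-contained construction of the coupling; instead it proves the limit exists by a Cauchy estimate (\cref{lem:quantitative_two_pt}), and then identifies the limit field through \cref{thm:pointwise_GFF} by conditioning on a dense grid of branches $A$, matching conditional means via \cref{cor:wind_var} and making both conditional variances small via \cref{lem:log_div} and \cref{lem:G0}. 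Gaussianity of the limit then comes for free from the identification with $\hg$ rather than from a CLT on sub-arcs, which as stated is not an argument. The upgrade to $H^{-1-\eta}$ in \cref{P:conv_GFF_H} does roughly follow the plan you sketch, but it hinges on the quantitative rate $ct^k e^{-c't}$ from \cref{lem:quantitative_two_pt} and Weyl's law, not merely on uniform integrability.
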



\begin{remark}
The coupling defined above between $\cT$ and $\hg$ is in fact the imaginary geometry coupling of \cref{thm:coupling_intro}. In particular this result recovers the fact that $\hg$ is measurable with respect to $\cT$, furthermore providing a fairly explicit construction. It was already proved in \cite{Dub1} that actually both $\cT$ and $\hg$ are measurable with respect to each other and a little known fact is that Section 8.1 in that paper already sketches an explicit construction of the field as a function of the tree which is however different from our own. Note also that the construction in \cite{Dub1} was also conceived as an analogue to Temperley's bijection.
\end{remark}

The rest of this section is dedicated to the proof of \cref{T:winding_continuum}. The general strategy is to first study the $k$-point functions $ \E[\prod h_t(x_i)]$ and to only integrate them at the last step to obtain moments of the integral of $h_t$ against test functions. The advantage of working with the $k$-point function is that it only depends on $k$ branches of the tree, which we know how to sample using \cref{prop:Wilson_cont}.
The existence of $\lim_{t \to \infty}\E[\prod h_t(x_i)]$ will follow from relatively simple distortion arguments and is proved in \cref{lem:quantitative_two_pt}. This essentially shows that $\lim h_t$ exists in the sense of moments (in particular this does not rely on Imaginary Geometry yet).

To identify the limit
we show that the conditional expectation of $\lim h_t$ given some tree branches agrees with the imaginary geometry definition (\cref{sec:conf_kpoint,sec:IG}). The uniqueness in imaginary geometry concludes.
 Finally, \cref{S:generaldomain} covers the extension from the disc to general smooth domains and \cref{S:cont_conv_H} upgrades the convergence from finite dimensional marginals to $H^{-1-\eta}$ using the moment bounds derived earlier.


\subsection{Convergence in the unit disc: one point function}\label{sec:unit disc convergence}


We first prove \cref{T:winding_continuum} in the case $D = \D$ of the unit disc, with the marked point $1$.
The extension of the results to general domains is discussed in \cref{S:generaldomain}. \textbf{Until that section, we henceforth assume $D= \D$.}

\medskip
Recall from \eqref{eq:general_D} that the definition of $h_t$ for this case is given by
\begin{equation}\label{D:hunitdisc}
h_t(z) = W(\gamma_z[-1,t], z) + \arg_{\D;1}(z) - \pi/2,
\end{equation}
where, as in \cref{lem:wind_start}, $\arg_{\D;1}$ is chosen so that $\arg_{\D;1}(0) = \pi$.

\begin{lemma}\label{lem:pointwise}
Let $a_1, \ldots, a_k \in \D$ be distinct and let $K = \gamma_{a_1}[0, t_1] \cup \ldots \cup \gamma_{a_k} [ 0, t_k]$ where $0 \le t_i \le \infty$. Fix $z \in \D$ distinct from any of the $a_i$, and $T>0$. Let $D' =   \D \setminus K$ and assume that $1$ is a smooth point of $\partial D'$. Let $g: D' \to \D$
be a conformal map such that $g(1)=1$ (note such a map is not unique). If $\gamma_z(T) \in B(z,\eps) $ with $\eps \le R(z, D') /8$. Then
\begin{multline}
W ( g(\gamma_z[-1,T]),g(z) ) + \arg_{\D;1} (g (z))  -
\arg_{g' (  D')}(g'(z)) +\epsilon(T)  \\
= W(\gamma_z[-1,T],z) + \arg_{\D;1} (z)
\label{eq:conditional_wind1}
\end{multline}
   where $|\epsilon(T)| = O(\ve/ R(z, D'))$ with the implied constant being universal, and as before $\arg_{g'(D')}$ is chosen so that $\arg_{g' (  D')} (g'(1)) =0$.

Furthermore, assume that $\dist(z, K \cup \partial \D) > \dist(z, \gamma_z [ 0, T])$. Then
\begin{equation}\label{E:changecap}
 \frac{e^{-T}}{4} \le \frac{R(g(z), \D \setminus g(\gamma_z[0,T]))}{R( g(z), \D)}   \le e^{-T} \dist(z, K \cup \partial \D) ^{-1}
\end{equation}
\end{lemma}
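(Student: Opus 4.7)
My plan is to handle \eqref{eq:conditional_wind1} and \eqref{E:changecap} separately, each as an essentially direct application of results already developed.

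For \eqref{eq:conditional_wind1}, I will invoke \cref{lem:det_winding} with $\psi = g : D' \to \D$ and $\gamma = \gamma_z[-1, T]$, taking the winding with respect to the point $z$, for which the assumption $|z - \gamma_z(T)| \le \epsilon \le \delta/8$ provides the required distortion input. The hypothesis that $\arg g'$ extends continuously to $\gamma(0) = 1$ follows from \cref{lem:geometric_condition}, since $1$ is smooth on both $\partial D'$ and $\partial \D$; at the interior point $\gamma_z(T)$ there is nothing to check. The conclusion reads
\begin{equation*}
W(g(\gamma_z[-1,T]), g(z)) - W(\gamma_z[-1,T], z) = \arg_{g'(D')}(g'(z)) + \arg_{1-D'}(1 - z) - \arg_{1-\D}(1 - g(z)) + O(\epsilon / \delta),
\end{equation*}
where the arguments are normalised via the compatibility condition \eqref{eq:arg_condition}. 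Since $g(1) = 1$ and $1$ is smooth, $g$ restricts near $1$ to a conformal map preserving the inward boundary tangent, so $g'(1) > 0$ is real and $\Arg g'(1) = 0$. Combined with the infinitesimal expansion $1 - g(z) \sim g'(1)(1 - z)$ as $z \to 1$, condition \eqref{eq:arg_condition} then forces $\arg_{1-D'}$ and $\arg_{1-\D}$ (restricted to $1-D' \subset 1-\D$) to agree: they are two continuous branches of argument on $1-D'$ whose difference is a locally constant multiple of $2\pi$ and vanishes as $z \to 1$. Rearranging yields \eqref{eq:conditional_wind1} with $\epsilon(T) = -O(\epsilon/\delta)$.

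For \eqref{E:changecap}, the key point is that $g$ also restricts to a conformal map $D' \setminus \gamma_z[0,T] \to \D \setminus g(\gamma_z[0,T])$, so by conformal invariance of the conformal radius the Jacobian factors $|g'(z)|$ cancel from the ratio to give
\begin{equation*}
\frac{R(g(z), \D \setminus g(\gamma_z[0,T]))}{R(g(z), \D)} = \frac{R(z, D' \setminus \gamma_z[0,T])}{R(z, D')}.
\end{equation*}
The capacity parametrisation of $\gamma_z$ then gives $R(z, \D \setminus \gamma_z[0,T]) = e^{-T} (1-|z|^2)$. For the upper bound, domain monotonicity $R(z, D' \setminus \gamma_z[0,T]) \le R(z, \D \setminus \gamma_z[0,T])$ and the Koebe estimate $R(z, D') \ge \dist(z, \partial D') = \dist(z, K \cup \partial \D)$ combine immediately. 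For the lower bound, the hypothesis $\dist(z, K \cup \partial \D) > \dist(z, \gamma_z[0,T])$ ensures that $\dist(z, \partial(D' \setminus \gamma_z[0,T])) = \dist(z, \gamma_z[0,T])$, and applying Koebe's $1/4$ theorem in $\D \setminus \gamma_z[0,T]$ gives $\dist(z, \gamma_z[0,T]) \ge \tfrac14 e^{-T}(1-|z|^2)$; together with $R(z, D') \le 1 - |z|^2$ this yields $R(z, D' \setminus \gamma_z[0,T])/R(z, D') \ge e^{-T}/4$.

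The only real subtlety in the whole argument is the bookkeeping of additive constants for the various $\arg$ branches in the first part; once these are pinned down by the normalisations in the statement, the identity is a mechanical rearrangement. The capacity bound is just Koebe's theorem combined with domain monotonicity, so I expect no difficulty there.
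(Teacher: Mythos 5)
Your proof is correct and follows essentially the same route as the paper: the winding identity \eqref{eq:conditional_wind1} is an application of \cref{lem:det_winding} with $\psi = g$ and $\gamma = \gamma_z[-1,T]$ (with the argument normalisations pinned down via \cref{lem:geometric_condition} and the fact that $g'(1)>0$), and \eqref{E:changecap} follows from conformal covariance of the conformal radius plus domain monotonicity and the Koebe/Schwarz estimates. In fact your lower bound is handled a touch more carefully than the paper's: by writing the ratio as $R(z, D'\setminus\gamma_z[0,T])/R(z,D')$ and bounding $R(z,D')\le 1-|z|^2$ rather than just $\le 1$, you make the $(1-|z|^2)$ factors cancel cleanly and obtain exactly $e^{-T}/4$, whereas the paper's chain as written only yields $e^{-T}(1-|z|^2)/4$ (a harmless but genuine slip); the one small nit is that the inequality $R(z,D')\ge \dist(z,\partial D')$ is the Schwarz lemma, not Koebe.
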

\begin{proof}
\eqref{eq:conditional_wind1} is just an application of \cref{lem:det_winding}.
We only have to check the choice of the constant in the arguments. First note that $D'\subset \D$ so we can choose $\arg_{D';1}$ to be a restriction of $\arg_{\D;1}$. Thus it remains to check that the chain rule \eqref{eq:arg_condition} implies that $\arg_{g' (  D')}(g'(1))=0$, which however is easy to check thanks to the fact that $g'(1) >0$.

Moreover, \eqref{E:changecap} follows easily from conformal invariance and domain monotonicity of the conformal radius as well as Koebe's 1/4 theorem (see Theorem 3.17 in \cite{Lawler}).
 \end{proof}

\cref{thm:Schramm} deals with SLE curves towards $0$. We now provide an extension
of this result for SLE curves towards an arbitrary point in the unit disc.

\begin{lemma}\label{lem:one_point_ht}
Let $z \in \D$ and let $\psi: \D \mapsto \D$ be the M\"obius transformation
mapping $z$ to $0$ and
$1$ to $1$. If $\gamma_z(t)\in B(z,\ve)$ where $\eps \le R(z, \D)/8$, then
we have:
\begin{equation}
 W(\psi(\gamma_z[-1,t]),0) =
W(\gamma_z[-1,t],z)+ \pi - \arg_{\D;1}(z)+\epsilon(t)\label{eq:1}
\end{equation}
where the error term $|\epsilon(t)| \le C\ve/R(z,\D)$ for some universal
constant $C>0$
and $\arg_{\D;1}$ is chosen so that $\arg_{\D;1} (0) = \pi$. Also for all $s,t$
\begin{equation}
 \P(\abs{\gamma_z(t) - z} > e^{-t+s}R(z,\D)) \leq ce^{-c's}\label{eq:tip}
\end{equation}
where $c,c'$ are independent of $z$.
\end{lemma}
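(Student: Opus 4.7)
The plan is to prove the pointwise winding identity as a direct application of Corollary \cref{lem:det_winding} to the Möbius map $\psi$, specialized to $D = D' = \D$. We view the curve $\gamma = \gamma_z[-1, t]$ as starting from $\gamma(0) = 1 \in \partial \D$ (the base point $x$) and ending at $\gamma(1) = \gamma_z(t)$, with the interior target $z$ satisfying $|z - \gamma(1)| \le \varepsilon \le R(z, \D)/8$. Because $\psi$ is Möbius, $\arg \psi'$ extends smoothly to $\partial \D$, so the hypotheses of the corollary are met, and we obtain
\begin{equation*}
W(\psi(\gamma), 0) - W(\gamma, z) \;=\; \arg_{\psi'(\D)}(\psi'(z)) + \arg_{1-\D}(1-z) - \arg_{1-\D}(1) + O(\varepsilon/R(z,\D)).
\end{equation*}
Since $1 \in 1-\D$ corresponds to $0$, we have $\arg_{1-\D}(1) = 0$, so the task reduces to computing $\arg_{\psi'(\D)}(\psi'(z))$ with the normalization implicit in the corollary.

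The second step is to evaluate this argument explicitly. Writing $\psi(w) = e^{i\theta}(w-z)/(1-\bar z w)$ with $e^{i\theta} = (1-\bar z)/(1-z)$ (to ensure $\psi(1)=1$), a direct calculation gives $\psi'(1) = (1-|z|^2)/|1-z|^2 > 0$ and $\psi'(w) = e^{i\theta}(1-|z|^2)/(1-\bar z w)^2$. The convention of \cref{lem:det_winding} (equation \eqref{eq:arg_condition}) forces $\arg_{\psi'(\D)}(\psi'(1)) = \Arg(\psi'(1)) = 0$. Tracking the argument continuously along the straight segment $w_s = 1 + s(z-1)$, $s \in [0,1]$, one checks that $1 - \bar z w_s$ remains in the right half-plane, so its argument varies smoothly from $\arg(1-\bar z) = -\arg_{1-\D}(1-z)$ to $0$. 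Consequently $\arg(\psi'(w_s)) = \theta - 2\arg(1-\bar z w_s)$ drops from $0$ at $s=0$ to $-2\arg_{1-\D}(1-z)$ at $s=1$, giving $\arg_{\psi'(\D)}(\psi'(z)) = -2\arg_{1-\D}(1-z)$. Substituting yields the required identity \eqref{eq:1}.

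For the tail bound \eqref{eq:tip}, the plan is to push the problem back to the setting of \cref{thm:Schramm} via conformal invariance. The map $\psi$ sends $z$ to $0$ and is conformal, so $\psi(\gamma_z[0,\cdot])$ has the law of radial SLE$_2$ in $\D$ targeted at $0$; moreover, conformal invariance of the conformal radius implies $R(0, \D \setminus \psi(\gamma_z[0,t])) = e^{-t}$, i.e.\ the capacity parametrisation is preserved. Schramm's estimate \eqref{eq:Schramm_dist} then gives $\P(|\psi(\gamma_z(t))| > e^{-t+s}) \le c e^{-c's}$. Finally, applying \cref{lem:distortion} to $\psi^{-1}$ at $0$ (using $R(z,\D) = |\psi'(z)|^{-1}$) shows that on the event $|\psi(\gamma_z(t))| \le e^{-t+s}$ with $e^{-t+s}$ sufficiently small, $|\gamma_z(t) - z|$ is within a universal constant of $R(z,\D)\,|\psi(\gamma_z(t))|$, absorbing the constant into the $e^{-c's}$ factor.

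The main obstacle I anticipate is the bookkeeping in the middle step: verifying that the path-continuation definition of $\arg_{\psi'(\D)}$ mandated by \cref{lem:det_winding} gives exactly the factor $-2\arg_{1-\D}(1-z)$, rather than differing by a stray multiple of $2\pi$ or by a sign. Once this is carefully pinned down, the identity and tail bound both follow by combining a small number of standard ingredients (distortion, conformal invariance of SLE, Schramm's estimate).
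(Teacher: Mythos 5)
Your proposal is correct and follows essentially the same route as the paper: apply \cref{lem:det_winding} to the M\"obius map $\psi$, compute $\arg_{\psi'(\D)}(\psi'(z)) = -2\arg_{1-\D}(1-z)$, and then derive the tail bound from Schramm's estimate \eqref{eq:Schramm_dist} together with Koebe distortion. The only (inconsequential) variation is that you pin down the branch of $\arg_{\psi'(\D)}$ by tracking it continuously along the segment from $1$ to $z$, whereas the paper instead shows directly that $\psi'(\D)$ avoids the half-line through $e^{i(\theta-\pi)}$ so that $\arg_{\psi'(\D)}$ agrees with a fixed principal branch; both arguments yield the same value.
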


The proof can be found in Lemma B.9.

 We now want to regularise $h_t$ a bit further by restricting it to an event
where the tip is not too far away from the endpoint. This is something we often need to do in the following, so we will define for $t  \ge 0$ and
$z
\in \D$,
\begin{equation}
\cA(t,z) := \{|\gamma_z(t) -z|<e^{-t/2} R(z, \D)\} \quad ; \quad \hat
h_t(z) :=  h_t(z)\mathbbm{1}_{\cA(t,z)}.
\end{equation}
The event $\cA(t,z)$ and corresponding field $\hat h_t$ will be used throughout our proof of \cref{T:winding_continuum}. By
\cref{lem:one_point_ht}, $\cA(t,z)$ is a very likely event:
\begin{equation}
 \P(\cA(t,z)) >1-ce^{-c't} \label{eq:Atz}
\end{equation}
for some universal constants $c,c'>0$.
\begin{lemma}
 \label{lem:one_point}
We have for every $z \in \D$,
\begin{equation}
\label{E:one_point}
 \lim _{t \to \infty}\E(\hat h_t(z)) = 2\arg_{\D;1}(z) - \frac{\pi}{2}.
\end{equation}
Also, we have the following bounds on the moments:
\begin{equation}\label{E:onepointmoment}
 \E(|\hat h_t(z)|^{k})\le c(k)(1+ t^{k/2}) \quad \quad \E(| h_t(z)|^{k})\le
c(k)(1+ t^{k/2})
\end{equation}

\end{lemma}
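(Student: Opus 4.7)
My plan is, on the high-probability event $\cA(t,z)$, to identify $h_t(z)$ with a sum of computable random variables using \cref{lem:one_point_ht}. Let $\psi$ be the M\"obius self-map of $\D$ sending $z \mapsto 0$ and fixing $1$. Then \cref{lem:one_point_ht} applied to the definition of $h_t$ gives, on $\cA(t,z)$,
\[
h_t(z) = \tfrac{\pi}{2} + 2\arg_{1-\D}(1-z) + W(\psi(\gamma_z)[-1,t], 0) - \epsilon(t),
\]
with $|\epsilon(t)| \le c e^{-t/2}$. By the conformal invariance of SLE and of harmonic measure (harmonic measure from $z$ in $\D$ pushes to the uniform measure on $\partial \D$ under $\psi$), $\psi(\gamma_z)$ has the law of a radial SLE$_2$ in $\D$ targeted at $0$ started from a uniform boundary point $e^{i\Theta}$, so \cref{thm:Schramm} gives $W(\psi(\gamma_z)[-1,t], 0) = \Theta + B(2t) + y_t$.

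\textbf{Expectation.} To prove \eqref{E:one_point} I would take expectation of $\hat h_t = h_t \mathbbm{1}_{\cA}$ using $\E(\Theta) = \pi$ (uniform on $[0,2\pi)$), $\E(B(2t)) = 0$, and $\E(y_t) = 0$: the last identity comes from the reflection symmetry of radial SLE$_2$ targeted at $0$ (recalled in \cref{sec:SLE}), since under reflection across the diameter through $e^{i\Theta}$ both $W(\psi(\gamma_z)[0,t],0)$ and the driving Brownian motion $B(2t)$ change sign, forcing $y_t \overset{d}{=} -y_t$. Contributions from the indicator $\mathbbm{1}_{\cA^c}$ vanish in the limit: the deterministic and $\epsilon(t)$ pieces use $\P(\cA) \to 1$ and the uniform bound on $\epsilon(t)$ on $\cA$; the random pieces $\Theta\mathbbm{1}_{\cA^c}$, $B(2t) \mathbbm{1}_{\cA^c}$, and $y_t \mathbbm{1}_{\cA^c}$ are controlled by Cauchy--Schwarz using the exponential tail $\P(\cA^c) \le ce^{-c't/2}$ from \eqref{eq:Atz} together with $\E(B(2t)^2) = 2t$ and $\E(y_t^2) = O(1)$. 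The resulting limit is $\tfrac{\pi}{2} + 2\arg_{1-\D}(1-z) + \pi = \tfrac{3\pi}{2} + 2\arg_{1-\D}(1-z)$.

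\textbf{Moments and main obstacle.} For $\hat h_t$ the moment bound \eqref{E:onepointmoment} is immediate from the representation above, via the triangle inequality, $\E(|B(2t)|^k) = O(t^{k/2})$, and $\E(|y_t|^k) = O_k(1)$ (from the uniform exponential tail on $y_t$ in \cref{thm:Schramm}). The harder case is $\E(|h_t(z)|^k)$ without the indicator, and this is the main obstacle: \cref{lem:one_point_ht} relies crucially on $\cA$ to supply the small error term $\epsilon(t)$, so on $\cA^c$ one must work with only an $O(1)$ distortion estimate. My plan is to invoke the unrestricted change-of-coordinates formula \cref{lem:det_winding2} for $\gamma_z[-1,t]$ under $\psi$, which bounds $W(\gamma_z[-1,t], \gamma_z(t)) - W(\psi(\gamma_z)[-1,t], 0)$ by a quantity that depends only on $z$ and the endpoint arguments, and to pass from winding around the tip $\gamma_z(t)$ to winding around $z$ by an elementary topological estimate whose error is bounded for $z$ in a compact subset of $\D$. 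Combining this with $\P(\cA^c) \le ce^{-c't/2}$ and Cauchy--Schwarz controls the contribution of $\cA^c$ and yields the matching $c(k)(1 + t^{k/2})$ bound.
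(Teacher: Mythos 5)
Your expectation computation and the moment bound for $\hat h_t$ are correct and follow essentially the same route as the paper: reduce to the $z=0$ case via \cref{lem:one_point_ht}, use the representation $W(\psi(\gamma_z)[-1,t],0) = \Theta + B(2t) + y_t$, exploit the symmetry $z \mapsto \bar z$ to kill $\E(y_t)$, and control the contribution of $\cA^c$ by Cauchy--Schwarz. The sticking point is exactly where you flag it, the moment bound for $h_t$ without the indicator, and your proposed fix has a genuine gap.

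First, a small misstatement: \cref{lem:det_winding2} applied to $\gamma_z[-1,t]$ and $\psi$ relates $W(\gamma_z[-1,t], \gamma_z(t))$ to $W(\psi(\gamma_z)[-1,t], \psi(\gamma_z(t)))$ --- the winding of the image around the \emph{image of the tip}, not around $0$. Since $\psi(\gamma_z(t)) \neq 0$ off the event $\cA$, this does not by itself land you in the setting of Schramm's theorem. To finish you would still need to compare winding around $\psi(\gamma_z(t))$ with winding around $0$, which is the same difficulty in disguise.

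Second, and this is the real obstruction, there is no ``elementary topological estimate'' bounding $W(\gamma, z) - W(\gamma, \gamma_z(t))$ uniformly in the curve. The topological winding of a fixed simple curve around two reference points in the same component of its complement is \emph{not} uniformly close: a spiral $\gamma(s) = (1-s)e^{2\pi i n s}$, $s \in [0,1-\eps]$, has winding $\approx 2\pi n$ around $0$ but only $\approx \pi n$ around $1/2$, so the gap grows linearly in $n$ and no deterministic bound depending only on $z$ (or on $z$ ranging over a compact set) can exist. Any argument along these lines would have to be probabilistic, and at that point you are back to needing tail control on the event $\cA^c$, which is exactly what you were trying to avoid.

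The paper sidesteps both issues with a clean deterministic trick: append the hyperbolic geodesic from $\gamma_z(t)$ to $z$ to form a curve $\gamma'$ whose tip is \emph{exactly} $z$, and apply \cref{eq:1} to $\gamma'$ with $\eps=0$. Then $W(\gamma', z) - W(\psi(\gamma'), 0) = -\arg_{1-\D}(1-z)$ holds exactly, the geodesic and its $\psi$-image (also a geodesic) each have topological winding around their endpoint bounded by $\pi$, and $W(\psi(\gamma_z)[-1,t],0)$ has the required moments by \cref{thm:Schramm}. The key structural difference is that the paper replaces the tip by $z$ \emph{inside} the curve before applying the change of coordinates, so the reference point never changes and the unbounded comparison you would need simply never arises.
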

\begin{proof}
We first check \cref{E:one_point} and \cref{E:onepointmoment} for $z = 0$. Then $\gamma_z(0)$ is uniformly distributed on $\partial \D$, contributing an expected topological winding of $\pi$ (using the fact that the Loewner equation is invariant under $z \mapsto \bar z$).
Adding the term
$\arg_{\D;1}(0) - \pi/2$ in the definition of $h_t$ in \cref{D:hunitdisc} shows that $\E (h_t ) = 3\pi /2$.  Furthermore, by \cref{thm:Schramm}, we have $\E( h_t(0)^2) \le 2t + o(t)$ and since  $\P(\cA(t,z)) \ge 1 - e^{-ct}$ we deduce from Cauchy--Schwarz that $ \lim_{t \to\infty} \E( \hat h_t(0)) = 3\pi/2$. The moment bound for $h_t(0)$ (and then for $\hat h_t(0)$ using Cauchy--Schwarz) follows again from
\cref{thm:Schramm} and the inequality $|a+b|^k \le 2^{k-1}(|a|^k + |b|^k)$.

For any other $z \in \D$, we start by proving the moment bound \cref{E:onepointmoment}. We join $\gamma_z(t)$ and $z$ by a hyperbolic geodesic
in $\D$, call the resulting union $\gamma'$, and apply $\psi$ to it. Then the image
becomes a concatenation of an SLE$_2$ curve targeted towards $0$ and another
hyperbolic geodesic. Using \cref{eq:1} (which is deterministic) with $\eps= 0$,
  $  W (\psi(\gamma'), 0) - W( \gamma', z)   =\pi - \arg_{\D;1}(z) $. Since the winding of the hyperbolic geodesics are bounded
by at most $\pi$, and the winding of $\psi(\gamma)$ possesses the required moment
bounds, this proves \cref{E:onepointmoment} in $\D$.

Now using \eqref{eq:1}:
\begin{align}
 \E(W(\psi(\gamma_z[-1,t]),0) \mathbbm{1}_{\cA(t,z)}) & =
\E(W(\gamma_z[-1,t],z) \mathbbm{1}_{\cA(t,z)}) \nonumber\\
& +( \pi - \arg_{\D;1}(z) )\P(\cA(t
,z))+\E( \epsilon(t) \mathbbm{1}_{\cA(t,z)})\label{eq:rewrite}
\end{align}
Since $\psi(\gamma_z[-1,t))$ is an SLE$_2$ curve towards $0$ the left hand side of
\eqref{eq:rewrite} converges to $\pi$. Also from \cref{lem:one_point_ht}, the error
term $|\epsilon(t)| <ce^{-c't} \to 0$ on $\cA(t,z)$ and hence converges to $0$
as $t \to \infty$. Recall also the terms added in the definition of $h_t$ in \eqref{eq:general_D}. Combining all these together with \cref{E:onepointmoment}, we have our result.
\end{proof}

\subsection{Conformal covariance of $k$-point function}\label{sec:conf_kpoint}

In the next lemma, we prove the existence of the limit of the $k$-point
function of the regularised winding field of the continuum UST. However we do
not identify the limit at this point as this requires a separate argument. For
this separate argument we will also need a convergence result of the $k$-point
function given several branches of $\cT$, the continuum UST.

\begin{prop}
 \label{lem:two_pt_conv}
Let $\{z_1,\ldots, z_k, w_1, \ldots, w_{k'}\}$ be a set of points
in $\D$ all of which are distinct.  Then the
following is
true.
\begin{itemize}
\item Both $\lim_{t \to \infty} \E (\prod_{i=1}^k  \hat h_{t}(z_i))$ and
$\lim_{t \to \infty} \E (\prod_{i=1}^k   h_{t}(z_i))$ exist and are equal.
\item Let $A=\{\gamma_{w_1}, \ldots,
\gamma_{w_{k'}}\}$ be a set of branches of $\cT$. Let $\E^A$ denote the
conditional expectation given $A$.  Let $g_A: \D
\setminus A \mapsto \D$ be some conformal map which fixes $1$. Let $\tilde h_t$ be an independent copy of $h_t$ in
$\D$. Then
\begin{multline*}
 \lim_{t \to \infty} \E^A(\prod_{i=1}^k   h_{t}(z_i))
 = \lim_{t \to \infty}
\E^A(\prod_{i=1}^k  \hat h_{t}(z_i)) \\
= \lim_{t \to \infty}
\E\big[\prod_{i=1}^k (\tilde h_t(g_A(z_i))- \arg_{g'_A(\D\setminus A)} (g_A'(z_i)) ) \big]\quad a.s.
\end{multline*}
\end{itemize}
\end{prop}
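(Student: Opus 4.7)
The plan is to prove both parts simultaneously by induction on $k$, using Wilson's algorithm and the pointwise conformal covariance of winding (Lemma~\ref{lem:one_point_ht}) to reduce the conditional $k$-point function to an unconditional one.

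First I would reduce from $h_t$ to $\hat h_t$ in both statements. Since $|h_t - \hat h_t| = |h_t|\mathbbm 1_{\cA(t,z)^c}$, expanding $\prod_i h_t(z_i) - \prod_i \hat h_t(z_i)$ telescopically gives a finite sum of terms, each containing at least one factor of the form $h_t(z_j)\mathbbm 1_{\cA(t,z_j)^c}$. By H\"older applied with the polynomial moment bound $\E[|h_t(z_j)|^{2k}] \le c(k)(1+t^k)$ from \eqref{E:onepointmoment} and the exponential tail $\P(\cA(t,z_j)^c) \le c e^{-c't}$ from \eqref{eq:Atz}, each such term vanishes as $t\to\infty$; the same argument works conditionally on $A$, almost surely.

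Next, for the conditional statement, I condition on $A$ and apply Wilson's algorithm in the continuum (\cref{prop:Wilson_cont}): each $\gamma_{z_i}$ is an SLE$_2$ in $\D\setminus(A\cup\{\gamma_{z_j}:j<i\})$, concatenated with a boundary arc to $1$. Conformal invariance of SLE$_2$, together with $g_A(1)=1$, implies that $g_A(\gamma_{z_i})$ has the law of the branch $\tilde\gamma_{g_A(z_i)}$ of an independent continuum UST in $\D$. Now apply Lemma~\ref{lem:one_point_ht} (or equivalently \cref{eq:conditional_wind1}) pointwise on $\cA(t,z_i)$ to obtain
\[
\hat h_t(z_i) = \tilde h_{t'_i}(g_A(z_i)) - \arg_{g'_A(\D\setminus A)}(g'_A(z_i)) + \epsilon_i(t),
\]
where $t'_i$ is the capacity of $g_A(\gamma_{z_i}[0,t])$ inside $\D$, with $|\epsilon_i(t)| = O(e^{-t/2})$ on $\cA(t,z_i)$, and \eqref{E:changecap} gives the crucial fact that $t'_i - t = O_A(1)$ so that $t'_i\to\infty$ with $t$.

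Expanding the product $\prod_i \hat h_t(z_i)$ using this identity, taking $\E^A$, and again using H\"older with the moment bounds to discard any term containing an $\epsilon_i(t)$ factor, leaves
\[
\lim_{t\to\infty}\E^A\!\left[\prod_i \hat h_t(z_i)\right] = \lim_{t\to\infty} \E\!\left[\prod_i\bigl(\tilde h_{t'_i}(g_A(z_i))-\arg_{g'_A(\D\setminus A)}(g'_A(z_i))\bigr)\right],
\]
provided the right-hand limit exists. Since $t'_i - t$ is bounded in $t$ for fixed $A$ and the right-hand quantities depend only on SLE$_2$ branches targeting $g_A(z_i)$ in $\D$, one may replace the $t'_i$'s by a common parameter $s\to\infty$ (absorbing the bounded shift using the same $L^k$ moment control applied to the telescoping difference across the shift). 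This identifies the limit with the stated one.

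Finally, for existence of the unconditional limit $H(z_1,\dots,z_k)$, I induct on $k$. The base case $k=1$ is \cref{lem:one_point}. For $k\ge 2$, take $k'=1$, $w_1 = z_1$ and apply the conformal covariance step above with $A=\{\gamma_{z_1}\}$; expanding the product $\prod_{i=2}^k(\tilde h_s(g_A(z_i))+c_i(A))$ with $c_i(A):=-\arg_{g'_A(\D\setminus A)}(g'_A(z_i))$ reduces the conditional limit to a linear combination of $(k-1)$-point limits in $\D$, all of which exist by induction. Integrating out $\gamma_{z_1}$ using dominated convergence -- justified by \cref{E:onepointmoment} applied to each factor and the fact that $c_i(A)$ is almost surely finite -- yields the existence of the $k$-point limit and equates the two limits in Part~1.

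The main technical obstacle is controlling the shift $t'_i - t$ and the error $\epsilon_i(t)$ uniformly enough to pass them through the $k$-fold product; this is handled by \eqref{E:changecap} and the $L^k$ moment bounds of \cref{lem:one_point}. A secondary point is justifying the dominated convergence when integrating over $\gamma_{z_1}$, which reduces to the almost-sure finiteness and integrability of $c_i(A)$ together with the moment bounds on $\tilde h_s$.
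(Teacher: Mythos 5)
Your outline correctly identifies several of the paper's ingredients (the reduction from $h_t$ to $\hat h_t$, the pointwise conformal covariance from \cref{lem:one_point_ht}/\cref{lem:pointwise}, and the capacity control from \eqref{E:changecap}), but it is missing the central technical device the paper uses, namely the Cauchy estimate \cref{lem:quantitative_two_pt} and the symmetry argument inside it. This gap manifests in two places in your argument.

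First, your induction on $k$ does not actually establish existence of the unconditional $k$-point limit. Conditioning on $A=\{\gamma_{z_1}\}$ and applying the conformal covariance reduces $\E^A(\prod_{i=2}^k \hat h_t(z_i))$ to a combination of $(k-1)$-point quantities, and this converges almost surely. But the $k$-point function is $\E\bigl[\hat h_t(z_1)\,\E^A(\prod_{i=2}^k \hat h_t(z_i))\bigr]$, and the factor $\hat h_t(z_1)$ does \emph{not} converge as $t\to\infty$: it behaves like a Brownian motion at time $2t$, with variance of order $t$. ``Integrating out $\gamma_{z_1}$'' therefore does not give the $k$-point limit; it gives the $(k-1)$-point limit, and there is no dominated-convergence argument available because the integrand does not converge pointwise. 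The paper instead establishes directly that $t\mapsto \E(\prod_i \hat h_t(z_i))$ is Cauchy, by conditioning on the branches $A_i$ from the \emph{other} points and using the key cancellation $\E\bigl(\tilde h_{t_i'}(z_i)-\tilde h_{t'}(z_i)\bigr)=0$, which holds by the reflection symmetry of the driving Brownian motion (\cref{thm:Schramm}). Without that cancellation, one only controls differences like $\|\tilde h_{t_i'}-\tilde h_{t'}\|_{L^k}$, which for $|t_i'-t'|=O(1)$ is $O(1)$, not $o(1)$.

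Second, the same gap breaks your step of replacing the random capacities $t_i'$ by a common parameter $s$. You claim this follows by ``$L^k$ moment control applied to the telescoping difference across the shift,'' but as just noted, $\tilde h_{t_i'}-\tilde h_s$ has $L^k$ norm of order $1$, not $o(1)$, when the shift is bounded. The paper handles this precisely via \cref{lem:quantitative_two_pt}: it is the \emph{difference of expectations} that vanishes (exponentially in $t$), thanks to the symmetry argument, not the $L^k$ norm. So the identification of the conditional limit with the stated limit also requires that lemma. In short, the right conceptual picture is there, but the proposal would need the Cauchy/symmetry estimate of \cref{lem:quantitative_two_pt} as a separate ingredient; without it, neither the existence of the limit nor the passage from $t_i'$ to $s$ is justified.
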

We call the function defined by the first point of the proposition the $k$-point function and we write it $H$:
\begin{equation}\label{eq:def_H}
H(z_1, \ldots, z_k ) := \lim_{t \to \infty} \E (\prod_{i=1}^k   h_{t}(z_i))
\end{equation}

The technical part of the proof of \cref{lem:two_pt_conv} is
accomplished in the following lemma.
\begin{lemma}
  \label{lem:quantitative_two_pt}
Let $\{z_1,z_2,\ldots, z_k\}$ be a set of points
in $\D$ all of which are distinct. Let $r =
\min_i \dist(z_i ,\partial \D) \wedge \min _{i \neq j} |z_i-z_j|$. Let
$t_1\ge t_2>\ldots \ge t_k>t\ge -10 \log r+1$ such that $t_1<10t_k$. Then there are constants $c, c'$ depending only on $k$ such that
\begin{gather*}
|\E(\prod_{i=1}^k  \hat h_{t_i}(z_i))) - \E(\prod_{i=1}^k  \hat
h_{t}(z_i)) | <ct^ke^{-c't} 
\end{gather*}
The same inequality holds with $h$ instead of $\hat h$.
\end{lemma}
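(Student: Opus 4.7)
The plan is to prove the Cauchy-type estimate by telescoping in the coordinates and controlling each term via Wilson's algorithm together with the conformal change of coordinates in \cref{lem:pointwise} and Schramm's finer estimate (\cref{thm:Schramm}).

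\textbf{Telescoping.} Set $\Delta_i := \hat h_{t_i}(z_i) - \hat h_t(z_i)$ and $F_i := \prod_{j<i}\hat h_{t_j}(z_j)\prod_{j>i}\hat h_t(z_j)$, so that
\[
\prod_{i=1}^k \hat h_{t_i}(z_i) - \prod_{i=1}^k \hat h_t(z_i) = \sum_{i=1}^k F_i \Delta_i.
\]
By \cref{lem:one_point} and $t_1 \le 10 t_k$, one has $\|F_i\|_p \le C(k,p)\, t^{(k-1)/2}$ for every $p \ge 1$. It therefore suffices to bound each term $|\E[F_i \Delta_i]|$.

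\textbf{Conditioning via Wilson's algorithm.} Fix $i$. Using the symmetry in \cref{prop:Wilson_cont}, I would sample first $\{\gamma_{z_j}[0,t_j]\}_{j<i}$ and $\{\gamma_{z_j}[0,t]\}_{j>i}$, then $\gamma_{z_i}[0,t]$, and finally the continuation $\gamma_{z_i}[t,t_i]$; let $\cF_i$ be the $\sigma$-algebra generated by everything sampled before the continuation. Then $F_i$ is $\cF_i$-measurable and, conditionally on $\cF_i$, the continuation is the next segment of a radial SLE$_2$ in $D_i := \D \setminus (\text{previously sampled arcs})$ targeted at $z_i$. Introduce the good event $B$ that, for each $j$, $|\gamma_{z_j}(s) - z_j| < e^{-s/2}R(z_j,\D)$ for $s=t$ and $s=t_j$; by \cref{eq:Atz} and a union bound, $\P(B^c) \le Ck e^{-c't}$. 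Off $B$, Cauchy--Schwarz together with \cref{E:onepointmoment} gives a contribution of order $t^{k/2}e^{-c't/2}$, which is absorbed into $Ct^k e^{-c't}$.

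\textbf{Conformal reduction and conditional mean.} Let $g_i : D_i \to \D$ be a conformal map with $g_i(1)=1$. Applying \cref{lem:pointwise} at $s=t$ and $s=t_i$ and subtracting (so that the $\cF_i$-measurable conformal corrections cancel), on $B$ we obtain
\[
\Delta_i = W\bigl(g_i(\gamma_{z_i}[t,t_i]),\, g_i(z_i)\bigr) + O\!\bigl(e^{-t/2}/R(z_i,D_i)\bigr).
\]
A further M\"obius transformation sending $g_i(z_i)$ to $0$, together with \cref{lem:one_point_ht}, identifies the leading term with the winding increment of a radial SLE$_2$ in $\D$ towards $0$ between two capacities of order $t$. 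By \cref{thm:Schramm}, this increment decomposes as $(B(2s_2)-B(2s_1)) + (y_{s_2} - y_{s_1})$; the Brownian part has zero conditional mean, and the $y$ part has conditional mean bounded by $Ce^{-c't}$ (obtained from the uniform exponential tail of $y_s$ combined with the exponential mixing of the driving Brownian motion on the unit circle in the radial Loewner equation). Hence $|\E[\Delta_i\mid\cF_i]|\mathbbm{1}_B \le Z_i e^{-c't}$ where $Z_i$ is $\cF_i$-measurable with $\|Z_i\|_2$ at most polynomial in $t$ after controlling $R(z_i,D_i)$ from below on a further event of probability $1-e^{-c''t}$.

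\textbf{Conclusion and main obstacle.} Cauchy--Schwarz then gives $|\E[F_i \Delta_i \mathbbm{1}_B]| \le \|F_i\|_2\, \|\E[\Delta_i\mid\cF_i]\mathbbm{1}_B\|_2 \le Ct^{k/2} e^{-c't}$, and summing over $i$ with the off-$B$ contribution yields the stated bound. The argument for the unhatted $h_{t_i}$ is identical, since $\hat h_s = h_s$ on $B$ and the moment bound \cref{E:onepointmoment} applies to $h_s$ as well. The main technical obstacle is the exponential rate in the conditional mean of $y_{s_2}-y_{s_1}$: Schramm's uniform exponential tail gives only bounded moments, so exponential convergence of $\E[y_s]$ must be extracted from the ergodic dynamics of radial SLE$_2$ (for instance via a coupling of the driving Brownian motions on $S^1$, which mixes exponentially fast).
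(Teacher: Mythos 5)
Your telescoping and the idea of conditioning via Wilson's algorithm and applying the conformal change of coordinates of \cref{lem:pointwise} is the same skeleton the paper uses, but there is a genuine gap in the way you close the argument, and it comes from conditioning on too much. You put $\gamma_{z_i}[0,t]$ into $\cF_i$ and reduce to controlling the conditional mean of the winding increment of the \emph{continuation}. After mapping out, that continuation is a radial SLE$_2$ started from a \emph{fixed, conditioned} boundary point (the image of the tip), and its winding increment has no reason to have small conditional mean; this is why you are driven to a quantitative ergodicity statement about $\E[y_s]$, which you yourself flag as unresolved. The paper sidesteps this entirely: it conditions only on $A_i = \{\gamma_{z_j}[0,\infty): j\ne i\}$, the \emph{full} other branches (this is also what makes the Markov property clean — conditioning on truncated arcs $\gamma_{z_j}[0,t_j]$ does not make the continuation of $\gamma_{z_i}$ an SLE$_2$ in a slit domain). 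Given $A_i$, one maps out $A_i$, and then $\hat h_{t_i}(z_i)-\hat h_t(z_i)$ becomes the winding increment of a radial SLE$_2$ in $\D$ started from a \emph{uniform} boundary point. By the conjugation symmetry $z\mapsto\bar z$ of radial SLE$_2$, this has \emph{exactly} zero mean — no decomposition into $B + y$ and no mixing argument is needed (in fact $\E[y_s]=0$ for all $s$ by the same symmetry, so the unconditional version of your "obstacle" does not exist either; the real issue is that conditioning on $\gamma_{z_i}[0,t]$ kills the symmetry).

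Two smaller remarks. First, the good event you should restrict to is not just that the tips $\gamma_{z_j}(t_j)$ are close to $z_j$: the distortion bounds from \cref{lem:pointwise} and \eqref{E:changecap} degenerate when $\gamma_{z_j}$ comes close to $z_i$, so one needs a separate case analysis on $\delta_{\min}=\min_i\dist(z_i, \cup_{j\ne i}\gamma_{z_j})$ — this is the paper's Case 1/Case 2 split, and the low-probability case is disposed of via \cref{cor:SLE_distance}. Second, the paper first reduces the stated bound to the single-step estimate $|\E\prod\hat h_{t_i}(z_i)-\E\prod\hat h_t(z_i)|\le ct_1^{k/2}e^{-c't}$ and then sums over a geometric sequence of intervals $[t2^{i-1},t2^i]$; you would need an analogous reduction to control the factor $t_1^{(k-1)/2}$ from your $\|F_i\|_p$ bound, since $t_1$ can be as large as $10t_k$ but not bounded in terms of $t$ alone.
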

Let us comment on why we need to go through the trouble of considering multiple times in \cref{lem:quantitative_two_pt}. A first issue is that when we apply a conformal map, the conformal radius changes differently depending on the point. To get any control both before and after applying the map we therefore need to allow for different $t_i$'s (this is for example the case in equation \eqref{eq:arg_cancel}).
\begin{proof}
We first claim that it is enough to prove that for $t_i$'s as above,
\begin{equation}
  |\E(\prod_{i} \hat h_{t_i}(z_i)) - \E(\prod_{i} \hat h_{t}(z_i))|
  \le ct_1^{k/2}e^{-c't}. \label{eq:bootstrap}
\end{equation}
This clearly completes the
proof since we can break up the interval $[t,t_k]$
into $\cup_{i=1}^J[t{2^{i-1}},t{2^i}]$ where $t{2^{J-1}}\le t_k< t{2^{J}}$.
Using the bound \eqref{eq:bootstrap} for each such interval and using
$t_1 <10t_k$,
\begin{multline}
  \label{eq:sum_bk}
  |\E(\prod_{i} \hat h_{t_i}(z_i)) - \E(\prod_{i} \hat h_{t}(z_i))|
\le c\sum_{i=1}^\infty
(t{2^{i}})^{k/2}e^{-c't{2^{i-1}}} \le ct^{k/2} e^{-c't}
\end{multline}
from which  \cref{lem:quantitative_two_pt} follows.\ The bound for the term involving $h_t$ follows from that of
$\hat h_t$ using \cref{E:onepointmoment}, H\"older's inequality (generalised for
$k$ terms) and the exponential bound on the probability of events $\cA(t_i,z_i)$.

To prove \eqref{eq:bootstrap}, the idea is to consider several
cases depending on how close $\gamma_{z_i}$ gets to the other
points. If it gets very close, the distortion of the conformal map becomes more pronounced and the
estimate in \cref{lem:pointwise} carries large errors. But $\gamma_{z_i}$ getting
close to $z_j$ for some $j \neq i$ is unlikely by \cref{cor:SLE_distance} and comes at a price. So there is a tradeoff between
these two situations. Let $d_i = \inf_{j \neq i}
\dist(z_i, \gamma_{z_j}[0, \infty)) \wedge r$ and $d_{\min} : = \min_i(d_i)$.

\paragraph{Case 1: $-\log (d_{\min}) > t/4 $}
By \cref{cor:SLE_distance} and a union bound, $\P(-\log d_{\min} > t/4 ) \le
ck(\frac{e^{-t/4}}{r})^{c_0} $ for universal constants $c,c_0$. Using the
fact
that $t>-10\log r+1$, we see that $\P(-\log d _{\min}> t/4 ) \le
cke^{-c't}$ for some $c'>0$. Using the one-point moment bounds \eqref{E:onepointmoment} and H\"older's (generalised) inequality,
\begin{equation}
 |\E(\prod_{i} \hat h_{t_i}(z_i)) - \E(\prod_{i} \hat h_{t}(z_i))\mathbbm{1}_{-\log (d_{\min}) > t/4 } )|
< ct_1^{k/2}e^{-c't}\label{eq:case1}
\end{equation}
for some positive universal constants $c,c'$ since $t >1$.

\paragraph{Case 2: $-\log( d_{\min}) \le t/4 $}
Let $A_{i}:= \{\gamma_{z_j} [ 0 , \infty): j \neq i\}$. First we observe that it
is enough to prove
\begin{equation}
  \label{eq:28}
  |\E(h_{t_i}(z_i) - h_t(z_i)|A_{i})\mathbbm{1}_{-\log (d_{\min})<t/4 }|
\le
ct_i e^{-c't}
\end{equation}
since we can use the decomposition
\begin{multline}
  \label{eq:breakup}
|\E(\prod_{i} \hat h_{t_i}(z_i)) - \E(\prod_{i} \hat h_{t}(z_i))| \le\\
\sum_{i=1}^k |\E\Big( \E(\hat h_{t_i}(z_i) - \hat h_t(z_i) |A_{i}
)\hat h_t(z_1)\ldots \hat h_t(z_{i-1}) \hat h_{t_{i+1}}(z_{i+1})\ldots
\hat h_{t_n}(z_k)   \Big)   |
\end{multline}
and then use H\"older's inequality, \eqref{eq:28} and the one-point moment bounds \eqref{E:onepointmoment} to obtain the required
bound.

We now concentrate on the proof of \eqref{eq:28}.
We wish to use
\cref{lem:pointwise} and map out $A_{i}$ by a conformal map $\ph$ mapping $z_i$ to $0$ and $1$ to $1$ and record
the
change in winding of $\gamma_{z_i}$.
By \eqref{E:changecap},
$$
\frac{e^{-t_i}}{4} \le R(\ph(z_i), \D \setminus \ph(\gamma_{z_i} [0, t_i] )) \le e^{- (t_i- t/4 )}.
$$
since $-\log( d_{\min}) \le t/4$.
Therefore, using
\cref{lem:pointwise} for an independent copy $\tilde h$ of $h$ (note that the $\arg_{\varphi' (\D \setminus A_i)}$ term cancels), we have
\begin{equation}
 \E\big((\hat h_{t_i}(z_i) - \hat h_t(z_i))\mathbbm{1}_{\cA(t,z_i)} |A_{z_i}
\big) = \E\Big(\big(\tilde h_{t_i'}(z_i) - \tilde h_{t'}(z_i)
+ \epsilon(t_i) - \epsilon(t) \big)\mathbbm{1}_{\cA(t,z_i)}\Big)
\end{equation}
where $|t_i' -t_i|<t_i/2$ and $|t' - t| <t/2$ and $|\epsilon(t_i)|
\vee |\epsilon(t)| \le e^{-ct}$ on $\cA(t,z_i)$. Now notice that by symmetry,
$\E\Big(\big(\tilde h_{t_i'}(z_i) - \tilde h_{t'}(z_i)
\big)\Big) = 0$. We conclude using Cauchy--Schwarz, the moment bound
\eqref{E:onepointmoment} and the bound on the probability on $\cA(t,z_i)^c$.
\end{proof}

We also need the
following estimate which says that the $k$-point function blows up at most like a power of $\log(r)$
as the points come close.

\begin{lemma}[Logarithmic divergence]\label{lem:log_div}
For any $k \ge 1$ and any  $k$ distinct points $z_1,z_2,\ldots,z_k \in \D$ and
$t>0$,
$$|\E(\prod_{i=1}^k h_t(z_i))| \le  c(1+ \log^{k}(1/r)) \quad ; \quad
|\E(\prod_{i=1}^k \hat h_t(z_i))| \le c(1+ \log^{k}(1/r))$$
where $r =
\min_i \dist(z_i ,\partial \D) \wedge \min _{i \neq j}
|z_i-z_j| $ and $c=c(k)>0$ is a constant.
\end{lemma}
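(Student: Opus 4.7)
The plan is to combine the one-point moment bound from \cref{lem:one_point} with the quantitative capacity-comparison estimate of \cref{lem:quantitative_two_pt} via a simple thresholding argument. I would set $t_0 := -10 \log r + 1$, which is precisely the threshold appearing in the hypothesis of \cref{lem:quantitative_two_pt}.

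First, for $t \le t_0$, I would apply the generalised Hölder inequality together with the one-point bound $\E |h_t(z)|^k \le c(k)(1+t^{k/2})$ from \cref{E:onepointmoment} to directly obtain
\begin{equation*}
\Bigl|\E\Bigl(\prod_{i=1}^k h_t(z_i)\Bigr)\Bigr| \le \prod_{i=1}^k \bigl(\E|h_t(z_i)|^k\bigr)^{1/k} \le c(k)(1+t^{k/2}) \le c(k)\bigl(1+\log^{k/2}(1/r)\bigr),
\end{equation*}
which is certainly bounded by the desired $c(1+\log^k(1/r))$.

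For $t > t_0$, I would compare $\E(\prod_i h_t(z_i))$ against $\E(\prod_i h_{t_0}(z_i))$ by invoking \cref{lem:quantitative_two_pt} with the constant capacity choice $t_1 = \cdots = t_k = t$. The constraint $t_1 < 10 t_k$ then becomes trivially true, so the lemma delivers
\begin{equation*}
\Bigl|\E\Bigl(\prod_i h_t(z_i)\Bigr) - \E\Bigl(\prod_i h_{t_0}(z_i)\Bigr)\Bigr| \le c\, t_0^k e^{-c' t_0},
\end{equation*}
which is bounded uniformly in $r$ because $t_0 \sim \log(1/r)$ and the exponential decay dominates the polynomial prefactor. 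Combining this with the bound at $t_0$ proven in the first step yields the full result.

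The same two-step argument applies verbatim to the $\hat h_t$ version, using the companion one-point bound in \cref{E:onepointmoment} and the $\hat h$ part of \cref{lem:quantitative_two_pt}. I do not anticipate any real obstacle: once the key observation is in hand — namely that the $k$-point functions are effectively controlled by their values at the threshold capacity $t_0 \sim \log(1/r)$, and stabilise at an exponential rate thereafter — the proof is essentially immediate from the earlier lemmas.
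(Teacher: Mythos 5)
Your proof is correct and follows essentially the same route as the paper: both set the threshold $t_0 = -10\log r + 1$, handle $t \le t_0$ via the generalised Hölder inequality and the one-point moment bound \cref{E:onepointmoment}, and handle $t > t_0$ by comparing with $t_0$ via \cref{lem:quantitative_two_pt}, where the decaying factor $t_0^k e^{-c' t_0}$ is uniformly bounded. The only cosmetic difference is that you spell out the Hölder step explicitly, whereas the paper states the resulting bound $\E(\prod_i |h_{t'}(z_i)|) \le C(1+(t')^{k/2})$ directly.
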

\begin{proof}
We only check the first of these inequalities as the proof of the other is identical.  Let $t = -10 \log r+1$. By \cref{lem:one_point}, for $t' \le t$, we obtain
$\E(\prod_{i=1}^k| h_{t'}(z_i)|) \le C(1+ (t')^{k/2}) \le C( 1+ t^{k/2})$ which is  what we wanted for $t'\le t$. On the other hand if $t'\ge t$, by
\cref{lem:quantitative_two_pt}, $|\E(\prod_{i=1}^k  h_{t'}(z_i)
-\prod_{i=1}^k  h_t(z_i))|<ct^ke^{-ct}$.  Combining with the result for $t' =t$ we obtained the desired bound also for $t'\ge t$.
%
%
\end{proof}

\begin{proof}[Proof of \cref{lem:two_pt_conv}]
   Notice that \cref{lem:quantitative_two_pt}
implies that the quantity $\E(\prod_{i=1}^k  h_{t}(z_i))$ (resp. $\E(\prod_{i=1}^k  \hat h_{t}(z_i))$) is a Cauchy
sequence and hence converges. Moreover,
  \begin{equation}
    \label{eq:25}
   | \E (\prod_{i=1}^k  h_{t}(z_i) - \prod_{i=1}^k  \hat h_{t}(z_i))|
   \le \E(\prod_{i=1}^k  |h_{t}(z_i)| \mathbbm{1}_{\cup_i \cA(t,z_i)^c})
   \le c(1+ t^k)e^{-c't} \to 0
  \end{equation}
Hence the limits are the same, proving the first point.

To simplify notations, we write $g$ in place of $g_A$ and $\arg$ in place of
$\arg_{g_A'(\D\setminus A)}$.
Let $r =
\min_i \dist (z_i, A \cup \partial \D ) $ and take $t > -11\log r +1$. From
\cref{lem:pointwise}, we see (using the obvious domain Markov property and conformal invariance of the UST) that given $A$, we have the equality in distribution
\begin{equation}\label{eq:cond_0}
h_t(z) = \tilde h_{t_i} (g(z)) - \arg g'(z_i) + \epsilon_i(t),
\end{equation}
where
$$t_i = -\log R \big( g(z_i), \D\setminus g(\gamma_{z_i} [0,t] ) \big)  + \log R\big( g(z_i), \D \big).
$$
Hence
\begin{equation}
 \E^A(\prod_{i=1}^k\hat h_{t}(z_i)) = \E(\prod_{i=1}^k (\tilde h_{t_i}(g(z_i))
- \arg(g'(z_i)) +\epsilon_i(t)
)\mathbbm{1}_{\cA(t,z_i)})\label{eq:cond_1}
\end{equation}
By \cref{E:changecap},
$
|t_i - t| \le \log 4 +  \log (1/r).
$
Therefore almost surely,
$9t/10\le t_i \le 11t/10 $ for all $i$ from the choice of
$t$. Thus $t_i\to \infty$ as $t\to \infty$. Further
$|\epsilon_i(t)| = O( e^{-t/2}/r ) = O( e^{-t/2+t/10}) \to 0$ for all $i$
on the event $\cA(t,z_i)$ from \cref{lem:pointwise}. Using all this
information, Cauchy--Schwarz,
\cref{lem:quantitative_two_pt,lem:log_div}, we obtain
\begin{multline}
 | \E(\prod_{i=1}^k (\tilde h_{t_i}(g(z_i))- \arg(g'(z_i)) +\epsilon_i(t)
)\mathbbm{1}_{\cA(t,z_i)}) \\- \E(\prod_{i=1}^k (\tilde h_{9t/10}(g(z_i)) -
\arg(g'(z_i))) | \le ct^ke^{-c't}\nonumber
\end{multline}
almost surely given $A$. The second item of the proposition now follows from the first item.
\end{proof}

To prepare for the proof of convergence in the Sobolev space
$H^{-1-\eta}$ for all $\eta >0$ we need the following convergence of $h_t$ integrated
against test functions.
\begin{lemma}
  \label{lem:multi_point_conv}
Let $\{f_i\}_{1 \le i \le n}$ be smooth compactly supported functions in $\D$. Then for any sequence of integers $k_1,\ldots,k_n$,
\begin{multline*}
\lim_{t \to \infty}\E\prod_{i=1}^n( \int_\D h_t(z)f_i(z)dz)^{k_i}
=
\int_{\D^{\sum_{i=1}^nk_i}} H(z_{11},\ldots,z_{nk_n})\prod_{i=1}^n\prod_{j=1}^{k_i}f_i(z_{ij}
) dz_{ij}
\end{multline*}
where $H$ is as in \cref{lem:two_pt_conv}.
\end{lemma}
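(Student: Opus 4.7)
The plan is to reduce the statement to a direct application of Fubini plus dominated convergence, using the pointwise convergence of the $k$-point function proved in \cref{lem:two_pt_conv} and the logarithmic divergence bound from \cref{lem:log_div}.

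First I would expand the product of integrals. Writing $K = \sum_{i=1}^n k_i$ and labelling the integration variables as $(z_{ij})_{1\le i\le n,\, 1 \le j \le k_i}$, the $f_i$'s are bounded with compact support in $\D$, and Fubini's theorem gives, for each fixed $t$,
\[
\E\prod_{i=1}^n\Bigl( \int_\D h_t(z)f_i(z)\,dz\Bigr)^{k_i}
= \int_{\D^{K}} \E\Bigl[\prod_{i,j} h_t(z_{ij})\Bigr]\prod_{i,j} f_i(z_{ij})\,dz_{ij}.
\]
The interchange is justified because $\int_{\D^K} \E|\prod_{i,j} h_t(z_{ij})|\prod|f_i(z_{ij})|\,dz_{ij}$ is finite for each fixed $t$: Cauchy--Schwarz followed by the one-point moment bound \eqref{E:onepointmoment} gives $\E|\prod h_t(z_{ij})| \le c(K)(1 + t^{K/2})$, which is integrable against the compactly supported $\prod f_i$.

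Next I would identify the pointwise limit. For almost every $K$-tuple $(z_{ij})$ all coordinates are distinct, and then \cref{lem:two_pt_conv} yields
\[
\E\Bigl[\prod_{i,j} h_t(z_{ij})\Bigr] \xrightarrow[t \to \infty]{} H(z_{11}, \ldots, z_{nk_n}).
\]
The remaining task is to justify passing the limit under the integral sign by dominated convergence. This is where \cref{lem:log_div} is used: for distinct $(z_{ij})$, setting $r = \min_i \dist(z_{ij}, \partial \D) \wedge \min_{(i,j) \ne (i',j')}|z_{ij} - z_{i'j'}|$, one has $|\E[\prod h_t(z_{ij})]| \le c(1 + \log^K(1/r))$ uniformly in $t$.

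The domination then follows from the integrability of this logarithmic envelope against the bounded compactly supported weights $\prod f_i(z_{ij})$. This is the only mildly technical point: one writes $\log^K(1/r) \le C_K \sum_{(i,j)\ne (i',j')}(\log^+|z_{ij}-z_{i'j'}|^{-1})^K + C_K \sum_{i,j}(\log^+\dist(z_{ij},\partial\D)^{-1})^K$, and each summand is integrable on $\D^K$ since logarithmic singularities of any fixed power are integrable in two real dimensions (integrating out the variable closest to the singularity first, all other variables being confined to the bounded support of the $f_i$'s). Dominated convergence then concludes the proof.

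The main (and essentially only) obstacle is the uniform-in-$t$ bound with integrable envelope, which is exactly what \cref{lem:log_div} was set up to provide; everything else is bookkeeping with Fubini and the already-established convergence of $k$-point functions.
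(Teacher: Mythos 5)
Your proposal follows the same route as the paper: Fubini (justified by the moment bounds in \cref{lem:one_point}), pointwise convergence from \cref{lem:two_pt_conv}, and dominated convergence via the logarithmic envelope of \cref{lem:log_div}. The extra detail you give on splitting the logarithmic singularity and integrating out variables one at a time is correct and simply fills in a step the paper leaves implicit.
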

\begin{proof}
  Straightforward expansion and Fubini's theorem yield
  \begin{equation}
    \label{eq:19}
    \E\prod_{i=1}^n( \int_\D  h_t(z)f_i(z)dz)^{k_i}
    = \int_{\D^{\sum_{i=1}^nk_i}}
    \E(\prod_{i=1}^n\prod_{j=1}^{k_i}
h_{t}(z_{ij}))\prod_{i=1}^n\prod_{j=1}^{k_i}f_i(z_{ij})dz_{ij}
  \end{equation}
We can apply Fubini because the term inside the integral is integrable from
the moment bounds in \cref{lem:one_point}. We want to take the limit as $t \to
\infty$ on both sides of \eqref{eq:19} and apply dominated convergence theorem
and \cref{lem:two_pt_conv} to complete the proof.
To justify the application of dominated convergence theorem note
that by \cref{lem:log_div}, $\E|\prod_{i=1}^n\prod_{j=1}^{k_i}h_{t}(z_{ij}))| \le
\log^{\sum_{i=1}^nk_i/2}(r)$ where $r  = \min_{(i,j),(i',j')}|z_{ij} -
z_{i'j'}| \wedge \min_{ij}|z_{ij} - \partial \D|$,which is integrable. Further the functions $f_i$'s are
uniformly bounded.
\end{proof}

\subsection{Identifying winding as the GFF: imaginary geometry}\label{sec:IG}

At this stage, we have proven that $h_t$ converges as $t$ goes to infinity, even if we are yet to state a precise meaning for this convergence. However we do not have any information about the limit law and in particular the $k$-point function $H$ is unknown. In this section we will identify the limit with the GFF, using the imaginary geometry coupling.

Recall from \cref{sec:UST} that imaginary geometry provides a coupling between a UST and $\hg$, such that conditionally on some branches $\gamma_i$ of the UST, $\hg$ is a GFF in $\D \setminus \cup \gamma_i$, plus an argument term. Note that this argument term
is exactly the same as the one for the conditional law of the regularised winding $h_t$ (see \cref{lem:two_pt_conv}). The key idea will be to say that if we take a large but finite number of branches, then in $\D \setminus \cup \gamma_i$ all points are close to the boundary and therefore both $\hg$ and $h_t$ have a small conditional variance. The means are essentially the argument terms so they match up to small errors. This will show that $\hg$ and $h_t$ are close in $L^2$, hence identifying the limit.

Note that the only non-trivial fact about imaginary geometry that we need to use is the existence of a field $\hg$ with such a conditional law.

\medskip We first need the fact that the centred two-point function $G(x,y)$ (defined below) is small when one of the points, say $x$, is near
the boundary. For this we start by a deterministic lemma about the argument of conformal maps that remove a small set.

\begin{lemma}[distortion of argument]
 \label{lem:distortion2}
 Let $K$ be a closed subset of $\bar \D$ such that $H = \D \setminus K$ is simply connected (i.e. $K$ is a hull).
 Further assume that the diameter of $K$
is smaller than some $\delta < 1/2$ and $1 \notin \cup_{x \in K} B(x,\delta^{1/2})$. Let
$\tilde g$ denote the conformal map sending $H$ to $\D$ with $\tilde g(0) = 0$ and $\tilde g(1) = 1$. Then
$$|\arg_{\tilde g'  (H)}(\tilde
g'(0)) - \arg_{\tilde g'(H)} (\tilde g'(1))|<C\delta^{1/2},
$$
 where $C$ is a universal constant. Here
$\arg_{\tilde g'  (H)} (\cdot)$ is the argument in $\tilde g'  (H)$ (which does not contain 0), defined up to a global unimportant additive constant.
\end{lemma}
The proof is given in the supplementary, Lemma B.3.

\medskip
We define
\begin{equation}\label{E:G}
G(z_1,z_2,\ldots,z_k) = \lim_{t \to \infty}\E(\prod _{i=1}^k(
 h_t(z_i) - \E( h_t(z_i) ) ))
\end{equation}
 to be the $k$-point covariance function which exists by
\cref{lem:two_pt_conv}. Using \cref{lem:distortion2} we can show that the two-point function is small close to the boundary (i.e., the field has Dirichlet boundary conditions):

\begin{lemma}\label{lem:G0}
For all $|z| \ge 3/4$ and $t_2\ge t_1 >  -10 \log \dist(z,\partial \D) +1$ such that $t_2<10t_1$,
\[
\abs{ \E\Big(h_{t_1}(z)h_{t_2}(0) - \E(h_{t_1}(z))\E(h_{t_2}(0))\Big)} \le  c \dist(z, \partial \D)^{c'}
\]
In particular, as $z \to \partial \D$, $G(0,z) \to 0$.
\end{lemma}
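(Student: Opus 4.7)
The plan is to exploit the fact that when $\delta := \dist(z,\partial\D)$ is small, the branch $\gamma_z[-1,t_1]$ is a small hull attached to the boundary near $z$ and hence far from both $0$ and the marked point $1$. By the tower property and Cauchy--Schwarz,
\[
|\cov(h_{t_1}(z),h_{t_2}(0))| \le \|h_{t_1}(z)\|_2 \cdot \bigl\|\E^{\gamma_z[-1,t_1]}[h_{t_2}(0)]-\E[h_{t_2}(0)]\bigr\|_2,
\]
and the first factor is $O(t_1^{1/2})$ by \eqref{E:onepointmoment}, so the task reduces to controlling the inner conditional difference.

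I would first introduce a favourable event $\cA$ on which $\gamma_z[-1,t_1]\subset B(z,\delta^{1/2})$, so that $K:=\gamma_z[-1,t_1]$ is a hull of diameter $\le 2\delta^{1/2}$ in $\D$. By \cref{lem:one_point_ht} (applied with an appropriate choice of $s$) combined with the assumption $t_1 \ge -10\log\delta+1$, one has $\P(\cA^c)\le c\delta^{c'}$. The contribution of $\cA^c$ is handled by Cauchy--Schwarz and the moment bound \eqref{E:onepointmoment}. On $\cA$ (and using $|z-1|\ge\delta$ together with a slight refinement of the radius when $z$ is close to $1$) the hull is far from $1$ in the sense of \cref{lem:distortion2}, so that if $g:D'\to\D$ denotes the conformal map fixing $1$ with $D':=\D\setminus K$, then $|\arg_{g'(D')}g'(0)|\le c\delta^{c''}$.

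Conditionally on $\gamma_z[-1,t_1]$, Wilson's algorithm identifies $\gamma_0$ with an SLE$_2$ in $D'$ aimed at $0$, and so $g(\gamma_0)$ is an SLE$_2$ in $\D$ aimed at $g(0)$. Applying \cref{lem:pointwise} to $\gamma_0$ yields
\[
h_{t_2}(0) \stackrel{d}{=} \tilde h_{t_2'}(g(0)) - \arg_{g'(D')}g'(0) + \epsilon,
\]
where $\tilde h$ is an independent winding field, $|t_2-t_2'|=O(\log\delta^{-1})$, and $|\epsilon|\le ce^{-ct_2}$ on $\cA(t_2,0)$. Taking expectation and using standard Koebe distortion ($|g(0)|\le c\delta^{1/2}$) together with \cref{lem:one_point} (which gives $\E[h_t(w)]=3\pi/2+2\arg_{1-\D}(1-w)+O(e^{-ct})$ uniformly on compact subsets of $\D$), the continuity of $w\mapsto \arg_{1-\D}(1-w)$ at $0$ produces $|\E[\tilde h_{t_2'}(g(0))]-\E[h_{t_2}(0)]|\le c\delta^{1/2}$. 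Combining these bounds on $\cA$ and the probability estimate on $\cA^c$ yields the claimed polynomial decay.

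The main obstacle will be bookkeeping: coordinating the changes of capacity $t_2\leftrightarrow t_2'$ produced by the removal of $K$, the perturbation of the target point $0\leftrightarrow g(0)$, and the correction $\arg_{g'(D')}g'(0)$ all at the same time, so that each is controlled by a positive power of $\delta$ independently of $t_1,t_2$ (given the hypothesis $t_i\gtrsim \log\delta^{-1}$). Once \cref{lem:distortion2} is in hand, the key input is really that the exceptional event $\cA^c$ carries a polynomial-in-$\delta$ probability, which is precisely the role of the assumption $t_1>-10\log\delta+1$.
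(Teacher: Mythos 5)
Your overall structure follows the paper's own argument: restrict to a good event on which $\gamma_z$ remains in a small ball around $z$, condition on $\gamma_z$ and apply \cref{lem:pointwise} to re-express $h_{t_2}(0)$ in terms of an independent winding $\tilde h$ plus a correction term $\arg_{g'(D')}g'(0)$, and then use \cref{lem:distortion2} to show the correction is polynomially small in $\delta$. The Cauchy--Schwarz reorganisation up front is harmless and buys nothing over the paper's direct computation of $\E(h_t(z)h_t(0))$, since in the end you still need precisely the same conditional analysis.

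However, there is a genuine gap in your control of $\P(\cA^c)$. You invoke \cref{lem:one_point_ht} to conclude that $\gamma_z[0,t_1]\subset B(z,\delta^{1/2})$ with probability $1-c\delta^{c'}$, but \cref{lem:one_point_ht} (via \eqref{eq:tip}) only controls the \emph{tip} $\gamma_z(t_1)$, not the full trajectory. A radial SLE$_2$ branch starting at $z^*\in\partial\D$ near $z$ and ending near $z$ after large capacity can still make a long excursion far from $z$ at intermediate times; the conformal radius decay alone does not prevent this. The ingredient that controls the entire trajectory is the second statement of \cref{cor:SLE_distance}, which gives $\P(\gamma_z\subset B(z,\delta^{1/2}))\ge 1-c(\delta/\delta^{1/2})^{c'}$ by a direct application of the crossing estimates (\cref{lem:SRW_hit}) — and this is what the paper uses to define $\cG$. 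Without it, you cannot conclude that $K=\gamma_z[0,t_1]$ has small diameter, and \cref{lem:distortion2} does not apply. A further, lesser issue: you should take $g$ to fix both $0$ and $1$ (as the paper does), which makes $g(0)=0$ and removes the need for any Koebe estimate on $|g(0)|$; with $g$ fixing only $1$ the map is not pinned down and $g(0)$ need not be small. Once you replace the citation for $\P(\cA^c)$ and normalise $g$ correctly, the rest of the argument goes through.
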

\begin{proof}
Let $r =  \dist(z,\partial \D) = 1- |z|$. Set $t = -10 \log r +1$.
 By
\cref{lem:quantitative_two_pt},
\begin{gather*}
|\E\big(h_{t_1}(z)h_{t_2}(0) - h_t(z)h_t(0)  \big)| \le cte^{-c't} ,\\
|\E h_{t_1}(z)\E h_{t_2}(0) - \E h_t(z)\E h_t(0)   | \le cte^{-c't} ,
\end{gather*}
and observe that $ct e^{- c't} \le c r^{c'}$.  Let us define the event $\cG:=
\cA(t,z) \cap \{\gamma_z\subset
B(z,\sqrt{r})\}$ where here and in the rest of the proof by $\gamma_z$ we mean $\gamma_z[0,\infty)$. From the exponential bound on the probability of
$\cA(t,z)$, $\cA(t,0)$ and \cref{cor:SLE_distance}, we have
\begin{equation}
 \P(\cG \cap \cA(t,0)) \ge 1-ce^{-c't}\label{eq:g}.
\end{equation}
By
\cref{lem:one_point} and Cauchy--Schwarz, we see that
\begin{equation}
 \label{eq:target}
| \E (h_t(z)h_t(0)) -  \E (h_t(z)h_t(0)\mathbbm{1}_{\cG,\cA(t,0)}) | \le cte^{-c't}.
\end{equation}
 Let $g :\D
\setminus \gamma_z \mapsto \D$ be a conformal map fixing $0$ and $1$.
Then from \cref{lem:pointwise}, we have for some independent copy $\tilde
h_t(0)$ of $h_t(0)$,
\begin{equation}
 \E\Big(h_t(0)\mathbbm{1}_{\cA(t,0)} \Big| \gamma_z \Big)\mathbbm{1}_\cG = \E\left(\big(\tilde
h_s(0) - \arg_{g'(\D \setminus \gamma_z)}(g'(0)) + \epsilon(t) \big)\mathbbm{1}_{\cA(t,0)}
\right)\mathbbm{1}_{\cG}
\end{equation}
where $|\epsilon(t)|<ce^{-c't}$ on $\cA(t,0)$  and $t+\log(|z|-\sqrt{r})\le
s \le t+\log 4$ (as in \cref{E:changecap}), and where $\arg_{g'(\D \setminus \gamma_z)}$ is chosen as in \cref{lem:pointwise}, i.e., $\arg_{g'(\D\setminus \gamma_z)} (g'(1)) = 0$. Note also that $|z| - \sqrt{r}
\ge 1 - 2\sqrt{r}>0$. Thus
we obtain using \cref{eq:target},
\begin{multline}
 \E(h_t(z)h_t(0) ) =   O( e^{-ct}) + \\
 \E \Big( h_t(z)\E \Big( (\tilde h_s(0) -
\arg_{g' ( \D \setminus \gamma_z)}(g'(0)) +
\epsilon(t) )\mathbbm{1}_{\cA(t,0)}  \Big| \gamma_z \Big)
\mathbbm{1}_{\cG} \Big)
\end{multline}
We now expand the terms in the right hand side and treat each of them separately. Observe that while $\tilde h$ is independent of $h$, $s$ is still measurable with respect to $\gamma_z$. Hence by symmetry, $\E( \tilde h_s(0) | \gamma_z) = H(0)$ a.s. and hence
$$
\E\big ( h_t(z) \E( \tilde h_s(0) | \gamma_z) \big ) =  H(0) \E( h_t(z)) \to H(0) H(z)
$$
as $t \to \infty$. (In fact a symmetry argument holds here as well and there is no need to let $t \to \infty$).

Regarding the second term, we claim that
$\E (h_t(z)\arg_{g' ( \D \setminus \gamma_z)}(g'(0))\mathbbm{1}_{\cG})$
converges to $0$. Indeed, this follows from the distortion estimate on the
argument we did in
\cref{lem:distortion2} and the fact that on $\cG$, $\diam(\gamma_z(t)) <
\sqrt{r}$. Hence by Cauchy--Schwarz we conclude
$$\E|h_t(z)\arg_{g' (\D \setminus \gamma_z) }(g'(0))\mathbbm{1}_{\cG}| <cte^{-c't} .$$

Finally, for the third term, since $|\epsilon(t)| \le e^{-c't}$ on $\cA(t,0)$,
we deduce that $\E( h_t(0) \epsilon(t) \mathbbm{1}_{\cA(t,0) , \cG} ) \le
te^{-ct}$ by Cauchy--Schwarz and the moment bound. Consequently, we have proved
$$
|\E( h_t (z) h_t(0) ) - H(z) H(0) | \le c e^{-c't}.
$$
Using \cref{lem:quantitative_two_pt} we deduce that $|\E( h_t (z) h_t(0) ) - \E( h_t (z) ) \E (h_t(0) ) | \le c e^{- c't}$.
This proves the lemma.
\end{proof}

 \begin{lemma}
   \label{cor:wind_var}
Let $\{ w_1,w_2, \ldots, w_{k}\}$ be a set of points
in $\D$ all of which are distinct. Let $A=\{\gamma_{w_1}, \ldots,
\gamma_{w_{k}}\}$ be the corresponding set of branches of $\cT$ in $\D$. Let $g:
\D \setminus A \to \D$ be a conformal map fixing $1$. Let $g_z: \D
\setminus A \to \D$ be a conformal
map which maps $z$ to $0$ and $1$ to $1$. Then for any test
function $f$
in $C^\infty(\bar \D)$,
 \begin{multline*}
 \lim_{t \to \infty}  \E \left[\int_{\D} \hat h_t(z)f(z)\mathbbm{1}_{\dist(z,A)
 >e^{-t/10}}dz \Big| A \right] = \\ \int _\D
(2\arg_{\D;1}(g(z)) - \frac \pi 2 -
  \arg_{g' (\D \setminus A)}(g'(z)))f(z)dz
  \end{multline*}
  where $ \arg_{g' (\D \setminus A)}$ is chosen so that $ \arg_{g' (\D \setminus A)} (g'(1)) = 0$.
  \begin{align*}
  \lim_{t \to \infty} \var \left[\int_{\D} \hat h_t (z)f(z) \mathbbm{1}_{\dist(z,A)
 >e^{-t/10} }dz \Big| A \right] & =
  \int_{\D \times \D}G(0,g_z(w)) f(z)f(w)dzdw
\end{align*}
almost surely, where $G(\cdot, \cdot)$ is the two-point covariance function defined in \cref{E:G}.
 \end{lemma}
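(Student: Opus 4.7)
The approach is to condition on $A$ and apply \cref{lem:two_pt_conv} pointwise, then exchange limit and integral using Fubini together with the moment bounds of \cref{lem:one_point,lem:log_div}. Since the conformal map $g$ and the indicators $\mathbbm{1}_{\dist(z,A)>e^{-t/10}}$ are $A$-measurable, Fubini rewrites the conditional expectation as $\int_{\D}\E^A[\hat h_t(z)]\,f(z)\,\mathbbm{1}_{\dist(z,A)>e^{-t/10}}\,dz$, and similarly the conditional variance as the double integral of the conditional covariance $\E^A[\hat h_t(z)\hat h_t(w)]-\E^A[\hat h_t(z)]\E^A[\hat h_t(w)]$ against $f(z)f(w)$ times the product of indicators.

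For the pointwise limits, \cref{lem:two_pt_conv} with $k=1$, combined with \cref{lem:one_point} applied to the independent copy $\tilde h_t$, gives almost surely, for almost every $z$,
\begin{equation*}
\lim_{t\to\infty}\E^A[\hat h_t(z)]=\tfrac{3\pi}{2}+2\arg_{1-\D}(1-g(z))-\arg_{g'(\D\setminus A)}(g'(z)),
\end{equation*}
which is precisely the integrand of the first claim. For the two-point function, \cref{lem:two_pt_conv} with $k=2$ expresses $\lim_t \E^A[\hat h_t(z)\hat h_t(w)]$ as the limit of $\E\bigl[(\tilde h_t(g(z))-\arg g'(z))(\tilde h_t(g(w))-\arg g'(w))\bigr]$; expanding and subtracting the product of the one-point limits, the cross terms linear in $\arg g'$ cancel exactly, leaving
\begin{equation*}
\lim_{t\to\infty}\bigl(\E^A[\hat h_t(z)\hat h_t(w)]-\E^A[\hat h_t(z)]\E^A[\hat h_t(w)]\bigr)=G(g(z),g(w))
\end{equation*}
by the definition~\eqref{E:G} of $G$. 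Since the left-hand side does not depend on the particular conformal map used in \cref{lem:two_pt_conv}, applying the same argument with $g$ replaced by $g_z$ (which also fixes $1$) yields $G(g(z),g(w))=G(0,g_z(w))$; this is the conformal invariance of $G$ under M\"obius maps of $\D$ fixing $1$, and matches the integrand in the statement.

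It remains to exchange the $t\to\infty$ limit with the integrals by dominated convergence. The integrands converge pointwise (a.s.\ for a.e.\ $z$, resp.\ $(z,w)$); domination comes from the moment bound $\E|\hat h_t(z)|^k\le c(1+t^{k/2})$ of \cref{lem:one_point} combined with the quantitative rate of \cref{lem:quantitative_two_pt}, together with the logarithmic two-point bound of \cref{lem:log_div}. The cutoff $\mathbbm{1}_{\dist(\cdot,A)>e^{-t/10}}$ is calibrated precisely so that, via \cref{lem:distortion}, the image points $g(z),g(w)$ remain in a region where \cref{lem:quantitative_two_pt} provides a uniform rate. The main obstacle is integrability of the envelope near the two singular regions: the diagonal $z=w$, where the conditional two-point function inherits the $\log^2(1/|z-w|)$ blow-up of \cref{lem:log_div}, and the boundary of $\D\setminus A$, where $\arg g'$ has a logarithmic singularity in the spirit of \cref{lem:distortion2}. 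Both singularities are locally integrable in $\D$ and $\D^2$ respectively, so the dominated convergence theorem applies and the claimed limits follow.
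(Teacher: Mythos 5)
Your overall organisation matches the paper's: condition on $A$, push the expectation inside by Fubini, identify the pointwise limits via \cref{lem:two_pt_conv}, note that the $\arg g'$ terms are $A$-measurable and drop out of the covariance, and finish with dominated convergence. The derivation of $G(g(z),g(w))=G(0,g_z(w))$ by noting that the conditional covariance is intrinsic and can be computed with either conformal map fixing $1$ is a clean rephrasing of what the paper does by working with $g_z$ from the start. Up to that point the proposal is sound.

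The genuine gap is in the last paragraph, where you justify dominated convergence. Asserting that ``both singularities are locally integrable in $\D$ and $\D^2$'' is not enough, and the citation to \cref{lem:distortion2} is off target: that lemma bounds $\arg g'(0)$ by $\delta^{1/2}$ when a hull of diameter $\delta$ is removed far from $0$, which says nothing about the growth of $\arg g'(z)$ or of the covariance as $z$ approaches the random, fractal boundary $\partial(\D\setminus A)$. More importantly, the bound from \cref{lem:log_div} is of the form $\log^2(1/r)$ where $r$ is the minimum of $|g(z)-g(w)|$ and $\dist(g(z),\partial\D)$, $\dist(g(w),\partial\D)$ — i.e.\ distances in the \emph{image} disc, not in the $(z,w)$ variables over which you integrate. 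Without controlling the distortion of $g$ near the SLE curves $A$, you cannot conclude that the envelope is integrable on $\D^2$.

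The paper resolves exactly this by the change of variable $y=g_z(w)$. After that substitution the integrand is $\mathrm{Cov}(\tilde h_t(0),\tilde h_t(y))\,|(g_z^{-1})'(y)|^2$, and the two singular regimes are handled separately: near $|y|=1$ the covariance is \emph{uniformly bounded} by \cref{lem:G0} (the covariance decays at the boundary — it does not blow up there), while the Jacobian integrates to at most $\pi$ because conformal maps into $\D$ cannot increase area; for $|y|\le 1-\delta$ one uses \cref{lem:log_div} for the covariance and Koebe/Schwarz to bound $|(g_z^{-1})'(y)|$ by $c/\delta$. Your argument is missing all three ingredients (the substitution, the area bound on $\int|(g_z^{-1})'|^2$, and the boundary decay from \cref{lem:G0}), and these constitute the actual technical content of the lemma. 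Without them the dominated convergence step does not go through.
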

 \begin{proof}

This proof is an application of dominated convergence theorem. For the
first item, note that for a fixed $t$ we can take the expectation inside by
Fubini and the moment bounds of $\hat h_t$.  Again observe
that from \eqref{eq:cond_1}, we have for an independent copy $\tilde h_t$ of
$ h_t$ in $\D$, and if we write $\cD(z,A) = \{ \dist(z, A)
 >e^{-t/10}\}$,
\begin{align*}
& \int_\D \E\left[\hat h_t(z)f(z)\mathbbm{1}_{ \cD(z,A)}dz  \Big| A\right]
  \\
 & = \int_{\D} \mathbbm{1}_{\dist(z, A)
 >e^{-t/10}} \E\left(\left.\mathbbm{1}_{\cA(t,z)}(\tilde h_{t'}(g(z)) - \arg_{g'(\D \setminus A)} g'(z))
\right| A \right) f(z)dz + O( e^{-ct})
\end{align*}
where $9t/10<t' = t'(z)<11t/10$ almost surely by \cref{lem:pointwise}.
Therefore, the first item follows by taking limit on both
sides and using dominated convergence theorem (whose application is justified by
say \cref{lem:log_div}).

For the variance computation, recall that we write $\E^A$ for the
conditional expectation given $A$. Then, applying the conformal map $g_z$ and using \eqref{eq:cond_0},
\begin{multline}
 \E^A\Big[\big(\hat h_t (z) - \E^A\hat
h_t(z)\big) \mathbbm{1}_{\cD(z,A)}  \big(\hat h_t (w) - \E^A \hat h_t(w)\big)\mathbbm{1}_{\cD(w,A)}   \Big] \\
=\E^A\Big[\prod_{y \in \{z,w\}} \Big(\tilde h_{t_y}(g_z(y))
+\epsilon_y(t) - \E\tilde
h_{t_x}(g_z(y))   \Big)\mathbbm{1}_{\cA(t,y);\cD(y,A)}\Big] \label{eq:arg_cancel}
\end{multline}
because once we condition on $A$, the term $\arg_{g'_z( \D\setminus A)} (g'_z(y))$ is nonrandom and hence cancels out in $\hat h_t(y) - \E^A \hat h_t (y)$. Note that since $z,w$ are at least at a distance $e^{-t/10}$ away from $A$, we have $9t/10 \le t_y \le 11t/10$ for $y\in
\{z,w\}$, and that
$|\epsilon_y(t)| \le c e^{-c't}$ on $\cA(t,y)$.
 By Cauchy--Schwarz and \cref{lem:one_point,lem:quantitative_two_pt}, note that in the right hand side we can replace $t_z, t_w$ by $t$ provided that we add an error term bounded by $ce^{-c't}$, uniformly in $z$ and $w$.

Hence, by Fubini and \cref{eq:arg_cancel},
\begin{align}
 & \text{Var} \left[\int_{\D} \hat h_t(z)f(z) \mathbbm{1}_{\cD(z,A)}dz )\Big| A\right] = \nonumber\\
&\int_{\D^2}\E \Big[\prod_{y \in \{z,w\}} \Big(\tilde h_{t}(g_z(y))
+\epsilon_y(t) - \E\tilde
h_{t}(g_z(y))   \Big)\mathbbm{1}_{\cA(t,y);\cD(y,A)}\Big]f(z)f(w)dzdw   \nonumber \\
& \quad +\textsf{error}(t)\nonumber\\
&
 = \int_{\D^2} \text{Cov}\Big(\tilde h_{t}(0) , \tilde
h_{t}(g_z(w)) \Big) f(z)f(w)dzdw +\textsf{error}(t)\nonumber\\
   & = \int_{z \in \D} \!\!\! f(z)\int_{y \in \D}\text{Cov}\big(\tilde h_{t}(0)
\tilde
h_{t}(y)\big)|(g_z^{-1})'(y)|^2f(g_z^{-1}(y))dydz
+\textsf{error}(t) \nonumber 
\end{align}
where  the
error term satisfies $|\textsf{error}(t)| \le cte^{-c't}$.

Pointwise convergence of the integrand comes from the definition of the two-point correlation function $G$. To conclude we check that we can apply the dominated convergence theorem. By \cref{lem:G0}, one can find a $\delta$ such that for all $y \in \D$ with
$|y| > 1-\delta$ and all $t > -12 \log \delta+1$, $|\text{Cov}(\tilde h_{t}(0)
\tilde
h_{t}(y))| < 1$ almost surely. Therefore, on the set $\{|y | \ge 1- \delta\}$,  the integrand in the last equality is bounded by
 $a(y,z) := |(g_z^{-1})'(y)|^2\|f\|_\infty^2$. On the other hand, by \cref{lem:log_div} the integrand
is bounded by $b(y,z):=\log(|y| \wedge
(1-|y| ))|(g_z^{-1})'(y)|^2\|f\|_\infty^2$ when $|y | \leq 1-\delta$.

 Note that
$$\int_{\{|y| >1-\delta\}} |(g_z^{-1})'(y)|^2 dy = \Leb (\{ w: |g_z(w)
>1-\delta |\}) \le \pi,$$
therefore $a(y,z)$ is integrable on $\{|y|
>1-\delta\}$. Note also that if $|y| \le 1-\delta$, $|(g_z^{-1})'(y)| =
R(g_z^{-1}(y),
g_z^{-1}(\D))/R(y, \D)<c\delta^{-1}$, so $b$ too is integrable on $\{|y|\le
1-\delta\}$. Thus we can take limit inside
the integral. Finally, one can use
\cref{lem:two_pt_conv}, item $1$ to conclude.
 \end{proof}

 We are now going to use the  imaginary geometry coupling of
 \cref{thm:coupling_intro} to prove the following consequence.

\begin{thm}\label{thm:pointwise_GFF}
 Let $f$ be any test function in $C^\infty(\bar \D)$. Let $ h =   \hg^0  + \chi u_{\D,1}$
be the Gaussian free field coupled with the UST according to \cref{thm:coupling_intro}, and let $\hg = (1/\chi ) h + \pi/2.$ Then we have
\[
 \lim_{t \to \infty} \E\Big(( h_t,f) - \big(\hg ,f\big)\Big)^2 = 0
\]
In particular, $( h_t,f) $ converges to $ ( \hg,f)$ in $L^2(\P)$ and in
probability as $t\to \infty$.
\end{thm}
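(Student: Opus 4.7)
The plan is to use the conditional variance decomposition and exploit that, by \cref{thm:coupling_intro}, the limit field $\hg$ is measurable with respect to $\sigma(\cT)$. Fix a countable dense sequence $(w_n)_{n\ge 1}$ in $\D$, set $A_k=\bigcup_{i\le k}\gamma_{w_i}$, and let $\cF_k=\sigma(A_k)$. Writing $X_t=(h_t,f)$ and $X=(\hg,f)$, the decomposition $\E[Y^2]=\E[\mathrm{Var}(Y\mid\cF)]+\E[\E[Y\mid\cF]^2]$ together with $\mathrm{Var}(X_t-X\mid\cF_k)\le 2\mathrm{Var}(X_t\mid\cF_k)+2\mathrm{Var}(X\mid\cF_k)$ gives
\begin{equation*}
\E[(X_t-X)^2]\;\le\;2\,\E[\mathrm{Var}(X_t\mid\cF_k)]+2\,\E[\mathrm{Var}(X\mid\cF_k)]+\E\bigl[(\E[X_t-X\mid\cF_k])^2\bigr].
\end{equation*}
I would take $t\to\infty$ first for each fixed $k$, and then send $k\to\infty$.

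For the third (conditional-mean) term, \cref{cor:wind_var} gives an explicit almost sure limit of $\E[X_t\mid\cF_k]$ in terms of a conformal map $g:\D\setminus A_k\to\D$ fixing $1$. On the other hand, \cref{thm:coupling_intro} identifies $\E[X\mid\cF_k]=(u_{(\D\setminus A_k,1)}+\pi/2,f)$. The key point is to verify that these two deterministic functionals agree: this follows from the change-of-coordinates formula for intrinsic winding (\cref{lem:det_winding}) combined with the defining relation \eqref{E:uDx} for $u_{(\D\setminus A_k,1)}$. With this matching in hand, the logarithmic-divergence bound from \cref{lem:log_div} provides the uniform integrability needed to upgrade the a.s. convergence of $\E[X_t\mid\cF_k]$ to $L^2$ convergence, so the third term vanishes as $t\to\infty$ for fixed $k$.

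For the first two terms I argue both conditional variances tend to $0$ as $k\to\infty$. Since $\hg$ is $\sigma(\cT)$-measurable and branches sampled from a countable dense set generate $\sigma(\cT)$ via Wilson's algorithm (\cref{prop:Wilson_cont}), the martingale convergence theorem yields $\E[\mathrm{Var}(X\mid\cF_k)]\to 0$. For the $X_t$ side, \cref{cor:wind_var} yields the almost sure identity $\lim_{t\to\infty}\mathrm{Var}(X_t\mid\cF_k)=\iint G(0,g_z(w))f(z)f(w)\,dz\,dw$, where $G$ is the continuum two-point covariance. By the conditional-mean matching established above, the field $\lim_t h_t$ inherits from $\hg$ a covariance that (up to the factor $1/\chi^2$) coincides with the Dirichlet Green's function of $\D$, so by conformal invariance this expression equals $\mathrm{Var}(X\mid\cF_k)$; alternatively one can bound it directly using \cref{lem:G0} and the shrinking of $R(z,\D\setminus A_k)$ as $A_k$ fills up $\cT$. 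Either way the term vanishes after taking $k\to\infty$.

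The main obstacle is the identification of the two conditional-mean expressions in the second paragraph: tracking the additive constants of the various $\arg$ functions under the conformal map $g:\D\setminus A_k\to\D$ well enough to recognise the boundary-argument integrand produced by \cref{cor:wind_var} as the harmonic extension of the intrinsic winding of $\partial(\D\setminus A_k)$ at $1$. This is essentially the content of the change-of-coordinate rule in imaginary geometry (see \cref{R:ig}), and the deterministic groundwork laid in \cref{sec:image-wind}, together with the precise normalisation of $u_{(\D\setminus A_k,1)}$ via \eqref{E:uDx}, is designed precisely to make this verification possible.
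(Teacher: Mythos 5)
Your overall decomposition — conditioning on finitely many branches of $\cT$ and splitting into conditional-mean and conditional-variance terms — coincides with the paper's proof, and the conditional-mean matching via \cref{cor:wind_var}, \cref{thm:coupling_intro}, and \cref{lem:det_winding} is the same key step. Your martingale argument for $\E[\mathrm{Var}(X\mid\cF_k)]\to 0$ is a clean alternative to the paper's explicit computation with the conformal Green's function: it buys less quantitative control but is conceptually simpler, and in fact, once one has $X_t\to X_\infty$ in $L^2$ with $X_\infty$ being $\sigma(\cT)$-measurable (a consequence of \cref{lem:quantitative_two_pt} and \cref{lem:multi_point_conv}), Lévy's upward theorem combined with the conditional-mean matching would already give $X_\infty=X$ a.s.\ without any variance estimate.

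However, your primary route for $\lim_{t\to\infty}\E[\mathrm{Var}(X_t\mid\cF_k)]$ has a genuine gap: you assert that "by the conditional-mean matching established above, the field $\lim_t h_t$ inherits from $\hg$ a covariance that coincides with the Dirichlet Green's function." Matching conditional means says nothing about conditional variances — consider $Y+Z_t$ with $Y$ being $\cF_k$-measurable and $Z_t$ independent of $\cF_k$, mean-zero, with arbitrary variance — and the identification of the two-point function $G$ with the Green's function is exactly what the theorem is trying to establish, so this step is circular. Your "alternatively" is the correct route and is essentially the paper's: one bounds the combined integrand $G(0,g_z(w))-2\log|g_z(w)|$ (the paper treats both conditional variances at once, as in \eqref{eq:23}) by first choosing $\delta$ so that this quantity is $<\ve$ when $|g_z(w)|>1-\delta$ via \cref{lem:G0}, then choosing the lattice spacing $\eta$ so small that on $|g_z(w)|\le 1-\delta$ the Jacobian bound $|(g_z^{-1})'(y)|^2\le(\eta/\delta)^2$ (Koebe) makes the remaining contribution negligible. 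You should also address that \cref{cor:wind_var} is stated for the doubly-truncated field $\bar h_t = \hat h_t\,\mathbbm{1}_{\dist(\cdot,A)>e^{-t/10}}$ rather than for $h_t$, and a separate estimate is needed to show $\E[(\bar h_t - h_t,f)^2]\to 0$ before invoking it.
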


\begin{proof}

Fix $\ve>0$. Using \cref{lem:G0}, pick
$\delta$ such that we have $G(0,y) +2\log(1/|y|) <\ve$ if $|y| \in (1-\delta,1)$.
Fix $\eta$ to be chosen suitably later (in a way which is allowed to depend on
$\eps$ and $\delta$).
Let $A$ be the
set of branches of $\cT$ from a ``dense" set of points, $\frac \eta 4\Z^2 \cap \D$, to $1$. Note that
that $R(z,\D \setminus
A) <\eta$ for any $z \in \D$ by Koebe's 1/4 theorem. Let
$D'= \D \setminus A$. Define $\bar h_t(z) = \hat
h_t\mathbbm{1}_{\dist(z,A)>e^{-t/10}}$.  
First of all notice that
\begin{equation}
 \E(( h_t,f) - ( h_{\text{GFF}},f))^2 \le 2 \E((\bar h_t,f) - ( h_{\text{GFF}},f))^2 + 2 \E((\bar h_t,f) - (h_t,f))^2 \label{eq:hat_bar}
\end{equation}
By adding and removing $\hat h_t$ the last expression on the right hand side of \eqref{eq:hat_bar} converges
to $0$ by \eqref{eq:25} and the following fact: using Cauchy--Schwarz and the moment bounds on $\hat h_t(z)$,
\begin{align*}
 \E((\bar h_t,f) &- (\hat h_t,f))^2  = \E(\int_{\D} \hat
h_t(z)f(z)\mathbbm{1}_{\dist(z,A) \le e^{-t/10}})^2 \\
& \le \int_{\D^2}
\E|\hat h_t(z) \hat h_t(w)|\|f\|^2_\infty \mathbbm{1}_{\dist(z,A)\vee \dist(w,A)
\le e^{-t/10}}dzdw\\
& \le  c\|f\|^2_\infty(1+t) \int_{\D^2} \P(\dist(z,A) \vee \dist(w, A) \le e^{-t/10})^{1/2}dzdw\\
& \le c\|f\|^2_\infty(1+t)\P(\dist(U_1,A)\vee
\dist(U_2,A) \le e^{-t/10})^{1/2}  \label{eq:area}
\end{align*}
where $U_i \sim \text{Unif}(\D)$ and are independent of everything else and
each other. Now if $z$ is a point in $\tfrac{\eta}{4} \Z^2 \cap \D$, then
$$
\P( \dist(U, \gamma_z) \le e^{-t/10}) \le c_\eta e^{-c't}
$$
by  \cref{cor:SLE_distance}. Hence summing up over $O(1/\eta^2)$ points and using a union bound, we see that (for every fixed $\eta$), $\P(\dist(U_1 ,A)\vee
\dist(U_2,A) \le e^{-t/10})^{1/2}  \to 0$ exponentially fast and thus the second term on the right hand side of \cref{eq:hat_bar} tends to 0.


Let $\E^A$ and $\var^A$ denote the conditional expectation and
variance given $A$.  It is easy to
see
\begin{equation}
 \E^A((\bar h_t,f) - ( h_{\text{GFF}},f))^2 \le 3\text{Var}^A (\bar h
_t,f)+3\text{Var}^A ( h_{\text{GFF}},f) + 3(\E^A(\bar h
_t,f) - \E^A( h_{\text{GFF}},f))^2\label{eq:tri_bk}
\end{equation}
Note that it is enough to show that as $t\to \infty$ the left hand side of \eqref{eq:tri_bk} can be made smaller than $\eps$ (in expectation) by choosing $\eta$ suitably since this implies that the first term in the right hand
side of \eqref{eq:hat_bar} is smaller than $\eps$ plus a term converging to zero, which completes the proof.

The last term of \eqref{eq:tri_bk} converges to $0$ for every $\eta$ from the convergence of
expectations in \cref{cor:wind_var} and the fact that $  h_{\text{GFF}}$ satisfies the correct boundary conditions given $A$ (which is a consequence of the imaginary geometry coupling).


For the other terms, recall that conditionally on $A$, $
h_{\text{GFF}}$ is just a free field in $D'$ with variance $1/\chi^2$ and with Dirichlet boundary
condition plus a harmonic function. Recall that the variance of a GFF
integrated against a test function is given by an integral of the Green's function
in the domain. Also recall that the Green's function is conformally
invariant. In particular if $g_z$ is the conformal map from $D'$ to $\D$ sending $z$ to $0$ and $1$ to $1$,
using the explicit formula for the Green's function in the unit disc, we have
\[
 \var^A(
h_{\text{GFF}}) =  -\int_{\D\times \D} \frac1{\chi^2}\log|g_z(w)|f(z)f(w)dzdw.
\]
Plugging in the variance formula derived in \cref{cor:wind_var} and since $\chi = 1/\sqrt{2}$,
\begin{align}
  \label{eq:23}
& \E(\text{Var} ^A(( h_{\text{GFF}},f))) + \E(\lim_{t \to
\infty}\text{Var}^A (\bar
  h_t,f))) \\& \quad  = \E(\int_{\D\times
\D}(G(0,g_z(w))-2\log|g_z(w)|)f(z)f(w)dzdw). \nonumber
\end{align}
By a change of variable $y=g_z(w)$,
\begin{equation}
 \int_{\D } (G(0,g_z(w))-2\log|g_z(w)|)f(w)dw \leq \|f\|_\infty \int_{\D }
|(G(0,y)-2\log|y|)| \, |(g_z^{-1})'(y)|^2dy. \label{eq:int1}
\end{equation}
As in \cref{cor:wind_var}, we are going to estimate the integral on two domains, $B_\delta =
\{|y|<1-\delta\}$ and $B'_\delta := \D \setminus B_\delta$. Recall that
by the choice of $\delta$, $G(0,y)+2\log1/|y|<\ve$ if $y \in B'_\delta$. Hence
\begin{equation}
  \label{eq:5}
  \int_{B'_\delta}
|(G(0,y)-2\log|y|)| \, |(g_z^{-1})'(y)|^2dy< \ve
\Leb(g_z^{-1}(B'_\delta)) <\pi \ve \nonumber
\end{equation}
To estimate the integral in $B_\delta$, notice that $|(g_z^{-1})'(y)| =
R(g_z^{-1}(y), D')/R(y,\D)<\eta/\delta$ if $y \in B_\delta$, by Koebe's 1/4 theorem. Hence
\begin{equation}
  \label{eq:12}
  \int_{B_\delta}
|(G(0,y)-2\log|y|)| \, |(g_z^{-1})'(y)|^2dy  <
\frac{\eta^2}{\delta^2} \int_{B_\delta}
|G(0,y)-2\log|y|| dy
\end{equation}
The integral on the right hand side is finite via the bound
\cref{lem:log_div}. After bounding the integral over $w$, it remains to bound
the integral over $z$ in \eqref{eq:23} by $\|f\|_\infty$ times the area. This completes the proof since we can choose $\eta$ such that
$\eta/\delta <\ve$
where $\ve$ is arbitrary.
\end{proof}

	
\begin{corollary}
In the same setup as \cref{thm:pointwise_GFF}, for any $p>0$, and any sequence of test functions $(f_1, \ldots, f_n) \in
C^\infty (\bar \D)$ and integers $k_1, \ldots, k_n$, as $t \to \infty$,
\begin{equation}
 \E[|\prod_{i=1}^n( h_t,f_i)^{k_i} - \prod_{i=1}^n( \hg,f_i)^{k_i}|^p] \to 0.
\end{equation}
\end{corollary}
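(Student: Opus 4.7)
The plan is to deduce the corollary from \cref{thm:pointwise_GFF} combined with a standard uniform integrability argument. \cref{thm:pointwise_GFF} gives $(h_t, f_i) \to (\hg, f_i)$ in $L^2(\P)$, and hence in probability, for each $i$. Since multiplication is continuous, it follows immediately that
$$X_t := \prod_{i=1}^n (h_t, f_i)^{k_i} \xrightarrow[t\to\infty]{} \prod_{i=1}^n (\hg, f_i)^{k_i} =: X$$
in probability, and consequently $|X_t - X|^p \to 0$ in probability. The task therefore reduces to proving that $\{|X_t - X|^p\}_t$ is uniformly integrable, since then Vitali's theorem will upgrade convergence in probability to convergence in $L^1$, yielding the claim.

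For the uniform integrability I would establish a $t$-uniform bound on $\E[|X_t|^{2m}]$ for a fixed even integer $2m > p$. Expanding the product and applying Fubini (applicable because one may first bound by the integrand with $|f_i|$, which is nonnegative), one obtains
$$\E[|X_t|^{2m}] \le \int_{\D^K} \Bigl|\E\Bigl[\prod_{(i,j)} h_t(z_{ij})\Bigr]\Bigr| \prod_{(i,j)} |f_i(z_{ij})| \, dz_{ij},$$
where $K = 2m\sum_i k_i$ and the product runs over the $K$ integration variables. By \cref{lem:log_div} the inner $K$-point expectation is bounded by $c(1 + \log^{K}(1/r))$ with $r$ the minimum pairwise distance (and distance to $\partial\D$). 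Since $\log^K$ is locally integrable and the $f_i$ are smooth with compact support, the resulting integral is finite and independent of $t$. The analogous bound for $\E[|X|^{2m}]$ is immediate from the fact that $(\hg, f_i)$ is a deterministic shift of a Gaussian, so $X$ has finite moments of every order.

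Combining the two bounds via $|X_t - X|^p \le 2^p(|X_t|^p + |X|^p)$ and Jensen's inequality, the family $\{|X_t-X|^p\}_t$ is bounded in $L^{2m/p}$ with $2m/p > 1$, which is uniform integrability. Together with the convergence in probability, Vitali's theorem gives $\E |X_t - X|^p \to 0$, completing the proof. The only ingredient beyond \cref{thm:pointwise_GFF} is the $t$-uniform $K$-point moment bound from \cref{lem:log_div}, which is already available, so I do not expect any significant obstacle.
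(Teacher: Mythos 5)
Your argument is correct, and it relies on exactly the same two inputs as the paper's own proof — the $L^2$ convergence from \cref{thm:pointwise_GFF} and the $t$-uniform $K$-point moment bound of \cref{lem:log_div} — but packages them differently. The paper's proof is a short induction: for $n=1$ it notes that $L^2$ convergence together with uniform boundedness of $(h_t,f)$ in every $L^q$ (itself a consequence of \cref{lem:log_div}, just as in your argument) gives $L^p$ convergence of each power $(h_t,f)^{k_1}$; the inductive step then uses Minkowski plus Cauchy--Schwarz to propagate $L^p$ convergence of two sequences to $L^p$ convergence of their product. Your route instead passes to convergence in probability, bounds $\E[|X_t|^{2m}]$ in one shot by a single Fubini expansion of the whole product, and closes with de la Vall\'ee Poussin/Vitali. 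These are essentially equivalent — $L^p$ convergence is, after all, convergence in probability plus uniform integrability of $p$-th powers — so the difference is one of presentation: the paper's version is more modular, while yours makes the source of the uniform moment control completely explicit in a single bound rather than distributing it over an induction. One small imprecision to fix: the hypothesis is $f_i\in C^\infty(\bar\D)$, not that the $f_i$ are compactly supported in $\D$. This is harmless for your domination step, since $\log^K(1/r)$ remains integrable over $\D^K$ even when integration variables approach $\partial\D$ (the quantity $r$ in \cref{lem:log_div} already incorporates the distance to $\partial\D$), but the sentence should be rephrased to avoid invoking compact support.
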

\begin{proof}
It is enough to prove this fact when $p$ is an even integer.
 For $n=1$ this follows from the fact that $(h_t, f)$ converges in $L^2(\P)$ towards $( h_{\text{GFF}}, f)$, and $(h_t,f)$ is bounded in $L^p$ for any $p>1$ by Lemma \ref{lem:log_div}.

For general $n\ge 1$ we proceed by induction, and note that by the triangle inequality in $L^p$ (i.e., Minkowski's inequality), if $X_n \to X$ and $Y_n \to Y$ in $L^p$ for every $p>1$ then $X_n Y_n \to XY$ in $L^p$ for every $p>1$.
\end{proof}

\subsection{General domains}
\label{S:generaldomain}

In this section we state our result when $D$ is a bounded domain with a locally connected boundary and we defer the proof to Theorem F.1 of the supplementary.
 Recall that our definition of $u_{(D,x)}$ in \eqref{E:uDx} only makes sense when the boundary is smooth in a neighbourhood of a marked point $x \in \partial D$ (while otherwise it is only defined up to a global additive constant, see \cref{R:rough_everywhere}). The general idea is to show that in the limit one has
$$
  h^D \circ \ph(z) = h^\D(z)  + \arg_{\ph'(\D)}(\ph'(z)), \ \ z \in \D
$$
which is the imaginary geometry change of coordinates (see \cite{IG1,IG4}).

\begin{thm}\label{L:change_coord}
Let $D$ be as above.
Let $f$ be any bounded Borel test function defined on $\bar D$. Let $ h = \hg^0 + \chi
u_{(D,x)}$
 be the GFF coupled to the UST according to the imaginary geometry coupling of
\cref{thm:coupling_intro} and $u_{(D,x)}$ is as in \eqref{E:uDx}. Then
 $( h_t^D ,f) $ converges to $(\hg^D,f)$ in $L^2(\P)$ and in
probability as $t\to \infty$, where $\hg^D = \chi^{-1}h+\pi/2$.
\end{thm}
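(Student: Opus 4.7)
The plan is to reduce \cref{L:change_coord} to the disc case \cref{thm:pointwise_GFF} via the conformal map $\ph : \D \to D$ with $\ph(1) = x$. By conformal invariance of radial SLE$_2$ and of the construction of branches via \cref{prop:Wilson_cont}, one can couple the continuum wired UST on $D$ with that on $\D$ so that $\gamma^D_{\ph(\tilde z)} = \ph(\gamma^\D_{\tilde z})$ jointly for all $\tilde z \in \D$.

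The first step is to derive the deterministic pointwise identity
\begin{equation*}
h_t^D \circ \ph(\tilde z) \;=\; h_t^\D(\tilde z) + \arg_{\ph'(\D)}(\ph'(\tilde z))
\end{equation*}
by applying \cref{lem:det_winding2} to $\psi = \ph$ and to the branch $\gamma^\D_{\tilde z}[-1,t]$ with boundary endpoint $1$ and interior endpoint $\tilde z$. The boundary-argument terms $\arg_{1-\D}(1-\tilde z)$ and $-\arg_{x-D}(x-\ph(\tilde z))$ produced by \cref{lem:det_winding2} combine exactly with the corresponding terms built into the definitions of $h_t^\D$ and $h_t^D$, leaving only $\arg\ph'$. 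When $\partial D$ is rough near $x$, $\arg\ph'$ has no canonical boundary value at $1$, but this is compatible with the fact that $h_t^D$ itself is only defined up to a global additive constant in $\R$ for such domains (cf.\ \cref{eq:general_D_rough}), and we fix both ambiguities compatibly.

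Next, performing the change of variables $z = \ph(\tilde z)$ and setting $\tilde f(\tilde z) := f(\ph(\tilde z))\,|\ph'(\tilde z)|^2$, we have
\begin{equation*}
(h_t^D, f)_D \;=\; (h_t^\D, \tilde f)_\D \;+\; (\arg_{\ph'(\D)}(\ph'), \tilde f)_\D ,
\end{equation*}
where the second summand is deterministic. Applying \cref{thm:pointwise_GFF} (extended from $C^\infty(\bar\D)$ to bounded test functions — its proof only uses $\|f\|_\infty$ together with the moment bounds \cref{E:onepointmoment,lem:log_div}) yields that the first summand converges in $L^2(\P)$ to $(\hg^\D,\tilde f)_\D$. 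To identify the total limit with $(\hg^D,f)_D$, one combines \eqref{E:uDx}, which reads $u_{(D,x)}\circ \ph = u_{(\D,1)} + \arg_{\ph'(\D)}(\ph')$, with the conformal invariance of the Dirichlet GFF inherited from the imaginary geometry coupling (\cref{thm:coupling_intro}); this yields the crucial identity $\hg^\D + \arg_{\ph'(\D)}(\ph') = \hg^D \circ \ph$. Changing variables backward then concludes.

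The main obstacle is that $\tilde f$ may fail to be bounded when $|\ph'|$ blows up near $\partial \D$, which is unavoidable as soon as $\partial D$ has non-smooth points. One only has $\tilde f \in L^1(\D)$ for free (since $\int_\D|\ph'|^2 = \mathrm{area}(D)<\infty$), and not $L^\infty(\D)$. This is bypassed either by directly inspecting the proof of \cref{thm:pointwise_GFF} to allow bounded Borel test functions (which suffices, since only $\|f\|_\infty$ enters the estimates), or by first proving \cref{L:change_coord} for $f$ compactly supported away from $\partial D$, where $\tilde f$ is bounded with compact support in $\D$, and then extending to bounded $f$ using the uniform $L^2$ control on $h_t^D$ that comes from \cref{lem:log_div,E:onepointmoment} together with a truncation/dominated-convergence argument.
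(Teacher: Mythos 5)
Your overall strategy — conformal transport to the disc, the pointwise change-of-coordinates identity, and the identity $\hg^\D + \arg_{\ph'(\D)}\ph' = \hg^D\circ\ph$ obtained from \eqref{E:uDx} — is the same as the paper's, and you have correctly spotted the key obstacle, namely that the pulled-back test function $\tilde f = f\circ\ph\,|\ph'|^2$ need not be bounded. However, both ways you propose to get around this have genuine gaps.

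First, a technical but consequential point: you cannot apply \cref{lem:det_winding2} to $\gamma^\D_{\tilde z}[-1,t]$ with ``interior endpoint $\tilde z$''. The curve at time $t$ terminates at $\gamma^\D_{\tilde z}(t)$, which is only \emph{close} to $\tilde z$ (within conformal radius $e^{-t}$); moreover the curve is an SLE$_2$, so its tip is not a smooth simple point, which \cref{lem:det_winding2} requires. The correct statement is \cref{lem:det_winding}, which carries a distortion error $O(|\tilde z-\gamma(t)|/R(\tilde z,\D))$ and therefore only yields the exact identity up to an $O(e^{-ct})$ error, and only on the good event $\cA^D(t,z)$ where $\gamma(t)$ is close to $z$.

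This is where the substantive gap opens up. Once one must work with the good event, one needs to control the second moment of $h_t^D$ on its complement $\cA^D(t,z)^c$ in order to run Cauchy--Schwarz and a dominated convergence argument. On $\cA^D(t,z)^c$ there is no relation between $h_t^D\circ\ph$ and the disc winding $h_t^\D$, so the disc estimates \cref{lem:log_div} and \cref{E:onepointmoment} that you invoke simply do not apply — you would be bounding the winding in $D$ by the winding in $\D$ precisely where the coupling between them has broken down. Your alternative route (proving first for $f$ compactly supported in $D$ and then extending by truncation) does not escape this either: to justify the truncation you need an a priori finiteness of $\E(h_t^D(z)^2)$ uniformly in $z$, which is again the missing ingredient. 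The paper resolves this by writing $h_t^D = h_{t'}^D + W(\tilde\gamma_z[t',t])$ for a random intermediate time $t'$ at which the tip is close to $z$, and then bounding the second increment using the a priori tail estimate \cref{prop:tail_winding2} on the winding of loop-erased random walk, transferred to the continuum via Lawler--Schramm--Werner. This input from the discrete part of the paper is essential and cannot be replaced by the disc moment bounds; its omission is the principal gap in your proposal.
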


\subsection{Convergence in $H^{-1-\eta}(D)$}\label{S:cont_conv_H}

Let $D$ be a domain with locally connected boundary and now assume also that $D$ is bounded.
Let $(e_j)_{j \ge 1}$ denote the orthonormal basis of $L^2(D)$ given
by the eigenfunctions of $-\Delta$ in $D$. Let $\hg$ denote the process defined in \cref{thm:pointwise_GFF}, which is $(1/\chi)$ times a GFF with winding boundary conditions multiplied by $\chi$.
We now strengthen the convergence from a convergence in probability or $L^2(\P)$ for finite-dimensional marginals to a convergence in the Sobolev space $H^{-1 - \eta}$.

\begin{prop}\label{P:conv_GFF_H}
For every $\eta>0$, the field $h_t$ converges to $\hg$ in
$H^{-1-\eta}$ in probability as $t \to \infty$. Further, $\{h_n\}_{n
  \in \N}$ converges almost surely to $\hg$ as $n \to \infty$ along positive integers. Also for all $1 \le k <\infty $, $\E [ \| h_{u} - h_\infty \|^k_{H^{-1 - \eta}} ]\to 0$.
\end{prop}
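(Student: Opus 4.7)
The plan is to expand distributions in the $L^2(D)$-orthonormal basis $(e_j)_{j\ge 1}$ of Dirichlet eigenfunctions of $-\Delta$, with eigenvalues $\lambda_j$, so that $\|f\|^2_{H^{-1-\eta}} \asymp \sum_j \lambda_j^{-1-\eta} (f,e_j)^2$. By Weyl's law in dimension $2$, $\lambda_j \asymp j$, hence $\sum_j \lambda_j^{-1-\eta} < \infty$ for every $\eta > 0$; this is precisely where the loss in the Sobolev index is used. The proof then combines fixed-$j$ $L^2(\P)$-convergence (provided by \cref{L:change_coord}) with uniform-in-$t$ moment bounds on the Sobolev norm.

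For the uniform bound, I would use \cref{lem:log_div}: the covariance kernel $C_t(x,y) := \E[h_t(x) h_t(y)]$ is dominated pointwise by $c(1+\log^2(1/r(x,y)))$, which belongs to $L^2(D\times D)$ with norm independent of $t$. Hence the integral operator $T_t$ with kernel $C_t$ is bounded on $L^2(D)$ uniformly in $t$ (e.g.\ by Schur's test), so $\E[(h_t,e_j)^2] = \langle e_j, T_t e_j\rangle \le \|T_t\|_{\mathrm{op}} \le C$ uniformly in $j$ and $t$. Consequently $\sup_t \E\|h_t\|^2_{H^{-1-\eta}} \le C \sum_j \lambda_j^{-1-\eta} < \infty$. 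For higher moments, one writes $\|h_t\|^{2k}_{H^{-1-\eta}}$ as a $2k$-fold integral of $\prod_i h_t(x_i) h_t(y_i)$ against the kernel $K_\eta(x,y) = \sum_j \lambda_j^{-1-\eta} e_j(x) e_j(y)$ of $(-\Delta)^{-1-\eta}$, which is bounded on bounded $2$-dimensional domains when $\eta>0$; then the $2k$-point bound from \cref{lem:log_div} yields $\sup_t \E\|h_t\|^{2k}_{H^{-1-\eta}} < \infty$ for every $k \ge 1$.

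Convergence in $L^2(\P;H^{-1-\eta})$ now follows by splitting $\E\|h_t-\hg\|^2_{H^{-1-\eta}} = \sum_j \lambda_j^{-1-\eta} \E(h_t-\hg,e_j)^2$ at a large $N$: the tail $j>N$ is $O\bigl(\sum_{j>N}\lambda_j^{-1-\eta}\bigr)$ uniformly in $t$ by the uniform bound (applied to both $h_t$ and $\hg$), while each of the finitely many terms $j\le N$ tends to $0$ as $t\to\infty$ by \cref{L:change_coord} applied to the bounded Borel test function $f = e_j$. Convergence in probability is immediate, and $L^k$ convergence is then obtained from convergence in probability combined with uniform integrability given by the $L^{2k}$ bound.

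For the almost sure convergence along integers, I would establish the exponential decay $\E\|h_n - \hg\|^2_{H^{-1-\eta}} = O(e^{-cn})$, which is summable in $n$ and so yields the a.s.\ claim via Chebyshev and Borel--Cantelli. The exponential bound is obtained by using \cref{lem:quantitative_two_pt}: on the region $\{(x,y): r(x,y) > e^{-cn}\}$ the quantitative two-point estimate provides exponential decay of $\E[h_n(x) h_n(y)]$ to its limit, while the complementary region has Lebesgue measure $O(e^{-cn})$ and its contribution is controlled by the crude $\log^2$ bound of \cref{lem:log_div}, integrated against the bounded kernel $K_\eta$. The main technical obstacle I anticipate is that the eigenfunctions $e_j$ may fail to be globally bounded when $\partial D$ is merely locally connected, so applying \cref{L:change_coord} with $f = e_j$ requires care; this can be handled by approximating $e_j$ in $L^2(D)$ by $C_c^\infty(D)$ functions and using the uniform Hilbert--Schmidt bound on $T_t$ (which does not require any $\|e_j\|_\infty$ control) to propagate the approximation error.
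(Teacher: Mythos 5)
Your argument is correct, and the overall skeleton is the same as the paper's (eigenfunction expansion, Weyl's law, Borel--Cantelli), but the technical realisation differs in an instructive way. The paper never introduces the integral operator $T_t$: it works entirely with the increment $h_u-h_t$ and applies a bare Cauchy--Schwarz to
$\bigl(\E[(h_u-h_t,e_j)^2]\bigr)^2 \le \int_{D^2}\bigl(\E[(h_u-h_t)(z)(h_u-h_t)(w)]\bigr)^2\,dz\,dw$,
the factor $\int e_j^2(z)e_j^2(w)\,dz\,dw=1$ dropping out by $L^2$-orthonormality. It then bounds the right-hand side by splitting into $t \lessgtr -10\log r(z,w)+1$, exactly as you do, obtaining $\E[(h_u-h_t,e_j)^2]\le c(1+u^2)e^{-c't}$ \emph{uniformly in $j$}, sums against $\lambda_j^{-1-\eta}$, concludes the Cauchy property, \emph{defines} $h_\infty$ as the a.s.\ limit along integers, and only then identifies $h_\infty=\hg$ by testing against smooth functions. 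You instead first establish $\sup_t\E\|h_t\|_{H^{-1-\eta}}^2<\infty$ via a Hilbert--Schmidt bound on the kernel, then split at $N$, using the uniform bound for the tail and $L^2(\P)$ convergence (\cref{L:change_coord}) for each finite mode. Both are fine; the paper's route is a little leaner because it never needs to know a priori that $\hg$ itself has finite $H^{-1-\eta}$ moments (that is a by-product of the Cauchy argument), whereas you need it for the tail estimate and must extract it from the Hilbert--Schmidt bound on $T_\infty$ together with \cref{thm:pointwise_GFF}, which works but adds a small justification step. Your worry about $\|e_j\|_\infty$ is addressable exactly as you say (and is in any case moot, since Dirichlet eigenfunctions on a bounded planar domain are bounded by elliptic regularity/Sobolev embedding); the paper sidesteps the issue entirely because its Cauchy--Schwarz only uses $\|e_j\|_{L^2}=1$.

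One small improvement you could borrow from the paper: rather than estimating $\E[(h_n-\hg)(z)(h_n-\hg)(w)]$, which expands into four terms including awkward mixed terms with the limiting field, the paper bounds $\E[(h_u-h_t)(z)(h_u-h_t)(w)]$ for finite $u>t$ and passes to the limit in $u$ only after integrating. This keeps every object a genuine function with moments controlled by \cref{E:onepointmoment} and avoids any discussion of what $\hg(z)$ means pointwise; the mixed terms are handled automatically because \cref{lem:quantitative_two_pt} is stated for general $t_1\ge t_2>t$.
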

\begin{proof}
The basic idea is to show that $h_t$ is a Cauchy sequence in $H^{-1 - \eta}$. Let $u \ge t$.
We start by getting bounds on $\E[(h_{u} - h_t,e_j)^2]$.
By Fubini's theorem and Cauchy--Schwarz,
 \begin{align}
  &\left(\E [ (h_{u} - h_t,e_j)^2]  \right)^2  = \left( \int_{D^2} \E [ (h_{u}(z)-h_t(z))
(h_{u}(w)-h_t(w))] e_j(z)e_j(w)dzdw \right)^2 \nonumber \\
& \le \int_{D^2} \Big(\E[(h_{u}(z)-h_t(z))
(h_{u}(w)-h_t(w))]\Big)^2dzdw \int_{D^2} e_j^2(z)e_j^2(w)dzdw\nonumber\\
&=  \int_{D^2} \Big(\E[(h_{u}(z)-h_t(z))
(h_{u}(w)-h_t(w))]\Big)^2dzdw
\label{eq:27}
  \end{align}
since $e_j$ forms an orthonormal basis of $L^2(D)$. Let $r(z,w) =  |z-w|\wedge
\dist(z,\partial D) \wedge \dist(w,\partial D)$. We are going to break up the
integral in \eqref{eq:27} into two cases, either $t
\le -10\log (r(z,w)) +1 $ (i.e, $r^{10} \le e^{1-t}$) or otherwise. In the first case Cauchy--Schwarz and
the bound on moment of order two yield
\begin{multline}
  \label{eq:29}
 \int_{D^2} \Big(\E[(h_{u}(z)-h_t(z))
(h_{u}(w)-h_t(w))]\Big)^2  \mathbbm{1}_{t \le  -10 \log
  (r(z,w)) +1 }dzdw \\ \le c (1+u^2) \int_{D^2} \mathbbm{1}_{ r(z,w)^{10} \le e^{1-t}} dzdw \le c(1+u^2)e^{-c't}
\end{multline}
On the other hand,
\begin{multline}
  \int_{D^2} \Big(\E[(h_{u}(z)-h_t(z))
(h_{u}(w)-h_t(w))]\Big)^2  \mathbbm{1}_{t >  -10 \log
  (r(z,w)) +1}dzdw  \\
  \le cu^2e^{-ct}\int_{D^2} \mathbbm{1}_{t >  -10 \log
  (r(z,w)) +1 }dzdw \le c(1+u^2)e^{-c't}\label{eq:30}
\end{multline}
where the second inequality above follows from (F.1) of the supplementary to formulate the correlation in the unit disc, and \cref{lem:quantitative_two_pt} to control this correlation by $r(z,w)$
(note that the bound of \cref{lem:quantitative_two_pt} holds also if $\hat h$ is replaced by $h$, because of the control on moments of $h_t$ in \cref{lem:one_point} and the exponential bound on the probability of $\cA(t,z)^c$). Combining
\eqref{eq:29} and \eqref{eq:30}, we obtain
\begin{equation}
\E[(h_{u} - h_t,e_j)^2] \le c(1+u^2)e^{-ct}.\label{eq:bootstrap2}
\end{equation}
Now let $t = n$ and $n \le u \le n+1$. Then by Jensen's inequality
\begin{multline}\label{E:controlnorm}
[\E  \| h_u - h_n\|_{H^{-1 - \eta}} ]^2 \le \E [ \| h_{u} - h_n \|^2_{H^{-1 - \eta}} ]  = \\
 \sum_{j \ge 1} \E[(h_{u} - h_n,e_j)^2]\lambda_j^{-1-\eta} \le c(1+ n^2) e^{- c'n } \sum_{j=1}^\infty \lambda_{j}^{-1- \eta}
\end{multline}
and since $\sum_{j=1}^\infty \lambda_{j}^{-1- \eta}< \infty$ by Weyl's law
we deduce (by applying Markov's inequality and the Borel--Cantelli lemma) that $h_n$ is almost surely a Cauchy
sequence in $H^{-1 - \eta}$ along the integers, and hence converge to a limit $h_\infty$ almost
surely in $H^{-1 - \eta}$. Furthermore by the triangle inequality and
\eqref{E:controlnorm} we get $ \E ( \| h_\infty - h_n \|_{H^{-1 - \eta}} ) \le
C e^{-c'n}$. Then
using \eqref{E:controlnorm} again, we deduce
$$
\E [ \| h_{u} - h_\infty \|_{H^{-1 - \eta}} ] \le C (1+u^2) e^{- c'u}
$$
and hence $h_u$ converges in probability in $H^{-1 - \eta}$ to $h_\infty$. To get convergence of $\E [ \| h_{u} - h_\infty \|^k_{H^{-1 - \eta}}$ for any $k \ge 1$ a similar argument would work: one needs to consider $\E(h_t-h_u,e_j)^{2k}$ and hence there would be $k$ terms inside the integrals around  \eqref{eq:27}. We skip this here because an exact similar argument with minor modifications is done in the proof of \cref{thm:main_details} later. Furthermore we have that $h_\infty = \hg$ by considering the action on test functions and \cref{thm:pointwise_GFF}. This finishes the proof of \cref{P:conv_GFF_H} and hence also of \cref{T:winding_continuum}.
\end{proof}

\section{Discrete estimates on uniform spanning trees}
\label{S:UI}

The goal of this section is to gather the lemmas needed in \cref{S:proof_discrete} for the proof of the main result, \cref{T:winding_intro}. To make the purpose of the results in this section more clear, it will be useful for the reader to recall the general overview of the proof in \cref{S:overview}.
Recall that one additional difficulty comes from the fact that we need to deal with convergence of moments and not just convergence in law. Therefore we also need a priori estimates on the tails of our variables to use Cauchy--Schwarz bounds and dominated convergence theorems. In particular a bound on the tail of the winding of loop-erased random walk is derived in \cref{SS:winding}.

\subsection{Assumptions on the graph}\label{sec:assumption}
Let $\{G^\d\}_{\delta >0}$ be a sequence of planar infinite (directed, weighted) graphs embedded properly in the plane. This means that for any $\delta>0$ the embedding is such that
no two edges cross each other. (The reader may think of $\delta$ usefully as a ``mesh size" or microscopic scale). Vertices of the graph are identified
with some points in $\C$ given by the embedding. We allow $G^\d$ to have
oriented edges with weights. A continuous time simple random walk
$\{X_t\}_{t \ge 0}$ on such a graph
$G^\d$ is defined in the usual way: the walker jumps from $u$ to $v$ at
rate $w(u,v)$ where $w(u,v)$ denotes the weight of the oriented edge
$(u,v)$. Given a vertex $u $ in $G^\d$, let $\P_u$ denote the law of
continuous time simple random walk on $G^\d$ started from $u$.
For $A \subset \C$,
we denote by $A^\d$ the set of vertices of $\Gd$ in $A$.

 In this section,
$B(a,r)$ will denote the set $\{z:|z-a|<r\}$. For $A \subset \C$, denote by $A+z
:= \{z+x:x \in A\}$ to be the translation of $A$ by $z$. We assume $G^\d $ has the
following
properties.

\begin{enumerate}[{(}i{)}]

  \item \label{boundeddensity} \textbf{(Bounded density)} There exists $C$ such that for any $x \in \C$, the number of vertices of $G^\d$ in the square $x + [0,\delta]^2$ is smaller than $C$.

  \item \label{embedding} \textbf{(Good embedding)} The edges of the graph are embedded
  in such a way that they are piecewise smooth, do not cross each other and have uniformly bounded winding. Also, $0$ is a vertex.


\item\label{irreducibility}  \textbf{(Irreducible)} The continuous time random walk on $G^\d$ is irreducible in the
  sense that for any two vertices $u$ and $v$ in $G^\d$, $\P_u(X_1 =
  v)>0$.
\item \label{InvP} \textbf{(Invariance principle)}
The continuous time random walk $\{X_t\}_{t \ge 0}$ on $G^\d$
  started from $0$ satisfies:
\[
 {( X_{t/\delta^2})_{t \ge 0}} \xrightarrow[\delta \to 0 ]{(d) }
(B_{\phi(t)})_{t \ge 0}
\]
where $(B_t, t \ge 0)$ is a two dimensional standard Brownian motion in $\C$ started from
$0$, and $\phi$ is a nondecreasing, continuous, possibly random function
satisfying $\phi(0) = 0 $ and $\phi(\infty) = \infty$. The above convergence
holds in law in Skorokhod topology.

\begin{figure}
\centering
\includegraphics[scale = 0.4]{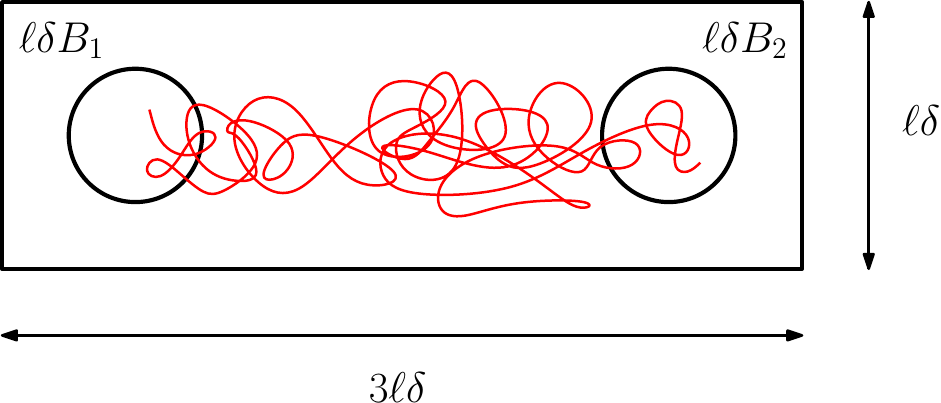}
\caption{An illustration of the crossing condition.}\label{fig:crossing}
\end{figure}

\item \label{crossingestimate} \textbf{(Uniform crossing estimate).}  
Let
    $\cR$
   be the horizontal rectangle $[0,3]\times [0,1]$ and $\cR'$ be the vertical
rectangle with same dimensions, and let $B_1 := B((1/2,1/2),1/4)$ be the
\emph{starting ball} and $B_2:= B((5/2,1/2),1/4) $ be the \emph{target ball}
(see \cref{fig:crossing}). There
exist constants $\delta_0 >0$ and $\alpha_0>0$ such that for all $z
\in \C, \delta>0$, $\ell \ge 1/\delta_0$, $v \in \ell \delta B_1$ such that $v+z \in \Gd$,
\begin{equation}
  \P_{v+z}(X \text{ hits }(\ell \delta  B_2+z) \text{ before exiting }  (\ell \delta \cR+z))
>\alpha_0.\label{eq:cross_left_right}
\end{equation}

The same statement as above holds for crossing from right to left, i.e., for
any $v \in \ell \delta B_2$, \eqref{eq:cross_left_right} holds if we replace $B_2$ by
$B_1$.
Also, the corresponding statements hold for the vertical rectangle $\cR'$.
\end{enumerate}

We point out that the invariance principle starting from zero, together with the crossing estimate, imply an invariance principle starting from arbitrary vertices $v^\d$ in $G^\delta$ converging to a point $z \in \C$ as $\delta \to 0$. Briefly, by the invariance principle a walk starting from zero has a positive probability of making a small circuit around $z$; so that a walk starting from $z$ can be coupled to the walk starting from zero the first time it hits this circuit (note that this time is a.s. finite by the crossing assumption).

However, we also point out that that the crossing estimate would not follow from the
invariance principle even if we assume it for all starting points; this would require a uniformity in the rate of convergence which does not necessarily hold in interesting examples for applications (in particular in the case of T-graphs which motivates our work).

\begin{remark}\label{R:conv_lerw}
In this paper we make crucial use of a result of Yadin and Yehudayoff \cite{YY} showing convergence of loop-erased random walk to SLE$_2$.
This holds under an assumption of invariance principle for simple random walk. However we need to spare a few words on their paper since they do not state their main theorems with quite the level of generality that we need here. Here are the points to note in order to check that their proofs extend to our setting.

1. They considered \emph{backward} loop
  erased random walk, whereas we consider forward LERW (where loops
  are erased in chronological order). However, these have the same
  law, even when the graph is directed as is the case here (see
  \cite{Lawler}).

2. They consider scaled versions of a single infinite graphs instead of an arbitrary family of graphs with a scale parameter $\delta$. However this is just for ease of notation as the proofs never use the relation between the graphs at different scales and all estimates are uniform over the underlying graphs.

 3. The result of \cite{YY} is stated with $D = \D$ and $z = 0$, but this does not play a role in the proof. Notice in particular  that the key estimate on the Poisson kernel (\cite{YY}, Lemma 1.2) is stated with the generality we require, namely on an arbitrary domain and an arbitrary target point. More precisely, the target point is $0$ but the domain is an arbitrary domain which contains $0$: this of course amounts to the same thing as fixing and choosing an arbitrary fixed point inside a given domain. They also require that the inner radius of the domain (with respect to the target point $0$) is greater than $1/2$. Up to a change of scale, this amounts to requiring that the point $z$ is at positive distance from the boundary. The convergence in Lemma 1.2 of \cite{YY} is hence uniform in the domain if we assume that the distance from the boundary is bounded below. In our case we will only use the result of \cite{YY} at a finite number of points which sit in the support of a compactly supported function $f$ on $D$, so this assumption is certainly verified.

 See in particular \cite{YY}, Proposition 6.4 for a statement about the convergence of the driving function to Brownian motion in the general setup we require. Note that planarity of the graph plays a crucial role to prove this estimate. Also, the proof of tightness in the sense of Lemma 6.17 in \cite{YY} follows through in our situation with no significant modification.

\end{remark}

\begin{remark}
Let us briefly discuss the role of these assumptions. The invariance principle should be essentially a minimal assumption for the convergence. Indeed the Gaussian free field depends on the Euclidean structure of the plane and it is difficult to imagine any graph converging in a sense to the Euclidean plane without satisfying an invariance principle. In practice the invariance principle and irreducibility, together with the fact that there is no accumulation point, are exactly the assumptions needed for the convergence of the loop-erased random walk to SLE$_2$ from \cite{YY}.

Our main additional assumption is the uniform crossing estimate. It is used extensively to derive various \emph{a priori} estimates on the behaviour of the random walk, the uniformity over starting points and scale being a key factor for different multi-scale arguments. We believe however that there should be some room in our proofs to weaken this assumption.

The bounded density assumption is actually only needed for a union bound in the proof of \cref{lem:Schramm_finiteness}. It is clear from that proof that it would not be needed if the uniform crossing assumption was allowed to ``scale" with the local density of the graph.

For future reference, we note that the uniform crossing assumption can be rephrased equivalently by saying that there exist $\delta_0,\alpha_0$ such that for any $r>0$, for any $\delta \le r \delta_0$, the probability to cross $r\cR$ from $rB_1$ to $rB_2$ (left to right) and from $rB_2$ to $rB_1$ (right to left) is at least $\alpha_0$, and likewise for the vertical rectangle.

\end{remark}


\subsection{Russo--Seymour--Welsh type estimates }\label{sec:regularized_estimate}

Let $D$ be a domain with locally connected boundary. To define the wired UST in the discrete domain, we perform the following surgery.
For every oriented edge $(xy)$ which intersect $\partial D$, we add an extra auxiliary vertex at the first intersection point (when following the embedded edge $(xy)$). We then replace $(xy)$ by an oriented edge from $x$ to this auxiliary vertex, keeping the same weight.
 The \emph{wired graph} is the graph induced by all the vertices in $D^\d$ along with all the auxiliary vertices and then wiring (or gluing) together all the auxiliary vertices. We denote by $\partial D^\d$ all the edges with one endpoint being an auxiliary vertex and another endpoint inside $D$. The wired UST $\cT^\d$ is defined to be  a uniform spanning tree on the wired graph. It is useful to think of the wired tree being sampled by Wilson's algorithm with the wired vertex being the initial root vertex. All the results in this section hold without the assumption
of CLT (just assumptions \ref{boundeddensity} and \ref{crossingestimate} from \cref{sec:assumption} are needed).

We denote by $A(x,r,R)$ the annulus $\{z\in \C:r<|z-x|<R\}$. Let $v \in
A(x,r,R)^\d$. The random walk trajectory from a vertex $v$  is the
union of the edges it crosses (viewed as embedded in $\C$). We say random walk
from $v$ does a \textbf{full turn} in $A(x,r,R)$
if the random walk trajectory intersects every curve in the plane starting
from $\{|z|=r\}$ and ending in $\{|z| = R\}$. We will write $X[a,b]$ for the random walk trajectory between times $a$ and $b$. We will allow ourselves to see $X[a,b]$ and the loop-erased walk $Y$ either as sequences of vertices, continuous paths in $\mathbb{C}$, or as sets depending on the place but this should not lead to any confusion.
For any continuous curve $\lambda \in \C$, with a slight abuse of terminology we will freely say that ``$(X_t, t \ge 0)$ \textbf{crosses (or hits)} $\lambda$ at time $t>0$" to mean that $X_t \neq X_{t^-}$ and the half-open edge $(X_{t^-}, X_t]$ intersects the range of $\lambda$.

{In this section and the next, we will always assume that the loop-erased walk is generated by erasing loops chronologically from a simple random walk. We will allow ourselves to refer to the simple random walk associated to a loop-erased walk without further mention of this.}

\begin{lemma}
  \label{lem:full_turn}
Fix $0<r<R$, $\Delta = R-r$. There exists constants $c>0$ and $\alpha > 0$
depending only on $R/r$ such that the following holds. For all $\delta \le c r \delta_0$ where $\delta_0$
is as in \cref{crossingestimate}, for
all $x \in \C$ and $v \in A(x,r+\Delta/3,R-\Delta/3)^\d$, the
probability that the random walk starting at $v$ does a full turn before
exiting $A(x,r,R)$ is at least $\alpha$.


\end{lemma}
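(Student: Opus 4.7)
The plan is to apply the uniform crossing estimate from assumption~\ref{crossingestimate} iteratively along a finite chain of horizontal and vertical rectangles whose union contains a simple closed path winding once around $x$ inside $A(x,r,R)$. Each individual crossing succeeds with probability at least $\alpha_0$, so by chaining $N$ crossings together using the strong Markov property the full turn is realised with probability at least $\alpha_0^{N}$, where $N$ depends only on $R/r$.

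For the geometric construction, I will fix a scale $s$ of order $\Delta$, say $s := \Delta/30$, and build $N$ rectangles $R_1,\dots,R_N$ of dimensions $3s \times s$ (each either horizontal or vertical), all contained in the middle sub-annulus $A(x, r+\Delta/4, R-\Delta/4)$, arranged so that: (a) for each $i$ the target ball $B_2^{(i)}$ of the crossing in $R_i$ lies inside the starting ball $B_1^{(i+1)}$ of the crossing in $R_{i+1}$ (indices taken cyclically); (b) $R_1\cup\dots\cup R_N$ contains a simple closed curve around $x$ separating $\{|z-x|\le r\}$ from $\{|z-x|\ge R\}$. Concretely one can cover an axis-aligned square annulus inscribed in $A(x, r+\Delta/4, R-\Delta/4)$ by finitely many horizontal and vertical rectangles of size $3s\times s$ with sufficient overlap, the number $N$ of which depends only on $R/r$.

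Given a starting vertex $v\in A(x,r+\Delta/3,R-\Delta/3)^\d$, I will additionally prepend one or two ``entry'' rectangles whose starting ball contains $v$ and whose target ball lies in $B_1^{(1)}$; this is possible since $v$ lies in the middle third of the annulus. The mesh condition $\delta\le Cr\delta_0$, with $C=C(R/r)$ chosen so that $\delta/s\le\delta_0$, ensures that after rescaling each of these rectangles by $1/s$ assumption~\ref{crossingestimate} applies to it. By the strong Markov property applied inductively at the hitting times of the successive target balls, the probability that the walk traverses the entire chain (entry rectangles followed by $R_1,\dots,R_N$) without first exiting their union is at least $\alpha_0^{N+O(1)}=:\alpha$; on this event the concatenated trajectory lies in $\bigcup_i R_i\subset A(x,r,R)$ and contains the closed winding curve from (b), hence crosses every simple arc joining $\{|z-x|=r\}$ to $\{|z-x|=R\}$, i.e.\ the walk performs a full turn.

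The main obstacle lies in the geometric construction when $R/r$ is close to $1$: the annulus is then thin and $N$ grows like $1/(R/r-1)$, but it remains finite for each fixed ratio. The restriction to horizontal and vertical rectangles (rather than arbitrary rotations) is harmless because a square-annular path suffices, and the starting vertex $v$ can always be absorbed into the starting ball of an appropriately placed entry rectangle.
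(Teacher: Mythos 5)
Your proposal is essentially the same as the paper's proof: in both cases one constructs a finite chain of $\varepsilon$-scaled horizontal/vertical rectangles inside the middle sub-annulus whose successive starting and target balls link up, with the starting ball containing $v$, and then applies the uniform crossing estimate together with the strong Markov property at each stage to obtain a lower bound $\alpha_0^{N}$ where $N$ depends only on $R/r$. The only cosmetic difference is that you explicitly prepend ``entry'' rectangles to reach the chain from $v$, whereas the paper folds this into the requirement that $v$ lies in the starting ball of $R_1$.
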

\begin{proof}
We use the uniform crossing assumption here and use the notations and
terminology as described in \cref{sec:assumption}. It is easy to see that we can
find a sequence of rectangles $\cR_1,\cR_2,\ldots,\cR_k \subset
A(x,r+\Delta/4,R-\Delta/4)^\d$ where each such rectangle is a rectangle of the
form $\ve \cR+z$ or $\ve \cR' +z$ (i.e. a scaling and translation of $\cR$ or $\cR'$)
such that the starting ball of $\cR_i$ coincides with the target ball of
$\cR_{i-1}$, $v $ is in the starting ball of $\cR_1$ and the following holds.
If the simple random walk iteratively moves from the starting ball to the
target ball of $\cR_i$ for each $i =1,\ldots,k$ such that the starting vertex of
$\cR_{i+1}$ is the vertex where the walk enters the target ball of $\cR_i$, then
the walker accomplishes a full turn in $A(x,r,R)$. Here we can choose the
scaling $\ve$ as a function of the ratio $r,R$ and the number $k$ to be bounded
above by a constant $k_0(R/r)$. Applying the uniform crossing estimate and
the Markov property of the walk, we see that this probability is bounded below
by $\alpha_0^{k_0}$, thus completing the proof.
\end{proof}

Actually we will need estimates such as \cref{lem:full_turn} to hold even when we condition on the exit point of the annulus which we will prove now. The first step is to prove a conditional version of the uniform crossing estimate.

\begin{lemma}\label{lem:cross_conditioned}
Fix $0 < r < R$ and $\epsilon <(R-r)/3$. There exist constants $c = c(R/r, \epsilon/r) >0, \alpha= \alpha( R/r, \epsilon/r) > 0 $ such that if $\delta \le c r \delta_0$ and $x,y \in \C$, the following holds. Let $\tau$ be the stopping time when the random walk exits $A(x, r, R)^\d$. Let $\cR$ be a rectangle of the form $y +[0,3\eta]\times[0,\eta]$ such that $\cR \subset A(x, r+\epsilon, R-\epsilon)$ and $\eta \ge \epsilon $. Let $B_1$ and $B_2$ be balls defined as in the uniform crossing estimate, i.e $B_1 = y + B((\frac{\eta}{2}, \frac{\eta}{2}), \frac{\eta}{4})$ and $B_2 = y + B((\frac{5\eta}{2}, \frac{\eta}{2}), \frac{\eta}{4})$. For all $v \in B_1^\d$ and $u \in \partial A(x,r, R)^\d$ such that $\P_v[X_\tau = u] > 0$,
\[
\P_v[\text{$X$ hits $B_2$ before exiting $\cR$} | X_\tau = u] > \alpha .
\]
\end{lemma}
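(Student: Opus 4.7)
The strategy is to reduce the conditional crossing probability to a discrete Harnack-type comparison via Bayes. Let $\sigma$ denote the hitting time of $B_2$ and $\tau_R$ the exit time of $R$, and set $h(z) := \P_z[X_\tau = u]$, so that $h$ is a nonnegative function, discrete harmonic on the interior of $A(x,r,R)^\d$. Since $R \subset A(x, r+\ve, R-\ve)$, the event $\{\sigma < \tau_R\}$ implies $\sigma < \tau$, so by the strong Markov property at $\sigma$,
$$
\P_v\big[\sigma < \tau_R,\, X_\tau = u\big] \;=\; \E_v\big[\mathbf{1}_{\sigma < \tau_R}\, h(X_\sigma)\big] \;\ge\; \Big(\min_{w \in B_2^\d} h(w)\Big) \cdot \P_v[\sigma < \tau_R].
$$
The last factor is bounded below by $\alpha_0$ by the unconditional uniform crossing estimate \eqref{eq:cross_left_right}. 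Dividing through by $h(v) = \P_v[X_\tau = u] > 0$, the lemma reduces to establishing the inequality
$$
h(w) \;\ge\; c\, h(v) \qquad \text{for all } v \in B_1^\d,\ w \in B_2^\d,
$$
with $c > 0$ depending only on $R/r$ and $\ve/r$.

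This is a uniform Harnack inequality for nonnegative discrete harmonic functions on $A(x,r,R)^\d$, comparing values at points in the bulk $A(x, r+\ve, R-\ve)$. The plan is to prove it by the classical chaining argument: connect any $v \in B_1^\d$ to any $w \in B_2^\d$ by an overlapping chain of balls $B(z_i, \rho_i)$ entirely contained in $A(x, r+\ve/2, R-\ve/2)$, of number and radius ratios bounded in terms of $R/r$ and $\ve/r$, and apply on each consecutive overlap a \emph{local} Harnack estimate asserting that $h(z_i)$ and $h(z_{i+1})$ are multiplicatively comparable. The local Harnack estimate itself follows from the uniform crossing assumption \ref{crossingestimate}: for any ball $B(z,\rho) \subset A(x,r,R)$, the crossing estimate applied to rectangles inscribed in $B(z,\rho)$ implies that the random walk started at any point of $B(z,\rho/2)$ reaches any fixed sub-ball of $B(z,\rho/2)$ before exiting $B(z,\rho)$ with probability bounded below (uniformly, once $\delta$ is small enough relative to $\rho\delta_0$). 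Combined with the mean-value property $h(z') = \E_{z'}[h(X_{\tau_{B(z,\rho)}})]$, this yields the desired multiplicative comparison.

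The main technical obstacle is the careful derivation of the local Harnack inequality from the crossing assumption alone, especially given that the walk on $G^\d$ is potentially directed and the graph only has bounded (not uniform) vertex density. However, the derivation of Harnack-type inequalities from RSW-type estimates is by now classical and all steps go through in our setup, with constants depending only on the geometric ratios, as required. An alternative would be to argue via \cref{lem:full_turn}: from any bulk point the walk performs a full turn in a thin sub-annulus with positive probability, and by iterating this, the hitting distributions of a fixed ``middle'' circle starting from any two bulk points are comparable, which again upgrades to the Harnack bound on $h$ via the strong Markov property.
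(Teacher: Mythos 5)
Your reduction to the Harnack-type comparison via Bayes and the strong Markov property is correct and matches the first part of the paper's argument. The gap is in the derivation of that comparison, that is, in showing $h(w) \ge c\, h(v)$ for nonnegative discrete harmonic $h$ and arbitrary bulk points $v,w$.

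Your primary proposal --- chaining with a local Harnack estimate derived from hitting probabilities --- does not close. If the walk from $w$ reaches a small ball $B(v,\rho')$ before exiting $B(z,\rho)$ with probability $\ge c_1$, the mean-value property only gives $h(w) \ge c_1 \inf_{B(v,\rho')^\d} h$. But $\inf_{B(v,\rho')^\d} h$ has no reason to be comparable to $h(v)$: that is itself a Harnack statement, so the argument is circular. Iterating at smaller scales only degrades the constants and never reaches the single vertex $v$; to land exactly at $v$ one would need a pointwise Green's-function lower bound (a Poisson-kernel comparison), which is precisely what the crossing assumption alone does not hand you, especially on a directed weighted graph. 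Your alternative --- that full turns force the hitting distributions on a fixed middle circle from two bulk points to be comparable --- is also unjustified: a full turn tells you the trajectory separates the two boundary circles topologically, but it says nothing about \emph{where} on an intermediate circle the walk is at a given crossing time; the hitting measure could be entirely concentrated at one arc.

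The ingredient you are missing, and which the paper uses, is a \emph{monotone path}. Since $h$ is discrete harmonic and nonconstant, from $v$ one can follow a neighbour-to-neighbour path $\gamma$ along which $h$ is nondecreasing, all the way to $\partial A(x,r,R)^\d$. Now the full-turn idea does the work: if the walk from $w$ performs a full turn in the thin inner sub-annulus $A(x,r,r+\epsilon)$ and a full turn in the thin outer sub-annulus $A(x,R-\epsilon,R)$ before exiting $A(x,r,R)$ --- which has probability $\ge\beta(R/r,\epsilon/r)$ by \cref{lem:full_turn} --- then it must cross any path joining the two boundary circles, in particular $\gamma$, before exiting. On that event $X_{\tau_\gamma}\in\gamma$, hence $h(X_{\tau_\gamma})\ge h(v)$ by monotonicity of $h$ along $\gamma$, and optional stopping gives $h(w) = \E_w[h(X_{\tau_\gamma})] \ge \beta\, h(v)$. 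This replaces the uncontrollable pointwise hitting with hitting a large one-dimensional set on which $h$ is a priori bounded below, which is exactly what the crossing estimate can deliver.
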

\begin{proof}
The following argument is inspired by \cite{YY}. Let $h(v) = \P_v[X_\tau = u]$. We start by giving a rough bound on $h$ restricted to $A(x, r+\epsilon, R-\epsilon)^\d$. Let us fix $v, v' \in A(x, r+\epsilon, R-\epsilon)^\d$. Since $h$ is harmonic, there exists a path $\gamma = \{v, v_1, \ldots\}$ from $v$ to $\partial (A(x,r, R)^\d)$ along which $h$ is nondecreasing. Also since $h$ is harmonic and bounded, if $\tau_\gamma$ denotes the hitting time of $\gamma \cup \partial A(x, r, R)$ by a simple random walk, we have
\[
h(v') = \E_{v'}[ h(X_{\tau_\gamma}) ]\geq h(v) \P_{v'}[X_{\tau_\gamma} \in \gamma].
\]
Using the crossing estimate a bounded number of times as in the proof of \cref{lem:full_turn}, it is clear that there exists a constant $\beta = \beta(R/r, \epsilon/r)$ independent of $\delta$ and $v'$ such that
\begin{multline*}
\P_{v'}[\text{$X$ does a full turn in $A(x, r, r+\epsilon)$ and}\\
 \text{in $A(x, R-\epsilon, R)$ before exiting $A(x, r, R)$}] \geq \beta .
\end{multline*}
We see that on the above event we have $X_{\tau_\gamma} \in \gamma$ so we have proved the Harnack inequality
\[
 \forall v, v' \in A(x, r+\epsilon, R-\epsilon)^\d,\, \beta h(v) \leq h(v') \leq (1/\beta )h(v).
\]
Now together with the Markov property and the uniform crossing estimate this gives
\begin{multline}
\P_v[\text{$X$ hits $B_2$ before exiting $\cR$ and $X_\tau = u$}] \\ \geq \P_v(\text{$X$ hits $B_2$ before exiting $\cR$}) \inf_{v' \in B_2^\d} h(v')\geq \alpha \beta h(v).
\end{multline}
Dividing by $h(v)$, the proof is complete.
\end{proof}

Using a bounded number of rectangles to surround the center $x$ in $A(x,r,R)$
as in \cref{lem:full_turn}, we get the following corollaries:
\begin{corollary}
  \label{lem:full_turn_conditioned}
Suppose we are in the setup of \cref{lem:cross_conditioned}. Let $\tau$ be the
stopping
time when the random walk exits $
A(x,r,R)^\d$. Let $v \in A(x, r+ \eps, R- \eps)$ and $u \in \partial A(x,r,R)^\d$ such that
$\P_v(X_\tau = u) > 0$. Then if $\delta < cr\delta_0$, $$\P_v( X \text{ does a full turn in }A(x,r,R) | X_\tau = u)\ge \alpha= \alpha(R/r) >0.$$
\end{corollary}

\begin{corollary}\label{lem:hitting_cond}
Fix $0 < r < R$. There exists a constant $c = c(R/r) >0$ and $\alpha(R/r)>0$ such that if $\delta \le cr\delta_0$  and $x \in \C$, $v \in A(x, r + \frac{R-r}{3}, R - \frac{R-r}{3})^\d$ and if $u$ is such that $\P_v(X_\tau = u) > 0$, where $\tau$ is the exit time of $B(x, R)$,
\[
	\P_v[\text{$X$ enters $B(x, r)$ before exiting $B(x, R)$} | X_{\tau} = u] \ge \alpha.
\]
\end{corollary}

The next lemma establishes an exponential tail for the winding of the simple random walk in an annulus conditioned to exit at a vertex, a key estimate to get an exponential tail on the winding of loop-erased random walk (\cref{prop:tail_winding2}).
Recall that we write $X[t, t']$ for the random walk path between times $t$ and $t'$ and $W(\gamma, x)$ for the topological winding of a path $\gamma$ around $x$.
\begin{lemma}\label{lem:cond_winding}
Fix $0<r <R$.
There exists
$\alpha =\alpha(R/r) \in (0,1)$ and $c$ such that for all $x \in \C$, $\delta \in (0, c r
\delta_0)$, $v \in A(x, r + \frac{R-r}{3}, R - \frac{R-r}{3})^\d$,
and for all $u$ such
that $\P_v(X_\tau = u) > 0$ where $\tau$ is the exit time of $A(x,r, R)$,
\[
\forall n \ge 1, \quad \quad \P_v\big(\sup_{
 \cY \subset X[0, \tau]}
\abs{W( \cY, x) }\geq n \big| X_\tau = u
\big) \leq C (1-\alpha)^n .
\]
where the supremum is over all continuous paths $\cY$ obtained by erasing portions from $X[0, \tau]$.
\end{lemma}

\begin{proof} The proof is technical and can be found in Lemma D.1 of the supplementary. The main idea is similar to the Harnack inequality in Lemma \ref{lem:cross_conditioned}: we show that there is a curve cutting the annulus such that every time we wind around $x$ in the annulus we hit this curve and there is then a positive probability to escape the annulus without increasing the winding number.
\end{proof}

\cref{lem:cond_winding} will allow us to control the random walk at all scales except the biggest one (the annulus around the center which intersects the complement of $D$) since in reality we stop the walk when it exits $D$; the following lemma allows us to control this largest scale.

\begin{lemma}\label{lem:winding_macro}
Fix $0<r <R$.
There exists
$\alpha =\alpha(R/r) >0$ and $c$ such that for all $x \in \C$, $\delta \in (0, c r
\delta_0)$, $v \in A(x, r + \frac{R-r}{3}, R - \frac{R-r}{3})^\d$,
writing $\tau$ for the exit time of $D\setminus B(x, r)$,
\[
\forall n \ge 1, \quad \quad \P_v\big(\sup_{
 \cY \subset X[0, \tau]}
\abs{W( \cY, x) }\geq n \big| X_\tau \in \partial D^\d
\big) \leq C (1-\alpha)^n ,
\]
and for all $u \in B(x, r)$ such that $\P( X_\tau = u ) > 0$,
\[
\forall n \ge 1, \quad \quad \P_v\big(\sup_{
 \cY \subset X[0, \tau]}
\abs{W( \cY, x) }\geq n \big| X_\tau= u
\big) \leq C (1- \alpha)^n .
\]
In both cases, the supremum is over all continuous paths $\cY$ obtained by erasing portions from $X[0, \tau]$.
\end{lemma}

Note that when we condition on exiting the domain, it is essential that we do not condition on the precise exit point. Indeed the stronger statement where we condition on this exit point does not necessarily hold if the domain itself winds many times around $x$.

\begin{proof}
Details are similar to \cref{lem:cond_winding} and can be found in Lemma D.2.. \end{proof}


Finally recalling the definition of $\gamma_z$ from \eqref{eq:general_D}, we need to bound the winding along the boundary of a domain $D^\d$ from an arbitrary marked point to $X_\tau$ where $\tau$ is the hitting time of $\partial D^\d$. Once again, as in \cref{lem:winding_macro}, it is important here to note that we can only hope for a statement that is not conditional on $X_\tau$ since a priori the boundary winding can be arbitrarily large in some places -- but these will have small harmonic measure.

\begin{lemma}\label{lem:winding_boundary}
Let $\gamma_v^\d$ denote the branch of the tree between $v$ and the marked boundary vertex $x^\d$. There exists $\alpha> 0$ and $c,C>0$ depending only on the constants in the uniform crossing conditions \cref{crossingestimate}
such that the following holds. For all $\delta <c\delta_0R$,
\[
\forall n \ge 1, \quad \quad \P_v\big( | W(\gamma^\d_v[-1, 0], v) - \E  W(\gamma^\d_v[-1, 0], v) | \geq n \big) \leq C (1-\alpha)^n .
\]
\end{lemma}
\begin{proof}
The idea is to use the replica method, i.e., to consider two independent samples of $\gamma^\d_v$ and compare their boundary winding.
Roughly, we can form a simple loop by considering the two paths after their last intersection and the portion of the boundary joining them. Since any simple loop has winding bounded by $2\pi$, the winding of the portion of the boundary between the two paths is controlled by the winding of these two paths after their last intersection, whose tails are given by \cref{lem:winding_macro}. The details are in Lemma D.3 of the supplementary.
\end{proof}

\begin{remark}
We emphasise that in general the result of \cref{lem:winding_boundary} does not hold if we remove the centering, in the same way that the result cannot hold conditionally on $\gamma_v(0)$.
\end{remark}

Recall that we aim to decompose the winding into $h^\d = h_t^\d + \eps^\d$ (see \cref{E:overview}).
We have already dealt with the limit of $h_t^\d$ as $\delta \to 0$ in the continuum part. It now remains to say that $\ve^\d$ does not contribute because when $x \neq x'$ and $t$ is large, $\ve^\d(x)$ and $\ve^\d(x')$ are nearly independent. This is proved in \cref{sec:coupling} by constructing a coupling of the sub tree around $x$ and $x'$ with independent variables. This coupling is built by sampling tree branches in the right order using Wilson algorithm and analysing carefully which part of the graph the random walks visits while performing the algorithm. In particular, a crucial step is to control the probability that the loop-erased walk from $x$ comes close to $x'$ which we now prove.

\begin{prop}
 \label{prop:discrete_coming_close}
 There exists constants $c > 0$ and $\alpha >0 $ depending only on the constants $\alpha_0, \delta_0$ in the uniform crossing assumption \cref{crossingestimate} such that the following holds.
Let $D \subset \C$ be a domain and let $u,v \in D$. Let $r = |u-v| \wedge
\dist(v,\partial D) \wedge \dist(u,\partial D)$. Let $v^\d$ be the closest vertex to $v$ in $G^\d$. Let $\gamma$ be a loop erased walk
starting from $v^\d$ until it exits $D^\d$. For all $\delta \in (0,c\delta_0]$, for all $n \le \log_4( cr\delta_0 /\delta)-1$
in $\N$,
\[
 \P(\dist (u, \gamma) <4^{-n}r) < (1-\alpha)^n.
\]
\end{prop}

\begin{proof}

We assume $|u-v|=r$ for otherwise we can wait until the simple
random walk comes closer to $u$. The idea for the proof is the following. If
the loop erased walk comes within distance $4^{-k}r$ to $u$, then after the \emph{last} time the random walk was within distance  $4^{-k}r$ to $u$, it crossed $k$ annuli without performing a full turn. The probability of this event is exponentially small in $k$ via \cref{lem:full_turn_conditioned}. Some care is needed since the above time is obviously not a stopping time.

Now we write the details.
  Let $i_{\max}(\delta) =
\lfloor \log_4(c\delta_0r/\delta) \rfloor$. Let $\{C_i\}_{0\le i \le i_{\max} }
$ denote the circle of radius $4^{-i}r$ around $u$ and define $C_{-1} =
\partial D$. We inductively define a sequence of times $\{\tau_k\}_{k \ge 0}$ as
follows. We have $\tau_0=0$. Having defined $\tau_k$ to be a time
when the random walk crosses (or hits) some circle $C_{i(k)}$, we define
$\tau_{k+1}$ to be the smallest time when
 leaving the annulus defined by $C_{i(k)-1}$ and $C_{i(k)+1}$, and
define $i(k+1)$ to be the index of the circle by which the random walk leaves the annulus. If $i (k) = i_{\max}$,
we define $\tau_{k+1}$ to be smallest time after $\tau_k$ when the walk leaves the ball defined by $C_{i_{\max} -1}$. We stop if we leave $ D$ and let $N$ be the
largest index of $\tau$ after which we stop.

Let $\cS := (X_{\tau_k})_{0\le k \le N}$ denote the sequence of crossing positions of the $C_i$.
Notice that conditioned on any realisation of $\cS$, if $k<N$, the
simple random walk between $X_{\tau_k}$ and $X_{\tau_{k+1}}$ is a simple random
walk in the annulus $A(u,C_{i(k)-1},C_{i(k)+1})$ conditioned to exit at
$X_{\tau_{k+1}}$. Furthermore by the Markov property of the walk, conditioned on $\cS$, the portions of random walk $(X[\tau_k, \tau_{k+1}])_{0 \le k \le N-1}$ are independent.

On the event $\dist( u, \gamma )<4^{-n}r$, the sequence of positions
$\cS := \{X_{\tau_k}\}_{0\le k \le N}$ contains an index when the random walk
crosses $C_n$ since the loop-erased walk is obviously a subset of the walk. Let $\kappa$ be the index of the last crossing of $C_n$ by the walk and let $\gamma'$ be the path obtained by erasing loops from $X[0, \tau_\kappa]$, i.e the ``current loop-erased path" at time $\tau_\kappa$. On the event $\dist(u, \gamma)<4^{-n}r$, necessarily the random walk did not hit $\gamma'$ after $\tau_\kappa$, otherwise the part of the path closer to $u$ would have been erased. In particular the random walk did not do any full turn after time $\tau_\kappa$. Also by construction we have $N - \kappa \geq n$.

Therefore we see that conditioned on the sequence of positions $\cS$, the event $\dist(u , \gamma)<4^{-n}r$ is included in the event that there was no full turn in the last $n$ intervals $[\tau_k, \tau_{k+1}]$. Choosing the constant $c$ to be the constant from  \cref{lem:full_turn_conditioned} associated with annuli of aspect ratio $R/r = 16$,  we can apply that result for each annulus,
since we condition on $\cS$ and for each $i\le i_{\max}$, if $r_i$ is the radius of $C_i$, we have $\delta \le c r_i\delta_0$ by our choice of $i_{\max}(\delta)$. Hence using the independence noted above, the conditional probability of this is at most $(1-\alpha)^n$ for some $\alpha >0 $, which concludes the proof.
%
\end{proof}


\subsection{Tail estimate for winding of loop-erased random walk}
\label{SS:winding}

Let $\gamma_v^\d[0,\infty)$ denote the branch of the wired UST in $D^\d$ from $v$ to the wired vertex. As in the continuum we parametrise it by its capacity plus $\log R(v,D)$, so that time 0 corresponds to being on $\partial D$, and time $\infty$ to hitting $v$. 
We
prove that $W(\gamma_v[t,t+1]^\d,v)$ has exponential tail uniformly in $\delta$ and $t$. (Note that here we do not consider the contribution coming from the winding of the boundary between the marked boundary vertex $x^\d$ and $\gamma^\d_v(0)$.)

%
%


\begin{prop}
 \label{prop:tail_winding2}
There exist constants $C, c > 0$ depending only on the constant in the uniform crossing assumption \cref{crossingestimate} such that the following holds.
 For all $v \in D^\d$, for all $t \geq 0$,
for all $\delta < c e^{-t} d(v, \partial D^\d) \delta_0 $ and $n \ge 1$,
\[
 \P\Big (\sup_{t \leq t_1, t_2 \leq t+1}\abs{W(\gamma_v[0,t_1]^\d,v) - W(\gamma_v[0,t_2]^\d,v)} >n \Big) < Ce^{-cn} .
\]
\end{prop}

We first set up a few notations; these are analogous to the ones in the proof of \cref{prop:discrete_coming_close} except that here we are considering circles of growing size.

Let $r_i = (4e)^{i-1}e^{-t}R(v, D)$ for $i \ge -1$ and
$r_{-2} = 0$. Let $C_i$ be the circle of radius $r_i$ centered at $v$ as long
as $C_i \subset D$. As soon as  $C_i $ is not a subset of $D$, define $C_i =
\partial D$ (call the maximum index $i_{\max}$ {and allow us to make small abuse of notations such as calling $C_{i_{\max}}$ a circle or writing $D \setminus B(v, r_{i_{\max}-1}) = A( r_{i_{\max}-1}, r_{i_{\max}})$}). Let $X$ be a random walk from $v$ run until it leaves the domain $D$.
Let $Y(t)$ be the loop-erasure of $X$, reparametrised by capacity seen from $v$  plus $\log R(v,D)$ (so in particular $Y(\infty) = v$). We emphasise that we are indexing circles starting from $i=-2$.

We inductively define a sequence of times $\{\tau_k\}_{k \ge 0}$ as
follows. We have $\tau_0=0$ and $i(0) = -2$, $\tau_1$ is the time the random walk crosses $C_{-1}$ and $i(1) = -1$. Having defined $\tau_k$ to be the smallest time when the random walk crosses (or hits) some circle $C_{i(k)}$, we define
$\tau_{k+1}$ to be the smallest time when it hits $C_{i(k)-1}$ or $C_{i(k)+1}$ and
define $i(k+1)$ to be the index of the circle it crosses. When $i(k) = -1$ we define $\tau_{k+1}$ only as the next crossing of $C_0$.

Let $k_{\text{exit}}$ be the first index such that $i( k ) = i_{\max}$, i.e the index corresponding to the exit from $D^\d$ and we let
 $\cS = (X_{\tau_k})_{ 0 \leq k < k_{\text{exit}} }$ the sequence of crossing positions, not including the exit position.
For any $j$, we also define $\cV_{j}$ to be the sequence of crossings of the circle $C_{j}$, i.e $\cV_{j} = \{ k | i(k) = j\}$.
We will call sets of the form $X[\tau_k, \tau_{k+1}]$ \textbf{elementary piece of random walk} and we note that conditionally on $\cS$ they are all independent. In this proof we call \textbf{crossing} a time of the form $\tau_k$.


Actually we prove something stronger than the statement in the theorem. Let $B_0$ be the disc with boundary $C_0$. We look at the portion of $Y$ \emph{outside} $B_0$ counted from the first time it enters $C_1$ until the last time it enters $C_0$ and bound the maximal winding of any sub-portion of this part of the path. Note that from Koebe's 1/4 theorem for any $t \geq 0 $, the circle $C_0$ does not intersect $\partial D$ and the times $t$ and $t+1$ happen in the interval above. 

\begin{lemma}\label{lem:tail_winding3}
There exist constants $C, c > 0$ depending only on the constant in the uniform crossing assumption \cref{crossingestimate} such that the following holds.
 For all $v \in D^\d$, for all $t \geq 0$, for all $\delta < C e^{-t}d(v, \partial D^\d) \delta_0$, let $\cY$ denote the portion of $Y$ from the first time it enters $C_1$ until the last time it enters $C_0$, then
\[
 \forall n \geq 1, \quad \quad \P\Big (\sup_{\tilde \cY \subset \cY \cap B_0^c}\abs{W(\tilde \cY,v)}>n \Big) < Ce^{-cn} .
\]
\end{lemma}

We now prove \cref{lem:tail_winding3} which immediately implies \cref{prop:tail_winding2}.
 The main idea of the proof of \cref{lem:tail_winding3} will be to show that this portion of the loop erased walk can be generated by erasing loops from a small number of elementary pieces of random walk. Then \cref{lem:cond_winding} will show that each piece does not contribute too much to the winding.

To control the number of pieces, we will use two elements. First conditionally on $\cS$ we will argue that we only need to look at the last few visits of the simple random walk to any circle because everything else was erased by a loop. Secondly we will show that the sequence $\cS$ is not too badly behaved even when we are looking close to the last visit to $C_0$. Note that this is non trivial because the last visit to $C_0$ is very far from being a stopping time. We now proceed to the actual proof, writing each of these steps as lemmas.

\medskip

We first note some deterministic facts about the loop erasure (\cref{claim:erasure}), therefore until further notice we work on a given realisation of the random walk. Let $k_{\max}-1$ be the index of the last crossing of $C_1$ by the walk (or $k_{\text{exit}}$ if $C_1 = \partial D$).
Only indices less than $k_{\max}$ will be of interest to us, as only those can contribute to the loop-erasure in the range we are considering.


  Let $\kappa_{-1}$ be the last $k$ such that $i(k) = -1$ and in the interval $[\tau_{k}, \tau_{k+1}]$ the random walk did a full turn in $A(v,r_{-1}, r_0)$. If there was no such full turn, set $\kappa_{-1} = 0$. Now we define $\kappa_i$ inductively as follows.
  \begin{itemize}
  \item The time $\kappa_i$ is the last time after $\kappa_{i-1}$ (but still before $k_{\max}$) where a full turn occurs in the annulus $A(v, r_i, r_{i+1})$.

\item   In case no such full turn occurs, $\kappa_i$ is the index of the first crossing of $C_i$ after $\kappa_{i-1}$ (but before $k_{\max}$).

\item Finally, if there is no crossing of $C_i$ between $\kappa_{i-1}$ and $k_{\max}$ define $\kappa_i = +\infty$.

    \end{itemize}

Finally we define $I = \max \{  i : \kappa_i < \infty \}$.
The idea is that a full turn erases a lot of the past. After a full turn we may still visit larger scales without doing a full turn in those scales and these might not get erased, so we will need to consider them. This is the role of the random variable $I$ which gives a bound on the largest such scale.

For every $i \leq I$, we let $\cG_i$ be the set of visits to $C_i$ after $\kappa_{i-1}$ but before $k_{\max}$, i.e, $\cG_i = \{ k \in \cV_i | \kappa_{i-1} \leq k \leq k_{\max} \}$. Observe that $\cG_i$ are ``good" indices which matter for the loop-erasure.

\begin{lemma}\label{claim:erasure}
Recall $\cY, B_0$ from \cref{lem:tail_winding3}. Then
\[
\cY \cap B_0^c \subset  \bigcup_{0\leq i \leq I} \bigcup_{k \in \cG_i} X[\tau_{k}, \tau_{k+1}].
\]
Furthermore, one can write $\cY {\cap B_0^c} = \cup_{i \le I} \cup_{k \in \cup \cG_i} \cY_{k}$ where
$\cY_k$ are disjoint intervals of the loop erased
random walk of the form $\cY_k = (Y_{j_k},Y_{j_k+1}, \ldots, Y_{j_k+i_k}) $ and $\cY_k \subset
X[\tau_k,\tau_{k+1}]$.
\end{lemma}

\begin{proof}
  The proof is by inspection, see Lemma D.4.
\end{proof}

The next step is to control the law of the size of the sets $\cG_j$, therefore we go back to considering $X$ as random.

\begin{lemma}\label{claim:size_G}
 There exist constants $C,c,c'>0$, such that for all $\delta \leq ce^{-t} \dist(v, \partial D^\d) \delta_0$, for all $n>0$,
 \[
 \P(\sum_{0 \le i \le I} \abs{\cG_i} \geq n) \leq C \exp(-c'n).
\]
\end{lemma}

\begin{proof}
This is the most delicate part of the proof of \cref{prop:tail_winding2} and is included in Lemma D.5 of the supplementary. To explain briefly:

-- it is easy to check that $I$ itself has geometric tail (conditionally on $\cS$, each $i$ such that $\kappa_i < \infty$ requires not making a full turn immediately after the last visit to $C_i$).

-- it is also immediate to see with a similar argument that the number of crossings of $C_i$ after $\tau_{\kappa_i}$ has geometric tail.

-- Therefore, to get an exponential tail on $|\cG_i|$ it remains to exclude the possibility that the walk oscillates many times between $C_i$ and $C_{i+1}$ before the next visit to $C_{i-1}$. Of course, the idea is to exploit the fact that every time the walk visits $C_i$ there is a positive chance to hit $C_{i-1}$ first rather than $C_{i+1}$. However this is technically tedious to implement since these visits are not stopping times and we can not directly condition on $\cS$ this time.
Instead we
choose to discover $\cS$ step by step by revealing only the portion of $\cS$ that is outside of $C_{i}$ and using \cref{lem:hitting_cond}.
\end{proof}

Now it is easy to complete the proof of \cref{lem:tail_winding3} using \cref{lem:cond_winding}. By \cref{claim:erasure} we can write $\cY \cap B_0^c = \bigcup_{k} \cY_k$ with for all $k$, $\cY_k \subset X[\tau_k, \tau_{k+1}]$. Therefore the winding around $v$ of any $\cY_k$ is bounded by the maximal winding difference between two times in $[\tau_k, \tau_{k+1}]$ which has uniform exponential tail by \cref{lem:cond_winding}
or \cref{lem:winding_macro} for the pieces in $\cG_{i_{\max}}$ and are independent since the walks are independent conditionally on $\cS$. Note that crucially this independence holds even when we do not condition on $\gamma^\d_v(0) = X(\tau_{k_{\text{exit}}})$, making \cref{lem:winding_macro} applicable. By \cref{claim:size_G} the number of terms in the union has exponential tail, so the proposition follows.
\qed


We now put together a consequence of the above estimates in a single lemma for ease of reference later on. Pick $v \in D^\d$ and $t \geq 0$. Recall the definition of $\gamma^\d_v[0,t]$ from \cref{prop:tail_winding2}. Now add to it a portion of the boundary $\partial D^\d$ from the marked boundary point on $\partial D^\d$ to $\gamma_v^\d(0)$
(i.e., the point where the branch hits the wired boundary). Parametrise the resulting curve as $\gamma^\d_v[-1,t]$.

\begin{lemma}\label{lem:truncated_UI}
For any $k \ge 1$, there exists a constant $A = A_k>0$ depending only on the constants in the crossing assumptions (\cref{crossingestimate}) and $k$ such that for all $\delta < ce^{-t}\dist(v, \partial D^\d)\delta_0$, we have
$$
\E\Big(\Big|W(\gamma_v^\d[-1,t),v  ) - \E [W( \gamma_v^\d[-1,t),v)] \Big|^k\Big) \le A(t + 1)^k
$$
\end{lemma}
\begin{proof}
Recall the definition of the circles $C_i$ used in the proof of \cref{lem:tail_winding3}. Note that \cref{lem:tail_winding3} provides exponential tail for the winding of $\gamma_v^\d$ from the first entry into $C_{i+1}$ until the first entry into $C_{i}$ for $i \ge i_{\max}-2$ (note that this is a continuous subportion of the loop erasure seen until last entry into $C_i$ and this portion is completely outside $C_i$), which immediately implies the same bound for the same centered random variable. Likewise, the first item of \cref{lem:winding_macro} provides exponential tail on the (centered) winding from $\gamma_v^\d$ until the first entry into $C_{i_{\max } -1}$. Finally \cref{lem:winding_boundary} provides exponential tail of the centered winding of $\gamma_v^\d[-1,0]$. Putting all of this together, we see that we need to bound the $k$th moment of a sum of $O(t+1)$ random variables with uniform exponential tails, which is elementary.
\end{proof}

\subsection{Local coupling of spanning trees}\label{sec:coupling}

Let
$z_1,\ldots,z_k \in D^\d$.  The goal of this section is to
establish a coupling between a wired uniform spanning tree $\cT^\d$ in
$D^\d$ and $k$ independent copies of full plane
spanning tree measure $\cT_i$ such that  for all $i$, there is a neighbourhood $N_i$ around $z_i$ on which $\cT^\d$ matches with $\cT_i^\d$.
The diameter of the
neighbourhoods $N_i$ are going to be random but nevertheless we will have a
good bound on the probability of the diameter being very small; typically the neighbourhoods will be of the optimal scale (i.e., not much smaller than the distance to the nearest vertex $v_j$ with $j \neq i$ or to the boundary). Note that \emph{a priori} it is
not even clear that the full plane local weak limit of a wired
spanning tree exists. For undirected graphs, the existence of this
limit follows from the theory of electrical networks \cite{pemantle}. However our setting
includes directed graphs where the electrical network theory no longer
applies. The existence of this limit will actually come out
of our coupling procedure.

The overall strategy will be to sample the spanning tree $\cT^\d$ in $D^\d$ and $(\cT_i^\d)_{1\le i \le k}$ using Wilson's algorithm. The coupling will mostly be achieved by using the same random walks for $\cT^\d$ and $(\cT_i^\d)_{1 \le i \le k}$. To achieve independence and obtain the tail estimate for the diameters of the neighbourhoods $N_i$, we will choose carefully the points from which we sample loop-erased walks and keep track of the distances from $\{z_i\}$ to the sub-tree discovered at any step. 


We start with a simple lemma regarding the hitting probability of random
walk. This is a reformulation of Lemma 2.1 from Schramm \cite{SLE} in our
setting.
\begin{lemma}
  \label{lem:SRW_hit}
  There exist constants $C, c, c' > 0$ such that
for all connected set $K \subset \C$ such that the diameter (in the metric
inherited from the Euclidean plane) of $K$ is at least $R$, for all $\delta \in (0,C \dist(v,K)\delta_0)$,
\[
\P_v(X \text{ exits $B(v,R)^\d$ before hitting $K^\d$})
\le c \left(\frac{ \dist(v,K)}{R}\right)^{c'}.
\]
\end{lemma}
\begin{proof}
  Let $C_i$ denote the circle of radius $2^{-i}$ around $v$ for $i \in
  \Z$. Consider a sequence of stopping times $\{T_k\}_{k \ge 0}$
  defined as in \cref{prop:discrete_coming_close}: if $T_k$ is the time when the
walk crosses
  $C_i$ then
  $T_{k+1}$ is the smallest time after $T_k$ when the simple random
  walk crosses $C_{i+1}$ or $C_{i-1}$. The number of circles which
  intersect $K$ is at least $c\log _2(\frac{R}{ \dist(v,K)})$ for some $c>0$. The
choice of $\delta$ is small enough for \cref{lem:full_turn} to apply for the
annuli bounded by these circles. Whenever the walk
  at $T_k$
  is in a circle $C_i$ such that both $C_{i-1}$ and $C_{i+1}$ are
  subsets of $D$ and
  intersect $K$, then the walk has probability at least $\alpha >0$ of
 performing a full turn in $A(v,2^{-i-1},2^{-i+1})$
  via \cref{lem:full_turn}. But doing such a full turn
 implies the walk must hit $K$. Hence the probability of the walk
 exiting $D^\d$ without hitting $K$ has probability at most
 $(1-\alpha)^{c'\log_2 (\frac{R}{ \dist(v,K)})}$ for some $c'>0$
which
 concludes the proof.
\end{proof}

Let $D$ be a fixed bounded domain, let $v \in D^\d$, and let $r$ be such that $B(v,r) \subset D$.
Using Wilson's algorithm, we now prescribe a way to sample the portion
of the wired
uniform spanning tree $\cT^\d$ of $D^\d$ which contains all the branches emanating from vertices in $ B(v,r/2)^\d$.
Consider
$\{\frac{r}{2} 6^{-j} \Z^2\}_{j
\ge 0}$, a sequence of scalings of the square lattice $\Z^2$ which
divides the plane into square cells.
At
step $j$, pick a vertex from each cell $f$ of $\frac{r}{2} 6^{-j} \Z^2$
which is farthest from $v$ in $B(v,\frac{r}{2}(1+2^{-j}))^\d \cap f$  (break ties arbitrarily) and is not chosen
in any previous step.
Call  $\cQ_j$ the set of vertices picked in step $j$. Now sample branches of $\cT^\d$ from each of these
vertices in any prescribed order via Wilson's algorithm, resulting in a partial tree $\cT^\d_j$. We continue
until we exhaust all the vertices in $B(v,r/2)^\d$. We call this algorithm the
\textbf{good algorithm} $GA_D^\d(r,v)$ to sample the portion of $\cT^\d$
containing all branches emanating from vertices in $B(v,r/2)^\d$ (and in
particular, the restriction of $\cT^\d$ to $B(v,r/2)^\d$). Note in particular
that $GA_D^\d(r,v)$ is sure to terminate after step $j=  \log_6( Cr/\delta)$,
where $C$ depends only on the constant appearing in the bounded density assumption
(assumption \ref{boundeddensity}).

\medskip The next lemma is similar to Schramm's finiteness theorem from \cite{SLE}. Roughly, this says that for all $\eps>0$, if we fix a $\rho$ sufficiently small depending only on $\eps$, and reveal the branches of the spanning tree at a finite number of points with density approximately $1/\rho$, then with high probability none of the remaining branches would have diameter greater than $\eps$. Schramm's finiteness theorem is originally stated for the diameter of the remaining branches of the spanning tree (which are loop-erased paths) but in fact the result holds for the underlying random walks themselves. Also the original theorem is interested in sampling the whole tree while we only want to sample $\cT^\d \cap B(v, r)$ for some $r$. Since we will need these properties later on, we write it for completeness, but the proof is exactly the
same as in \cite{SLE}.

\begin{lemma}[Schramm's finiteness theorem]\label{lem:Schramm_finiteness}
 Fix $\ve>0$ and let $D,v,r$ be as above. Then there exists a $j_0 =
j_0(\ve)$ depending solely on $\eps$ such that for all $j \ge j_0$ and all $\delta \le 6^{-j_0}
\delta_0 r$, where $\delta_0$ is as in \cref{crossingestimate}, the following holds with probability at least $1-\ve$:
\begin{itemize}
 \item The random walks emanating from all vertices in $\cQ_j$ for $j > j_0$ stay in
$B(v,r)$.
\item All the branches of $\cT^\d$ sampled from vertices in $\cQ_j
\cap B(v,r/2)$ for $j > j_0$ until they hit $\cT^\d_{j_0} \cup \partial D^\d$ have Euclidean
diameter at most $\ve r$. More precisely, the connected components of $\cT^\d \setminus \cT^\d_{j_0}$ within $B(v, r/2)$ have Euclidean diameter at most $\eps r$.
\end{itemize}

\end{lemma}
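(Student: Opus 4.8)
The plan is to follow the strategy of Schramm's finiteness theorem (Theorem 1.5 and its proof in \cite{SLE}), adapting it to our setting where we only sample the restriction of $\cT^\d$ to a ball $B(v,r/2)$ and where the random walk underlying loop-erased walks is the one satisfying our crossing assumptions rather than simple random walk on $\Z^2$. By scaling we may assume $r$ is a fixed constant (say $r = 1$). First I would fix a mesh scale: choosing $j_0$ large, the cells of $\frac12 6^{-j_0}\Z^2$ that intersect $B(v, \frac12(1+2^{-j_0}))$ number at most $C 6^{2j_0}$ (here assumption \ref{boundeddensity} is implicitly used only through the fact that $GA$ terminates, but the counting of cells is purely Euclidean). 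The key probabilistic input is the following: for a vertex $w \in \cQ_j$ with $j > j_0$, the branch of $\cT^\d$ from $w$ (equivalently, by Wilson's algorithm, the loop-erased walk from $w$ run until it hits $\cT^\d_{j_0} \cup \partial D^\d$) has small diameter with high probability, because $\cT^\d_{j_0}$ already forms a ``$\eps$-net'' of connected sets near $w$.

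More precisely, the heart of the argument is a two-part estimate. Part one: the underlying random walk from any $w \in \cQ_j$, $j > j_0$, must either exit $B(v,1)$ or travel Euclidean distance $\ge \eps$ before hitting $\cT^\d_{j_0} \cup \partial D^\d$; I would bound the probability of this by showing that $\cT^\d_{j_0}$ contains, near $w$, a connected set of diameter comparable to $6^{-j_0}$ (namely the branch from the $\cQ_{j_0}$-vertex in a nearby cell, together with whatever it attaches to), and then applying \cref{lem:SRW_hit} to conclude that the walk hits this set before leaving a ball of radius $\eps$ with probability $\ge 1 - c_0 (6^{-j_0}/\eps)^{c_1}$. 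Summing the failure probability over all $O(6^{2j_0})$ cells at level $j_0$ — wait, more carefully, over the at most $C/\delta^2$-many vertices where branches are drawn, but grouped dyadically by scale so that the contribution is dominated by a geometric series — gives a bound that can be made $< \eps$ by taking $j_0$ large; this is exactly where the bounded density assumption \ref{boundeddensity} enters, to control the number of terms in the union bound (as remarked in the paper). Part two: on the complement of this bad event, every branch drawn after step $j_0$ stays within distance $\eps$ of its starting cell, hence stays in $B(v,1)$ and has diameter $\le \eps$; and since the connected components of $\cT^\d \cap B(v,1/2)$ cut at $\cT^\d_{j_0}$ are unions of such branches attached along $\cT^\d_{j_0}$, a short topological argument (each component, being a subtree hanging off $\cT^\d_{j_0}$, is covered by finitely many of these small branches whose union is connected and meets $\cT^\d_{j_0}$) shows each component also has diameter $O(\eps)$; rescaling $\eps$ absorbs the constant.

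The remaining technical point is to make the ``$\cT^\d_{j_0}$ forms a good net'' claim precise and uniform. Here I would argue as in \cite{SLE}: after step $j_0$ of the good algorithm, every vertex of $\frac12 6^{-j_0}\Z^2$ inside $B(v, \frac12(1+2^{-j_0}))$ has had a branch drawn from it, and each such branch reaches either $\partial D^\d$ or $\cT^\d_{j_0-1}$, so near any point $w$ at distance $\le \frac12$ from $v$ there is, at Euclidean scale $6^{-j_0}$, a connected piece of $\cT^\d_{j_0} \cup \partial D^\d$ — either a nearby branch of positive diameter or the boundary itself. One must handle the two cases (nearby branch has large diameter vs. small diameter) exactly as Schramm does, using \cref{lem:full_turn} / \cref{lem:SRW_hit} to force a hit. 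I expect the main obstacle to be purely bookkeeping: organizing the union bound over the (many) vertices and scales so that it telescopes, and making sure all the crossing/hitting estimates are applied at scales $\ge c\delta/\delta_0$ so that the hypotheses of \cref{lem:SRW_hit} and \cref{lem:full_turn} (the constraints $\delta \le C\,\dist(v,K)\delta_0$ and $\delta \le 6^{-j_0}\delta_0 r$) are genuinely met for all $j > j_0$. Since the paper explicitly says ``the proof is exactly the same as in \cite{SLE}'', I would present it at the level of indicating these adaptations rather than reproducing every estimate.
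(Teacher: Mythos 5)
Your proposal has a genuine gap in the union bound. You set up the per-vertex estimate using $\cT^\d_{j_0}$ as the \emph{fixed} hitting target: a walk started at $w\in\cQ_j$ is at distance $\lesssim 6^{-j_0}r$ from some connected piece of $\cT^\d_{j_0}$, so by \cref{lem:SRW_hit} it escapes to distance $\eps r$ without hitting with probability $\lesssim (6^{-j_0}/\eps)^{c_1}$. This bound is \emph{independent of $j$}, and the number of vertices in $\bigcup_{j>j_0}\cQ_j$ is of order $(r/\delta)^2$. So your union bound is $\gtrsim (r/\delta)^2\,(6^{-j_0}/\eps)^{c_1}\to\infty$ as $\delta\to 0$; you cannot fix this by taking $j_0$ large once $\eps$ is fixed. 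Your parenthetical ``grouped dyadically by scale so that the contribution is dominated by a geometric series'' does not repair this: with a $j$-independent per-vertex bound, no regrouping makes the series converge.

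The paper's argument avoids this by changing the target with the scale: the walk from $w\in\cQ_j$ is required to hit $\cT^\d_{j-1}$ (not $\cT^\d_{j_0}$), which is within distance $4\cdot 6^{-j}r$ of $w$ by construction. Applying \cref{lem:SRW_hit} once gives a fixed escape probability $\le 1/2$ at \emph{scale} $C_0 6^{-j}r$, and iterating over $\sim j^2$ nested shells (valid because $j^2 6^{-j}<2^{-j}$) gives $\P(\cD(w,j))\le(1/2)^{j^2/C_0}$. This super-exponential decay beats the growth $|\cQ_j|=O(6^{2j})$, so $\sum_j|\cQ_j|(1/2)^{j^2/C_0}<\eps$ for $j_0$ large — and for the second item the same bound applies to $\cE(w,j)$ after noting that, conditionally on $\cT^\d_{j-1}$, the probability does not depend on the order of sampling within $\cQ_j$. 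Two further points you flag but do not resolve also require this structure: (a) for $j>j_{\max}:=\lfloor\log_6(C\delta_0 r/\delta)\rfloor$ the crossing estimate no longer applies below scale $\delta/\delta_0$, and the paper handles this by redefining $\cD(w,j)$ relative to $\cT^\d_{j_{\max}}$ and invoking the bounded-density assumption to count those vertices; (b) the component-diameter bound comes from summing along the chain of scales, $\sum_{j>j_0} j^2 6^{-j}r\le\eps r$, rather than from your looser ``covered by finitely many small branches'' argument, which would give a diameter bound that is not obviously $O(\eps r)$ without controlling how many branches are chained together.
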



\begin{proof}
  For $ j \ge 1$, the
  number of vertices in $\cQ_j$ is at most $c6^j$ where $c$ is a universal
constant. Let $  j_{\max}:= \lfloor \log_6(\frac{C\delta_0 r}{\delta})
\rfloor $. The choice of $j_{\max}$ is such that for $j \le j_{\max}$ our
uniform crossing assumption holds and in particular we can apply
\cref{lem:SRW_hit}.
Notice each vertex in $D^\d$ is within
  Euclidean distance
  $4\cdot 6^{-j}r$ from a vertex in $\cQ_{j-1}$.
   By
  \cref{lem:SRW_hit} and the choice of $\delta$, for $j \le j_{\max}$, there exists
  a $C_0$ such that the probability that the simple random walk from a vertex in $\cQ_j$ reaches Euclidean
  distance $C_06^{-j}r$ from its starting point without hitting $\cT^\d_{j-1}$ is at most $1/2$. Notice that
$j^26^{-j} < 2^{-j}$ for all $j \in \N$ and hence using the Markov property, we
can iteratively apply the same bound for
  the walk $j^2/C_0$ times (this is the reason why in the good algorithm we
sample from balls of decreasing size at each step).  This shows that the
probability that the random walk emanating from
a vertex $w$
  in $\cQ_j$ has diameter greater than $j^2 6^{-j}r$ (call this event
$\cD(w,j)$) is at most
  $ (1/2)^{j^2/C_0}$.



\def\jm{j_{\max}}

Recall that the bounds above hold for $j \le j_{\max}$. When $j >\jm$ and $w \in \cQ_j$ we define $\cD(w,j)$ to be the event that the random walk emanating from $w$ reaches distance $\jm^2 6^{-\jm}r$ without hitting $\cT_{j}^\d$ (and in particular $\cT^\d_{\jm}$). Then observe that we still have
$\P(\cD(w,j)) \le (1/2)^{j_{\max}^2/C_0}$ in this case. Notice that the number of lattice points in
$\cup_{j >j_{\max}} \cQ_j$ is at most $B6^{j_{\max}}$ for some constant $B$
depending only on $\delta_0$ in the uniform crossing estimate assumption and
bounded density assumptions (\cref{crossingestimate,boundeddensity}).

Notice that on the complement of $$ \cD:= \cup _{j_0 \le j }\cup_{w\in \cQ_j}
  \cD(w,j) $$
  no random walk emanating from a vertex $w\in \cQ_j$ can reach distance $j^2 6^{-j} r$ from its starting point and hence stays in $B(v,r)$. Furthermore, by a union bound,
\begin{equation} \label{D2}
\P(\cD) \le \sum_{j \ge j_0}c6^j(1/2)^{j^2/C_0} <\ve
\end{equation}
for large enough choice of $j_0=j_0(\ve)$ which shows the first property of the lemma.

Also, let $\cE(w,j)$ be the event that a connection from $w$ to $\cT_{j-1}^\d$ has diameter at least $j^2 6^{-j}r$.
Observe that conditionally on $\cT_{j-1}^\d$, the probability of the event $\cE(w,j)$ does not depend on the order of the points in $\cQ_j$ and hence we may assume that $w$ is the first point in $\cQ_j$ when we compute this probability. In that case, the probability in question is at most the one we computed above, and we deduce that
 $\P( \cE(w,j) | \cT^\d_{j-1})  \le (1/2)^{(j\wedge \jm)^2/C_0}.$ On the complement of
 \begin{equation}
 \label{E}
 \cE:= \cup _{j_0 \le j }\cup_{w\in \cQ_j}\cE(w,j)
 \end{equation}
each point $w\in \cQ_j$ is connected to a point in $\cT_{j_0}^\d$ by a path of diameter at most $\sum_{j>j_0} j^2 6^{-j} r \le \eps r$ provided that $j_0$ is large enough. As $\P(\cE) \le \eps$ if $j_0$ is large enough, the proof is complete.
\end{proof}

Now we shall describe the coupling between a wired spanning tree in $D^\d$ and a full plane spanning tree around a single point, which we call \textbf{base coupling}. (The final coupling will be nothing more but an iteration of this procedure with an extra step initially which we call cutset exploration). Recall that a priori it is
not even clear that the full plane local weak limit of a wired
spanning tree exists (the existence of this limit will actually come out
of our coupling procedure).

Basically, the idea is the following. We wish to couple a uniform spanning in tree in $D^\d$ to a uniform spanning tree in $\tilde D^\d$ within a neighbourhood of some fixed vertex $v$. To do this, we first make sure that the branches from a finite number of vertices coincide for UST's in $D^\d$ and $\tilde D^\d$ in some neighbourhood, and then we apply Schramm's finiteness theorem. We now explain this in detail.

\paragraph{Base coupling} The base coupling we describe now takes
the following parameters as input: two domains $D^\d,\tilde D^\d$ and a neighbourhood
$B(v,10r)$ of a vertex $v$ such that $B(v,10r)^\d \subset D^\d \cap \tilde D^\d$. Let $\cT$ and $\tilde \cT$ denote a sample of uniform
spanning tree in $D^\d$ and $\tilde D^\d$ respectively. For any vertex $u$ in $D^\d$ (resp. $\tilde D^\d$), let
$\gamma(u)$ (resp. $\tilde \gamma(u)$) denote the branch of $\cT $ (resp. $\tilde \cT$) from $u$
to the boundary of $D^\d$ (resp. $\tilde D^\d$).
\begin{enumerate}[{(}i{)}]
\item Pick a point $u_1$ in $A^\d(v,8 r,9 r)$ and sample
  $\gamma(u_1), \tilde \gamma(u_1)$ independently (any joint law could work but we take them independent for concreteness). Let $\cE_1$ be the event that both
  $\gamma(u_1)$ and $\tilde \gamma(u_1)$ stay outside $B(v,7
  r)$ and suppose $\cE_1$ holds.

\item Let $u_2 \in A^\d(v, 2 r,3 r)$. We will use the same underlying random walk to couple $\gamma(u_2)$ and $\tilde \gamma(u_2)$. More precisely, start a simple random walk
  from $u_2$ until it is in one of $\gamma(u_1) \cup \partial D^\d$ or $\tilde \gamma(u_1) \cup \partial \tilde D^\d$ at
  time $t_1$. Suppose without loss of generality that the walk hits $\gamma(u_1) \cup \partial D^\d$ at $t_1$. Then
  we continue the walk from that point until it hits the
  other path or the boundary at time $t_2$. We then define $\gamma(u_2)$ to be the loop erased path up to time $t_1$ and
  $\tilde \gamma(u_2)$
  to be the loop erased
  path from time $0$ to $t_2$. Let $\cE_2$ be the event that $\gamma(u_2)$ and $\tilde \gamma(u_2)$ agree
in $B(v,6r)$, and suppose $\cE_2$ holds.

\item Fix a $j_0=j_0(1/2)$ as defined in
  \cref{lem:Schramm_finiteness}. Let $\cQ_j$ be a set of points in $B(v, (r/2)(1+ 2^{-j})^\d$, one in
  each cell of $6^{-j} (r/2)\Z^2 $ chosen that it is furthest away from $v$ within that cell,
as described in the good algorithm above. Let $\cE_3$ be the event that the
  branches from all the vertices in $ \cup_{j \le j_0} \cQ_j$ of $\cT$, $\tilde \cT$
  agree in $B(v,5r)$, and suppose that $\cE_3$ holds.

  \item \label{GAfail} Finally let $\cE_4$ be the event that all the branches
  from vertices in $\cup_i \cQ_i$ of $\cT$, $\tilde \cT$ agree in $B(v,r/2)$.
\end{enumerate}

In the steps above if $\cap_i \cE_i$ does not occur, we say that the
\textbf{base coupling has failed}. We think of the above process as
sampling branches one by one from the prescribed vertices. We stop
this process of sampling branches
at the first time a sampled branch causes the base coupling to fail.

\begin{lemma}\label{claim:fail_prob}
  There exist constants $0<p_0<p_0'<1$ and $c>0$ such that for all
  $\delta \le c \delta_0r$,
  \[
p_0<\P(\text{base coupling succeeds in }D^\d) <p_0'
\]
\end{lemma}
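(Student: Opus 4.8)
The plan is to estimate $\P(\bigcap_{i=1}^4\cE_i)$ by treating the four events $\cE_1,\dots,\cE_4$ of the base coupling in order and showing that each of them, conditionally on the previous ones, has probability bounded away from $0$, and that (for at least one of them) also away from $1$, with bounds uniform in $\delta\le C\delta_0 r$ and in the two domains $D,\tilde D$. Everything rests on inputs that only see the geometry inside $B(v,10r)$ and are therefore uniform: the crossing/full-turn estimates (\cref{lem:full_turn}, \cref{lem:full_turn_conditioned}, \cref{lem:hitting_cond}), Schramm's hitting estimate (\cref{lem:SRW_hit}), Schramm's finiteness theorem (\cref{lem:Schramm_finiteness}), together with two deterministic facts about chronological loop-erasure: (a) if $X[t_1,t_2]$ avoids an open set $U$, then the loop-erasures of $X[0,t_1]$ and $X[0,t_2]$ coincide inside $U$; and (b) if the walk completes a full turn in an annulus $A(v,\rho,\rho')$ before ever entering $B(v,\rho)$, then its loop-erasure never enters $B(v,\rho)$ (the walk cannot leave $B(v,\rho)$ again without crossing, hence erasing back across, the full-turn trace). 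A key geometric observation, used on $\cE_1$, is that since $u_1\in A(v,8r,9r)$ and the branches end on $\partial D^\d$, $\partial\tilde D^\d$ at distance $\ge 10r$, each of $\gamma(u_1),\tilde\gamma(u_1)$ necessarily contains a sub-path crossing $A(v,9r,10r)$ from inner to outer boundary.

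\textbf{Lower bound.} For $\cE_1$: by \cref{lem:full_turn} and a bounded chain of crossings, each of the two independent walks defining $\gamma(u_1),\tilde\gamma(u_1)$ has probability $\ge c>0$ to reach the middle third of $A(v,7r,8r)$ while staying outside $B(v,7r)$ and then complete a full turn there; by fact (b) this forces $\gamma(u_1),\tilde\gamma(u_1)$ to avoid $B(v,7r)$, so $\P(\cE_1)\ge c^2$. For $\cE_2$: on $\cE_1$, after $t_1$ the walk starts from $X_{t_1}\notin B(v,7r)$ and, with probability $\ge c>0$ (a gambler's-ruin crossing estimate together with \cref{lem:full_turn}), reaches the middle third of $A(v,9r,10r)$ and performs a full turn there while staying outside $B(v,6r)$; by the geometric observation this full turn is forced to hit the not-yet-hit target, so $t_2$ occurs with $X[t_1,t_2]\cap B(v,6r)=\emptyset$, and fact (a) then gives $\cE_2$; thus $\P(\cE_2\mid\cE_1)\ge c$. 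For $\cE_3$: the vertices of $\cup_{j\le j_0}\cQ_j$, with $j_0=j_0(1/2)$ a universal constant, lie near $v$ and number at most $C6^{j_0}$; sampling their branches one at a time with the same walk in $D^\d$ and $\tilde D^\d$, each walk has probability $\ge c>0$ (uniformly over the configuration) to hit the current tree via a full turn in a small annulus around $v$ while remaining inside $B(v,5r)$, and since the two current trees already agree inside $B(v,6r)$, the resulting branch is then identical for $\cT$ and $\tilde\cT$; multiplying over the fixed number of branches gives $\P(\cE_3\mid\cE_1\cap\cE_2)\ge c^{C6^{j_0}}>0$. Finally $\cE_4$ follows from \cref{lem:Schramm_finiteness}: conditionally on $\cT_{j_0}$, with probability $\ge 1/2$ every remaining branch is too small to leave a fixed ball of radius $2r$ about $v$, hence—since on $\cE_3$ we have $\cT_{j_0}=\tilde\cT_{j_0}$ inside $B(v,5r)$ and the same walk is used—each such branch hits $\cT_{j_0}=\tilde\cT_{j_0}$ at the same vertex in both trees and is identical, so $\cT$ and $\tilde\cT$ agree in $B(v,r)$. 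Multiplying the four conditional bounds yields $\P(\bigcap_i\cE_i)\ge p_0>0$.

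\textbf{Upper bound.} It suffices to exhibit a single $\cE_i$ that, conditionally on the previous events, fails with probability bounded below; I would use $\cE_2$. On $\cE_1$, with probability $\ge c>0$ the independently sampled branches $\gamma(u_1)$ and $\tilde\gamma(u_1)$ are distinct (the loop-erased walk from $u_1$ to the boundary of the macroscopic domain is genuinely random), so $\gamma(u_1)$ contains an interior point $q\notin B(v,7r)$ that is not on $\tilde\gamma(u_1)\cup\partial\tilde D^\d$. Then, using the crossing estimates and \cref{lem:SRW_hit}, the walk from $u_2\in A(v,2r,3r)$—which, since $u_2\in B(v,6r)$, already visits $B(v,6r)$ before $t_1$—has probability $\ge c>0$ to first hit $\gamma(u_1)$ at such a point $q$ (hence $t_1<t_2$), then re-enter $B(v,6r)$, cross its own earlier trace there (creating a loop that alters the loop-erasure of $X[0,t_1]$ inside $B(v,6r)$), and only afterwards hit $\tilde\gamma(u_1)\cup\partial\tilde D^\d$; on this event $\gamma(u_2)$ and $\tilde\gamma(u_2)$ disagree in $B(v,6r)$, so $\cE_2$ fails. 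Hence $\P(\bigcap_i\cE_i)\le\P(\cE_1\cap\cE_2)\le 1-c=:p_0'<1$.

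\textbf{Main obstacle.} The genuinely delicate point is the interaction between loop-erasure and the arbitrary, possibly wild, parts of $D$ and $\tilde D$ far from $v$: once the crossing estimates have prescribed the macroscopic behaviour of the walks, one must still argue that the loop-erasures really do (for the lower bound), or really do not (for the upper bound), exhibit the prescribed behaviour inside the fixed balls $B(v,kr)$, and that every bound is uniform over the two domains. This is exactly why the two deterministic facts (a) and (b) above are invoked repeatedly, so that one never controls microscopic paths, only their macroscopic crossings; handling the resulting bookkeeping—keeping the various radii, conditionings, and the order of sampling consistent—is where the real work lies.
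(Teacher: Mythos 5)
Your overall strategy — bound each $\cE_i$ conditionally, use the crossing/full-turn estimates for the lower bound and Schramm's finiteness theorem for $\cE_4$, and exhibit a positive-probability failure event for the upper bound — is the same as the paper's. The main divergence, and where your argument develops a real gap, is in the two deterministic ``facts'' about chronological loop-erasure that you introduce to make the bookkeeping explicit.

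\textbf{Fact (a) is false as stated.} Take an abstract walk $a\to b\to c\to d\to e\to c\to f$ on a graph where $U$ is a neighbourhood of $a$ containing $b$ and $d$ but not $c,e,f$. Then the loop-erasure of $X[0,4]$ is $(a,b,c,d,e)$, whose restriction to $U$ is $\{a,b,d\}$; but when the walk revisits $c$ at time $5$, the loop-erasure resets to $(a,b,c)$, so the loop-erasure of $X[0,6]$ is $(a,b,c,f)$, whose restriction to $U$ is $\{a,b\}$. The piece $X[4,6]$ avoids $U$, yet the two loop-erasures disagree in $U$. The underlying issue is that when the post-$t_1$ walk cuts the current loop-erasure at a vertex \emph{outside} $U$, the surviving prefix can lose pieces of the loop-erasure that sat \emph{inside} $U$, because a simple path may enter and exit $U$ more than once. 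For the application to $\cE_2$ this is precisely the danger: $Y_s=$ LERW$(X[0,s])$ might cross $\partial B(v,6r)$ several times while the walk wanders inside $B(v,7r)$, and a later hit on $Y_s$ in the annulus $A(v,6r,7r)$ could then erase a piece of the loop-erasure inside $B(v,6r)$. What saves the argument is not fact (a) but an extra condition which must be imposed (the paper is also terse here, but the fix is clear): require the walk after time $s$ to stay outside $B(v,7r)$, not merely outside $B(v,6r)$. Then the post-$s$ walk cannot hit $Y_s$ at all (since $Y_s\subset \bar B(v,7r)$), so the loop-erasure inside $B(v,6r)$ is frozen and the base coupling step succeeds. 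Your proof should record this condition rather than invoke fact (a).

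\textbf{Fact (b) is stated too strongly.} The loop-erasure \emph{can} enter $B(v,\rho)$ at intermediate times (the walk dives into the ball and the current loop-erasure grows a finger there); what is true, after a careful topological induction, is that every time the walk revisits a vertex of the full-turn trace the loop-erasure is reset to a state disjoint from $B(v,\rho)$, and consequently the \emph{terminal} loop-erasure (the branch $\gamma(u_1)$) avoids $B(v,\rho)$. That is the statement you actually need, and it deserves the proof sketch you relegated to a parenthetical, since the chain of resets does not bottom out in one step. The paper sidesteps this entirely for $\cE_1$ by citing \cref{prop:discrete_coming_close} (noting that $4$ may need to be replaced by another ratio), which is cleaner because that proposition is proved with all the loop-erasure bookkeeping done once and for all.

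\textbf{Upper bound.} You take the harder route. The paper simply observes that $\P(\cE_1)<1$ by the crossing estimate: with uniformly positive probability the walk from $u_1$ enters $B(v,7r)$ and does a full turn there, which (this time by the topological fact that is actually used in the paper, namely Lemma~\ref{claim:erasure}) forces $\gamma(u_1)$ into $B(v,7r)$. Your argument via $\cE_2$ requires arguing that after the walk from $u_2$ hits $\gamma(u_1)$ it re-enters $B(v,6r)$ and creates a loop which ``alters the loop-erasure inside $B(v,6r)$''; but a loop inside $B(v,6r)$ need not alter the \emph{restriction} of the loop-erasure to $B(v,6r)$ at all — it may cut and regrow to an identical path, or the cut point may lie after the entire $B(v,6r)$ prefix, in which case nothing there changes. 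Proving a genuine discrepancy with uniformly positive probability takes more work than the one line you give, and there is no reason to take this route when the $\cE_1$ bound is immediate.

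In short: the decomposition and the probabilistic inputs are the right ones, but the two deterministic lemmas you use to glue loop-erasures together need to be replaced (fact (a) by the stronger condition ``the post-$s$ walk avoids $B(v,7r)$'' and fact (b) by the statement about the terminal loop-erasure, with an honest induction on resets), and the upper bound is best extracted from $\cE_1$ directly.
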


\begin{proof}
  The proof essentially follows from
  \cref{prop:discrete_coming_close}, which says that loop erased random walk does not come too close to a particular vertex, and \cref{lem:full_turn} which says that random walk makes a full turn in a given annulus with positive probability. To start with, it follows from
  \cref{prop:discrete_coming_close} (possibly replacing $4$ there by some other number) that $\cE_1$ has a positive
  probability $p_1$. Also using the crossing estimate it is easy to
  see that $\P(\cE_1) < p_0' <1$ which completes the proof of the upper bound. Moreover, independently of $\cE_1$, the walk started from $u_2$ after exiting $B(v,7r)$ has probability at least $p_2$ to make a
  full turn in $A(v,9r ,10r)$ without first hitting $B(v,6r)$ by \cref{lem:full_turn}. In particular, this implies $\cE_2$ has probability at least $p_2$, conditionally on $\cE_1$.

Now assume  $\cE_1 \cap \cE_2$ holds. Let $w \in \cQ_j$ for $j \le j_0$, and assume that revealing previous branches did not make the coupling fail. Then
  the walk started from $w$ has a positive
  probability $p_3$ to do a full turn in $A(v,3r,5r)$ before leaving $B(v, 5r)$. If this occurs then the corresponding branches $\gamma(w)$ and $\tilde \gamma(w)$ will agree in $B(v, 5r)$ (since the walk is then certain to hit at least both $\gamma(u_2)$ and $\tilde \gamma(u_2)$ in that ball). Iterating this bound over a bounded number of points (of order $6^{j_0}$) shows that, conditionally on $\cE_1 \cap \cE_2$, the probability of $\cE_3$ is uniformly bounded below.

  Finally, conditionally on $\cap_{1\le i \le 3} \cE_i$, $\cE_4$ has probability at least $1/2$ by Schramm's finiteness theorem (\cref{lem:Schramm_finiteness}), which finishes the proof.
\end{proof}

The general idea for the full coupling around one point $v$ will be that when the base coupling fails there is a not too small neighbourhood around $v$ which was not intersected by any of the paths we sampled so far. Therefore we will be able to retry the coupling in a new smaller neighbourhood. To implement this strategy we first show that if the base coupling fails, $v$ remains reasonably isolated at
the point when we stop the process with high probability.
We say a vertex $u$ (possibly different from the vertex $v$ around which we make the coupling) has
\textbf{isolation radius} $6^{-k}$ at scale $r$ at any step in the above base coupling (centered around $v$) if
$$ B(u,6^{-k}r)
\text{ does not contain any vertex from a sampled branch}.$$
We then set $I_u$ to be the minimal such $k\ge 1$ at the time when the base coupling fails.

\begin{lemma}
  \label{lem:isolation_rad}
Let $I_u$ be as above and suppose that either $u = v$ or $|u - v | \ge 10r$. Then there exist constants $\delta',c,c'>0$
such that for all $\delta \in (0,\delta'r)$ and for all $i \in (0,\log_6(\delta'r/\delta) -1)$,
\[
\P(I_u \ge i| \text{ coupling fails }) \le ce^{-c'i}
\]
\end{lemma}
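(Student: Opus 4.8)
The plan is to pass from the conditional statement to an unconditional one and then split according to which branch sampled in the base coupling can be responsible for $u$ being poorly isolated. By \cref{claim:fail_prob}, $\P(\text{coupling fails})\ge 1-p_0'>0$, so it suffices to prove $\P(I_u\ge i,\ \text{coupling fails})\le ce^{-c'i}$. Unwinding the definition, $\{I_u\ge i\}$ says exactly that some branch sampled before the (random) failure time has a vertex in $B(u,\rho)$ with $\rho:=6^{-(i-1)}r$; the restriction $i<\log_6(\delta'r/\delta)-1$ makes $\rho$ macroscopic ($\rho\gg\delta/\delta'$), so every crossing and full-turn estimate of \cref{sec:regularized_estimate} applies down to scale $\rho$.

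I would first treat the case $|u-v|\ge 10r$, which is cleaner. Every vertex from which a branch is ever sampled ($u_1,u_2$, their $\tilde D$-counterparts, and all vertices of $\cup_j\cQ_j$, which lie in $B(v,9r)$) is then at distance $\ge r$ from $u$. Reveal the branches in the order they are drawn in $\cT$ and $\tilde\cT$, let $K_\ell$ be the union of $\partial D$ (resp.\ $\partial\tilde D$) with the branches revealed so far, so that branch $\beta_\ell$ is conditionally a loop-erased walk from its source $w_\ell$ to $K_\ell$. The key input is the immediate extension of \cref{prop:discrete_coming_close} from a loop-erased walk targeted at $\partial D$ to one targeted at an arbitrary hull $K$: the proof is unchanged — decompose the underlying walk into its excursions across the dyadic annuli around $u$, which are conditionally independent given the sequence of crossing positions, and apply \cref{lem:full_turn_conditioned}. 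This gives $\P(\beta_\ell\cap B(u,4^{-m}\rho_\ell)\ne\emptyset)\le\alpha^m$, where $\rho_\ell=|w_\ell-u|\wedge\dist(u,K_\ell)\wedge\dist(w_\ell,K_\ell)$. Feeding this in, one scale at a time, and using the first-branch-to-reach-$B(u,\rho)$ decomposition (see the last paragraph) to keep $\dist(u,K_\ell)$ of order $\ge\rho$, one concludes $\P(\text{some sampled branch meets }B(u,\rho))\le ce^{-c'i}$, since there are only $O(1)$ relevant branches before the failure and collectively they must bridge the $\sim i$ dyadic scales between radii $\rho$ and $r$ around $u$.

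For $u=v$ the same argument controls the branches from the ``far'' sources $u_1,u_2$, their counterparts, and $\cup_{j\le j_0}\cQ_j$, all at distance $\ge c_1 r$ from $v$ with $c_1=c_1(j_0)$ universal; the new feature is the branches drawn in the final step of the base coupling (where $\cE_4$ is tested) from vertices of $\cQ_j$ with $j>j_0$, which may be close to $v$. On $\cE_1\cap\cE_2\cap\cE_3$ the partial trees of $\cT$ and $\tilde\cT$ agree throughout $B(v,5r)$ at every intermediate step of that step — the branches drawn there have Euclidean diameter at most $\tfrac12 r$ on Schramm's finiteness event, hence stay in $B(v,r)$ — so the good algorithm can only reach level $j$ there if some branch from $\cQ_{j'}$, $j_0<j'\le j$, has already produced a disagreement, which forces the underlying walk, started (by construction of the good algorithm) at distance $\le 4\cdot 6^{-j'}r$ from the revealed tree, to exit $B(v,5r)$ without hitting it. Iterating \cref{lem:SRW_hit} as in the proof of \cref{lem:Schramm_finiteness}, this has probability at most $(1/2)^{c\,6^{j'}}$, doubly exponentially small in $j'$ and hence dominating $|\cQ_{j'}|\le c\,6^{2j'}$; summing over $j'\ge i-C$ and over the vertices of each $\cQ_{j'}$ gives $ce^{-c'i}$. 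A branch from a $\cQ_{j'}$-vertex with $j'<i-C$ cannot by itself meet $B(v,\rho)$ with more than this probability, since then its source sits at distance $\gtrsim 6^{-j'}r\gtrsim\rho$ from $v$ and one may again invoke the extended \cref{prop:discrete_coming_close}.

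The step I expect to be the real obstacle is the one glossed over above: since the forest revealed so far may itself have drifted to within $O(\rho)$ of $u$, a direct application of \cref{prop:discrete_coming_close} to the next branch degenerates once $\dist(u,K_\ell)$ is already comparable to $\rho$. The remedy is to look only at the \emph{first} sampled branch that meets $B(u,\rho)$ — before it, the revealed forest stays at distance $\ge\rho$ from $u$ — and then to bound, one dyadic scale at a time, the chance that this branch pushes from $\partial B(u,6^{-(k-1)}r)$ into $B(u,6^{-k}r)$. This last estimate is exactly where the conditional crossing and full-turn lemmas (\cref{lem:cross_conditioned}, \cref{lem:full_turn_conditioned}, \cref{lem:hitting_cond}) are needed: conditioned on its eventual exit point, the walk underlying the branch has probability bounded away from $1$ of crossing a given annulus inward rather than escaping it outward, so the probability of penetrating $k$ further scales decays geometrically in $k$.
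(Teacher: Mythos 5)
The overall structure you propose (reduce to an unconditional bound, split the branches by the level $j'$ of the source, invoke \cref{prop:discrete_coming_close} for sources far from $u$ and an escape estimate \`a la Schramm for sources near $v$) is the same as the paper's, but your choice of threshold $j' = i-C$ introduces a genuine gap. For $j_0 < j' < i-C$ the union bound you rely on is
$\sum_{w \in \cQ_{j'}} \P\bigl(\gamma(w) \text{ meets } B(v,\rho)\bigr) \lesssim |\cQ_{j'}|\,\alpha^{c(i-j')} \lesssim 6^{j'}\alpha^{c(i-j')}$,
and summing over $j' < i-C$ this geometric series is dominated by its last term, of order $6^{i-C}\alpha^{cC}$, which grows exponentially in $i$. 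Concretely, your sentence ``A branch from a $\cQ_{j'}$-vertex with $j'<i-C$ cannot by itself meet $B(v,\rho)$ with more than this probability'' is false: at $j' = i-2C$ say, \cref{prop:discrete_coming_close} yields only the constant $\alpha^{2cC}$, not $e^{-c'i}$, because the source is only a bounded number of dyadic scales away from the target ball. Your doubly-exponential escape bound $(1/2)^{c\,6^{j'}}$ is fine but is only applied for $j'\ge i-C$, leaving the whole range $\sqrt{i}\lesssim j' < i-C$ uncontrolled. The paper's proof cuts at $j = \sqrt{i}$ precisely so that the union bound produces $\sqrt{i}\cdot 6^{\sqrt{i}}\alpha^{c(i-\sqrt i)}$, which does decay exponentially, and for $j>\sqrt i$ it instead bounds $\P(\text{coupling fails at step }j) \lesssim 6^j (1/2)^{j^2/C_0} \le \alpha'^{j^2}\le \alpha'^i$ --- a bound that never mentions $u$ and that majorizes $\P(\cA(i,j))$ trivially.

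A secondary issue is the claim of ``$O(1)$ relevant branches'' in the case $|u-v|\ge 10r$. The good algorithm samples of order $\sum_j |\cQ_j| \sim 6^{j_{\max}}$ branches with $j_{\max}\sim \log_6(r/\delta)\to\infty$ as $\delta\to 0$, so a naive union bound gives $6^{j_{\max}}\alpha^{ci}$, which is not uniformly controlled in $\delta$ for fixed $i$. The same $\sqrt{i}$ split is needed here too. The ``first branch to come close'' decomposition you describe does keep $\dist(u,K_\ell)\ge\rho$ but does not reduce the number of candidate branches you must union over, so it does not by itself repair either gap.
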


\begin{proof}
  From \cref{prop:discrete_coming_close}, if one of $\cE_k$ fails for $k
  =1,2,3$, the probability that the
  isolation radius is at least $i$ is at most $C(1-\alpha)^i$ for some $C>0$ and $\alpha >0$
  (since it is the maximum of a finite number of variables
  each with exponential tail). It remains to consider is
  the branches drawn while doing the good algorithm in \cref{GAfail} of the base coupling.

Notice that the number
of vertices in  $\cQ_j$ for $j\ge j_0$ is at most $C_0 6^j$ and each of them is at
a distance at least
$r6^{-j-1}$ from $u$ (note that this holds both when $u = v$ or $|u - v | \ge 10r$). Let $\cA(i,j)$ be the
event that coupling fails in step $j\ge j_0$ and $I_u$ after this step is at
least $i$. For $i \in
(j^2,\log_6(\delta_0r/\delta) )$, the probability of $\cA(i,j)$ is at
most the probability that the branch $\gamma(w)$ from one of the vertices $w\in \cQ_j$ comes within distance $6^{-i}r$ of $u$. By  \cref{prop:discrete_coming_close} and a union bound, this is bounded by $C_0 6^j (1-\alpha)^{i-j} \le C_0 6^{\sqrt{i}}(1- \alpha)^{i - \sqrt{i}}, $ which decays exponentially even when we sum over $j$ such that $j \le \sqrt{i}$.

 Finally if $i <j^2$, we bound the
probability of $\cA(i,j)$ by using the explicit error bound \cref{D2} which we obtained in the proof of Schramm's finiteness theorem: indeed, we showed that the probability one of the branches emanating from a vertex $w \in \cQ_j$ leaves the ball of radius $ j^2 6^{-j}r$ around $w$ is less than $(1/2)^{j^2/C_0} = (1- \alpha)^{j^2}$. In particular this is also a bound on the probability that one of these branches leaves $B(v,r)$. Hence we conclude that $\P(\cA(i,j)) \le C_0 6^j (1-\alpha)^{j^2} \le C(1-\alpha)'^{j^2}$ for some $\alpha'>0$. Summing over $j\ge \sqrt{i}$ we get $\P( I_u \ge i) \le C(1-\alpha')^i$.

It remains to condition on the event that the coupling fails. But since the coupling fails with probability bounded below by \cref{claim:fail_prob}, the result follows.
\end{proof}



\paragraph{Iteration of base coupling around a single point}

We now describe how to iterate
the base coupling at different scales which is the key step to construct the full coupling. We start with a
domain $D \subset \C$ and suppose $v \in D^\d$. Suppose $r<1$
is small enough such that $B(v,10r)$ is disjoint and contained in
$D \cap \tilde D$. Fix a small constant $c$ so that \cref{claim:fail_prob} holds and
assume that $\delta \le c\delta_0r$.
\begin{enumerate}[{(}i{)}]
\item Perform a base coupling with $D^\d,\tilde D^
\d$ and $B(v,r)$. If the
  coupling succeeds, we are done.
\item If the coupling fails, let
  $6^{-I_{v,1}}r$ be the isolation radius of $v$ at scale $r$ at the step the
  coupling has failed. If $ I_{v,1} \ge
  \log_6 (c\delta_0r/\delta)$, we \textit{abort} the whole process and we say that the full coupling failed.

\item If the base coupling has failed but we haven't aborted, we move to a smaller scale. Let $\cT_{1}$ (resp. $\tilde \cT_1$) be the portion of
  the uniform spanning tree in $D^\d$ (resp. $\tilde D^\d$) sampled up to this point. We perform the base
  coupling in the domains $D^\d \setminus \cT_{1},\tilde D^\d \setminus \tilde \cT_1^\d$ in the ball
  $B(v,6^{-I_{v,1}}r)$ around $v$.


\item If the coupling fails, let $6^{-I_{v,1} - I_{v,2}}r$  be the isolation radius
  at scale $r$ around $v$ at the step the coupling has failed. Let
  $\cT_{2} \supset \cT_{1}$ (resp. $\tilde \cT_2 \supset \tilde \cT_1$) be the uniform spanning tree of $D^\d$ (resp. $\tilde D^\d$)
  sampled up to this point. If
  $ (I_{v,1}  +I_{v,2})\ge
  \log_6 (c\delta_0r/\delta)$, we abort the whole process. Otherwise we
  perform the base coupling with $D^\d \setminus \cT_{2} , \tilde
  D^\d  \setminus \tilde \cT_2$ and $B(v_1,6^{-(I_{v,1}  +I_{v,2})}r)$.
\item We continue in this way until we either abort or the base coupling succeeds at the
  $N$th iteration. If we haven't aborted the process along the way, we have
  obtained a partial tree $\cT_{N}$ which is coupled with a
  uniform spanning tree $\tilde \cT_N$ in $\tilde D^\d$ so that they are the same  in $B(v,6^{-(I_{v,1} +
    I_{v,2} +\ldots + I_{v,N-1})}r)$.
%
%
%

\item  If we abort the process at step $m$, define $N = m$ by convention.
\end{enumerate}

We call $I =  \sum_{\ell=1}^{N-1} I_{v,\ell}$ , so that $6^{-I} r$ is the isolation radius at scale $r$ when we have succeeded in coupling the trees around $v$.

\paragraph{Full coupling}
We now describe how to perform the full coupling around a fixed number of given points.
For this we introduce a new idea. We first sample all the branches from a cutset separating each of the vertices from the rest. Conditioned on these sampled branches, the neighbourhood of the vertices are now independent. We then show that the ``unexplored" neighbourhoods around the points are still big with high probability and apply our one-point iterated base coupling for each such neighbourhood.

Let us start with some notations. Let $v_1,v_2,\ldots,v_k$ be $k$ distinct points and let $r$ be chosen so that so that $B(v_i,10r) $ are disjoint and are all contained in $D \cap \tilde D$. Let $H_i$ be a set of vertices in $A(v_i,9r/2,5r)$ which disconnect $v_i$ from $\partial  D $ and $\partial \tilde D $, and let $H = \cup_i H_i$. We simply reveal the branches emanating from $H_i, 1 \le i \le k$ in some arbitrary order by Wilson's algorithm, resulting in a subgraph $\cT_{H}^\d$. We call this step a \textbf{cutset exploration}. Let $J_{ v_i}$ be the minimum $k$ such that $B(v_i, 6^{-k} r) \cap \cT_H^\d = \emptyset$. Let $J = \max_i J_{ v_i}$, and let $D^\d_i$ be the remaining unexplored domain around $v_i$, i.e., the connected component containing $v_i$ in $D^\d \setminus \cT_H^\d$. Say that we abort if $c 6^{-J} r \delta_0 \le \delta$, where $c$ is as in \cref{claim:fail_prob}.


Conditionally on $\cT_H^\d$, on the event that we haven't aborted, the component of $\cT$ containing $v_i$ is distributed as a wired uniform spanning tree in $D_i^\d$. We perform the iterated base coupling of this wired spanning tree with a uniform spanning tree $\cT^\d(i)$ of $\tilde \cD^\d$ with a base neighbourhood $B(v_i, 6^{-J}r)$. We also do this coupling around each point to obtain conditionally independent subtrees $ (\cT^\d(i))_{1 \le i \le k}$ given the cutset exploration $\cT_H^\d$. We say that we abort the full coupling either if we aborted at the cutset exploration step or if we abort in any of the iterated base couplings. Let $I_i$ be the isolation radius at scale $r$ around $v_i$ after performing the iterated base coupling around $v_i$.



\begin{thm}
  \label{lem:exp_tail}
On the event $\cA$ that we do not abort the full coupling, we obtain a coupling between
$\cT^\d$ and
independent copies of uniform spanning trees $\tilde \cT^\d(i)$ in $\tilde D^\d$ for $1 \le
j \le k$ such that
\[
\cT^\d
\cap B(v_i, 6^{- I_i} r) = \tilde \cT^\d(i)
\cap B(v_i, 6^{- I_i} r)
\]
Furthermore, there exists a universal constant $c>0$ and $C>0$ such that for all $\delta \le c \delta_0r$
and
$1\le i\le k$,
\begin{equation}
  \label{eq:32}
  \P (I_i \ge n; \cA) \le Ce^{-cn}.
\end{equation}
\end{thm}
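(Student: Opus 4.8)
The first assertion requires almost no work: it is built into the construction. On $\cA$ we never abort, so for each $i$ the base coupling around $v_i$ eventually succeeds, say at its $N_i$-th attempt, inside a ball of radius $6^{-I_i}r$ (with the radius conventions of the base coupling). Success of a base coupling means the events $\cE_1,\dots,\cE_4$ all hold, and the last of these, combined with the fact that the good algorithm samples \emph{all} branches of the tree emanating from that ball, forces the portion of $\cT^\d$ inside $B(v_i,6^{-I_i}r)$ to agree with the corresponding portion of the fresh independent wired UST $\tilde\cT^\d(i)$ of $\tilde D^\d$ used in that attempt; mutual independence of the $\tilde\cT^\d(i)$ follows since a fresh sample is drawn every time the coupling is restarted. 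So the real content is the tail bound \eqref{eq:32}, and that is where the plan below concentrates.

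The plan is to regard the whole procedure as a sequence of base-coupling \emph{attempts}, with $(\cF_m)_{m\ge0}$ the filtration generated by the first $m$ attempts (their random walks, loop-erasures and resulting isolation radii), and to let $\tau$ be the number of attempts performed up to and including the one at which the coupling around $v_i$ succeeds (or up to abort, whichever comes first). Each attempt either succeeds, contributing $0$ to $v_i$'s isolation exponent, or fails, in which case it shrinks $v_i$'s isolation radius (measured at scale $r$) by a factor $6^{-J_m}$ for some integer $J_m\ge1$; by definition $I_i=\sum_{m=1}^{\tau}J_m$ on $\cA$ (with $J_m=0$ on successful attempts). I would then establish two facts: (i) $\tau$ has an exponential tail, and (ii) each $J_m$ has a conditionally exponential tail given $\cF_{m-1}$, and combine them through an elementary moment-generating-function estimate. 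For (i): whenever an attempt $m$ is performed, conditionally on $\cF_{m-1}$ it is a genuine base coupling and hence succeeds with probability at least $p_0$ by \cref{claim:fail_prob}; so by stochastic domination the number of successes among the first $m-1$ performed attempts dominates $\mathrm{Bin}(m-1,p_0)$, and since $\{\tau\ge m\}$ forces fewer than $i$ successes among them, a Chernoff bound gives $\P(\tau\ge m)\le C e^{-cm}$, with $c$ uniform and $C$ depending only on $k$ (through $i\le k$). For (ii): conditionally on $\cF_{m-1}$ and on attempt $m$ being a failure, the decrement of $v_i$'s isolation exponent has an exponential tail by \cref{lem:isolation_rad}, whose hypotheses I would check are met — either $v_i$ is the centre of that attempt, or $|v_i-v_j|\ge 20r$ since the balls $B(v_\ell,10r)$ are disjoint, while the radius parameter of attempt $m$ is at most $r$, so the ``$10r$''-separation condition of \cref{lem:isolation_rad} holds; and on $\cA$ the current scale is always $\ge\delta/(C\delta_0)$, so choosing $C$ small enough that the abort threshold $\log_6(C\delta_0 r/\delta)$ lies inside the range of validity of \cref{lem:isolation_rad} at every scale guarantees the estimate applies over the whole range of values $J_m$ can take without triggering an abort. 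Thus, after truncating $J_m$ at the abort threshold (which changes nothing on $\cA$), $\P(J_m\ge j\mid\cF_{m-1})\le c_0 e^{-c_1 j}$ for all $j\ge1$.

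To finish, for any $\beta>0$ and the truncated decrements $\hat J_m$ (still satisfying $\P(\hat J_m\ge j\mid\cF_{m-1})\le c_0e^{-c_1j}$ and agreeing with $J_m$ on $\cA$), using $\hat J_m\ge0$, Markov's inequality and iterated conditioning on $\cF_{m-1}$,
\[
\P(I_i\ge n,\cA)\ \le\ \P(\tau\ge\beta n)\ +\ \P\Big(\sum_{m=1}^{\lceil\beta n\rceil}\hat J_m\ge n\Big)\ \le\ C e^{-c\beta n}\ +\ e^{-\lambda n}M(\lambda)^{\lceil\beta n\rceil},
\]
where $M(\lambda)=1+\sum_{j\ge1}c_0 e^{\lambda j}e^{-c_1 j}<\infty$ for $\lambda<c_1$. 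Fixing $\lambda$ with $M(\lambda)<\infty$ and then $\beta$ small enough that $\beta\log M(\lambda)<\lambda/2$ yields $\P(I_i\ge n,\cA)\le c\,e^{-c'n}$, which is \eqref{eq:32}. (Note this genuinely needs the full-strength MGF argument rather than the weaker \cref{lem:stretched}, since we want an honest, not stretched, exponential.)

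I expect the main obstacle to be not any single estimate but the bookkeeping that makes steps (i)--(ii) rigorous: one must verify that each attempt, conditionally on its past, is a genuine fresh base coupling in the appropriate \emph{reduced} domain $D^\d\setminus\cT_{\bullet}$ to which \cref{claim:fail_prob} and \cref{lem:isolation_rad} apply verbatim — in particular that the current isolation ball around the target vertex is clean in the reduced domain (so the base coupling even makes sense), that the balls $B(v_\ell,10r)$ stay disjoint and far apart relative to the shrinking scales, and that the abort rule is calibrated through the choice of $C$ so that every scale visited on $\cA$ remains in the regime where \cref{prop:discrete_coming_close}, \cref{lem:full_turn} and hence \cref{lem:isolation_rad} hold uniformly. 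Once that scaffolding is in place, the probabilistic part is the routine ``geometric sum of exponentials'' argument sketched above.
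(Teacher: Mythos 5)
Your proposal is correct and follows essentially the same route as the paper: both decompose $I_i$ as a sum of isolation-radius decrements over base-coupling attempts, bound the number of attempts via \cref{claim:fail_prob} and the size of each decrement via \cref{lem:isolation_rad}, and combine (the paper merely asserts "the proposition follows easily"). Your explicit MGF argument for turning a geometric number of exponential-tail decrements into an honest exponential tail is a careful filling-in of what the paper leaves implicit, not a different method.
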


\begin{proof}

Observe that $I_i$ is a sum of the form $\sum_{\ell=1}^{N_i-1} I_{v_i, \ell} + J_{v_i}$.
By exactly the same proof as \cref{lem:isolation_rad}, $J_{v_i}$ has an exponential tail so we concentrate on the sum. Observe that by \cref{claim:fail_prob}, $N_i$ has geometric tail (since the base coupling has probability uniformly bounded below to succeed at every step, independently of the past). Moreover, each $I_{v_j,\ell}$ has uniform exponential tail conditionally on all previous steps by \cref{lem:isolation_rad}. Hence the situation is similar to Lemma D.5 : by Markov's inequality
\begin{align*}
\P(   \sum_{\ell=1}^{N_i-1} I_{v_i, \ell} \ge n) & \le \P( N_i \ge \eps n) + P ( \sum_{\ell=1}^{\eps n} I_{v_i, \ell} \ge n)\\
& \le e^{ - c \eps n} + \E( e^{c'\sum_{\ell=1}^{\eps n} I_{v_i, \ell} } ) e^{-c' n}
\end{align*}
where $c'$ is as in \cref{lem:isolation_rad}.
Now from that lemma we see that, even when we condition on $I_{v_i, 1}, \ldots, I_{v_i, \ell_1}$,  $\E(e^{c' I_{v_i, \ell}} | I_{v_i,1}, \ldots , I_{v_i, \ell-1}) \le C_1$ for some $C_1$. Hence the right hand side above is less than
$$
e^{- c\eps n} + (C_1)^{\eps n}e^{- c' n }
$$
so by choosing $\eps$ sufficiently small this decays exponentially, as desired.
\end{proof}
The following two consequences are immediate:
\begin{corollary}
  \label{cor:abort}
  There exist constants $C,c>0$ such that
the probability of aborting the above process is at most
$C(\delta/ r\delta_0)^{c}$.
\end{corollary}


\begin{proof}
The event $\cA^c$ occurs precisely when one of the variables $I $ in step (vii) exceeds $\log_6 (c r \delta_0 /\delta)$. As above, this has exponential tail.
\end{proof}

\begin{corollary}\label{cor:existence_tree}
The wired uniform spanning tree measures in $D^\d$ has a local limit when $\tilde D \to \C$ and this limit is independent of the exhaustion taken. We call this measure the \textbf{whole plane spanning tree}. In \cref{lem:exp_tail} and all the above statements, the spanning tree measure on $\tilde D^\d$ can be replaced by a whole plane spanning tree.
\end{corollary}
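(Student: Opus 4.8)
The plan is to deduce the corollary directly from \cref{lem:exp_tail} and \cref{cor:abort}, using the full coupling of that theorem to compare the wired UST on two different large domains, restricted to a fixed ball. Concretely, I would fix the mesh $\delta$, a ball $B(v,\rho)$, and two bounded domains $\tilde D,\tilde D'$ both containing $B(v,10r)$ for some macroscopic $r$ (i.e.\ $\delta\le C\delta_0 r$ with $C$ the constant of \cref{lem:exp_tail}). Running the full coupling of \cref{lem:exp_tail} with $k=1$, marked point $v$, the ``$D$'' of that theorem equal to $\tilde D$ and its ``$\tilde D$'' equal to $\tilde D'$, produces on the non-abort event $\cA$ a coupling of $\cT^\d_{\tilde D}$ and $\cT^\d_{\tilde D'}$ agreeing on $B(v,6^{-I_1}r)$. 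Hence the two laws restricted to $B(v,\rho)^\d$ differ in total variation by at most $\P(\cA^c)+\P(I_1>\log_6(r/\rho);\cA)$, which by \cref{cor:abort} and \eqref{eq:32} is at most $c(\delta/(r\delta_0))^{c'}+c(\rho/r)^{c''}$ --- a bound depending only on $\delta,\rho,r$ and not on $\tilde D,\tilde D'$.

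Since for fixed $\delta$ and $\rho$ this bound tends to $0$ as $r\to\infty$ (through macroscopic values), I would then conclude that along any exhaustion $\tilde D_n\uparrow\C$ the restrictions $\cT^\d_{\tilde D_n}|_{B(v,\rho)^\d}$ form a Cauchy sequence in total variation, and that any two exhaustions yield the same limit. Letting $v$ and $\rho$ vary and using that these finite-volume marginals are consistent, Kolmogorov extension then defines a single measure $\cT^\d_\infty$ on spanning configurations of $G^\d$, which is the whole-plane spanning tree.

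For the last assertion I would compose couplings. Given the relevant region $R=\bigcup_i B(v_i,r)$ (which contains all the balls $B(v_i,6^{-I_i}r)$ that appear in \cref{lem:exp_tail}), the definition of $\cT^\d_\infty$ as a local limit provides, for any $\eta>0$, a bounded domain $\tilde D$ large enough that $\cT^\d_{\tilde D}$ and $\cT^\d_\infty$ can be coupled to agree on $R^\d$ with probability at least $1-\eta$, and independently for each of the $k$ copies at total cost $k\eta$. Composing this with the coupling of \cref{lem:exp_tail} run on that domain $\tilde D$ yields a coupling of $\cT^\d$ with $k$ independent whole-plane trees agreeing with $\cT^\d$ on $B(v_i,6^{-I_i}r)$ off an event of probability at most $c(\delta/(r\delta_0))^{c'}+k\eta$; choosing $\eta$ as small as we like (legitimate, since $\tilde D$ is free) makes this comparable to the original abort bound, while the tail bound \eqref{eq:32} for each $I_i$ is only intersected with a further good event and hence still holds verbatim.

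The only mildly delicate point I expect is precisely this bookkeeping: ensuring that the error introduced when passing from finite domains to the whole-plane tree, together with the uniformity over domains needed in the Cauchy estimate, does not degrade the quantitative bounds. No new idea beyond \cref{lem:exp_tail} and \cref{cor:abort} should be required.
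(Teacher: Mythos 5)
Your proposal is correct and is essentially the same argument the paper gives: the paper also deduces the corollary from \cref{lem:exp_tail} and \cref{cor:abort}, comparing the wired UST on two terms of an exhaustion via the full coupling to get a Cauchy sequence in total variation on any fixed ball, and then observes that the uniformity over $\tilde D$ of all the coupling estimates lets one replace $\tilde D^\d$ by the whole-plane tree. Your version is simply more explicit about the $r\to\infty$ bookkeeping and the Kolmogorov-extension step, neither of which requires any new idea beyond what the paper invokes.
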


\begin{proof}
For the first sentence, consider an exhaustion $D_n$ i.e., an increasing
sequence such that $\cup_n D_n = \C$. Using \cref{lem:exp_tail} along with
the control of the abortion probability from \cref{cor:abort} we conclude that
the spanning tree measures form a Cauchy sequence in total variation, therefore
it converges. For the second one, it then follows immediately from the fact
that all the statements are uniform on domains $\tilde D$.
\end{proof}


\begin{remark}\label{R:abscontI}
Suppose that $(\gamma, \tilde \gamma)$ are branches emanating from $z_0$ in $\cT$ and $\tilde \cT$ coupled by a global coupling as
above. For $i \in \mathbb{Z}$, let $T_i$ be the smallest time $t$ such that $\gamma(t) \in B(z_0,
r 6^{-i})$, and let $T = T_I$ (here we view $\gamma$ as parametrised towards $z_0$). Then given $I = i$ the law of $(\gamma_t, t \ge T)$
is absolutely continuous with respect to the unconditional law of $(\gamma_t, t
\ge T_i)$ with Radon-Nikodym derivative bounded above by $C$ for some universal
constant $C>0$ as we vary $\delta$ (indeed, it is just the law of a
loop-erased random walk conditioned on some event of uniformly positive
probability, where these events are described in the base coupling).
%
\end{remark}

\paragraph{Cutting coupled paths}
At this point, we have proved that each branch $\gamma$ of the UST has a portion where it can be coupled with a path $\tilde \gamma$ in the whole plane independent UST. Also the part far away from the starting point can be approximated by an SLE because of the convergence of loop erased random walk to SLE. It will therefore be natural to cut the branches into two parts and to use a different approximation for each piece.

A subtle issue arises here because of the choice of approximation. Indeed for the portion approximated by SLE we want to cut the path at a fixed capacity to be in the setup of \cref{sec:continuum}, while for the discrete part we want to cut $\gamma$ and $\tilde \gamma$ exactly at the same point so that their (diverging) windings cancel exactly. This is a problem because the capacity of a curve depends on the whole curve and therefore will never agree exactly between $\gamma$ and $\tilde \gamma$. Our solution to this issue is to parametrise by capacity but cut at a randomised time in both $\gamma$ and $\tilde \gamma$ in such a way that the corresponding positions match exactly. The key to doing this will be to observe that not only are the capacities of the two paths close to one another, but also their derivatives.

Recall the full coupling $(\gamma, \tilde \gamma)$ in \cref{lem:exp_tail}, where $\gamma$ is a loop-erased random walk in $D^\d$ and $\tilde \gamma$ is a loop-erased random walk in $\tilde D^\d$ starting from a vertex $v$ where $\tilde D$ is arbitrary (we can think of $\tilde D$ as the full plane). Suppose they are parametrised by capacity plus log of the conformal radius seen from $v$ in their respective domains.

\begin{lemma}\label{lemma:coupling_Poisson}
There exist constants $C,c > 0$ such that the following holds. For any $t>0$ there exists $\delta = \delta(t)$ such that for any $\delta \in (0, \delta(t))$,
we can find a pair of random variables $(X,\tilde X)$ such that individually, $X$ and $\tilde X$ are each independent of $(\gamma, \tilde\gamma )$
and
\[
\P[\gamma(t + X) = \tilde \gamma(t +  \tilde X)] \geq 1- Ce^{-ct }.
\]
Furthermore, both $X$ and $\tilde X$ are random variables which are bounded (by $1/20$).
\end{lemma}

The proof of this lemma is quite technical so we only give a short description of the ideas, see the supplementary, Section E for the details.

The idea behind the coupling is to see $\gamma(t+X)$ where $X$ is an exponential variable as the first point in a Poisson point process on $\gamma$ with an intensity given by a multiple of the derivative of the capacity over $[ t , t+ 1/20]$ (or more formally a multiple of the push forward of the Lebesgue measure by $s \mapsto \gamma(s)$ over that interval), where the multiple is itself exponentially large in $t$. This derivative measure will be almost the same in $\gamma$ and $\tilde \gamma$ with exponentially small error in $t-I$ (this is the technical part of the proof). Therefore the point processes can be coupled so that their first points are the same with high probability and this preserves some independence because of the independence inherent to a Poisson process.

\section{Convergence of the winding of uniform spanning tree to GFF}
\label{S:proof_discrete}



\subsection{Discrete winding, definitions and notations}\label{sec:discrete_wind_def}

We define the discrete winding fields in a finite domain properly here. This is completely analogous to the definition in the continuum from \cref{sec:continuum}. Let us fix a bounded domain $D \subset \C$ with a locally connected boundary and a marked point $x \in \partial D$. Using \cite{Pommerenke} Theorem 2.1, $\partial D$ is a curve. Let $\delta > 0$ and let $\cT^\d$ be a wired spanning tree of $D^\d$. Let $\gamma_v^\d$ be the branch connecting $v$ to the wired boundary in $\cT^\d$. 
As in the continuum definition, we add to each $\gamma_v^\d$ a path following $\partial D$ clockwise to $x$. More precisely, one endpoint of $\gamma_v^\d$ is some auxiliary vertex (a point on the continuum boundary $\partial D$, see \cref{sec:regularized_estimate}). We add to $\gamma^\d_v$ the \emph{continuum} curve joining this vertex and the marked boundary point $x$ in the clockwise direction. For simplicity of notation we still call this path $\gamma_v^\d$.

 We parametrise the part of the curve in $\partial D$ by $[-1, 0]$ and the rest by capacity in $D$ plus $\log R(z,D)$. Observe that this definition is analogous to the continuum definition in \cref{sec:continuum}, so that $t=0$ will always correspond to hitting the boundary. Motivated by the formula connecting intrinsic and topological winding (cf. \cref{lem:wind_start}), we define, in the smooth case first:
\begin{equation}\label{E:hd}
h^\d(v) := W_{\i} (\gamma^\d_v[-1, \infty));
\end{equation}
and
\begin{equation}
h^\d_t (v) :  = W(\gamma^\d_{v}[-1,t],z) - \Arg(-(\gamma^\d_v)'(-1)) + \arg_{D;x}(v).
\end{equation}
If $D$ is not smooth near $x$, then we can still define $h^\d(v)$ via \eqref{E:hd} up to a global constant,
\begin{equation}
h^\d_t (v) :  = W(\gamma^\d_{v}[-1,t],z) + \arg_{D;x}(v)
\end{equation}
where $\arg_{D;x}(v)$ is defined up to a global constant.

\def\fh{ \tilde h}

\smallskip Consider a full plane discrete UST on $G^\d$, $\tilde \cT = (\tilde  \gamma_v )_{v \in G^\d}$. (We write $\tilde \gamma_v$ instead of $\tilde \gamma^\d_v$ for simplicity).
We parametrise the paths $ \tilde \gamma_v$ by full plane capacity plus $\log R(v, D)$, going from $-\infty$ far away to $+\infty$ at $v$ (we need to add $\log R(v, D)$ to match its parametrisation with the parametrisation of $\gamma_v$ as much as possible).
We extend the definition of the regularised winding to that setting by defining $\fh_T(v) - \fh_S(v) := W(\tilde \gamma_v(S, T), v)$ and $\fh(v) - \fh_T(v) := W(\tilde \gamma_v(T, \infty), v)$. Note that the definition of the increments (in $T$) of $\fh$ make sense even though we cannot define $\fh$ pointwise, so this definition is a slight abuse of notation. Finally
\begin{equation}\label{def:m}
m^\d(v) := \E[\fh^\d(v) - \fh^\d_{\log R(v,D)}(v)]
\end{equation}
and note that random variable $$\fh^\d(v) - \fh^\d_{\log R(v,D)}(v) = W ( \tilde \gamma_v(\log(R(v,D)), \infty); v)$$ is well defined (and its expectation is well defined too) and that it does not depend on $D$ since $\tilde{\gamma} ( \log R(v, D))$ is just the point of full plane capacity $0$. We point out that as $\delta \to 0$, the path $\tilde \gamma_v$ converges
(uniformly on compacts) towards a full plane SLE$_2$; this follows by a
combination of results from Lawler, Schramm and Werner; Yadin and
Yehudayoff \cite{LSW, YY} as well as Field and Lawler \cite{FieldLawler}. As a
consequence, the arbitrary choice of truncating at capacity $0$ is irrelevant:
this is because  the asymptotics of $m^\d(v)$ as $\delta \to 0$ is independent
of the choice of truncation (indeed, for a full plane SLE path, the expected
winding between capacity 0 and 1 is zero by symmetry). Readers who are
uncomfortable with the notion of full plane SLE can replace the full plane by a
disc of some large radius in the definition of $m^\d$; in which case the above
remark relies just on the convergence result of Lawler, Schramm and Werner
\cite{LSW} and Yadin and Yehudayoff \cite{YY}.

To help with the intuition, recall from the introduction that we need to remove by hand microscopic contributions to the expected winding. This is the purpose of $m^\d$. Subtracting $m^\d$ also allows us to take into account the possible contribution to winding coming from intermediate (mesoscopic) scales.
\subsection{Statement of the main result}
\label{S:main_winding_full}
We now state the main theorem in this section, which is a stronger version of  \cref{T:winding_intro} in the introduction. Since we are going to integrate
the discrete winding field against test functions, we need to make precise what
we mean by this. There are two natural choices to do this integral: one which
takes into account the geometry of the underlying graph and another one which
accounts only for the ambient Euclidean space in which the graph is embedded. The latter turns out to be slightly more natural since the limit in that case is a (conformally invariant) Gaussian free field, i.e., does not depend on the limiting density of vertices.

We proceed as follows. Given $h^\d$ defined on the vertices of the graph, we can extend $h^\d$ to a function on the whole domain $D$ using
various forms of interpolation. One way to do this is to linearly extend the
value of $h^\d$ to the edges and then take a harmonic extension on the faces (this includes the outer face, and then we restrict this extension to $D$).
However for concreteness we will look at the following extension: consider the Voronoi tesselation of $D$ defined by the vertices
of the graph. We then define the extension $h^\d_{\text{ext}}$ to be constant on each Voronoi cell, equal to $h^\d (v)$ where $v$ is the centre of the cell. This allows us to use the regular $L^2$ product to integrate $h^\d$ against test functions.
\[
 (h^\d,f) := \int_D h^\d_{\text{ext}}(z) f(z) dz.
\]
This extension procedure can also be applied to $m^\d$, leading to a function defined on all of $D$. We then have the following theorem.

\begin{thm}\label{thm:main_details}
Let $G$ be a graph satisfying the assumptions of \cref{sec:assumption}, let $D\subset \C$ be a simply connected domain with a locally connected boundary and a marked point $x \in \partial D$. Let $\cT$ be a uniform spanning tree of $D^\d$ and let $h^\d(v)$ denote the intrinsic winding as above, and let $m^\d$ be defined as above.
Then
\[
	h^{\d} - m^\d \xrightarrow[\delta \to 0]{} \hg.
\]
The convergence is in law in the Sobolev space $H^{-1-\eta}(D)$ for any $\eta>0$. The limit $\hg$ is a free field with intrinsic winding boundary conditions, that is, $\hg = (1/\chi) h + \pi/2$ where $ h =   \hg^0  + \chi u_{(D,x)}$. Here $\hg^0$ is a standard GFF with Dirichlet boundary conditions and $u_{(D,x)}$ is defined as in \cref{E:uDx} and \cref{R:rough_everywhere}.

Moreover, for all $n\ge 1$, for all $f_1, \ldots, f_n \in H^{1+ \eta}$, and for all positive reals $k_1, \ldots, k_n $ we have
\[
  \E \prod_i (h^{\d} - m^\d, f_i)^{k_i} \xrightarrow[\delta \to 0]{} \E \prod_i (\hg , f_i)^{k_i}
\]
and for all $k\ge 1$
\[
 \E  \left(\norm{h^{\d} - m^\d}^k_{H^{-1-\eta}} \right)  \xrightarrow[\delta \to 0]{}  \E \left( \norm{\hg }^k_{H^{-1- \eta}}\right) .
\]
\end{thm}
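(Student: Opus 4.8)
The plan is to bootstrap from the continuum result \cref{T:winding_continuum}, the discrete a priori estimates of \cref{S:UI}, and the multiscale coupling of \cref{lem:exp_tail,lemma:coupling_Poisson}, following exactly the decomposition sketched in \cref{S:overview}. First I would fix $\eta>0$ and work with the orthonormal basis $(e_j)$ of $L^2(D)$ of Dirichlet eigenfunctions of $-\Delta$, so that convergence in $H^{-1-\eta}$ amounts to control of $\sum_j \lambda_j^{-1-\eta}(h^\d-m^\d,e_j)^2$ together with the fact that $\sum_j \lambda_j^{-1-\eta}<\infty$ by Weyl's law. The key decomposition is $h^\d = h_t^\d + \epsilon_t^\d$ where $h_t^\d$ is the winding truncated at capacity $t$ and $\epsilon_t^\d(v) = h^\d(v)-h_t^\d(v)$; correspondingly $m^\d(v) = \E[\tilde h^\d(v)-\tilde h_0^\d(v)]$ should be compared with the mean of $\epsilon_t^\d$. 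The plan is to show: (i) for fixed $t$, $h_t^\d \circ (\text{Voronoi ext.}) \to h_t$ in law in $H^{-1-\eta}$ as $\delta\to 0$, using that truncated winding is a continuous functional of the spanning tree in the Schramm topology plus the LERW-to-SLE$_2$ convergence of \cite{LSW,YY} (\cref{R:conv_lerw}); (ii) $h_t \to \hg$ in $H^{-1-\eta}$ as $t\to\infty$ by \cref{T:winding_continuum}/\cref{P:conv_GFF_H}; (iii) the error term $\epsilon_t^\d-\E[\epsilon_t^\d]$, suitably re-centred so that it matches $h^\d-m^\d-(h_t^\d-\E_{\text{relevant}})$, is negligible in $H^{-1-\eta}$ uniformly in $\delta$ once $t$ is large.

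The heart of the argument is step (iii), and I would run it at the level of second moments of $(\epsilon_t^\d - (\text{its mean}), e_j)$, then sum against $\lambda_j^{-1-\eta}$ as in the proof of \cref{P:conv_GFF_H}. Expanding the square and using Fubini, I need a bound of the form
\[
\Big|\,\E\big[(\epsilon_t^\d(v)-\mathfrak m_t^\d(v))(\epsilon_t^\d(w)-\mathfrak m_t^\d(w))\big]\,\Big| \le C(1+\log^2 r(v,w))\,\psi(t)
\]
with $\psi(t)\to 0$ as $t\to\infty$ uniformly in $\delta$, where $r(v,w)=|v-w|\wedge \dist(v,\partial D)\wedge\dist(w,\partial D)$ and the logarithmic divergence is integrable in $D^2$. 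To get this, for $v,w$ at macroscopic distance I would apply the two-point coupling \cref{lem:exp_tail} (which couples the branches from $v$ and $w$ with branches of \emph{independent} whole-plane USTs on isolation balls $B(v,6^{-I_v}r)$, $B(w,6^{-I_w}r)$), together with \cref{lemma:coupling_Poisson} to match the capacity-$t$ truncation points, so that conditionally on the coupling succeeding (which has probability $1-O((\delta/r)^{c'})$, \cref{cor:abort}) the windings $\epsilon_t^\d(v)$ and $\epsilon_t^\d(w)$ become functions of \emph{disjoint} whole-plane trees and are exactly independent; the definition of $m^\d$ via the whole-plane tree truncated at capacity $0$ is precisely what makes the conditional mean of $\epsilon_t^\d(v)$ equal $m^\d(v)$ up to an $o_t(1)$ error (using that the expected winding of a full-plane SLE$_2$ between capacities $0$ and $t$ is $0$ by symmetry, and that $I_v = O(1)$ with exponential tails). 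When $v,w$ are at microscopic distance one cannot decouple, but then one falls back on the crude a priori bound: by \cref{prop:tail_winding2} the quantity $\sup_{t\le t_1,t_2\le t+1}|W(\gamma_v[0,t_1]^\d,v)-W(\gamma_v[0,t_2]^\d,v)|$ has stretched-exponential tails uniformly in $\delta$, which combined with a telescoping over unit capacity intervals and \cref{prop:tail_winding_torus} gives $\E[\epsilon_t^\d(v)^2]\le C(1+t^2)$ uniformly; Cauchy--Schwarz then handles the (small in measure) region where $r(v,w)$ is of order $\delta$, exactly as in \eqref{eq:29} of the proof of \cref{P:conv_GFF_H}.

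Combining (i)--(iii): given $\varepsilon>0$ pick $t$ so large that step (ii) contributes less than $\varepsilon$ and $\psi(t)$ is small enough that step (iii) contributes less than $\varepsilon$; then let $\delta\to 0$ using step (i). This proves convergence in law in $H^{-1-\eta}$. For the joint moments $\E\prod_i(h^\d-m^\d,f_i)^{k_i}\to\E\prod_i(\hg,f_i)^{k_i}$ with $f_i\in H^{1+\eta}$, I would repeat the same three-term decomposition but now controlling $k$-point functions rather than two-point functions: expand $\E\prod_i(\epsilon_t^\d,f_i)^{k_i}$ by Fubini into an integral of $\E[\prod \epsilon_t^\d(z_{ij})]$, bound this by $C(1+\log^{K}(1/r))\psi(t)$ with $K=\sum k_i$ via an iterated application of the multi-point coupling \cref{lem:exp_tail} (which is stated for $k$ points simultaneously) plus \cref{lemma:coupling_Poisson} at each point, and bound the truncated part $\E\prod_i(h_t^\d,f_i)^{k_i}$ by convergence of finite-dimensional marginals to the continuum $k$-point function $H$ of \cref{lem:two_pt_conv}; the a priori $k$-th moment bound $\E|h_t^\d(v)|^k \le c(k)(1+t^{k/2})$ (the discrete analogue of \eqref{E:onepointmoment}, again from \cref{prop:tail_winding2,prop:tail_winding_torus} and \cref{lem:stretched}) provides the domination needed for all the Fubini/dominated-convergence steps. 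Finally, convergence of $\E\|h^\d-m^\d\|^k_{H^{-1-\eta}}$ follows from the moment convergence applied to $f=e_j$, Weyl's law, and a uniform-integrability argument: writing $\|h^\d-m^\d\|^2_{H^{-1-\eta}}=\sum_j \lambda_j^{-1-\eta}(h^\d-m^\d,e_j)^2$, one shows the tail $\sum_{j>J}\lambda_j^{-1-\eta}\E(h^\d-m^\d,e_j)^{2k}$ is small uniformly in $\delta$ (using the $k$-point bound with the $\log^K$ divergence integrated against $e_j^{2}$, which is uniformly bounded since $e_j$ is orthonormal), exactly mirroring \eqref{E:controlnorm} in the proof of \cref{P:conv_GFF_H}. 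The main obstacle, and the place where all the work of \cref{S:UI} is consumed, is establishing the uniform-in-$\delta$ decorrelation estimate for $\epsilon_t^\d$ with the correct logarithmic blow-up as $v\to w$: this requires simultaneously (a) the geometric control that the coupling succeeds and the isolation radii are $O(1)$, (b) the capacity-matching of \cref{lemma:coupling_Poisson} so that truncation at capacity $t$ in $D^\d$ corresponds to truncation in the whole plane, and (c) the a priori winding tail bounds to control the rare failure events and the microscopic-distance regime — and one must be careful that the choice of $\delta(t)$ in \cref{lemma:coupling_Poisson} is compatible with taking $t\to\infty$ before $\delta\to 0$, which forces the order of limits $\delta\to 0$ first (at fixed $t$), exactly as in the continuum proof.
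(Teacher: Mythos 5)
Your plan is essentially the same as the paper's: decompose $h^\d-m^\d$ into a truncated-winding piece (converging via LERW$\to$SLE$_2$ and the continuum theorem) plus an error piece whose decorrelation at distinct points comes from the multiscale coupling of \cref{lem:exp_tail} and the capacity matching of \cref{lemma:coupling_Poisson}, with the \emph{a priori} tail bounds of \cref{prop:tail_winding2} furnishing the uniform moment control needed to justify Fubini and dominated convergence, and Weyl's law closing the $H^{-1-\eta}$ argument. The only real presentational difference is that you run a two-step limit ($\delta\to 0$ at fixed $t$, then $t\to\infty$) and bound the covariance of $\epsilon_t^\d-m^\d$ directly, whereas the paper uses a diagonal scale $T(\delta)\to\infty$ and spells out an explicit five-term decomposition $A_j+B_j+C_j+D_j+\xi_j$ (see the proof of \cref{P:pointwisemoments}) that cleanly isolates which piece is annihilated by the coupling (inside the isolation radius), which is killed by the event $T\ge I$, which has exactly zero mean by the definition of $m^\d$, and which has $o(1)$ mean by the symmetry of full-plane SLE$_2$; your sketch lumps these together but correctly identifies all the ingredients. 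Both routes buy the same thing, and your version is sound provided you keep track, as you flag at the end, that the admissible range $\delta<\delta(t)$ in \cref{lemma:coupling_Poisson} is compatible with the order of limits.
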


\begin{remark}
We emphasise that the function $m^\d$ is a deterministic function which depends only on the point in the graph, and in particular does not depend on the domain $D$. Note also that it follows from this result that $\E(h^\d - m^\d) \to \E(\hg)$ and hence we deduce
$$
h^\d - \E h^\d \to\frac1{\chi} \hg^0
$$
in the same sense as above, where $\hg^0$ is a Gaussian free field with Dirichlet boundary conditions.
\end{remark}

\subsection{Other notions of integration}
We now comment briefly on other possible definitions of integration against test functions. Another definition which is a priori natural is to consider
$$
(h^\d - m^\d,f)_{\d} : = \frac{1}{\mu^\d(D)}\sum_v (h^\d(v) - m^\d) f(v).
$$
In that case, $h^\d - m^\d$ is viewed as a random measure which is a sum of weighted Dirac masses.
We can first ask about convergence of this object as a stochastic process indexed by test functions (see e.g. \cite[Definition 1.10]{NBnotes}).
It can be checked that if the uniform distribution on the vertices of the graph converges to a measure
$\mu$ in $\C$, we have that
$$
(h^\d - m^\d,f)_{\d} \xrightarrow[\delta \to 0]{} \hg^\mu
$$
where now $\hg^\mu$ is a Gaussian stochastic process indexed by test functions such that $(\hg^\mu, \phi) = (\hg, \phi \frac{d\mu}{d\Leb})$ where $\Leb$ denotes the Lebesgue measure.

Note that in most cases, for example in any periodic lattice or isoradial graphs or T-graphs where
our results apply, $\mu$ is just the Lebesgue measure. In that case, note that $h^\d$ lies a priori within $H^{-1 - \eps}$ for any $\eps>0$ (this is the Sobolev regularity of any Dirac mass) and it is easy to check that our proof implies convergence of $h^\d - m^\d$ towards $\hg$ in the stronger sense of Sobolev spaces $H^{-1 - \eta}(D)$ for any $\eta$, as in Theorem \ref{thm:main_details}.

However there are also
interesting examples of graphs where the convergence of random walk to a (time-changed) Brownian motion holds but $\mu$ is different from Lebesgue measure. An exotic example of such a situation is
a conformally embedded random planar map where such a convergence is expected to hold and the measure $\mu$
is a variant of Gaussian multiplicative chaos (see \cite{GarbanBourbaki} and \cite{LBM13, ber2013diffusion} for an introduction to this topic).

\subsection{Proof of the main result}

Now we collect the results of the two previous sections to prove \cref{thm:main_details}.

\begin{lemma}
Fix a domain $D \subset \C$ with locally connected boundary and let $x_1,\ldots, x_k \in D$. For all $v^\delta_1, \ldots, v^\delta_k \in D^{\d}$ converging to $x_1, \ldots, x_k$, for all $T_1, \ldots, T_k$,
\[
	\E\left[ \prod_i h^{\d}_{T_i} (v_i) \right] \xrightarrow[\delta \to 0]{} \E \left(\prod_i h_{T_i}(x_i) \right).
\]
where $h_T$ is the regularised winding field of a continuum UST in $D$ as defined in \cref{E:uDx,R:rough_everywhere}.
\end{lemma}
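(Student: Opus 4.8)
The plan is to reduce the assertion to a convergence in distribution for the $k$ sampled branches of the spanning tree, and then to upgrade it to convergence of moments via a uniform integrability estimate.

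First, in the definition of $h^\d_{T_i}(v_i)$ the terms $-\Arg(-\gamma_{v_i}'(-1))$ and $\arg_{x-D}(x-v_i)$ (or, in the rough-boundary case, the single term $\arg_{x-D}(x-v_i)$ read up to a global additive constant, the same on both sides) are \emph{deterministic}: they depend only on $D$, $x$ and $v_i$, are bounded uniformly for $\delta$ small, and converge to the corresponding continuum quantities in \eqref{eq:general_D}, \eqref{eq:general_D_rough} as $v^\delta_i \to x_i$. Similarly, the winding $W(\gamma^\d_{v_i}[-1,0], v_i)$ of the boundary portion of $\gamma^\d_{v_i}$ — which by construction is an arc of the curve $\partial D$ running clockwise from $x$ to the auxiliary vertex where $\gamma^\d_{v_i}$ first meets $\partial D$ — is a continuous function of $v_i$ and of that auxiliary vertex, hence bounded uniformly for $\delta$ small and convergent once the hitting point converges. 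Thus it suffices to establish (i) the joint convergence in distribution
\[
\big(W(\gamma^\d_{v_i}[-1,T_i],v_i)\big)_{i=1}^k \xrightarrow[\delta \to 0]{(d)} \big(W(\gamma_{x_i}[-1,T_i],x_i)\big)_{i=1}^k
\]
and (ii) the uniform integrability of the family $\big\{\prod_{i=1}^k h^\d_{T_i}(v_i)\big\}_\delta$.

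For (i) I would sample the $k$ branches in the discrete using Wilson's algorithm and in the continuum using \cref{prop:Wilson_cont}; since the $x_i$ are fixed, each continuum branch is a.s.\ unique and is a radial SLE$_2$ towards $x_i$ (as recalled after \cref{prop:Wilson_cont}), so the continuum $k$-tuple of branches is well defined. One then proceeds inductively over $j=1,\dots,k$: assuming the first $j-1$ discrete branches converge, in the Schramm (uniform-on-compacts, up to reparametrisation) topology and with their capacity parametrisations, to the first $j-1$ continuum branches, the slit domains $D^\d\setminus(\gamma^\d_1\cup\cdots\cup\gamma^\d_{j-1})$ converge in the Carath\'eodory sense seen from $v_j$ to $D\setminus(\gamma_1\cup\cdots\cup\gamma_{j-1})$, whose boundary is locally connected since the $\gamma_i$ are continuous. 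Hence the random-walk harmonic-measure starting point converges to the Brownian one, and by \cref{R:conv_lerw} (the convergence of loop-erased walk to radial SLE$_2$ from \cite{YY}, valid for arbitrary target points and applying with no essential change in these slit domains, with driving-function convergence) the $j$-th discrete branch converges to the $j$-th continuum branch; a standard Skorokhod coupling makes all of this joint. Now, truncating the interior portion at the fixed capacity $T_i$, the Koebe $1/4$ theorem gives $\dist(v_i,\gamma_{v_i}[0,T_i])\ge \tfrac14 e^{-T_i}R(v_i,D)>0$; moreover the SLE$_2$ branch $\gamma_{x_i}$ is a.s.\ a simple curve whose conformal-radius capacity is strictly increasing, so a.s.\ the portion up to capacity $T_i$ depends continuously on the curve, and $W(\,\cdot\,[-1,T_i],\,\cdot\,)$ — the topological winding around a point at positive distance from the relevant, capacity-selected portion — is a continuous functional of $(\gamma_{v_i},v_i)$ at $\P$-a.e.\ realisation of the limiting branches. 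The continuous mapping theorem then yields (i), and together with the convergence of the deterministic terms and of the boundary winding, the convergence in distribution of $\prod_i h^\d_{T_i}(v_i)$ to $\prod_i h_{T_i}(x_i)$.

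For (ii) it suffices to bound all moments uniformly in $\delta$. The boundary winding and the deterministic terms are uniformly bounded as above. For the interior winding, covering $[0,T_i]$ by $\lceil T_i\rceil$ unit-capacity intervals and applying \cref{prop:tail_winding2} on each (valid for $\delta$ small depending only on $T_i$), then combining with \cref{lem:stretched}, shows that $W(\gamma^\d_{v_i}[0,T_i],v_i)$ has a stretched-exponential tail with constants uniform in $\delta$; hence $\sup_\delta \E|h^\d_{T_i}(v_i)|^p<\infty$ for every $p\ge1$. By the generalised H\"older inequality the family $\prod_{i=1}^k h^\d_{T_i}(v_i)$ is bounded in $L^p$ for every $p>1$ and therefore uniformly integrable, so convergence in distribution upgrades to convergence of expectations, which is the claim. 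The one genuinely delicate point is the inductive convergence of the branches in the slit domains together with the almost-everywhere continuity of the truncated winding functional; the rest is soft.
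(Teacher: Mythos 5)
Your proposal is correct and follows essentially the same route as the paper's proof: convergence in distribution via continuity of the truncated winding functional applied to the Wilson-algorithm-sampled branches (using \cref{R:conv_lerw}), upgraded to convergence of expectations by uniform integrability from \cref{prop:tail_winding2}. The paper states this in a few lines, while you spell out the inductive slit-domain convergence, the a.e.\ continuity of the capacity-truncated winding, and the H\"older bound explicitly; these are exactly the points the paper leaves implicit.
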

\begin{proof}
By our assumptions (see \cref{R:conv_lerw}) and by Wilson's algorithm, the paths $(\gamma_{v_i}^\d)_{i=1}^k$ converge to $(\gamma_{x_i})_{i=1}^k$ where the $\gamma_{x_i}$ are the paths connecting $x_i$ to $\partial D$ in a continuous UST. Furthermore, observe that the function $h_T (v) $ is a continuous function of $\gamma_v^\d$ (this is because the topological winding up to capacity $t + \log(R(v,D))$ is continuous in the curve). Hence
$ \prod h^{\d}_T (v_i)$ converges in distribution to $\prod h_T(x_i)$.
Using \cref{lem:truncated_UI}, we see that it is also uniformly integrable and hence the expectation converges.
\end{proof}

Combining the above lemma with \cref{T:winding_continuum} and \cref{lem:quantitative_two_pt}, we can find a sequence $T(\delta)$ (depending on the $v_i$'s) going to infinity slowly enough such that whenever $T(\delta) \le T_i(\delta) \le T(\delta) + 1/20$,
\[
\E\left[ \prod h^{\d}_{T_i(\delta)} (v_i) \right] \xrightarrow[\delta \to 0]{} \E\left( \prod \hg(x_i) \right).
\]
Using the previous lemma with the full coupling of \cref{lem:exp_tail}, we can control the $k$-point function for the winding up to the endpoint, which is the key step in the proof of \cref{thm:main_details}.
\begin{prop} \label{P:pointwisemoments}
For all $k$, for all bounded domains $D$ with locally connected boundary, for all $v^\delta_1, \ldots, v^\delta_k \in D$ converging to $x_1, \ldots, x_k$,
\[
	\E\left[ \prod_i ( h^{\d} (v_i^\delta) - m^\d(v_i^\delta) )\right]  \xrightarrow[\delta \to 0]{} \E \left(\prod_i \hg(x_i)\right).
\]
Recall that the right hand side is a notation for the k-point function of a GFF with winding boundary condition, as in \cref{thm:main_details}.
\end{prop}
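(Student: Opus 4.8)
The idea is to decompose the winding at each point into a truncated part and a tail part, $h^\d(v_i) - m^\d(v_i) = h^\d_{T(\delta)}(v_i) + \big(h^\d(v_i) - h^\d_{T(\delta)}(v_i) - m^\d(v_i)\big) =: h^\d_{T(\delta)}(v_i) + \eps^\d_i$, where $T(\delta) \to \infty$ is the slowly growing sequence produced just above, so that $\E[\prod_i h^\d_{T_i(\delta)}(v_i)] \to \E\prod_i \hg(x_i)$ whenever $T(\delta) \le T_i(\delta) \le T(\delta) + 1/20$. Expanding the product $\prod_i (h^\d_{T(\delta)}(v_i) + \eps^\d_i)$, the main term is exactly the one with all factors equal to $h^\d_{T(\delta)}(v_i)$, and we must show every mixed term with at least one $\eps^\d_i$ factor vanishes in the limit. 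By Hölder's inequality and the a priori stretched-exponential moment bounds on windings (\cref{prop:tail_winding2}, and its whole-plane analogue \cref{prop:tail_winding_torus} for $m^\d$ and the $\tilde h$ increments), it suffices to show that for each fixed $i$, $\E[|\eps^\d_i|^p] \to 0$ for every $p \geq 1$, uniformly over the relevant range — actually we only need $\E[|\eps^\d_i|^p]$ to go to $0$ after we know it is \emph{bounded} for all the other factors, so the crux is a single-point estimate: $\eps^\d_i \to 0$ in $L^p$.

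To control $\eps^\d_i = h^\d(v_i) - h^\d_{T(\delta)}(v_i) - m^\d(v_i) = W(\gamma^\d_{v_i}[T(\delta), \infty), v_i) - m^\d(v_i)$, I would invoke the full multiscale coupling of \cref{lem:exp_tail}: with high probability (on the non-abort event $\cA$, whose complement has probability $O((\delta/\delta_0)^{c'})$ by \cref{cor:abort}) the branch $\gamma^\d_{v_i}$ coincides inside a ball $B(v_i, 6^{-I_i})$ with an independent whole-plane spanning tree branch $\tilde\gamma_{v_i}$, where $I_i$ has a uniform exponential tail (\eqref{eq:32}). On that event, once $T(\delta)$ is large enough that capacity $T(\delta)$ is reached inside $B(v_i, 6^{-I_i})$ — which happens with probability $\ge 1 - Ce^{-cT(\delta)}$ by Schramm's tip estimate \eqref{eq:Schramm_dist} combined with \cref{R:abscontI} — the tail winding $W(\gamma^\d_{v_i}[T(\delta), \infty), v_i)$ equals the corresponding whole-plane tail winding $W(\tilde\gamma_{v_i}[T(\delta), \infty), v_i)$ up to a capacity reparametrisation controlled by \cref{lemma:absolute_difference}, and modulo the bounded reparametrisation shift of \cref{lemma:coupling_Poisson}. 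Since $m^\d(v_i) = \E[\fh^\d(v_i) - \fh^\d_0(v_i)]$ is precisely the expected whole-plane tail winding (from capacity $0$), and the expected whole-plane winding between any two finite capacities is $0$ by symmetry of full-plane SLE$_2$, we get $\E[W(\tilde\gamma_{v_i}[T(\delta),\infty), v_i)] = m^\d(v_i) + o(1)$; more importantly the \emph{centered} variable $W(\tilde\gamma_{v_i}[T(\delta),\infty),v_i) - m^\d(v_i) = -\big(\fh^\d_{T(\delta)}(v_i) - \fh^\d_0(v_i)\big) + \big(\fh^\d(v_i) - \fh^\d_{T(\delta)}(v_i) - \E[\fh^\d(v_i)-\fh^\d_{T(\delta)}(v_i)]\big)$, and each piece is a winding increment over a range of capacities tending to $\infty$ which, by \cref{prop:tail_winding_torus}, has stretched-exponential tails and, one checks using Schramm's Brownian description (\cref{thm:Schramm}), variance $o(1) \cdot$ (something). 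Hmm — actually the cleaner route: $\E[(\eps^\d_i)^2] \le 3\E[(\fh^\d_{T(\delta)}(v_i)-\fh^\d_0(v_i))^2] + 3\,\mathrm{err}$, and the first term is the variance of the full-plane winding accumulated between capacities $0$ and $T(\delta)$, which by \cref{thm:Schramm} is $\asymp T(\delta)$ — so this does \emph{not} vanish, and I have the decomposition backwards.

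Let me restate the correct decomposition: we keep $h^\d_{T(\delta)}(v_i)$ as the "bulk" term and write $h^\d(v_i) - m^\d(v_i) - h^\d_{T(\delta)}(v_i) = \big(h^\d(v_i) - h^\d_{T(\delta)}(v_i)\big) - m^\d(v_i)$; here $h^\d(v_i) - h^\d_{T(\delta)}(v_i) = W(\gamma^\d_{v_i}[T(\delta),\infty),v_i)$ is the winding from capacity $T(\delta)$ to the tip, and $m^\d(v_i) = \E[W(\tilde\gamma_{v_i}[0,\infty),v_i)]$ is the expected whole-plane winding from capacity $0$ to the tip. These are not obviously close, but on the coupling event the \emph{path near the tip} agrees with the whole-plane path, so $W(\gamma^\d_{v_i}[T(\delta),\infty),v_i) = W(\tilde\gamma_{v_i}[T'(\delta),\infty),v_i)$ for some $T'(\delta)$ close to $T(\delta)$ (the Poisson coupling of \cref{lemma:coupling_Poisson} makes the cut-points literally agree up to a bounded, path-independent shift); and $W(\tilde\gamma_{v_i}[T'(\delta),\infty),v_i) - m^\d(v_i) = -W(\tilde\gamma_{v_i}[0,T'(\delta)],v_i) + \big(W(\tilde\gamma_{v_i}[0,\infty),v_i) - \E W(\tilde\gamma_{v_i}[0,\infty),v_i)\big)$. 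The first piece is a mean-zero winding over a bounded-from-infinity range — no good for vanishing. So the resolution must be that the $\eps^\d_i$ terms do \emph{not} vanish individually but the \emph{mixed products} do, because $\eps^\d_i$ and $\eps^\d_j$ become independent (via independent whole-plane couplings around $v_i$ and $v_j$) while $h^\d_{T(\delta)}$ correlates only through the bulk — precisely the strategy outlined in \cref{S:overview}: "$\eps^\d$ from point to point have a fixed mean and are independent of each other". So the real plan is: (1) show $\E[\eps^\d_i] \to 0$ using that the full-plane truncated-at-$0$ and truncated-at-$T$ windings have the same expectation by symmetry and the coupling transfers this to $\gamma^\d_{v_i}$; (2) show that any product $\E[\prod_{i\in S} \eps^\d_i \prod_{j\notin S} h^\d_{T(\delta)}(v_j)]$ with $S\ne\emptyset$ factorizes asymptotically as $\prod_{i\in S}\E[\eps^\d_i] \cdot \E[\prod_{j\notin S}h^\d_{T(\delta)}(v_j)] + o(1)$ using the independence from the multiscale coupling (\cref{lem:exp_tail}) together with \cref{R:abscontI} to absorb the conditioning, with error controlled by the moment bounds and Hölder; and (3) conclude the mixed terms vanish since $\E[\eps^\d_i]\to 0$. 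The main obstacle is step (2): making the near-independence quantitative in the presence of the random isolation radii $I_i$ and the capacity-reparametrisation mismatch, which is exactly why the delicate machinery of \cref{SS:capacity} (the Poisson coupling matching cut-points) and \cref{R:abscontI} (bounded Radon--Nikodym derivative of the conditioned law) were set up; the argument is essentially a careful bookkeeping of these a priori estimates with Hölder and dominated convergence, along the lines of the conditional-expectation decompositions used in \cref{lem:quantitative_two_pt} and \cref{thm:pointwise_GFF}.
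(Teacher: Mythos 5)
You arrive at the right overall strategy---and indeed the paper proceeds via a decomposition into a truncated ``bulk'' term $h^\d_{T_j}(v_j)$ plus an error term that factors, using the multiscale coupling---but your plan leaves open precisely the step that carries the weight. The factorization you posit in step~(2), namely that $\E[\prod_{i\in S}\eps^\d_i\prod_{j\notin S}h^\d_{T(\delta)}(v_j)]$ splits asymptotically into $\prod_{i\in S}\E[\eps^\d_i]\cdot\E[\prod_{j\notin S}h^\d_{T(\delta)}(v_j)]$, is not directly available for $\eps^\d_i$ as you defined it: $\eps^\d_i$ is a function of the discrete branch $\gamma^\d_{v_i}$, which lives in the same tree as the other branches $\gamma^\d_{v_j}$ and is correlated with them. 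The coupling of \cref{lem:exp_tail} supplies an \emph{independent} whole-plane branch $\tilde\gamma_j$ that agrees with $\gamma^\d_{v_j}$ near $v_j$ on a good event, but that does not make $\eps^\d_j$ itself independent of anything.

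The resolution---the heart of the argument and the piece missing from your plan---is to split $\eps^\d_j$ further as $A_j+B_j+C_j+D_j$: the coupling-error terms $A_j$ (discrepancy between $h$ and $\tilde h$ up to the isolation radius) and $B_j$ (discrepancy between the $I_j$-truncation and the $T_j$-truncation) vanish identically on the event that the coupling succeeds and $T_j\geq I_j$, so they are handled by H\"older together with \cref{cor:abort} and \cref{prop:tail_winding2} on the small complementary event; whereas $C_j := -(\tilde h_{\tilde T_j}-\tilde h_0)(v_j)$ and $D_j := \tilde h(v_j)-\tilde h_0(v_j)-m^\d(v_j)$ are functions of the independent whole-plane copy $\tilde\gamma_j$ alone. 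It is $C_j$ and $D_j$---not $\eps^\d_j$---which are genuinely independent of all the other factors, and they behave differently: $D_j$ has mean \emph{exactly} zero by the very definition of $m^\d$, so any product containing a $D_j$ is killed outright with no need for asymptotic factorization; $C_j$ has mean $o(1)$ (via symmetry of full-plane SLE$_2$ and independence of the Poisson cut-time $\tilde X_j$ from $\tilde\gamma_j$) and factors out cleanly. Your step~(1) that $\E[\eps^\d_i]\to 0$ is true and follows from this, but by itself it cannot kill a mixed product in the presence of correlations; the $A/B/C/D$ split is exactly what turns the informal independence you invoke into a usable factorization. Make that decomposition explicit, and the remaining H\"older bookkeeping is as you describe.
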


\begin{proof}
By definition of our extension of $h^\d$ to $D$, we may assume without loss of generality that $v_i^\delta \in D^\d$ (this is one of the advantages of working with the Voronoi extension of $h^\d$ to $D$).
We write $\gamma_i$ for $\gamma^\d_{v_i}$. If $\delta$ is small enough, one can apply the coupling of \cref{lem:exp_tail}. Focusing only on the paths from the $v_i$, we obtain random variables $I_1, \ldots, I_k$, all with exponential tails, and independent full plane loop-erased paths $\tilde\gamma_1, \ldots, \tilde\gamma_k$ such that
\[
 \forall j, \quad \quad  \gamma_{j} \cap B(v_j, 6^{-I_j} r) = \tilde\gamma_j \cap B(v_j, 6^{-I_j} r)
\]
on the event that the coupling succeeds, where $r =(1/10) ( \min_{i \neq j} |v_i - v_j| \wedge \min_{i } d(v_i, \partial D) \wedge 1)$. Here $r  $ is a constant and $\delta \to 0$ so we will not worry about $r$ or the offset in the parametrisation (which recall is capacity plus $\log R(v_i, D)$). Let $\tilde h^\d$ be the associated field defined as in \cref{def:m} (recall that only its increments in $T$ are defined).
Let $T(\delta)$ be some sequence such that for any $T_j$ such that $T(\delta) \le  T_j \le T(\delta) +  1/20$ we have
\begin{align}
& \E[ \prod h^{\d}_{ T_j} (v_j) ]\xrightarrow[\delta \to 0]{}  \E \prod \hg(x_j), \label{GFFTj}\\
 & \E[\tilde h^\d_{T(\delta)}(v_j) - \tilde h_{\log (R(v,D))}^\d(v_j)] \xrightarrow[\delta \to 0]{} 0.\label{eq:C}
 \end{align}
Recall that
\cref{lemma:coupling_Poisson} holds for $t = T(\delta)$. We can further modify our choice of $T(\delta)$ if needed so that $$\P( \gamma_{i}(T(\delta), \infty) \not\subset B(v_i, e^{- T(\delta)/2} )) \le ce^{- cT(\delta)} $$
and the same statement holds with $\tilde \gamma_{i}$ instead of $\gamma_{i}$.
This is possible because $T(\delta)$ can be chosen to grow to infinity arbitrarily slowly and these estimates hold in the continuum. Indeed, $\gamma_{j}$ converges to radial SLE$_2$ for which we can apply \eqref{eq:Schramm_dist} and $\tilde \gamma_{j}$ converges to whole plane SLE$_2$ which is symmetric with respect to conjugation hence has zero expected winding.

For each $1\le j\le k $ we use \cref{lemma:coupling_Poisson} and we write $T_j = T(\delta) + X_j$ and $\tilde T_j = T(\delta) + \tilde X_j$ the resulting times. Let $\cC$ denote the $\sigma$-algebra generated by the cutset exploration in Theorem \ref{lem:exp_tail} as well as $\tilde T_j$, $1\le j \le k$. Let $G$ be the good event that $\tilde \gamma_i (\tilde T_i, \infty) = \gamma_i (T_i, \infty) \subset B(v_i, r e^{-I_i})$ for all $1\le i \le k$. In other words, $G$ is the event that $\gamma_i(T_i) = \tilde \gamma_i (\tilde T_i)$ and $T_i \ge \Lambda_i$ where $\Lambda_i$ is the last time at which $\gamma_i$ enters $B(v_i, re^{-I_i})$. Unfortunately $G$ is not measurable with respect to $\cC$, but fortunately we will see that its complement has exponential small probability in $T(\delta)$.

 For clarity we remove the superscripts $\cdot^\d$ from notations, hence for instance $h(v_i)$ means $h^\d (v_i)$. Note that on the good event $G$, since $T_i\ge  \Lambda_i$ and $\tilde T_i \ge \tilde \Lambda_i $ (where $\tilde \Lambda_i$ is defined in the obvious analogue way to $\Lambda_i$), and since $\gamma$ and $\tilde \gamma$ also agree on $B(v_i, r e^{- I_i})$, we can write
\begin{equation}\label{dec:simple}
h(v_i) = h_{T_i} (v_i)  + \Delta \tilde h(v_i),
\end{equation}
where $ \Delta \tilde h(v_i) = \tilde h (v_i) - \tilde h_{\tilde T_i} (v_i)$.
The first term is the ``main" term (well described by SLE), see \eqref{GFFTj}. The second term is a mesoscopic term: when we substract $m(v_i)$ we will get independent terms with mean approximately zero.

We wish to take the conditional expectation given $\cC$, but since $G$ is not measurable with respect to $\cC$, some care is needed. We introduce the following notations: if $s \le t $ we write $h(s,t)$ for the winding around $v_i$ of $\gamma_{i}$ during $[s,t]$; i.e., $h(s,t) = W ( \gamma_i([s,t]), v_i)$. By an abuse of notation, we write $h(v_i) = h(-1, \infty)$. When $s \ge t $ we put $h(s,t) = - h(t,s)$.
We then write
$$
h(- 1, \infty) = h (-1, T_i) + h(T_i, \Lambda_i) + h(\Lambda_i, \infty).
$$
As before, the main term is $h(- 1, T_i) = h_{T_i} (v_i)$ which is thus governed by \eqref{GFFTj}. Note that we also have $ h(\Lambda_i, \infty) = \tilde h ( \tilde \Lambda_i, \infty)$
(this is also trivially true even when the coupling fails).

Moreover, the middle term $h(T_i, \Lambda_i)$ may be rewritten by adding and taking away $\tilde h ( \tilde T_i, \tilde \Lambda_i)$ as
$$
h(T_i, \Lambda_i) =   \tilde h (\tilde T_i, \tilde \Lambda_i) + \xi_i
$$
where
$$
\xi_i =  h ( T_i, \Lambda_i ) - \tilde h ( \tilde T_i, \tilde \Lambda_i)
$$
is an error that is typically zero except on an event of very small probability.

Consequently, we obtain the decomposition:
$$
h(v_i) - m_i = \underbrace{h_{T_i}(v_i)}_{\text{type 1}} + \underbrace{[\tilde h ( \tilde T_i, \infty) - m _i]}_{\text{type 2}} + \underbrace{\xi_i}_{\text{type 3}}.
$$
We now explain how we will finish the proof. We need to compute the expected value of the product over $i$ of the expression in the left hand side above. We expand the product in the right hand side above to get a finite sum of products of terms involving one of the three types of terms above for each $1\le i \le k$. We take the conditional expectation given $\cC$ and then the total expectation. If only the first type of terms occur in the sum, we can simply use \eqref{GFFTj} as already mentioned. To finish the proof, we simply make the following observations:
\begin{itemize}

\item Terms of type two, $B_i = [\tilde h ( \tilde T_i, \infty) - m_i] $, satisfy $\E(B_i) = o(1)$ and $B_i$ is independent of $\cC$ and of any of the terms in the product given $\cC$.

\item Since $\xi_i = 0$ except if $G$ does not hold, we have $\P( \xi_i \neq 0) \le C e^{- c T}$. Indeed,
    \begin{align*}
\P( G^c) &\le  \P( \gamma(T_i) \neq \tilde \gamma_i( \tilde T_i)) + \P( I_i \ge T(\delta)/2 - \log r+ \log R(v_i, D) )\\
&\ \  +  \P\left( I_i \le T(\delta)/2 - \log r + \log R(v_i, D) \text{ and } \gamma_i( T(\delta), \infty) \not\subset B(v_i,  e^{-T(\delta)/2})\right) \\
& \le C e^{- cT(\delta)}
\end{align*}
 where we have used Lemma \ref{lemma:coupling_Poisson} for the first term, \cref{lem:exp_tail} for the second (and the fact that $r$ is fixed), and \eqref{eq:Schramm_dist} and the choice of $T(\delta)$ for the third.
 Hence, using \cref{lem:truncated_UI},
    $$\E( \xi_i^k) \le  ce^{-cT} \E( T_i^k + \tilde T_i^k) \le C e^{-cT}.$$


 \item Moreover, $\E( h_{T_i}^k)  \le C (1+T^k)$ by \cref{lem:truncated_UI}.
\end{itemize}

When we take the conditional expectation, all the terms of the type $2$ contribute $\E(B_i |\cC) = \E(B_i) = o(1)$ to the product since they are independent of any other term in the product. Hence if the product contains only terms of types $1$ and $2$ then this contributes $ o(1) \E( \prod h_{T_i}(v_i) ) = o(1)$ by \eqref{GFFTj}.

Otherwise if the product contains any term involving $\xi_i$ (i.e. type 3), using H\"older's inequality and \eqref{GFFTj}, this contributes at most $O (e^{- cT}) = o(1)$. Hence the result is proved.
\end{proof}

%

The above proposition gives a pointwise convergence of the $k$-point function. We now need some a priori bounds to allow us to integrate these moments against test functions via the dominated convergence theorem.

\begin{lemma}\label{L:domconv}
For all $k \geq 2$, for all bounded domains $D$ with locally connected boundary, there exist constants $C=C_k,c>0$ such that for
all $\delta < c\delta_0$, for all $v^\d_1, \ldots, v^\d_k \in D^{\d}$,
\[
   \left|\E\big [ \prod \big( h^{\d} (v_i^\d) -
m^\d(v_i^\d)\big) \big] \right| \leq C  (1+ \log^{2k} r  )
\]
where $r   =  (1/10) ( \min_{i \neq j} |v_i^\d - v_j^\d| \wedge  \min_j
d(v_j^\d, \partial D) \wedge 1) $. The same holds even if we replace $v_i^\d$
by any point in its Voronoi cell as in our extended
definition of $h_{ext}^\d (z)$.
\end{lemma}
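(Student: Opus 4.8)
The plan is to bound the $k$-point function uniformly using the same decomposition as in the proof of \cref{P:pointwisemoments}, but tracking the dependence on $r$ explicitly rather than sending $\delta \to 0$. First I would reduce to the case $v_i^\d \in D^\d$, which is immediate from the definition of the Voronoi extension. Then, for $\delta$ small enough (depending only on universal constants, not on $r$, since the crossing estimates scale), apply the full coupling of \cref{lem:exp_tail} around the points $v_1^\d, \ldots, v_k^\d$ with isolation scale $r$ as defined in the statement. This produces random variables $I_1, \ldots, I_k$ with exponential tails (uniform in $\delta$ and in the configuration), independent full-plane loop-erased paths $\tilde\gamma_j$, and on the non-abort event $\cA$ the agreement $\gamma_j \cap B(v_j, 6^{-I_j} r) = \tilde\gamma_j \cap B(v_j, 6^{-I_j}r)$. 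I would pick a single truncation capacity $T = -10\log r + 1$ (no longer sending it to infinity), set $T_j = T + X_j$, $\tilde T_j = T + \tilde X_j$ via \cref{lemma:coupling_Poisson}, and write the same telescoping decomposition
\[
h^\d(v_j) - m^\d(v_j) = A_j + B_j + C_j + D_j + \xi_j
\]
with $A_j, B_j, C_j, D_j, \xi_j$ exactly as in \cref{P:pointwisemoments}.

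Next I would expand $\E\big[\prod_j (A_j + B_j + C_j + D_j + \xi_j)\big]$ and bound every term by H\"older's inequality, using the a priori moment bounds available: $\E(|A_j|^p) = O(\log(1/\delta)^p \delta^{\alpha})$ by \cref{cor:abort} (this is actually $o(1)$ and contributes nothing in the limit, but uniformly it is bounded by $1$ for $\delta$ small); $\E(|B_j|^p) = O(T^p e^{-cT^\alpha}) = O(1)$ by \cref{prop:tail_winding2} combined with \cref{lem:exp_tail}; $\E(|C_j|^p), \E(|D_j|^p) = O(T^p) = O(\log^p(1/r))$ by \cref{prop:tail_winding2} applied to the full-plane field together with the boundedness of $X_j, \tilde X_j$; and crucially $\E(|\xi_j|^p) = \E(|h^\d_{T_j}(v_j)|^p) = O(T^p) = O(\log^p(1/r))$, again by \cref{prop:tail_winding2} (the winding up to capacity $T = -10\log r + 1$ has stretched-exponential tails with scale controlled by $T$, hence $p$-th moment $O(T^p)$). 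Each of the $5^k$ terms in the expansion is then a product of $k$ factors, each with $p$-th moment $O(\log^p(1/r))$ for suitable $p = p(k)$ (taking $p = k$ in the generalized H\"older), so each term is $O(\log^k(1/r))$, and summing gives the bound $C_k(1 + \log^{k}(1/r))$ — in fact with exponent $4k$ I get plenty of slack, which is why the statement is written loosely with $\log^{4k}$. The final sentence about replacing $v_i^\d$ by a point in its Voronoi cell is automatic since by bounded density the Voronoi cells have diameter $O(\delta)$, far smaller than $r$, so $r$ changes by at most a constant factor.

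The main obstacle is getting the estimate \cref{prop:tail_winding2} to apply at capacity $T = -10\log r+1$ \emph{uniformly over all configurations of the $v_i^\d$}: the proposition requires $\delta < C e^{-T}\delta_0 = C' r^{10}\delta_0$, so one needs $\delta$ small relative to $r$. But here $r$ can be as small as $O(\delta)$ (points can be a single lattice spacing apart), in which case $e^{-T} \asymp r^{10} \asymp \delta^{10}$ and the hypothesis $\delta < Ce^{-T}\delta_0$ \emph{fails}. I would handle this by splitting into two regimes: when $r$ is bounded below by a fixed power of $\delta$ (say $r \ge \delta^{1/20}$), \cref{prop:tail_winding2} applies directly; when $r < \delta^{1/20}$, I instead bound $h^\d(v_i^\d) - m^\d(v_i^\d)$ crudely using the bounded-density and bounded-winding assumptions (\ref{boundeddensity}, \ref{embedding}) — the total winding of any branch in $D^\d$ is deterministically $O(\#\{\text{vertices}\}) = O(\delta^{-2}\,\mathrm{diam}(D)^2)$ — combined with \cref{prop:tail_winding2} applied down only to capacity $-\log\delta - C$ (where it \emph{does} apply), exactly as in item (i) of the proof of \cref{P:pointwisemoments}; this gives $p$-th moments $O(\log^p(1/\delta))$, and since in this regime $\log(1/\delta) \le 20\log(1/r)$, the bound $O(\log^{k}(1/r))$ is recovered (with room to spare, hence the $4k$ exponent). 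Assembling the two regimes and the H\"older expansion completes the proof.
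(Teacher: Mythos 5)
Your route tracks the proof of \cref{P:pointwisemoments} closely, keeping the same five-term decomposition including the Poisson-matched truncation times $T_j, \tilde T_j$; the paper instead drops the truncation altogether and works with a four-term decomposition built only from the isolation radius $I_j$ (writing $B'_j = h_I$ and controlling it by summing stretched-exponential contributions over dyadic annuli down to scale $e^{-I_j}r$). That difference is mostly a matter of taste, and your regime-split to ensure \cref{prop:tail_winding2} applies at capacity $T = -10\log r + 1$ is a sensible way to patch the hypothesis $\delta < Ce^{-t}\delta_0$.

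However, there is a genuine gap in the step where you bound all $5^k$ terms of the expansion by generalized H\"older. You claim $\E(|D_j|^p) = O(T^p) = O(\log^p(1/r))$ ``by \cref{prop:tail_winding2} applied to the full-plane field together with the boundedness of $X_j, \tilde X_j$.'' This is false. The term $D_j = \tilde h - \tilde h_0 - m^\d$ is the \emph{full}, untruncated winding of the whole-plane branch from capacity $0$ to the endpoint, minus its mean; it does not involve $T$ or $\tilde T_j$ at all and has nothing to do with $r$. Its $p$-th moment is on the order of $\log^p(1/\delta)$ (the stretched-exponential increment of \cref{prop:tail_winding2} applies over roughly $-\log\delta$ units of capacity, and beyond that the winding is only controlled by the deterministic bounded-density/bounded-winding hypotheses), and this blows up as $\delta \to 0$. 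In the regime $r \gg \delta^{1/20}$ your H\"older bound on any product containing a $D_j$ factor therefore gives $O(\log^k(1/\delta))$, which is not $O(\log^{4k}(1/r))$ uniformly in $\delta$. The paper never tries to bound the moments of this term: it observes instead that $D_j$ (the paper's $D'_j$) is \emph{independent} of the other factors by construction of the full coupling and has \emph{zero mean} by the definition of $m^\d$, so every term in the expansion containing at least one $D_j$ contributes exactly zero. You used precisely this argument in your reading of \cref{P:pointwisemoments} (item (iv) there), and it is indispensable here too; without it the proof does not close.
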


\begin{proof}
Let us assume $r \ge \delta$ for now and consider the full coupling of
\cref{lem:exp_tail}. We use the notations from the proof of \cref{P:pointwisemoments}. We exploit the following decomposition which is analogous to the decomposition in \cref{P:pointwisemoments} except we do away with $T_i$ (indeed, if the points $v_i$ are really close to each other, $e^{-T_i}$ might be much greater that this distance):
\begin{equation}
h(v_i) - m_i = h(-1, \Lambda_i) + \tilde h (\tilde \Lambda_i,\log R(v, D)) + [\tilde h(\log R(v, D),\infty) - m_i].
\end{equation}
Let $\cC$ be the sigma algebra generated by the cutset exploration. Note that conditionally on $\cC$, the third term is independent of any of the above terms involving $j \neq i$ and has 0 mean. Thus we can ignore the terms in the expansion of the product which has at least one term of the third type.

We now provide an argument on how to bound the first term and the same argument can be used to bound the second term by the same quantity. Let $\Phi_i$ be the time of first entry of $\gamma_i$ into $B(v_i, re^{-I_i})$ (in contrast with $\Lambda_i$ which is the last entry into  $B(v_i, re^{-I_i})$).
Note that
$$
h( - 1, \Lambda_i) = h ( - 1, \Phi_i) + h ( \Phi_i, \Lambda_i)
$$
and the second term is deterministically bounded above in absolute value by $2\pi$ for elementary topological reasons (essentially, the winding number of a Jordan curve is either 0 or $\pm 2\pi$). Furthermore, note that
$$|h ( - 1, \Phi_i)| \le  \sum_{j=-1 }^{\lfloor \Phi_i \rfloor -1  } |h (j, j +1) | +| h ( \lfloor \Phi_i \rfloor, \Phi_i)|.  $$
Now each of these terms have exponential tail by \cref{prop:tail_winding2}. Moreover, $ \Phi_i  -\log R(v, D) \le  - \log r + I_i  $ by monotonicity of conformal radius, and $I_i$ has exponential tails by \cref{eq:32} in \cref{lem:exp_tail}, hence by convexity of the function $x \mapsto x^k$,
\begin{align*}
\E ( |h (- 1, \Phi_i)|^k) & \le
 \E \Big( ( - \log r + I_i + \log R(v_i, D) )^{k-1} \\
 & \quad \quad\times
 \sum_{j= -1}^{ - \log r + I_i + \log R(v_i, D)} |h (j, j + 1)|^k \Big) + C\\
& \le C ( -  \log r + 1)^{2k}  + C
\end{align*}
by Cauchy--Schwarz, as desired (the above bound is not optimal, but this is unimportant).


Finally if $r<\delta$, we can use H\"older to bound the moment of $h^\d -
m^\d$ by $C(1+\log^{2k}\delta)$ as above which is at most the required bound.
\end{proof}

%
We can now prove our main theorem.
\begin{proof}[Proof of \cref{thm:main_details}]
Fix $f_1, \ldots, f_n$ to be smooth functions in $\bar D$ and $k_1, \ldots, k_n \ge 1$. Combining \cref{P:pointwisemoments} and \cref{L:domconv}, we see that we can apply the dominated convergence theorem to $ \E \left[ \prod_{i=1}^n ( h^\d - m^\d , f_i)^{k_i} \right] $ and therefore
$$
\E \left[ \prod_{i=1}^n ( h^\d - m^\d , f_i)^{k_i} \right] \to \E \left[ \prod_{i=1}^n ( \hg, f_i)^{k_i}\right].
$$
Since the right hand side is Gaussian (and therefore moments characterise the distribution), $ (h^\d - m^\d , f_i)_{i=1}^n $ converges in distribution to $ (\hg, f_i)_{i=1}^n$. In other words, at this point we already know $h^\d - m^\d$ converges to $\hg$ in the sense of finite dimensional marginals (when viewed as a stochastic process indexed by smooth functions with compact support, say). We now check tightness in the Sobolev space $H^{-1- \eta}$, from which convergence in $H^{-1 - \eta}$ will follow.

Note that by the Rellich--Kondrachov embedding theorem, to get tightness in $H^{-1- \eta}$ it suffices to prove that $\E(\|h^\d - m^\d\|_{H^{-1 - \eta'}}^2) < C$ for some constant $C$, for any $\eta'< \eta$.  More generally we will check that $\E(\|h^\d - m^\d\|_{H^{-1 - \eta}}^{2k}) < C_k$ for any $k \ge 1$ and any $\eta>0$.

Let $(e_j)$ be an orthonormal basis of eigenfunctions of $- \Delta$ in $L^2(D)$, corresponding to eigenvalues $\lambda_j > 0$. Then writing $h = h^\d - m^\d$ for convenience,
\begin{align*}
 \E(      \| h \|^{2k}_{H^{-1 - \eta}})
 &=  \E  \Big(  \sum_{j=1}^\infty  (h, e_j)_{L^2}^2  \lambda_j^{-1- \eta} \Big)^k \le C \sum_{j=1}^\infty \E( (h, e_j)^{2k}_{L^2} )   \lambda_j^{-1- \eta}
\end{align*}
by Fubini's theorem and Jensen's inequality (since by Weyl's law, $ \sum_j \lambda_j^{-1- \eta} < \infty$ is summable).

Furthermore,
\begin{align*}
 \E( (h, e_j)_{L^2}^{2k} ) &= \int_{D^{2k}} \E( h (z_1) \ldots h (z_{2k}) ) e_j(z_1)\ldots  e_j(z_{2k}) dz_1 \ldots dz_{2k}
\end{align*}
and note that by \cref{L:domconv},
$ \E( h(z_1)\ldots  h(z_{2k}) ) \le C ( 1+ \log^{4k} r)$ where $r = r(z_1, \ldots, z_{2k})$ is as in that lemma. Note also that $( \log r)^{a}$ is integrable for any $a>0$ and $D$ is bounded hence using Cauchy--Schwarz, we deduce that $\E( (h, e_j)_{L^2}^{2k} )  \le C (\int_{D^{2k}} e_j(z_1)^2\ldots  e_j(z_{2k})^2 dz_1\ldots dz_{2k} )^{1/2}= C$ since $e_j$ is orthonormal in $L^2$. Consequently,
 $$
 \E( \| h \|_{H^{-1 - \eta}}^{2k}) \le \sum_{j=1}^{\infty} C  \lambda_j^{-1- \eta}  < \infty
 $$
 by Weyl's law. This finishes the proof of \cref{thm:main_details} and hence \cref{T:winding_intro}. Let us remind the reader here that the proofs of moment bounds in $H^{-1-\eta}$ follows through in exactly the same way in the continuum proof in \cref{P:conv_GFF_H}.
\end{proof}

\section{Joint Convergence}\label{sec:jt_conv}

In this section we prove the joint convergence of (dimer height function, wired UST) to (GFF, continuum wired UST) where the latter is coupled together through the imaginary geometry coupling in \cref{thm:coupling_intro}.

Let us first introduce the setup. The topology on the height function is the Sobolev space $H^{-1-\eta}$ (recall this is a complete, separable Hilbert space). The topology on the tree is the Schramm topology $\Omega_1$ introduced in \cite{SLE} (and described in the supplementary). As usual we have a bounded domain $(D,x)$ with a marked point $x \in \partial D$ and locally connected boundary. We work with the space $\Omega:= \Omega_1 \times H^{-1-\eta}(D)$ equipped with the product topology. We also view $\Omega$ also as a metric space with metric defined by $d_1+d_2$ where $d_1$ and $d_2$ are the metrics in each coordinate.
Let $\cT^d,h^\d$ be as in \cref{S:main_winding_full}. Let $(\cT,\hg(\cT))$ denote the continuum wired UST in $D$ with $\hg(\cT)$ denoting the GFF which is coupled with $\cT$ using the imaginary geometry coupling (cf. \cref{thm:coupling_intro}). The point here is again that the height function is not continuous as a function of the discrete tree, hence we have to use the results about the truncated winding we proved in this paper.
\begin{thm}\label{thm:jt_conv}
In the above setup, we have the following joint convergence in law in the product topology described above:
\[
(\cT^\d,h^\d - m^\d) \xrightarrow[\delta \to 0]{(d)} (\cT,\hg(\cT)).
\]
\end{thm}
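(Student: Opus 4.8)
The plan is to bootstrap the joint convergence from the two pieces we have already established: marginal convergence of the spanning trees (in the Schramm topology, via \cite{LSW} and \cref{thm:coupling_intro}) and convergence of the winding field $h^\d - m^\d$ in $H^{-1-\eta}$ (\cref{thm:main_details}), together with the \emph{joint} convergence of the tree and of the \emph{truncated} winding field. The key point is that the pair $(\cT^\d, h_T^\d)$, for fixed truncation level $T$, \emph{is} a continuous functional of the tree (in the Schramm topology): indeed $h_T^\d(v)$ depends only on the branch $\gamma_v^\d$ up to capacity $T$, and the topological winding of a curve up to a fixed capacity is continuous in the curve (this was already used in the proof of \cref{P:pointwisemoments} and in the lemma preceding it). So the plan is: (i) show $(\cT^\d, h_T^\d) \to (\cT, h_T)$ jointly in $\Omega_1 \times H^{-1-\eta}$ for each fixed $T$, by the continuous mapping theorem applied to the marginal tree convergence; (ii) show that $(\cT, h_T) \to (\cT, \hg(\cT))$ jointly as $T \to \infty$, which is exactly \cref{T:winding_continuum}/\cref{thm:pointwise_GFF} (the limit $\hg$ is $\cT$-measurable and the convergence is in $H^{-1-\eta}$, so it lifts trivially to joint convergence with the fixed first coordinate $\cT$); (iii) control the error $h^\d - m^\d - h_T^\d$ uniformly in $\delta$ as $T\to\infty$, so that a standard $3\varepsilon$ / diagonal argument (e.g.\ via the Portmanteau theorem and the fact that $\Omega$ is a separable metric space) closes the gap.

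More concretely, for step (i) I would note that $\cT^\d \to \cT$ in law in $\Omega_1$ is known; since the map $\cT \mapsto h_T(\cdot)$ (suitably interpreted via the Voronoi extension and then the $H^{-1-\eta}$ embedding) is continuous at $\cT$ with probability one — the only possible discontinuities come from branches that hit $\partial D$ or wind pathologically, a null event for the continuum UST, and one has to be slightly careful that the branch from a fixed approximating vertex $v^\d$ converges and that only finitely many ``bad'' branches matter for the $H^{-1-\eta}$ norm — Skorokhod's representation theorem plus the continuity gives $(\cT^\d, h_T^\d) \to (\cT, h_T)$ a.s.\ along a coupling, hence in law. Actually it is cleaner to first prove convergence of the finite-dimensional marginals jointly (which follows from the lemma just before \cref{P:pointwisemoments}: $\prod h_T^\d(v_i)$ converges in distribution to $\prod h_T(x_i)$ jointly with the tree, by continuity and uniform integrability from \cref{prop:tail_winding2}) and then upgrade to $H^{-1-\eta}$ using the moment bounds of \cref{L:domconv} exactly as in the proof of \cref{thm:main_details}, which gives tightness of $(h_T^\d)_\delta$ in $H^{-1-\eta}$ uniformly; tightness of $\cT^\d$ in $\Omega_1$ is automatic since $\Omega_1$ is compact. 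Joint tightness plus convergence of finite-dimensional marginals (of the pair) identifies the limit.

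For step (iii), the required uniform control is essentially already contained in the proof of \cref{thm:main_details}: the decomposition $h^\d - m^\d = (A_j) + (B_j) + (C_j) + (D_j) + \xi_j$ shows that $\| (h^\d - m^\d) - h_{T}^\d \|$ has small moments once $T = T(\delta)$ grows slowly, and more robustly \cref{prop:tail_winding2} together with \cref{lem:exp_tail} bounds $\E\| h_T^\d - h_{T'}^\d \|_{H^{-1-\eta}}^2$ by something like $C(1+T'^2)e^{-cT}$ for $T' \ge T$ (this is the discrete analogue of \eqref{eq:bootstrap2} in the proof of \cref{P:conv_GFF_H}), uniformly in $\delta$ small. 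Combined with $\E\|h_T - \hg\|_{H^{-1-\eta}}^2 \to 0$ from \cref{T:winding_continuum}, a standard interchange-of-limits lemma (if $X^\d_T \to Y_T$ in law for each $T$, $Y_T \to Y$ in law, and $\limsup_\delta \E d(X^\d_T, X^\d) \to 0$ as $T\to\infty$, then $X^\d \to Y$) applied in $\Omega = \Omega_1 \times H^{-1-\eta}$ with $X^\d = (\cT^\d, h^\d - m^\d)$, $X^\d_T = (\cT^\d, h_T^\d)$, $Y_T = (\cT, h_T)$, $Y = (\cT, \hg(\cT))$ yields the theorem. One checks the hypotheses: the first is step (i), the second is step (ii), and the third is step (iii) applied only to the second coordinate (the first coordinate is literally equal in $X^\d$ and $X^\d_T$).

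The main obstacle I anticipate is \textbf{step (i)}, specifically the continuity of $\cT \mapsto h_T$ as a map into $H^{-1-\eta}$ and the accompanying uniform integrability needed to pass from convergence of the Voronoi-extended field pointwise/finite-dimensionally to convergence in the Sobolev norm jointly with the tree. Pointwise (finite-dimensional) continuity is genuinely easy given the lemma before \cref{P:pointwisemoments}, but to get the $H^{-1-\eta}$ statement jointly one needs the tightness estimate $\E\|h_T^\d\|_{H^{-1-\eta}}^{2k} \le C_{k,T}$ uniformly in $\delta$, which requires re-running the moment computation of \cref{L:domconv} with $h^\d$ replaced by $h_T^\d$ — this is where one must invoke \cref{prop:tail_winding2} to bound the $2k$-point function $\E\prod h_T^\d(v_i)$ by $C(1+\log^{C} r)$ uniformly in $\delta$ and in $T$ on compacts. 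This is routine but must be done carefully, particularly regarding the role of the marked boundary point $x$ and the boundary arc contribution $\gamma_v[-1,0]$, which is deterministic given the tree's hitting point and so poses no essential difficulty. Once that estimate is in hand, Prokhorov gives tightness of $(\cT^\d, h_T^\d)$, and the a.s.\ continuity of the limiting functional (via Skorokhod) identifies the joint limit as $(\cT, h_T)$.
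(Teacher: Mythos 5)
Your proposal takes essentially the same approach as the paper: approximate $h^\d-m^\d$ by the truncated winding $h_T^\d$, which is a continuous functional of the tree in the Schramm topology and hence converges jointly with $\cT^\d$ for each fixed $T$; let $T\to\infty$ using \cref{thm:pointwise_GFF}; and close the gap using the uniform-in-$\delta$ moment control on $h^\d - h_T^\d$ coming from \cref{prop:tail_winding2} and \cref{L:domconv}. The only cosmetic difference is that you phrase the final step as a Moore--Osgood interchange-of-limits lemma whereas the paper chooses a slowly growing sequence $t(\delta)$ and works directly with the L\'evy--Prokhorov metric; your observation that the continuity of $\cT\mapsto h_T$ into $H^{-1-\eta}$ requires the moment/tightness estimates is the correct reading of what makes the continuous-mapping step legitimate.
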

\begin{proof}
To simplify notations, we write $h^\d$ for $h^\d - m^\d$ admitting a slight abuse of notation.
Let $h_t$ denote the continuum winding truncated at capacity $t $ plus log conformal radius seen from the point as before.
Notice that from \cref{thm:pointwise_GFF},
$
(\cT,h_t) \xrightarrow[t \to \infty]{P} (\cT,\hg(\cT)),
$
where the convergence is in probability in the metric space defined above.
Note that for any fixed $t$, $h^\d_t$ and $h_t$ are obtained by applying the same continuous function to respectively $\cT^\d$ and $\cT$.
Hence we have
\begin{equation*}
(\cT^\d,h_t^\d) \xrightarrow[\delta \to 0]{(d)} (\cT,h_t).
\end{equation*}
Thus there exists a sequence $t(\delta)$ growing slow enough  such that
\begin{equation*}
(\cT^\d,h_{t(\delta)}^\d) \xrightarrow[\delta \to 0]{(d)} (\cT,\hg(\cT)).
\end{equation*}
Now recall that we proved in \cref{P:pointwisemoments,L:domconv} that $\E(\|h^\d_{t(\delta)} - h^\d \|_{H^{{-1-\eta}}}) \to 0$ which implies that $h^\d_{t(\delta)} - h^\d $ converges to $0$ in probability. Hence the result follows from Slutsky's lemma (e.g., Theorem 3.1 in \cite{Billingsley}).
%
\end{proof}


  \bibliographystyle{abbrv}
\bibliography{winding.bib}

\newpage

\appendix

\renewcommand{\appendixname}{}




\title{Supplementary material for Dimers and Imaginary Geometry}

\begin{frontmatter}

\runtitle{Dimers and Imaginary geometry: supplement}

%
%
%
%
%
%
%

\end{frontmatter}

\section{Background Materials}\label{sec:background_supp}\
Here we recall some background materials mentioned in Section 2 of the main file.
\subsection{Schramm--Loewner evolution}\label{sec:SLE_supp}
SLE, or Schramm--Loewner evolution, is a family of conformally
invariant random curves in the unit disc $\D$ which are supposed to describe
several aspects of statistical physics models. In this paper we are concerned
with the radial version of SLE, see \cite{Lawler} for more details. SLE$_\kappa$ with $\kappa \le 4$ in the unit disc $\D$ starting from $1$ and targeted at $0$ is the curve $\gamma$ (parametrised by $[0, \infty)$) described via the (unique) family of conformal maps  $g_t: \D  \to \D \setminus \gamma[0, t]$ with
$g_t(0) = 0$ and $g_t'(0) >0$, where $g_t$ satisfies the following differential
equation (called the radial Loewner equation) for each $z \in \D \setminus \gamma[0,t]$:
\begin{equation}
  \label{eq:rad_SLE_supp}
  \frac{\partial g_t(z)}{\partial t} = g_t(z)
  \frac{e^{i\sqrt{\kappa}B_t}+ g_t(z)}{e^{i\sqrt{\kappa}B_t}- g_t(z)}
  ; \quad g_0(z) = z.
\end{equation}
SLE$_\kappa$ enjoys conformal invariance, hence one can obtain radial SLE$_\kappa$ curves in other domains and/or for other starting and target points simply by applying conformal maps to the curve described by \eqref{eq:rad_SLE_supp}.
It is worthwhile to note here that radial SLE$_\kappa$ in $\D$ starting at 1 and targeted at $0$ is symmetric under conjugation in distribution.

\subsection{Gaussian Free Field (GFF)}
\label{sec:gff_supp}
In this section, we recall the definition of the GFF mostly to fix normalisation. See \cite{GFFShe} and \cite{NBnotes}
for more thorough definitions. Let $D$ be a domain in $\C$. Let $G_D$ be the Green function in $D$ (with Dirichlet boundary conditions), i.e., $ G_D(x,y) = \pi \int_{0}^\infty
p_t^D(x,y)dt $ where $p_t^D$ is the transition kernel for a Brownian motion
killed upon exiting $D$. The Gaussian free field, viewed as a stochastic process indexed by test functions $f\in C^\infty(\bar D)$, is the centered Gaussian process such that $(h,f)$ is a normal random variable and covariance between $(h,f)$ and $(h,g)$ given by $\iint G_D(x,y) f(x) g(y) dx dy$.
Alternatively, if $f_n$ is an orthonormal basis for the Sobolev space $H_0^1(D)$ induced by the Dirichlet inner product
\begin{equation}
  \label{eq:2_supp}
  (f,g)_\nabla = \frac{1}{2\pi}\int_D \nabla f \cdot \nabla g,
\end{equation}
then $h = \sum_n X_n f_n$ where $X_n$ are i.i.d. standard real Gaussian random variables. As a consequence of Weyl's law, it can be seen that this sum converges almost surely in the Sobolev space $H^{- \eta}(D)$ for any $\eta>0$.
Note that even though the GFF is not defined pointwise we will sometimes freely abuse notations and write $\E( h(x_1)  \ldots h(x_k))$ for its $k$-point moments (which are unambiguously defined if the $x_i$ are distinct).

Finally, for any function $u$ on the boundary of $D$, a GFF in $D$ with boundary condition given by $u$ is defined to be a Dirichlet GFF in $D$ plus the harmonic extension of $u$ in $D$. (In fact this also makes sense when $u$ is rougher than a function, provided that it can be integrated against harmonic measure).

\subsection{Wilson's  algorithm}\label{sec:Wilson_supp}


Recall from the introduction of the main file that given a finite weighted, oriented graph $G$ with a marked ``root'' vertex (and such that there exists a path from any point to the root), a \emph{spanning tree} of $G$ is an oriented subgraph $T$ containing all vertices of $G$, exactly one outgoing edge for each non-root vertex, and such that if we forget about the orientations $T$ is connected and has no cycle.
Note that starting from any point $x$, by following the outgoing path one obtains a self-avoiding path from $x$ to the root which will be called the branch of the tree starting at (or emanating from) $x$ which we will often write as $\gamma_x$.
The measure defined in picking a tree with a probability proportional to the edges is called (with a small abuse of language) the Uniform Spanning Tree measure or UST for short.


A crucial tool for studying the UST is Wilson's celebrated algorithm, which we recall here for convenience (since we need it in the context of directed and weighted graphs). This relies on the notion of loop-erased random walk, which we briefly explain now. Consider a simple random walk $(X_s)_{0 \le s \le T}$ run until some (possibly random) time $T$. The loop-erasure $Y$ of $(X_s)_{0 \le s \le T}$ is obtained from $X$ by chronologically erasing the
loops. By chronologically erasing the loops, we mean that we keep track of the current loop-erasure $Y^t$ which we update as follows. Given $Y^t$ and the next position of the walk $X_{t+1}$, if $X_{t+1} \notin Y^t$ we set $Y^{t+1} = Y^t \cup X_{t+1}$, and if $X_{t+1} \in Y^t$ we set $Y^{t+1}$ as the part of $Y^t$ up to and including $X_{t+1}$. The latter case is what we call erasing a loop. See \cite{Lawler_RW} for general background on loop-erased random walks. We also observe that the law of the path when loops are erased in a chronological order, and the law of the path when loops are erased in a reverse chronological order coincide, even if the graph is oriented (i.e., the random walk is nonreversible). See Lemma 7.2.1 in \cite{Lawler_RW} for a proof.

We now describe Wilson's algorithm for sampling a UST.
In the first step, we fix
a vertex, the root. To sample a uniform
spanning tree rooted at that vertex, we now pick
any other vertex and perform a loop-erased
random walk until the walk hits the root.
In the next step we update the root by
adding this loop-erased path to the old root, and iterate this procedure, (each time stopping when we hit the updated root), until
there are no vertices remaining. It
is easy to see that such a process in the end produces a tree, where the outgoing edge is taken to be towards the initial root or equivalently the first step of the loop-erased walk out of this vertex. This
tree has the law of the uniform spanning tree (\cite{LP:book}). Furthermore, and crucially, the law of this resulting tree does not depend on
the choice of the ordering of vertices from which we draw loop-erased paths.

It follows that from every vertex,
one can generate branches of the uniform spanning tree from that
vertex to the root by sampling loop erased random walk from those
vertices to the root. As mentioned before, often we will be interested in the \textbf{wired} uniform spanning tree, which is the UST on the graph where we have identified together a given set of ``boundary" vertices to be the root.

\subsection{Continuum Uniform spanning tree}\label{SS:contUST_supp}
The Continuum (Wired) Uniform Spanning Tree, in a simply connected domain $D$, is the scaling limit of the discrete UST in an approximation $D^\d$ of $D$ where we have wired all vertices of $\partial D^\d$. The convergence is in the following underlying topology. For any metric space
$X$, let $\mathcal H (X)$ denote the space of subsets of $X$ equipped with the
Hausdorff metric. Let $\cP(z,w,D)$ be the space of all continuous paths in $\overline D$
from a point $z\in \overline D$ to $w\in  \bar D$. We consider the Schramm space
$D \times D \times \cup_{z,w \in \overline{D}} \cP(z,w)$ which we view as a subset of $\mathcal H (\overline D \times \overline D \times \cH
(\overline D))$ equipped with the induced distance. A discrete wired UST is embedded into this space by considering $\{(x, y, \gamma_{xy}) | x, y \in D^\d \}$ where $\gamma_{xy}$ is the unique path connecting $x$ and $y$ in the tree, possibly including a part of the boundary if $x$ and $y$ belong to different connected components. 
We view the Schramm space as a subset of the
compact space $\mathcal H (\overline D \times \overline D \times \cH
(\overline D))$ equipped with its metric. This is the \textbf{Schramm
topology}.

All we will need to recall for now is the following fact.
 Let $z_1,z_2,\ldots,z_k$ be distinct points in $D$. A landmark result of Lawler,
Schramm and Werner \cite{LSW} shows that loop erased random walk in a domain
$D^\d$ of $\delta \Z^2$ approximating $D$ from the vertex closest to $z_i$ converges as
$\delta \to 0$ to a radial SLE$_2$ curve (denote it by $\gamma_{z_i}$). This SLE$_2$ curve starts from a point on the
boundary picked according to harmonic measure from $z_i$, and is targeted at $z_i$.
This convergence is in the Hausdorff sense. 
 From this it was deduced (cf. \cite{SLE}, Theorem 11.3 or Corollary 1.2 in \cite{LSW}) that the UST converges in the Schramm sense to an object which we call the \emph{continuum UST}. We call the curve $\gamma_{z_i}$ obtained as the scaling limit of the discrete UST branch the \emph{branch from $z_i$} of the continuum UST (note that this is a typical observable for the Schramm topology).
It is shown in Theorem 1.5 of \cite{SLE} that the branch
from a point in a domain is almost surely unique (i.e., there is a unique path connecting $z$ to $\partial D$) and hence this branch is unique a.s. for Lebesgue-almost every point $z \in D$.
(Actually Theorem 1.5
in \cite{SLE} deals with UST in the sphere but the same
argument works in a domain, see \cite{SLE}, Theorem 11.1.)
It is therefore easy to deduce:

\begin{prop}[Wilson's algorithm in the continuum]\label{prop:Wilson_cont_supp}
Let $D \subset \C$ be a simply connected domain and $z_1,\ldots, z_k \in D$.
We can sample the (a.s. unique) branches of the continuum wired UST in a domain
$D$ from
$z_1,\ldots, z_k$ as follows. Given the branches $\eta_i$ from $z_i$ for $1\le
i < j$, we inductively sample the branch from $z_j$ as follows. We pick a
point $p$ from the boundary of $D' := D \setminus \cup_{1\le i \le j} \eta_i$
according to harmonic measure from $z_j$ and draw an SLE$_2$ curve in $D'$ from $p$ to
$z_{j}$. The joint law of the branches does not depend on the order in
which we sample the branches.
\end{prop}

\section{Winding of curves}\label{app:winding_int_top_supp}

In this section, we provide detailed proofs of the results
(Lemmas 2.1 to 2.4) in Section 2.1 of the main file, which concern
basic deterministic facts about intrinsic and topological windings of a simple path.

\subsection{Intrinsic and topological winding}
\label{sec:wind_int_supp}

\begin{lemma}[Lemma 2.1 in the main file]\label{lem:intrinsic->top_supp}
Let $\gamma : [0, 1] \mapsto \C $ be a smooth self avoiding curve with $\gamma'(s) \neq 0$ for all $s$. We have
\begin{equation}
W_{\i}(\gamma) = W(\gamma, \gamma(1) ) + W( \gamma, \gamma(0) ).\label{eq:int->top_gen_supp}
\end{equation}
\end{lemma}

\begin{proof}
Let $x$ $[2\pi]$ denote $x$ modulo $2\pi$ for any real
number $x$. Let $\Arg \in (-\pi,\pi]$ be the principal branch of argument with branch cut $(-\infty,0]$. Note that
 \begin{align*}
 \lim_{u\to t, u < t} \Arg ( \gamma(u) - \gamma(t))& \equiv \pi + \Arg (\gamma'(t)) [2\pi], \\  \lim_{u\to s, u > s} \Arg ( \gamma(u) - \gamma(s))& \equiv  \Arg (\gamma'(s)) \,\, [2\pi].
 \end{align*}
 Therefore for any $t$,
 \begin{align*}
 W(\gamma[0,t], \gamma(t) ) + W( \gamma[0,t], \gamma(0) ) & \equiv \left( \pi + \Arg (\gamma'(t)) - \Arg( \gamma(0) - \gamma(t) )\right) \\
 & \quad \quad+ \left( \Arg( \gamma(t) - \gamma(0) ) - \Arg (\gamma'(0)) \right) \,\,[2\pi] \\
   & \equiv  \Arg (\gamma'(t)) - \Arg (\gamma'(0)) \,\,[2\pi]\\
   & \equiv W_{\i}(\gamma[0,t])  \,\,[2\pi]
\end{align*}
where notice that $\pi+ \Arg( \gamma(t) - \gamma(0) )$ cancels with $\Arg( \gamma(0) - \gamma(t) )$.
Equivalently we can write
\begin{equation}
W(\gamma[0,t], \gamma(t) ) + W( \gamma[0,t], \gamma(0) ) =  W_{\i}(\gamma[0,t]) +2\pi k_t
\end{equation}
for some $k_t \in \Z$.
However, as $t$ goes to $0$, both $W(\gamma[0,t], \gamma(t) ) + W( \gamma[0,t], \gamma(0) ) $ and $W_{\i}(\gamma[0,t])$ go to $0$ which implies $k_0=0$. Since $\gamma$ is smooth and self avoiding, it is easy to check that both  the winding terms are continuous in $t$. But this implies $k_t$ is continuous in $t$ and hence we conclude $k_t = 0$ for all $t$. This completes the proof.
\end{proof}

\subsection{Distortion estimates}
In this section we record some distortion estimates required in order to give a proof of Lemma 2.4 of the main file. This will be stated and proved in \cref{cor:det_winding_supp}.

\begin{lemma}[Distortion estimates]\label{lem:distortion_supp}
Let $D$ be a domain containing $0$ and let $R = R(0,D)$.
Let $g$ be a conformal map defined on $D$ mapping $0$ to $0$. Then
for any $z \in B(0, R/8) $,
\[
 |g(z)| \le 4 |z g'(0)|
\quad ;\quad |g(z)-g'(0)z|<6\frac{|z|^2}{R}|g'(0)|.
\]
 In particular, the image of a straight line joining 0 to a point at a distance
$\ve< R/8$ under $g$ lies within a cone of angle $\arctan
(6\frac{\ve}{R})$.
\end{lemma}

\begin{proof}
The first statement follows from applying the Growth theorem (see Theorem 3.23 in \cite{Lawler}) to the function $g(R z /4) / (R g'(0)/4)$, which is defined on the unit disc by Koebe's 1/4 theorem.
The second statement follows from applying Proposition 3.26 in
\cite{Lawler} with $r=1/2$ to the same function. The final assertion is immediate since the distance of $g(z)$ from $g'(0)z$ is at most $|g'(0)z| 6 \ve/R$ for all $z$ with $|z | \le \ve$.
\end{proof}

\begin{lemma}[distortion of argument]
 \label{lem:distortion2_supp}
 Let $K$ be a closed subset of $\bar \D$ such that $H = \D \setminus K$ is simply connected (i.e. $K$ is a hull).
 Further assume that the diameter of $K$
is smaller than some $\delta < 1/2$ and $1 \notin B(K,\delta^{1/2})$. Let
$\tilde g$ denote the conformal map sending $H$ to $\D$ with $\tilde g(0) = 0$ and $\tilde g(1) = 1$. Then
$$|\arg_{\tilde g'  (H)}(\tilde
g'(0)) - \arg_{\tilde g'(H)} (\tilde g'(1))|<C\delta^{1/2},
$$
 where $C$ is a universal constant. Here
$\arg_{\tilde g'  (H)} (\cdot)$ is the argument in $\tilde g'  (H)$ (which does not contain 0), defined up to a global unimportant additive constant.
\end{lemma}

\begin{proof}
Let $T$ be the capacity of $K$ seen from $0$ and let $(K_t)_{t \leq T}$ be a growing sequence of hulls of capacity $t$ such that $K_T = K$.
Let $g_t$ denote the Loewner maps associated to the family $(K_t)_{t\le T}$, with the usual convention $g_t(0) = 0$
and $g_t'(0) \in \R^+$ and note that $\tilde g = g_T / g_T(1)$.
Let $W:[0,\infty) \mapsto \R$ be the driving function for the radial Loewner
differential equation \eqref{eq:rad_SLE_supp} for the maps $(g_t)_{t \ge
  0}$ and $H_t = \D \setminus K_t$.
\begin{figure}
\begin{center}
\includegraphics{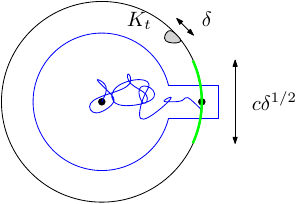}
\end{center}
\caption{Sketch of proof of \cref{lem:distortion2_supp}. The probability that the
Brownian motion exits through the green arc all the while staying within the
region bounded by blue arcs is $c\delta^{1/2}$. This is a lower bound for
$\harm_{H_T}(0, I^\pm)$. \label{F:argcont_supp}}
\end{figure}

Since the conformal radius $R(0, H_t)$ is at least the inradius which is at least $1-\delta$, we obtain that the capacity $t$ of $K_t$ seen from $0$ is always smaller
than $- \log (1- \delta) \le C \delta$ for $t \le T$, in particular $T \le C \delta$. Let $A_t=\partial \D \setminus g_t(\bar \D \setminus K_t)$, i.e., $A_t$ is the part of $\partial \D$ where $g_t$ maps the
boundary of $K_t$. By definition, $e^{i W_t} \in A_t$ for any $t$. Let
$\harm_D(z,S)$ denote the harmonic measure seen from $z$ of a set $S$ in the domain $D$. Since
$\dist(1, K) \geq \delta^{1/2}$, we can find $\theta_- < 0 <
\theta_+$ such that
$$
\harm_{H}(0, I^\pm) \ge c \delta^{1/2},\,\, I^\pm \cap K = \emptyset
$$
where $I^\pm$ are the two arcs connecting $e^{i \theta_\pm}$ to $1$ (the green
arcs in \cref{F:argcont_supp}). Note that since $K_t \subset K_T = K$, the same holds
for $\harm_{H_t}$.
  Applying conformal invariance of the harmonic
measure, we get $\harm_{\D} (g_t(I^{\pm}) ) \geq c\delta^{1/2}$ and
therefore $\diam(g_t(I^{\pm})) \geq c \delta^{1/2}$. Finally since
$I^\pm$ did not intersect $K_T$, $g_t(I_\pm) $ does not intersect
$A_t$ and therefore $\abs{e^{iW_t}-g_t(1)} \geq c \delta^{1/2}$.

Using this bound, the Loewner equation gives
\[
\abs{ \partial_tg_t(1)} = \abs{g_t(1) \frac{e^{i W_t}+ g_t(1)}{e^{iW_t}-g_t(1)}} \leq C \delta^{-1/2} .
\]
Integrating for a time $t \le T\le C\delta$ gives $\abs{g_t(1)-1} \leq
C\delta^{1/2}$. Recall $\Arg$ is the principal branch of argument in
$(-\pi,\pi]$. This gives $|\Arg(g_T(1))| \leq C \delta^{1/2}$ and since by
definition $g_t'(0) \in \R^+$,
\begin{equation}\label{E:Arg_supp}
\abs{\Arg \tilde g_t'(0)} =\abs{ \Arg g_t'(0) -
\Arg g_t(1)} \leq C \delta^{1/2},
\end{equation}
where $\tilde g_t = g_t / g_t(1)$ is the conformal map sending $H_t$ to $\D$ such that $\tilde g(0) = 0$ and $\tilde g( 1) = 1$.
Note that $\Arg \tilde g_t'(0)$ is continuous in $t$ since it is bounded by $C \delta^{1/2}$ (and so does not come into the region where $\Arg$ is discontinuous).

We deduce that the same bound as \cref{E:Arg_supp} holds for $\arg_{\tilde g_T' (H_T)}$.
Indeed note that one can find a curve $\lambda[0,1]$
connecting $1$ to $0$ which avoids $K_T$. By definition,
$$
\arg_{\tilde g_t'(H_t)}(g'_t(0)) - \arg_{\tilde g'_t(H_t)} (g'_t(1)) = \Im\left(\int_{\tilde g_t'(\lambda)}\frac{dz}{z}\right) = \int_{0}^1 \Im\left(\frac{\tilde g_t''(\lambda(s))\lambda'(s)}{\tilde g_t'(\lambda(s))} \right)ds.
$$
Thus the left hand side is continuous in $t$ since $\tilde g_t$ has a conformal extension in a neighbourhood of $\lambda$ which implies that all its derivatives are continuous in $t$ in this neighbourhood. Also for $t = 0$ the difference between the left hand side and $\Arg \tilde g'_t(0)$ is $0$ trivially. This
concludes the proof.
\end{proof}

 \subsection{Change in winding under conformal maps}
\label{sec:image-wind_supp}

We are now able to return to the proof of Lemma 2.4 in the main file which will be stated and proved in \cref{cor:det_winding_supp}. We will use a deformation argument and the main step is to prove the following intermediate lemma.

\begin{figure}
\begin{center}
\includegraphics[scale=.5]{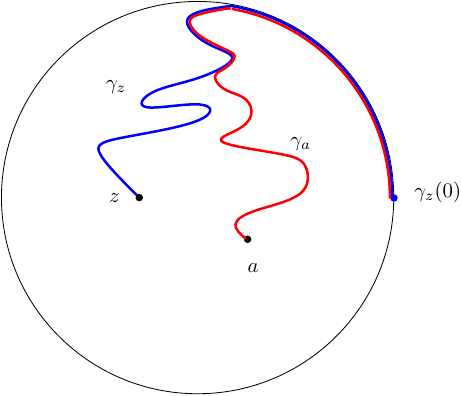}
\caption{A typical application of \cref{lem:image_intrinsic_supp}. Here $\gamma = \gamma_z$ (the curve going from $1$ to $z$ in a continuum
UST) and $\psi$ is the Loewner map removing $\gamma_a$. Note that $\psi(\gamma_z)$ is well-defined
as a continuous curve in $\bar \D$. \label{F:deformation_supp} so condition (iii) is satisfied.}
\end{center}
\end{figure}

\begin{lemma}\label{lem:image_intrinsic_supp}
Let $D,D'$ be domains with locally connected boundary. Let $\psi$ be a conformal map sending $D$ to $D'$. Let $\gamma : [0,1] \mapsto \bar D$ be a curve (not necessarily simple) in $\bar D$. Assume the following
\begin{enumerate}[{(}i{)}]
\item  The endpoints of $\gamma$ are smooth and simple.
\item $\arg_{\psi'(D)} \psi'$ extends continuously to a neighbourhood in $D$ of $\gamma(0)$ and $\gamma(1)$.
\item There exists a continuous curve $\tilde \gamma \subset \overline \D$ such that $\ph(\tilde \gamma) = \gamma$, where $\ph: \bar \D \mapsto \bar D$ is the extension of a conformal map in $\D$. (Such an extension exists by our assumption that $\partial D$ is locally connected.) See \cref{R:conformalboundary_supp} below for an equivalent reformulation of this condition.
\end{enumerate}
 Then,
\begin{multline*}
W\left( \psi(\gamma), \psi(\gamma(1)) \right) + W\left( \psi(\gamma), \psi(\gamma(0)) \right)\\ =  W(\gamma, \gamma(1) ) + W(\gamma, \gamma(0) )  + \arg_{\psi'(D)}(\psi'(\gamma(1) ) ) - \arg_{\psi'(D)}( \psi'(\gamma(0) )
\end{multline*}
where $\arg_{\psi'(D)}$ here is any determination of the argument on $\psi'(D)$.
\end{lemma}
\begin{proof}
We write $\arg$ for $\arg_{\psi'(D)}$ in this proof for notational convenience. First, we assume that $D$ is the unit disc $\D$ and $\tilde \gamma = \gamma$. We define a family of conformal maps,
\begin{align*}
\psi_t (z) &= \psi( t z)/t \text{ for }t\in (0, 1], & \psi_0(z) &= \psi'(0) z.
\end{align*}
The map $\psi_0$ is just a rotation and scaling so the topological winding does not change and $\arg(\psi'_0(\gamma(1) ) ) = \arg( \psi'_0(\gamma(0) ) = \arg(\psi'(0))$. Thus we have:
\begin{multline*}
W\left( \psi_0(\gamma), \psi_0(\gamma(1)) \right) + W\left( \psi_0(\gamma), \psi_0(\gamma(0)) \right) \\
=W(\gamma, \gamma(1) ) + W(\gamma, \gamma(0) ) + \arg(\psi'_0(\gamma(1) ) ) - \arg( \psi'_0(\gamma(0) ) .
\end{multline*}
On the other hand, since locally at $\gamma(0)$ and $\gamma(1)$, $\psi$ acts only by rotation and scaling we see that for all $t$,
\begin{multline}
W\left( \psi_t(\gamma), \psi_t(\gamma(1)) \right) + W\left( \psi_t(\gamma), \psi_t(\gamma(0)) \right) \\
\equiv W(\gamma, \gamma(1) ) + W(\gamma, \gamma(0) ) + \arg(\psi'_t(\gamma(1) ) ) - \arg( \psi'_t(\gamma(0) ) [2\pi] \label{eq:mod2pi_arg_change_Supp}.
\end{multline}
Indeed, let $\Arg$ denote the argument in $(-\pi,\pi]$ and note that
\begin{align*}
& \quad \quad W\left( \psi_t(\gamma), \psi_t(\gamma(1)) \right)\\
 & \equiv \lim_{s \to 1}\Arg(\psi_t(\gamma(s)) - \psi_t(\gamma(1))) - \Arg(\psi_t(\gamma(0)) - \psi_t(\gamma(1))) \,[2\pi]\\
& \equiv \Arg(\psi_t'(\gamma(1))) + \Arg(\gamma'(1)) +\pi- \Arg(\psi_t(\gamma(0)) - \psi_t(\gamma(1))) \,[2\pi]
\end{align*}
and
\begin{align*}
& W\left( \psi_t(\gamma), \psi_t(\gamma(0)) \right) \\
 \equiv& \Arg(\psi_t(\gamma(1)) - \psi_t(\gamma(0))) -  \lim_{s \to 0}\Arg(\psi_t(\gamma(s)) - \psi_t(\gamma(0))) \,[2\pi]\\
 \equiv & \Arg(\psi_t(\gamma(1)) - \psi_t(\gamma(0))) -  \Arg(\psi_t'(\gamma(0))) - \Arg(\gamma'(0))  \,[2\pi]
\end{align*}
and also
\begin{equation*}
W(\gamma, \gamma(1) ) + W(\gamma, \gamma(0) ) \equiv \Arg(\gamma'(1)) - \Arg(\gamma'(0))  \,[2\pi].
\end{equation*}
Both sides of \cref{eq:mod2pi_arg_change_Supp} match when $t=0$ and the right hand side is clearly continuous in $t$ since $\psi'$ extends continuously to $\gamma(0)$ and $\gamma(1)$, so we only have to argue that the left hand side is continuous. First, note that the curves $\psi_t(\gamma)$ are continuous in $t$ in the Hausdorff sense. Indeed the continuity is obvious for $t<1$ and at $t=1$ continuity follows from the fact that the map $\psi$ extends continuously to the boundary of $\D$. Now we argue that the topological winding of the curves are continuous in $t$, which will follow from the continuity of $\arg(\psi')$ up to the boundary.
Let us argue for the winding around $\gamma(0)$ at a fixed time $t_0$. Since $\gamma$ is smooth in a neighbourhood of $\gamma(0)$, we can find $\eps >0$ such that $\abs{\arg \gamma'(s) - \arg \gamma'(0)} \leq \eps$ for all $s\leq \eps$. We can further assume
 by continuity of $\arg \psi'$ that $\eps$ is such that $\abs{ \arg( \psi_t'( \gamma(s)) -\arg(\psi_{t_0}'(\gamma(0) )} \leq \eps$ for $s \leq \eps$ and $\abs{t-t_0} \leq \eps$. This shows that $\psi_t(\gamma[0,\eps])$ is a smooth curve  whose tangent at any point is always within $2\eps$ of $\arg(\psi_{t_0}'(\gamma(0) ) + \arg \gamma'(0)$. It is easy to see that such a curve cannot exit a cone of direction $\arg(\psi_{t_0}'(\gamma(0) ) + \arg \gamma'(0)$ and of angle $2\eps$ and therefore satisfies $W(\psi_t (\gamma[0, \eps]), \psi_t (\gamma(0)))\leq 2\eps$. On the other hand $\psi_t (\gamma[\eps, 1])$ stays uniformly away from $\psi_t( \gamma(0) )$ so $W(\psi_t (\gamma[\eps, 1]), \psi_t( \gamma(0) ))$ is continuous in $t$. Overall $W(\psi_t (\gamma), \psi_t( \gamma(0) ))$ is continuous in $t$. The argument for the winding around $\gamma(1)$ is identical and we are done in the case $D = \D$.


For the general case, take the conformal map $\psi \circ \ph:\D \to D'$. Using our previous argument for $\ph: \D \to D$ and $\psi \circ \ph$ gives two equations connecting the winding of $\tilde \gamma$ with the winding of $\gamma$ and $\psi(\gamma)$. Combining the two and noting that the equation do not depend on the particular realisation of $\arg$, we conclude.
\end{proof}

\begin{remark}\label{R:conformalboundary_supp}
The condition (iii) in \cref{lem:image_intrinsic_supp} is easier to understand when appealing to the notion of \emph{conformal boundary} (see \cite{BN_SLE_notes}). To explain what that is, we fix a conformal map $\ph: D \to \D$ and equip $D$ with the distance induced by $\ph$, i.e., we set for $z, z' \in D$, $d_\ph(z, z') = |\ph(z) - \ph(z')|$. The conformal closure $\text{cl}(D)$ of $D$ is defined as the completion of $D$ with respect to this distance. (The conformal boundary can then be defined to be $\text{cl}(D) \setminus D$; this notion is then equivalent to that of Poisson and Martin boundaries induced by Brownian motion on $D$, as well as prime ends in \cite{Pommerenke}.) Note that $\text{cl}(D)$ is then identified with the closed disc $\bar \D$ and so a point in $\text{cl}(D)$ projects to a unique point on $\bar D$ since by local connectedness of $\partial D$, $\ph^{-1}$ extends to the closed disc, see Theorem 2.6 in \cite{Pommerenke}.

With these definitions, assumption (iii) simply says that $\gamma$ is a continuous curve in $\text{cl}(D)$, i.e., $\gamma$ is the projection of a continuous curve in $\text{cl}(D)$.
\end{remark}

Putting \cref{lem:image_intrinsic_supp} together with Lemma 2.2 in the main file, we obtain the following.
\begin{lemma}\label{lem:det_winding2_supp}
Let $D,D'$ be domains with locally connected boundaries and let $\psi$ be a conformal map sending $D$ to $D'$.
Let $\gamma : [0,1] \to \bar D$ be a curve in $\bar D$ and assume that it is smooth and simple at $\gamma(1)$. Assume further that $x = \gamma(0) \in \partial D$ and that $\arg_{\psi'(D)}(\psi')$ extends continuously to $\gamma(0)$ and $\gamma(1)$. Then, letting $x' = \psi(x)$,
\begin{multline*}
W\left( \psi(\gamma), \psi(\gamma(1)) \right)  - W(\gamma, \gamma(1) )\\
 = \arg_{\psi'(D)} (\psi'(\gamma(1))) + \arg_{D; x}(\gamma(1)) - \arg_{D'; x'}(\psi(\gamma(1))  ).
\end{multline*}
where we \emph{choose} the global constants defining the arguments so that the chain rule holds at $x =\gamma(0)$:
\begin{equation}
\arg_{D';x'} ( (\psi \circ \gamma)'(0)) = \arg_{D,x} (\gamma'(0)) + \arg_{\psi'(D)} (\psi'(x)).
\label{eq:arg_condition_Supp}
\end{equation}
 Furthermore if $\arg_{\psi'(D)}(\psi')$ does not extend continuously to $\gamma(0)$, the formula still holds up to a global constant in $\R$ not depending on $\gamma$ and depending only on the choice of the constants in the definition of the arguments.
\end{lemma}

\begin{proof}
First assume that $\gamma(0)=x$ is a smooth simple point of $\gamma$, then the results follows directly from applying \cref{lem:image_intrinsic_supp} together with Lemma 2.2 in the main file. Now to get rid of this assumption, note that if we modify $\gamma$ in a small enough neighbourhood of $x$, we cannot change the value of the left hand side. We can therefore always change $\gamma$ into a curve satisfying our assumptions so we are done. For the case where $\arg(\psi')$ does not extend to $\gamma(0)$, note that as in the proof of \cref{lem:image_intrinsic_supp}, we can approximate $\psi$ by a map $\psi_t$ that has derivative in $\gamma(0)$. Applying the result for the smooth case modulo a global constant and then taking the approximation to $0$ (i.e., $t\to 1$)  yields the result.
\end{proof}

Finally in the case where we want to compute winding with respect to a point different from the endpoint -- which in practice is what we will do to truncate the winding of SLE -- the distortion lemma gives us a version of the \cref{lem:det_winding2_supp} with an error term.

\begin{corollary}[Lemma 2.4 in the main file]\label{cor:det_winding_supp}
Let $D,D'$ be bounded domains with locally connected boundary and let $\psi$ be conformal map sending $D$ to $D'$.
Let $\gamma : [0,1] \mapsto \bar D$ be a curve in $\bar D$. Assume further that $\arg_{\psi'(D)}(\psi')$ extends continuously to $\gamma(0)$ and $\gamma(1)$. Let $z$ be a point in $D\setminus \gamma[0, 1]$ and let $R = R(z, D)$ be its conformal radius and assume that $\abs{z-\gamma(1)}\leq R/8$. Then, letting $x = \gamma(0)$ and $x' = \psi (x)$,
\begin{multline}
W\left( \psi(\gamma), \psi(z) \right)  - W(\gamma, z ) \\
= \arg_{\psi'(D)} (\psi'(z)) + \arg_{D;x}(z) - \arg_{D';x'}(\psi(z)  ) +O(\abs{z-\gamma(1)}/R),\label{eq:det_winding_Supp}
\end{multline}
where the implicit constant in $O(\abs{z-\gamma(1)}/R)$ is universal. The constants in the arguments are defined as in \cref{lem:det_winding2_supp}. Furthermore if $\arg_{\psi'(D)}(\psi')$ does not extend to $\gamma(0)$, the formula still holds up to a global constant in $\R$ depending only on the choice of the constants for the arguments and not on $\gamma$.
\end{corollary}
\begin{proof}
Let $\tilde \gamma$ be obtained by appending a straight line connecting $\gamma(1)$ to $z$. By Koebe's 1/4 theorem $\tilde \gamma$ is still a curve in $D$ and it is obviously smooth at its last point so we can apply \cref{lem:det_winding2_supp} to it. On the other hand, by \cref{lem:distortion_supp}, we see that the image of the straight segment has winding $O(\abs{z-\gamma(t)}/R)$ since it stays in a cone of that angle. By additivity of the winding we are done.
\end{proof}

Note that the condition that the argument extends continuously
near $\gamma(0)$ (condition (ii) in \cref{lem:image_intrinsic_supp}, which also arises in \cref{cor:det_winding_supp}) is trivial if $\gamma(0)$ and $\gamma(1)$ are interior points but is non-trivial for boundary points. However Theorem 3.2 in \cite{Pommerenke} implies that if $D,D'$ are \emph{smooth} Jordan curves then this condition is satisfied. For completeness, we now give a slight generalisation of this fact, which gives us a simple geometric sufficient criterion for this condition to hold (essentially it suffices that the boundary is smooth locally near the point where we wish to extend the argument).

\begin{lemma}\label{lem:geometric_condition_Supp}
Let $\psi$ be a conformal map between two domains $D$ and $\tilde D$ and let
$x \in \partial D$ be fixed. Assume that $D$ and $\tilde D$ have locally connected boundary
and let $\lambda$ be a parametrisation of $\partial D$
coming from a map to the disc (i.e up to parametrisation, $\lambda$ is the curve $(g(e^{it}))_{0\le t \le 2\pi}$ with $g$ conformal from $\D$ to $D$).
If there exists an open interval $I$ such that $x \in \lambda(I)$ and both $\lambda(I)$
and $\psi(\lambda(I))$ are smooth curves,
then $\arg_{\psi'(D)} \psi'$ extends continuously to a neighbourhood of $x$. Here $\arg$ is any realisation of argument in the range of $\psi'$.
\end{lemma}

\begin{proof}

We consider first the case $D = \D$. Let us write $x' = \psi(x )$ and let $I$ be an interval given as in the statement. Let $\tilde \lambda$ be a $C^1$ parametrisation of $\psi(\lambda(I))$ with non vanishing derivative and $t_0$ be such that $x' = \tilde \lambda(t_0)$. Since $\psi(\lambda(I))$ is smooth, we can assume by taking a smaller $I$ if necessary that for all $t \in I$, $\abs{\arg_{\psi'(D)} \tilde \lambda'(t) - \arg_{\psi'(D)} \tilde \lambda'(t_0) } \leq 1$. As in the proof of \cref{lem:image_intrinsic_supp}, this implies that $\tilde \lambda$ stays in a cone of angle $2$ around each of its points and in particular $\tilde \lambda$ is injective, or in other word it is a Jordan arc. It is then easy to see that we can find a sub-domain $E \subset D'$ such that $E$ is bounded by a smooth Jordan curve and $\tilde \lambda(t) \in \partial E$ for $t$ in a neighbourhood of $t_0$.



Let $K = \psi^{-1}(D' \setminus E)$ and let $g$ be a map sending $\D \setminus K$ to $\D$ and $x$ to $x$. Observe that $\arg(g')$ extends to $x$ (e.g. by Schwarz reflection). Note also that the map $\ph = \psi \circ g^{-1}$ is a conformal map sending $\D$ to $E$ which is a smooth Jordan domain. By Theorem 3.2 in \cite{Pommerenke} $\arg(\ph')$ extends continuously to the boundary. Finally by construction $\psi = \ph \circ g$ in a neighbourhood of $x$ so $\arg \psi'$ also extends to the boundary in a neighbourhood of $x$. We conclude with arbitrary $D$ and $D'$ by considering the maps from $\D$ to $D$ and $\D$ to $D'$ and by composition.
\end{proof}

\subsection{One-point winding lemma}

Schramm's theorem about one-point winding (Theorem 3.1 in the main file) deals with SLE curves towards $0$ in the unit disc $\D$. We now provide an extension
of this result for SLE curves towards an arbitrary point in $\D$.

\begin{lemma}[Lemma 3.4 in the main file]\label{lem:one_point_ht_supp}
Let $z \in \D$ and let $\psi: \D \mapsto \D$ be the M\"obius transformation
mapping $z$ to $0$ and
$1$ to $1$. If $\gamma_z(t)  \in B(z,\ve)$ where $\eps \le R(z, \D)/8$, then
we have:
\begin{equation}
 W(\psi(\gamma_z[-1,t]),0) =
W(\gamma_z[-1,t],z) + \pi  - \arg_{\D ;1}(z) +\epsilon(t)\label{eq:1_supp}
\end{equation}
where the error term $|\epsilon(t)| \le C\ve/R(z,\D)$ for some universal
constant $C>0$
and $\arg_{\D, 1}$ is the argument function with values in $(\pi/2,3\pi/2)$ (so $\arg_{\D;1} (0) = \pi$). Also for all $s,t$
\begin{equation}
 \P(\abs{\gamma_z(t) - z} > e^{-t+s}R(z,\D)) \leq ce^{-c's}\label{eq:tip_supp}
\end{equation}
where $c,c'$ are independent of $z$.
\end{lemma}
\begin{proof}
  We are going to apply \cref{lem:det_winding2_supp} to the M\"obius transform $\psi$ of
  $\D$ mapping $z$ to $0$ and $1$ to $1$.
Computing $\psi(w)$ for $w \in \D$:
  \begin{equation}
    \label{eq:40_supp}
    \psi'(w) = \frac{1-|z|^2}{(1-w\overline z)^2}\frac{1-\overline z}{1-z}.
  \end{equation}
Now we argue that $\arg_{\psi' (\D)} (\cdot)$ can be taken to be $\Arg \in
(-\pi,\pi]$
with branch cut $(-\infty,0]$. Indeed let us describe $\psi'(\D)$. First, $1-\overline z \D$ is a
disc of radius $|z|$ contained in the right half plane $\{z:\Re (z)
>0\}$. Therefore $1/(1-\D\overline z)$ is a subset of the right half
plane since $z \mapsto 1/z$ preserves the right half
plane. Thus $(1-\D\overline z)^{-2}$ does not intersect $(-\infty,0]$.
Multiplying by the positive real number $(1- |z|^2)$ and rotating by $ \theta = \Arg( (1-\overline{z})/(1-z))$, we see that
$\psi'(\D)$ does not contain the half line $w$ joining $0$ and $e^{i (\theta-\pi
)}$. Therefore $\arg_{\psi' (\D)}$ coincides with the argument $\tilde Arg$ with
branch cut $w$ that can take the value $0$ (since they differ by some multiple of $2\pi$ and agree at $\psi'(1)$ by definition). With our choice of $\theta$, $\tilde Arg (\cdot)$ takes values in
$(\theta - \pi,\theta+\pi]$. Therefore $\tilde Arg ((1- \bar z)/(1-z) ) = \theta =  \Arg ( (1- \bar z)/(1-z))$ (since this is the only value congruent to $\theta$ mod $2\pi$ in the interval $(\theta - \pi, \theta + \pi]$) and hence,
\begin{multline*}
\arg_{\psi' (\D)}((1-\overline{z})/(1-z)) =\tilde
Arg(1-\overline{z})/(1-z)) = \theta \\
= \Arg(1-\overline{z})/(1-z))   = 2\pi - 2\Arg(z-1)    =2\pi -2\arg_{\D;1}(z)
\end{multline*}
where we used the rules of addition for arguments when going across a branch cut. Plugging this in \cref{lem:det_winding2_supp} and making the cancellation, we have the desired
expression \eqref{eq:1_supp}.

Now without loss of generality, assume $t-s \ge 10$. Applying Koebe's 1/4 theorem (see Theorem 3.17 in \cite{Lawler}), we see that
$$ B(0,e^{-t+s} /4) \subset \psi(B(z,e^{-t+s}R(z,\D) )). $$ Hence applying
Theorem 3.1 of \cite{SLE} and conformal invariance we have \eqref{eq:tip_supp}.
\end{proof}

\section{Flow lines of the GFF}\label{sec:coupling_GFF_supp}
\label{S:appendix_supp}
In this section, we provide a proof of Theorem 2.8 in the main file which is restated below in \cref{thm:coupling_intro_supp}. The reader is advised to recall the definitions of intrinsic winding boundary conditions on a simply connected domain $D$ with a marked point $x$ (which we denote by  $u_{D,x}$) from the main file.

\begin{thm}[Imaginary geometry coupling; Theorem 2.8 in the main file]

  \label{thm:coupling_intro_supp}
Let $D$ be a simply connected domain with a marked point $x$ on the boundary. Let $ h = \frac1{\sqrt{2}} u_{(D,x)} +  h^0_D$ where $ h^0_D$ is a GFF with Dirichlet boundary conditions in $D$.
There
exists a coupling between the continuum wired UST on $D$ and $h$ such that the
following is
true. Let $\{\gamma_i\}_{1 \le i \le k}$ be the branches of the continuum wired
UST
from points $\{z_i\}_{1 \le i \le k}$ in $D$ and let $D' = D \setminus \cup_{1 \le i \le k}
\gamma_i$ . Then the conditional law
of $h$ given $\{\gamma_i\}_{1 \le i \le k}$ is the same as $ \frac1{\sqrt{2}} u_{(D', x)} + h^0_{D'}$ where $h^0_{D'}$ is a GFF with Dirichlet boundary condition in $D'$. Furthermore, $h$ is completely determined by the UST and vice-versa.
\end{thm}

Given a planar graph $G$ with boundary $\partial G$ (here the boundary is just a given subset
of vertices of $G$), one can consider a uniform spanning tree in it with either
wired (when all the boundary vertices are collapsed into a single vertex) or
free boundary condition.
Every spanning tree in a planar graph corresponds to a spanning tree
in the dual graph.
 It is easy to see that the dual of a uniform spanning tree with free
boundary conditions is a uniform spanning tree in the dual graph with
a wired boundary condition. More generally, it will be convenient to consider trees
with \textbf{mixed boundary conditions}, that is, partially free and
partially wired. For example, we can divide the boundary $D^\d$
into an union of arcs $\alpha^\d$ and $\beta^\d$ and consider
the spanning tree with wired boundary on $\alpha^\d$ and free
boundary on $\beta^\d$. Then the dual tree is a spanning tree of
the dual $(D^\d)^\dagger$ with wired boundary conditions near $\beta^\d$
and free boundary conditions near $\alpha^\d$. See e.g. \cite{LSW} for details.

With every spanning tree in a planar graph, one can associate a curve
which forms the interface
in between the spanning tree in the graph and the dual spanning tree
in the dual graph. This interface visits every face, and is unique up to reparametrisation. This is called the \textbf{Peano curve} of the tree.
In case of the mixed boundary spanning tree described above, the
Peano curve joins the endpoints of the two arcs. The convergence of
this Peano curve to chordal SLE$_8$ was also established in \cite{LSW} using the following
topology: for paths $\beta,\gamma$ in $\bar D$, let
$d_{\text{Peano}} (\beta,\gamma) = \inf\sup_t |\beta(t)  -
\gamma(t)|$ where the infimum is over all possible parametrisations of
the curves $\beta, \gamma$. This is the \textbf{Peano curve
  topology}.

  \begin{thm} [\cite{LSW}]\label{thm:LSW_supp}

Let $\partial D$ be a $C^1$-smooth simple loop. Let
$(D^\delta)^{\dagger}$ denote the dual graph of $D^\delta$ which is a
subset of $(\Z +1/2)^2$.
\begin{itemize}
\item The uniform spanning tree with wired
boundary condition as well as the uniform spanning tree with free boundary condition converge as $\delta \to 0$ in the Schramm
topology. We call these limits the continuum wired uniform spanning tree
 and the continuum free uniform spanning tree
respectively.
\item Let $\alpha$ and $\beta$ be smooth arcs in $\partial D$ that are
  complementary: $\alpha \cup \beta = \partial D$ and let $a$ and $b$
  be points in $\partial D$ where $\alpha$ and $\beta$ meet. Let $\alpha^\d$
  be an approximation of $\alpha$ in $\partial D^\d$ and
  $\beta^\d$ be an approximation of $\beta$ in $\partial D^\d$
  in $(D^\d)^{\dagger}$ (the precise sense of this approximation can
  be found in Section 4 of Lawler, Schramm and Werner
  \cite{LSW}). Consider the UST in $D^\d$ with wired boundary
  condition on $\alpha^\d$ and the UST in $(D^\d)^{\dagger}$
  with wired boundary condition on $\beta^\d$. Then the Peano
  curve in between these two trees converge in law (with the underlying topology being the Peano curve
  topology) to a chordal SLE$_8$ curve from $a$ to $b$ in $D$.
\end{itemize}
\end{thm}

Now we describe a mapping which maps the Peano curve to an element in the Schramm space and essentially shows that the Peano curve topology is stronger than the Schramm topology. For this, we need to first go back to the discrete and describe the left and right boundary of the Peano curve.

In the graph $D^\d$, we join together every dual
vertex with the primal vertices adjacent to its face. These edges along with
the primal and the dual trees form a triangulation. Every edge in the primal or the dual tree belongs to
two triangles in this triangulation. Thus each triangle has an opposite triangle corresponding to the
(primal or dual) edge where it belongs. The Peano
curve visits each such triangle exactly once and we can fix the orientation of the Peano curve so that the primal tree edges always lie to its left. After
drawing a certain number of edges in the Peano curve, the triangles visited by
the Peano curve whose opposite triangle have not yet been visited form the
boundary of the curve. The boundary triangles corresponding to the
dual tree edges form the \textbf{left boundary} of the curve and the
triangles corresponding to the primal tree edges form the \textbf{right
boundary} of the curve.

 It is known that a chordal SLE$_8$
 is space filling almost surely (see Proposition 6.11 or more precisely Theorem 7.9 in \cite{Lawler}) which means that a.s. for all $z \in D$, the SLE$_8$ visits $z$.
 Since the curve is space filling, at
any stopping time $\tau$, the complement of the $\eta[0,\tau]$ consists of a single
component. The boundary of $D \setminus \eta[0,t]$ can
be naturally divided into two parts: the \textbf{left boundary} and the
\textbf{right boundary}. By definition, the left
boundary always contains a part of the wired boundary in the domain and the right
boundary contains a part of the free boundary of the domain.
Hence for $z,w
\in \bar D$, we can grow the curve until the times $\tau_z, \tau_w$ when it hits $z$ and $w$ respectively. One can think of
the curve as oriented towards its target. Then we define a (mixed) continuum UST $\cT$ (i.e., an element of the Schramm space) associated to $\eta$ by taking the path connecting $z$ and $w$ to be simply given by the left boundary of $\eta( \min(\tau_z, \tau_w), \max(\tau_z, \tau_w))$. Furthermore, since almost surely, SLE$_8$ visits Lebesgue almost every point only once, it is easy to deduce that the above map is a.s. continuous.
This implies that convergence in the Peano sense is stronger than in the Schramm sense for SLE$_8$.

One issue we have when applying \cref{thm:LSW_supp} is that the
convergence of the Peano curve is written for spanning trees with mixed boundary condition
only. Indeed, for fully wired boundary
(or fully free boundary in the dual), the convergence is no longer to
a chordal SLE$_8$ curve, but presumably to a certain space-filling loop. However, we could not locate a convergence result for
this loop in the literature, so we bypass this problem using the
following lemma.

\begin{lemma}
  \label{lem:mixed->free_supp}
Consider a sequence of arcs $\beta_n$ and $\alpha_n$ in $\partial D$ such that
$\beta_n \cup \alpha_n = \partial D$ with the diameter of $\alpha_n $ being
at most $1/n$. Let $\alpha^\d_{n}$ and $\beta^\d_{n}$ be a partition
of $\partial D^\d$ into connected segments which approximate
$\alpha_n$ and $\beta_n$ respectively as $\delta \to 0$ in the sense
of \cite{LSW}. Consider a mixed continuum tree $\cT_n$ obtained as the scaling limit of the
spanning tree
$\cT_n^\d$ in
$D^{\d}_n$ with wired boundary condition in $\beta_{n}^\d$ and
free boundary condition in $\alpha_{n}^\d$. (Such a scaling limit exist because the scaling limit of the Peano curve exists from \cite{LSW} and the Peano curve topology is stronger than Schramm topology).
For all $\ve>0$ there exists a coupling between the continuum mixed tree
$\cT_n$ and the continuum \emph{wired} UST $\cT_{w}$ in $D$ (constructed in Section \ref{SS:contUST_supp}) such that $\cT_n \cap (D \setminus
B(\alpha_n,\ve))  = \cT_w \cap (D \setminus
B(\alpha_n,\ve))$ for all $n >n_0(\ve)$ with probability at least
$1-\ve$.
In particular, $\cT_n$
converges
in probability to the continuum wired UST in $D$ as $n \to \infty$, in the
Schramm topology.
\end{lemma}

\begin{proof}[Proof of \cref{lem:mixed->free_supp}]
Let $\cT^\d$ denote the discrete tree with fully wired boundary condition.
  This lemma basically follows from Schramm's finiteness Theorem (Theorem
10.2 in \cite{SLE}, see also Lemma 4.18 in the main file).
For a fixed $\eps>0$, find a finite set $V$ of vertices (corresponding to $\cQ_{j}$ with $j = j_0(\eps)$ in the notations of Lemma 4.18 of the main file, for the domain $D^\d \setminus B(\alpha_n^\d, 2\ve)$). Choose $n_0(\eps)$ such that if $n \ge n_0(\eps)$, the arc $\alpha_n^\d$ is sufficiently small that for any $\delta < \delta_0(\eps)$, no random walk starting from $V$ hits $\partial D$ on $\alpha_n^\d$, with probability at least $1- \eps$. (This is possible because a $\delta \to 0$, the random walks starting from $V$ converge to a finite number of Brownian motions starting from a finite number of points in $D \setminus B(\alpha_n, 2 \eps)$).

On this event, which has probability at least $1- 2 \eps$, the branches from $\cT^\d$ and $\cT_n^\d$ emanating from $V$ can be taken to be identical thanks to Wilson's algorithm (which is valid both for $\cT^\d$ and $\cT_n^\d$, with the difference that in $\cT_n^\d$ the walks are reflected on $\alpha_n^\d$ instead of being killed). Moreover, by Lemma 4.18 in the main file, the choice of $V$ is such that simple random walk started from any other vertex $v$ will not exceed diameter $\eps$ before hitting one of the branches emanating from $V$, with probability at least $1-\eps$, for any $\delta \le \delta(\eps)$.

On these two events, all the branches from $V$ are identical in $\cT_n^\d$ and $\cT^\d$, and all branches from $D^\d \setminus B(\alpha_n^\d, \ve)$ can be taken to be the same in $\cT_n^\d$ and $\cT^\d$. Hence we have found a coupling so that $\cT_n^\d$ and $\cT^\d$ agree in $D^\d \setminus B(\alpha_n^\d, \ve)$ for any $\delta < \delta(\eps)$.
%
%
The result now follows by letting $\delta
\to 0$.
\end{proof}

\paragraph{Flow lines}

The theory of imaginary geometry developed by Miller and Sheffield in \cite{IG1, IG4} following earlier work of Dub\'edat \cite{Dub1} and Schramm--Sheffield \cite{SchrammSheffield} gives us a way to couple
SLE curve realised as flow lines of a Gaussian free field. Fix the constants:
$$\kappa =2; \quad \kappa' =
8; \quad \lambda' = \frac{\pi}{\sqrt{8}};\quad
\lambda = \frac{\pi}{\sqrt{2}}; \quad     \chi  = \chi(\kappa) =
\frac{2}{\sqrt{\kappa}} - \frac{\sqrt \kappa}{2}. $$
 The notations are consistent with the
notations in \cite{IG1, IG4}. We write $\chi = \chi(2)$, $\chi' = \chi(8) = - \chi(2)$.

Take a domain $D  \subset \C$ with two marked points on the boundary $a$ and
$b$ (possibly with a boundary which is rough). A canonical example is
$(\H,\infty,0)$ where $\H$ is the upper half plane. Fix $\alpha>0$ which we refer to as the \emph{angle} in what follows. Let $h_{\kappa'}$ be a GFF in $\H$ with
boundary condition given by $-\lambda'+\alpha \chi'$ on the positive
real line and $\lambda'+\alpha \chi'$
on the negative real line.  Let $h_\kappa$ be  a GFF in $\H$ with boundary condition $\lambda+ \alpha \chi$ on
the positive real line and $-\lambda+ \alpha \chi$ for the negative
real line for $\kappa$. Let $\varphi_D: D \to \H$ be a conformal equivalence
between $\H$ and $D$ which maps $b$ to $0$ and $a$ to $\infty$. Define a
\textbf{mixed intrinsic winding boundary condition} GFF with angle $\alpha$ on
$(D,a,b)$ with parameter $x\in \{\kappa,\kappa'\}$ to be
defined by
\begin{equation}
 h_x \circ \varphi - \chi(x)\arg(\varphi') \label{eq:conf_formula_supp}
\end{equation}
When we do not specify the angle, we mean $\alpha=0$ by default.
So that the reader remains in context, this boundary condition will correspond
to the mixed boundary uniform spanning tree (with a specific $\alpha$) where the boundary type changes at
$a$ and $b$. Although $\varphi$ is not unique, this definition defines a unique
law via scale invariance.

It can be checked that the mixed intrinsic winding boundary
condition $(\D,1,e^{i\theta_b})$ can be defined as follows:

\begin{align}
f_{1,b,\kappa'}(e^{i\theta}) =
\begin{cases}
 -\lambda'+\chi'(\theta+\frac{\pi}{2}) \text{ if }0 \le \theta \le
\theta_b\\
\lambda' + \chi'  (\theta+\frac{\pi}{2})\text{ if }\theta_b <\theta
<2\pi
\end{cases}
\end{align}
\def\k{\kappa}
Replacing $\kappa'$ by $\kappa$ and $\lambda'$ by $-\lambda$ we get a function
$f_{1,b, \kappa}$. We extend the definition of $f_{1,b,\k}$ (resp. $f_{1,b,
\k'}$) to $\theta_b = 2\pi$ by defining
$f_{1,1,\k}(e^{i\theta}) = \lambda+\chi (\theta+\pi/2) $ (resp.
$f_{1,1, \k'} (e^{i
\theta} ) =  -\lambda' + \chi' (\theta+\pi/2)$).
Hence $f_{1,1,\k}$ (resp. $f_{1,1,\kappa'}$) is the same as $\chi$ times (resp.
$\chi'$ times) the intrinsic winding boundary
condition plus a global shift. We
extend this to all marked domains $(D,a,b)$ via the coordinate
change formula \cref{eq:conf_formula_supp}; this defines functions $f_{a,b, \k}$ and
$f_{a,b , \k'}$ on this doubly-marked domain, and with a slight abuse of
notation we will still call $f_{a,b, x}$ the harmonic extension for $x \in \{
\kappa, \kappa'\}$.

%
%
%
%

 We are going to quote a special case of some theorems from \cite{IG4}.

\begin{thm}\label{thm:flow_lines_supp}
Let $(D,a,b)$ be a domain with two distinct marked points on its boundary. Let
$\{z_n\}_{n \in \N}$ be a countable dense set of interior points in $D$. Let
$h$ be a zero boundary $GFF$ in $D$ and let $f_x = f_{a,b,x}$ be as above.

Let
$\eta'[0,\infty)$ be an SLE$_{\kappa'}$ curve in $(D,a,b)$ from $b$ to $a$
parametrised by capacity. There exists a unique coupling between $h$,  $\eta'$ and a
set of simple curves $\{\eta^{(i)}[0,\tau_i]\}_{i \in \N}$ in $D$ starting at $z_i$ for each $i \ge 1$ (defined up to
monotone reparametrisation) where $\tau_i$ is the stopping time when the curves
hit $\partial D$, such that the following is true.

\begin{itemize}

\item[i] Let $\tau'_i$ be the stopping time when $\eta'$ hits $z_i$
($\tau'_i<\infty$ almost surely since $\eta'$ is space filling almost surely).
Then the left boundary of $\eta'[0,\tau'_i]$ is the same as $\eta_i$ almost
surely.

 \item[ii] For any stopping time $\tau'$ of $\eta'$, the conditional law of
$-h+f_{\kappa'}$ in $D \setminus \eta'[0,\tau']$ (notice that this is a
connected domain since SLE$_{\kappa'}$ is space filling almost surely) is given
by an independent GFF in $(D \setminus \eta'[0,\tau'],\eta'(\tau'),b)$ with
mixed winding boundary condition with parameter $\kappa'$. Such a curve
$\eta'$ coupled with $h$ is called a \textbf{counterflow line} in the terminology of of \cite{IG4}.

\item[iii] For any $k \ge 1$, for all $1 \le i,j\le k$, the curves $\eta^{(i)}$ and
$\eta^{(j)}$ have the coalescing property: if they intersect, they continue together until they hit $\partial D$.
In particular, $D \setminus \cup_{i=1}^k \eta^{(i)}[0,\tau_i]$ is a
connected domain. Furthermore, the conditional law of
$h+f_{\kappa}$
given
$\cup_{i=1}^k
\eta^{(i)}[0,\tau_i] $ is given by an an independent GFF in $(D \setminus
\cup_{i=1}^k
\eta^{(i)}[0,\tau_i],a,b)$
with mixed intrinsic winding boundary condition with
angle $\pi/2$.
These curves are the \textbf{flow lines of $h$ with angle $\pi/2$} in the terminology of \cite{IG4}. The full set of curves $ \eta^{(i)}$ is called a \textbf{tree of
flow lines} starting from the countable set of points $\{z_i\}_{i \in \N}$ and
angle $\pi/2$.

\item[iv]  The curve $\eta'$ determines $h$ and vice versa almost surely. The
curves
$(\eta^{(i)})_{i \in \N}$ determines $h$ and vice versa almost surely.

\end{itemize}

Furthermore in the case where $a=b$ we still get a coupling between $h$ and a
set of simple curves $\eta^{(i)}$ such that item iii above still holds.

\end{thm}

\begin{proof}
 The coupling between $\eta',h $
 is described
in Theorem 1.1 of \cite{IG1} or Theorem 6.4 in
\cite{Dub1}, while the coupling between $h$ and $\eta^{(i)}$ follows from Theorem 1.8 in \cite{IG4}. Note that the latter still holds in the case $a = b$.
The properties described in the coupling are special cases of
Theorem 1.8 and Theorem 1.13 of \cite{IG4}. That the curve $\eta'$ determines
$h$
and vice versa follows from Theorem 1.2 of \cite{IG1,IG4} and Theorem 1.16 of
\cite{IG4}. That the curves $\{\eta^{(i)}\}_{i \in \N}$ determine $h$ follows
from Theorem 1.10 of \cite{IG4}.
\end{proof}

At this point, we know that in the mixed boundary case and $a\neq b$, the coupling above holds for $h$, the Peano path $\eta'$ and the flow lines $\eta^{(i)}$. We know that for $\kappa = 2$, $\eta'$ is a chordal SLE$_8$ and hence the scaling limit of the discrete Peano path of a mixed UST. Since convergence in the Peano sense is stronger than in the Schramm sense, we know that the left and right boundaries of the path $\eta'$ are branches in the mixed continuum UST and its dual. By the above theorem  these are also the flow lines associated to $h$ with angle $\pm \pi/2$. We want to deduce by letting $b \to a$ that the tree of flow lines in the case $a=b$ described in the above theorem are the branches of the wired continuum UST.

Essentially, the argument goes as follows. By \cref{lem:mixed->free_supp} we know that as $b\to a$, the mixed continuum UST converges to the wired UST. The GFF determined by the branches of the mixed tree will be shown to converge. This requires an argument because the function which associates a field to a tree of flow lines is not known to be continuous, only measurable. However, the conditional law of the field with mixed boundary conditions, given a finite number of its flow lines, converges to that of a field with wired boundary conditions given these curves.
Since these conditional distributions characterise entirely the joint distribution between all the branches and the field, the result follows.


\begin{lemma}
 Let $(D,x)$ be a domain with a marked point.
Let
$\{z_i\}_{i \in \N}$ be a countable dense set of points in $D$.
Then there
exists a coupling between a GFF in $(D,x)$ and the continuum wired UST in $D$
such that flow line tree from
$\{z_i\}_{i \in
\N}$ with angle $\pi/2$ is equal to the branches of the continuum
wired UST from $\{z_i\}_{i
\in \N}$ almost surely.
\end{lemma}

\begin{proof}
Fix $k \in \N$ and any set of $k$ points from $\{z_i\}_{i \in \N}$. Without
loss of generality let this set be $\{z_1, \ldots, z_k\}$.
It suffices to find a coupling between $(h, \cT)$ such that in this coupling $h$ has $\chi$ times intrinsic winding boundary conditions and $\cT$ is a wired UST and such that for each $k \ge 1$ the branches $\tilde \eta_1, \ldots, \tilde \eta_k$ emanating from $z_1, \ldots, z_k$ are equal to the flow lines $\eta_1, \ldots , \eta_k$ with angle $\pi/2$.

To find such a coupling, take a sequence $y_n \in \partial D$ such that $y_n \to x$ and call $\alpha_n$ the arc between $y_n$ and $x$ (which converges to $\{x\}$) and $\beta_n$ the complementary arc (which converges to $\partial D$). Let $\cT_n$ be a mixed boundary continuum uniform spanning tree with free
boundary in the arc $\alpha_n $
and wired boundary in the rest. Let $\cT_n$ be coupled with a continuum wired
UST
denoted by $\cT$ in $(D,x)$ as in \cref{lem:mixed->free_supp}. The branches
$\eta_{n,i}$ of $\cT_n$ from $\{z_1,\ldots,z_k\}$  (until they hit the wired
boundary) in $(D,x,y_n)$ are unique almost surely and are equal to $\tilde
\eta_1, \ldots, \tilde \eta_k$ with high probability as $n\to \infty$. Let
$h_{n}  $ be the field determined by the branches of $\cT_n$ viewed as flow
lines (such an identification is possible when $n$ is finite and the boundary
conditions are mixed by \cref{thm:flow_lines_supp}). This defines a coupling of
$(h_{n}, \cT)$ such that the branches  $\tilde \eta_1, \ldots, \tilde \eta_k$
are equal with high probability to the flow lines of $h_{n}$ with angle
$\pi/2$. The marginal $h_{n}$ in this coupling is a mixed intrinsic
winding boundary GFF and so converges to a GFF with $\chi$ times intrinsic
winding minus $\lambda$ boundary conditions as $n\to \infty$. Hence $(h_{n},
\cT)$ is tight and we can consider a subsequential limit $(h, \cT)$. Notice that
in such a coupling necessarily we have that $\tilde \eta_1, \ldots, \tilde
\eta_k$ are flow lines of $h$, hence we have the desired coupling (this follows
easily from the fact that the law of
$(\eta_{n,1}, \ldots, \eta_{n,k})$ converges holds in total variation, see \cref{lem:mixed->free_supp}).
This completes the proof.
\end{proof}

We summarise the findings of this section in the following theorem:
\begin{thm}[Coupling]\label{thm:coupling_expanded_supp}
Let $D \subset \C$ be a domain which is not $\C$. Let $\cT$
 be a continuum spanning tree in $D$ with wired condition and root $x \in \partial D$. There exists a
coupling between the $\cT$, a Gaussian free field $h$ on $(D,x)$ with intrinsic winding
boundary condition such that $\cT$ is the flow line tree of $h$
with angle $\pi/2$.

Consequently, given $z_1,\ldots,z_k \in D$ and
the set of branches $\cB$ of $\cT$ from these points to
$x$, the conditional law of $h$ in $D \setminus \cB$ is given
by a GFF in $D \setminus \cB$ with intrinsic winding boundary
condition in $(D \setminus \cB,x)$.
Also,  $\cT$ determines $h$ and vice-versa.
\end{thm}
The second paragraph of \cref{thm:coupling_expanded_supp} is exactly the statement of \cref{thm:coupling_intro_supp}, which is what we want.
\section{Discrete estimates}
In this section, we prove the technical random walk estimates that we need in Section 4 of the main file.

\subsection{Winding of random walk}

\begin{lemma}[Lemma 4.7 in the main file]\label{lem:cond_winding_supp}
Fix $0<r <R$.
There exists
$\alpha =\alpha(R/r) >0$ and $c$ such that for all $x \in \C$, $\delta \in (0, c r
\delta_0)$, $v \in A(x, r + \frac{R-r}{3}, R - \frac{R-r}{3})^\d$,
for all $u$ such
that $\P_v(X_\tau = u) > 0$ where $\tau$ is the exit time of $A(x,r, R)$, and for all
$n \ge 1$, we have:
\[
\P_v\left[ \sup_{
 \cY \subset X[0, \tau]}
\abs{W( \cY, x) }\geq n | X_\tau = u
\right] \leq C (1- \alpha)^n .
\]
where the supremum is over all continuous paths $\cY$ obtained by erasing portions from $X[0, \tau]$.
%
\end{lemma}

\begin{proof}
The proof is divided in two steps.
First we construct a set $U'$ close to $u$ and macroscopic such that
if the random walk is started in $U'$ it has a good chance to exit the annulus through $u$. In the second step we use the conditional crossing estimate (Lemma 4.4
in the main file) to control the winding: every time the walk crosses the positive real line, the chance that it will wind once more before exiting at $u$ is small because there is a good chance to go hit the macroscopic set $U'$.


\mn \textbf{Step 1.} Let us assume first that $u$ is a point on the outer boundary of the annulus. Up to a rotation and a translation of the graph, we can assume that $x = 0$ and $u$ is on the negative real axis. We also simplify notations by writing $A = A(x, r, R)$. For a continuous path $\gamma$ we write $\gamma^\d$ for a discrete path staying at distance $2\delta/\delta_0$ of $\gamma$ (such a path can be constructed thanks to our Russo--Seymour--Welsh crossing assumption).

We start as in the proof of Lemma 4.4 in the main file by controlling the function $h(v) = \P_v[ X_\tau = u]$.
More precisely we claim that there exists $c>0$ such that if $\rho = \frac{R-r}{10}$, for all $\delta$ small enough, for all $a \in U := \big(A \cap \partial B(u, \rho)\big)^\d$ and $b \in U': = \big( \{z | \arg(z-u) \in [-\frac{\pi}{4}, \frac{\pi}{4}] \} \cap \partial B(u, \rho) \big)^\d$,
\begin{equation} \label{controlh_supp}
h(b) \geq c h(a).
\end{equation}
Indeed fix $b \in U'$ and $a \in U$.
Since $h$ is harmonic there exists a path $\gamma$ from $a$ to $u$ along which $h$ is non-decreasing and we can assume that $\gamma$ does not intersect $U$ outside of $a$ (otherwise change $a$ to be the last intersection point). Let $\tau_\gamma$ be the first hitting time of $\gamma \cup \partial A$ by the simple random walk. By harmonicity we have
\[
h(b) = \E_b[h(X_{\tau_\gamma})] .
\]
On the other hand, by the crossing estimate, the random walk has a positive probability $c$ independent of $\rho$ to hit $\gamma$ irrespective of the relative positions of $a,b, u$. (For example this is at least the probability to surround $\partial B(u, \rho) \cap A$ in the clockwise and anticlockwise directions, staying in the annulus $A(u, 99\rho/100, 101\rho /100)$.) Hence using this event to lower bound the expectation, we find $h(b) = \E_b[h(X_{\tau_\gamma})]  \ge h(a) c$. This proves the claim \eqref{controlh_supp}.

\mn \textbf{Step 2.} It will be easier to bound crossings of the real line. Let $\ell_+$ be a path connecting two boundary pieces of $A$ and approximating the interval $[r,R]$. Here approximating means the path stays within $O(\delta/\delta_0)$ from $\ell_+$ and the existence of such a path is guaranteed by the crossing estimate. Similarly, define $\ell_-$ approximating $[-R,-r]$.

Let $\tau_{+,1}$ be the first hitting time of $\ell_+$ and by induction let $\tau_{-,i}$ the first hitting time of $\ell_-$ after $\tau_{+,i}$ and $\tau_{+, i}$ the hitting time of $\ell_+$ after $\tau_{-, i-1}$. Let $I_+$ the number of $\tau_+$ before $\tau$, i.e $I_+ = \abs{\{ i | \tau_{+,i} \leq \tau\}}$.

Note that there exists $\alpha \in (0,1)$ depending only on $R/r$ such that for any $w \in \ell_+$,
$$\P_w( \tau_{-,1} < \tau) \le 1-\alpha.$$
Hence applying the strong Markov property $n$ times,
\begin{equation}\label{eq:hitlines_supp}
\P_v( \tau_{+,n} \le \tau; X_\tau = u) \le (1-\alpha)^n \sup_{w \in \ell_+} \P_w( X_\tau = u).
\end{equation}
However, we claim that for any $w \in \ell_+$, $\P_w( X_\tau = u) = h(w) \le  h(v)/\alpha $ where $\alpha $ depends only on $R/r$, whenever $v \in A (0, r+ (R-r)/3, R - (R-r)/3)$. Indeed, by the maximum principle, $h(w) \le \sup_{U} h \le (1/c) \inf_{U'} h$ by \eqref{controlh_supp}. On the other hand, $h(v) \ge \alpha \inf_U h$ since there is a uniformly positive chance $\alpha$ of hitting $U$ before $\tau$ whenever $v$ is in the allowed region. Hence $h(v) \ge \alpha h(w)$ for any $w \in \ell_+$ and $v$ in the allowed region. Plugging into \eqref{eq:hitlines_supp}, we deduce
$$
\P_v( \tau_{+,n} \le \tau| X_\tau = u) \le (1/\alpha) (1- \alpha)^n.
$$
However notice that deterministically,
$$
\sup_{\cY} |W (\cY, x) | \le 2\pi (I_+ +1)
$$
and hence the result follows.

Finally it is clear that the above proof extends to the case where $u$ is a point in the inner boundary instead of being in the outer boundary.
\end{proof}

The next lemma is Lemma 4.8 in the main file which, we recall, is an analogue of \cref{lem:cond_winding_supp} for the largest scale.

\begin{lemma}[Lemma 4.8 in the main file]\label{lem:winding_macro_supp}
Fix $r< R$
There exists
$\alpha =\alpha(R/r) >0$ and $c,C$ such that for all $\delta \le c r \delta_0$, for all $x \in \C$, for all domains $D$ such that $\dist( x, \partial D^\d ) = R$ (in particular $D \supset B(x,R)$), for all
 $v \in A(x, r + \frac{R-r}{3}, R - \frac{R-r}{3})^\d$, writing $\tau$ for the exit time of $D\setminus B(x, r)$,
\begin{equation}\label{eq:cond_wind_dom_supp}
\forall n \ge 1, \quad \quad \P_v\big(\sup_{
 \cY \subset X[0, \tau]}
\abs{W( \cY, x) }\geq n \big| X_\tau \in \partial D^\d
\big) \leq C (1-\alpha)^n ,
\end{equation}
and for all $u \in B(x, r)$ such that $\P( X_\tau = u ) > 0$,
\begin{equation}\label{eq:cond_wind_inside_supp}
\forall n \ge 1, \quad \quad \P_v\big(\sup_{
 \cY \subset X[0, \tau]}
\abs{W( \cY, x) }\geq n \big| X_\tau= u
\big) \leq C (1-\alpha)^n .
\end{equation}
In both cases, the supremum is over all continuous paths $\cY$ obtained by erasing portions from $X[0, \tau]$.
\end{lemma}

Before we start the proof, we remind the reader that the above result \eqref{eq:cond_wind_dom_supp} could be wrong if we conditioned on the precise exit point $X_\tau \in \partial D^\d$, so it is important to work with the above formulation.

\begin{proof}
We start with the proof of \eqref{eq:cond_wind_dom_supp}. Note that
doing a full turn outside $B(x, R)$ implies necessarily that the walk has left $D^\d$ since $\dist(x, \partial D^\d) = R$. This readily implies
 $\P( X_\tau \in \partial D^\d ) \ge 1-\alpha(R/r)$;  hence in order to show \eqref{eq:cond_wind_dom_supp} it suffices to prove the corresponding unconditional statement:
 \begin{equation}\label{uncondWind_supp}
\forall n \ge 1, \quad \quad \P_v\big(\sup_{
 \cY \subset X[0, \tau]}
\abs{W( \cY, x) }\geq n
\big) \leq C (1-\alpha)^n .
\end{equation}
This is easy to see: indeed, as in the proof of \cref{lem:cond_winding_supp}, let $\ell_+ = [r, \infty)^\d$ and and $\ell_- = (-\infty, -r]^\d$ be the \emph{infinite} half lines starting from the inner radius to infinity. Every time the walk hits $\ell_+$ there is a probability at least $\alpha = \alpha (R/r)>0$ that the walk will leave $D$ without touching $\ell_-$ and then $\ell_+$ again (and hence without making another turn): let $\tau_{+, i}$ and $\tau_{-, i}$ be defined as before. It is clear from the uniform crossing estimate that there exists $\alpha = \alpha(R/r) > 0$ such that
\begin{equation}\label{fullturn_supp}
\forall v \in \ell_+, \quad \P_v[ X[0, \tau_{+, 1} ] \text{ does a full turn outside } B(x, R) ] \geq  \alpha.
\end{equation}
(Note that this holds even if $v$ is quite far away from $B(x,R)$ by taking rectangles of sufficiently large scale in the crossing assumption). Iterating and applying the Markov property immediately implies \eqref{uncondWind_supp}.

We now turn to \eqref{eq:cond_wind_inside_supp}. This is essentially the same as \cref{lem:cond_winding_supp}, with a few modifications. We take $\ell_-, \ell_+$ to be the infinite half-lines as above, and note that
$$
\P( \tau_{+,n} \le \tau) \le (1- \alpha)^n
$$
by \eqref{fullturn_supp}. We conclude as in the proof of \cref{lem:cond_winding_supp}, noting that the maximum principle applies in any domain, and that if $v \in A(x, r+ (R-r)/3, R- (R-r)/3)$ the probability that the walk will hit the set $U'$ is still uniformly bounded below by $\alpha (R/r)$.
\end{proof}

\begin{lemma}[Lemma 4.9 in the main file]\label{lem:winding_boundary_supp}
There exists $\alpha \in (0,1)$ and $c,C>0$ depending only on the constants in the uniform crossing conditions
such that the following holds. For all $\delta <c\delta_0R$,
\[
\forall n \ge 1, \quad \quad \P_v\big( \big| W(\gamma_v^\d[-1, 0], v) - \E  W(\gamma_v^\d[-1, 0], v) \big| \geq n \big) \leq C (1-\alpha)^n .
\]
\end{lemma}
\begin{proof}
We drop the superscript $\cdot ^\d$ for convenience and use a replica technique: let $\tilde \gamma_v$ be an independent realisation of  $\gamma_v$. Clearly it suffices to show that
\[
\forall n \ge 1, \quad \quad \P_v\big( | W(\gamma_v[-1, 0], v) - W( \tilde \gamma_v([-1,0] , v) | \geq n \big) \leq C (1-\alpha)^n .
\]
Let $\lambda$ be a continuous parametrisation of $\partial D$ (as usual, this exists because of our assumption that $\partial D$ is locally connected). Then note that if we choose $S,S'$ such that $\lambda(S) = \gamma_v(0)$ and $\lambda(S') = \tilde \gamma_v(0)$ then by additivity of winding
$$
|W(\gamma_v[-1, 0], v) - W( \tilde \gamma_v([-1,0] , v)| = |W( \lambda([S, S']), v )|
$$

 Fix a domain $D^\d$ and a starting point $v$, let $R = \dist( v, \partial D^\d)$ and let $r = R/2$, we let the constant $c,C,\alpha$ be chosen so that \cref{lem:winding_macro_supp} applies. Take two independent random walks starting from $v$ run until they leave $D$, and consider the times at which they each exit $B(v,3R/4)$ for the first time after the last exit from $B(v,R/2)$. Let
 $X,X'$ be the continuation of the two walks from that point onwards and let $w,w'$ be their starting points respectively on $\partial B(v, 3R/4)^\d$.
 Observe that both $X,X'$  are independent random walks, starting from $w,w' \in A(v, r + \frac{R-r}{3}, R - \frac{R-r}{3})^\d$ and conditioned to exit $D\setminus B(v,r)$ through $ \partial D^\d$. We now make use of the following topological fact:
 \[
| W( \lambda[S, S'] ) | \leq 4 \pi + \sup_{
 \cY \subset X} | W( \cY, v ) | + \sup_{
 \cY' \subset X'} | W( \cY', v ) |.
\]
which immediately proves the lemma using \cref{lem:winding_macro_supp}. To see the above inequality, we simply point out that the winding of any loop around any point is bounded by $2\pi$ (note that this is true even if the loop is nonsimple, so long as it is noncrossing, and that $\lambda$ is such a curve). Also note that if the walks $X,X'$ do not intersect each other, then we can form a simple loop by joining $w, w'$ by a circular arc,
the loop erasures $\cY$ and $\cY'$ of $X$ and $X'$ after the last time they touch this circular arc, and $\lambda([S,S'])$. If the walks do intersect, then we can do the same but looking at loop erasures of the walks starting from their last intersection point. This completes the proof.
 \end{proof}

\subsection{Proof of exponential tail of loop-erased winding (Proposition 4.12)}
\label{SS:proofwindingtail_supp}

\begin{figure}[p]\label{fig:proof_ht_supp}
\begin{center}
\def\svgwidth{0.7\textwidth}
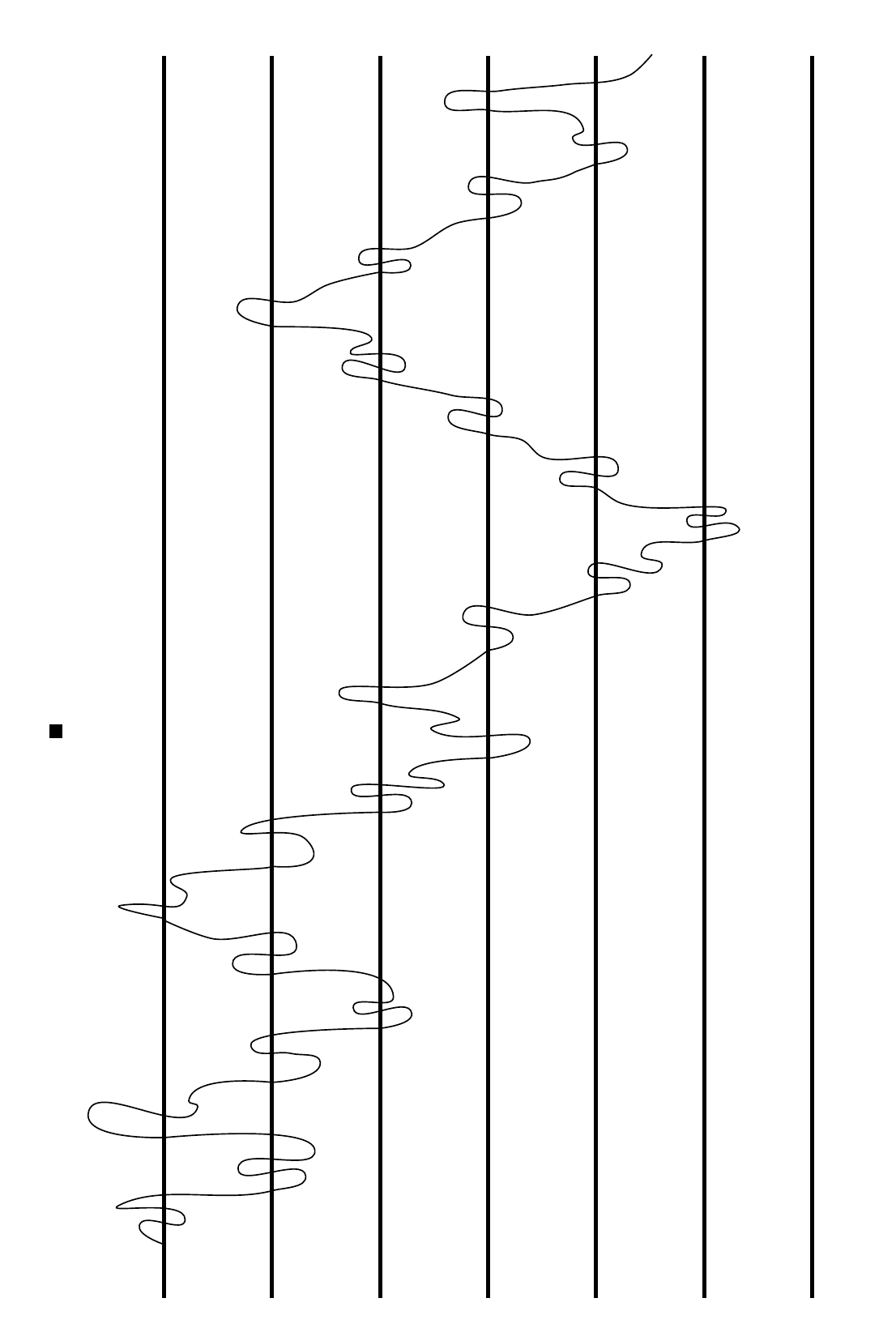
\caption{Schematic illustration of the proofs in Section \ref{SS:proofwindingtail_supp}. Time runs upward. The times $\tau_k$ are indicated by the boxes, with a blue circular arrows above time $\tau_k$ indicating when $X[\tau_k, \tau_{k+1}]$ does a full turn in an annulus. The horizontal blue half lines indicate that everything below these lines is erased and ignored in the decomposition of Lemma \ref{claim:erasure_supp}. In this example, there is no full turn at scale $2$ after $\kappa_1$ so we have $\kappa_{2} = \kappa_1 +1$ as indicated. There is no crossing of $C_4$ after $\kappa_3$ so $\kappa_4 = \infty$ and $I = 3$, and the event pictured is exactly how we see that $I$ has exponential tail. The red boxes denote the set $\cS^1$ in the proof of Lemma \ref{claim:size_G_supp}.}
\end{center}
\end{figure}

We now provide proofs of the sub-lemmas stated in the main file for the proof of Proposition 4.12 or, more precisely, Lemma 4.13.

\begin{lemma}[Lemma 4.14 in the main file]\label{claim:erasure_supp}
Recall $\cY, B_0$ from Lemma 4.13 (and above) in the main file. Then
\[
\cY \cap B_0^c \subset  \bigcup_{0\leq i \leq I} \bigcup_{k \in \cG_i} X[\tau_{k}, \tau_{k+1}].
\]
Furthermore, one can write $\cY {\cap B_0^c} = \cup_{i \le I} \cup_{k \in \cup \cG_i} \cY_{k}$ where
$\cY_k$ are disjoint intervals of the loop erased
random walk of the form $\cY_k = (Y_{j_k},Y_{j_k+1}, \ldots, Y_{j_k+i_k}) $ and $\cY_k \subset
X[\tau_k,\tau_{k+1}]$.
\end{lemma}
\begin{proof}
We consider the chronological erasure of loops so at each time $T$ we have a loop-erased path $Y^{(T)}$ obtained by erasing loops from the random walk $(X_t)_{t \leq T}$.

Notice that by construction, after time $\tau_{k_{\max}}$ the random walk does not return to $B(0, r_1)$ so $ \cY \subset Y^{(\tau_{k_{\max}})}$ (it might be smaller because
more loops can occur, erasing further the final set $\cY$, but no more points can be added to $\cY$ since we do not return to the annulus). This justifies looking only at $k \leq k_{\max}$ above.

Observe that if $X$ does a full turn between times $\tau_k$ and $\tau_{k+1}$, then $Y^{(\tau_{k+1})} \subset B(v, r_{i(k) +1}) $. Indeed if $Y^{(\tau_{k})} \subset B(v, r_{i(k) +1}) $ the statement is trivial. Otherwise $Y^{(\tau_{k})}$ includes a path from $C_{i(k)-1}$ to $C_{i(k)+1}$ which has to be crossed by the full turn at some time $T$ and this erases everything outside of $B(v, r_{i(k) +1})$.

In particular, we see that $Y^{\tau_{\kappa_{-1}+1}} \subset B(v, r_0)$. This
implies that ${\cY  \cap B_0^c} \subset X[\tau_{\kappa_{-1}+1}, \tau_{k_{\max}}]$ since all
the rest of $X$ either was erased or contributes to parts of $Y$ outside of
$\cY \cap B_0^c$. Using the same argument, we can erase the parts of the walk before
$\tau_{\kappa_0}$ that lie outside of $C_1$. But we need to keep all the visits
to $C_0$ (that is all the elementary pieces of random walk in $\cG_0$); so we
get
\[
{\cY \cap B_0^c} \subset \bigcup_{k \in \cG_0}  X[\tau_{k}, \tau_{k+1}] \cup
X[\tau_{\kappa_{0}+1}, \tau_{k_{\max}}].
\]
We now proceed by induction to complete the proof of the first part of the
statement.

\smallskip

For the last sentence, note that the edges of $Y$ are created by
the random walk in order from $v$ to the boundary.
Therefore the set of edges created in an interval of time by the random walk
has to form an interval in the loop-erased walk $Y$. We emphasise that here we
only look at the times when edges are added to $Y$, the full loop-erased walk,
and not the loop-erased path $Y^{(t)}$ at any intermediate point of time.
\end{proof}

The next delicate lemma in the proof of Lemma 4.13 in the main file is the following:

\begin{lemma}[Lemma 4.15 in the main file]\label{claim:size_G_supp}
 There exists $C,c,c'>0$, such that, for all $\delta \leq ce^{-t} \dist(v, \partial D^\d) \delta_0$, for all $n>0$,
 \[
 \P(\sum_{0 \le j \le I} \abs{\cG_j} \geq n) \leq C \exp(-c'n).
\]
\end{lemma}
\begin{proof}
  Fix a $\delta<ce^{-t}\delta_0$ with $c$ small enough that all the estimates below are valid for such a $\delta$.
First we observe that there exist $c_1>0$ such that
\begin{equation}
\P(I \ge n) \le e^{-c_1n} . \label{eq:I_supp}
\end{equation}
Indeed, condition on the sequence of crossing positions $\cS = ( X_{\tau_k})_{k \geq 0}$. Recall that by construction $i(k_{\max} -1) = 1$, therefore for every $i \geq 2$, just after the last crossing $\tau_k$ of $C_i$ before $k_{\max}$ the random walk goes to $C_{i-1}$. In particular if there is a full turn within the annulus $A(v, r_i, r_{i+1})$ during $[\tau_k, \tau_{k+1}]$, then  by definition $\kappa_{i+1} =+ \infty$. By Lemma 4.4 and
Corollary 4.5 in the main file, the conditional probability of making such a full turn has a uniformly positive probability to occur for each $i$ so we are done.

\smallskip

To proceed, we have to work on the law of $\cS$ so it is more tricky. Note that by Corollary 4.5 in the main file again, we immediately see that the number of crossings of $C_i$ after $\tau_{\kappa_i}$ has geometric tail. Indeed, conditionally on $\cS$, the event that there are $n$ crossings of $C_i$ after $\tau_{\kappa_i}$ is the event that the last $n$ crossings of $C_i$ -- which are measurable with respect to $\cS$ -- were not followed by a full turn in the annulus $A(v, r_i, r_{i+1})$. But a full turn in this annulus has uniformly positive probability by
Corollary 4.5 in the main file at each visit, conditionally on $\cS$. 
So this conditional probability has geometric tail, and so does the unconditional probability.

However $\abs{\cG_i}$ is the number of visits to $C_i$ after $\kappa_{i-1}$, so we also need to exclude the possibility that the random walk alternates many times between $C_i$ and $C_{i+1}$ without crossing $C_{i-1}$. We are actually going to prove that the number of crossings of $C_{i}$ between two successive crossings of $C_{i-1}$ has an exponential tail which is uniform over $i$. The difficulty comes from the fact that we are looking close to random times which are not stopping times, so we don't know the law of the walk. To get around this issue we will need to be more careful and discover the set $\cS$ in steps.

For any $j\ge 0$, let us define $\tau^j$, $i^j$ and $\cS^j$ as before but using $C_j$ as the reference smallest circle rather than $C_{-1}$ as above. More precisely, we define $\tau^{j}_0 = 0$, $\tau^j_1$ to be the first crossing of $C_j$. Then by induction, whenever $i^j(k) = j$ we define $\tau^j_{k+1}$ to be the first crossing of $C_{j+1}$ after $\tau^j_k$ and otherwise if $i^j(k) > j$ then we set $\tau^j_{k+1}$ to be the first crossing of $C_{i^j(k) \pm 1}$ after $\tau^j_k$.
Finally we set $\cS^j = (X(\tau^j_k))_{k}$. If needed, let us modify this a bit and define a sequence $\tilde{\cS}^j$ by inserting in the sequence $\cS^j$ the point $X(\tau_{k_{\max}-1})$ where the last crossing of $C_1$ happens. Let $\{\tilde{\tau}^j_k\}_{k\ge 0}$ be the times corresponding to the sequence $\tilde{\cS}^j$.

As before, conditionally on $\tilde{\cS^j}$ the pieces of random walk $X[\tilde{\tau}^j_k, \tilde{\tau}^j_{k+1}]$ are independent. For any $k$ such that $i^j(k)= j$, the piece of random walk $X[\tilde{\tau}^j_k, \tilde{\tau}^j_{k+1}]$ is distributed as a random walk starting at some point on $C_j$ and conditioned on its exit point of $B(v, r_{j+1})$. By Corollary 4.6 in the main file, this random walk has a strictly positive probability to intersect $C_{j-1}$ and this probability is independent of everything else.
Therefore in $\tilde{\cS}^j$, looking at the successive visits to $C_j$, the gaps between visits where the walk also crossed $C_{j-1}$ have an uniform exponential tail, even if we condition on $\tilde \cS_j$.
Let $\Xi^j_1,\Xi^j_2,\ldots$ be the number of such visits to $C_j$ in between visits to $C_{j-1}$. Since conditioned on $\tilde{\cS}^j$, the walks are independent, we conclude from the above discussion that $\Xi^j_1,\Xi^j_2,\ldots$ are independent and have an exponential tail which is uniform and independent of everything else.


Now let $\cI_{j-1} = \{k \in \cV_{j-1}: k \ge \kappa_{j-1}\}$ (which has uniform geometric tail by \eqref{eq:I_supp}). Thus
$$
|\cG_j| \le \sum_{1\le i \le |\cI_{j-1}|}   \Xi_i
$$
We conclude that $|\cG_j|$ has an exponential tail which is uniform over everything else. In other words, there exist constants $c_2,C_2>0$ such that for all $j \ge 1$
\begin{equation}
\E(e^{c_2|\cG_j|}| \tilde {\cS}^j) \le C_2 \label{G_supp}
\end{equation}
Now we need to sum $|\cG_j|$ over $1 \le j \le I$. Note that $\cG_j$ are not independent so we cannot directly obtain the desired exponential tails (although stretched exponential tails follow automatically). However, there is enough independence so that we can reveal information step by step and bound the probabilities. Note that $\cS \supseteq \tilde{\cS^0}\supseteq\tilde {\cS^1} \supseteq \ldots$ and $\cG_j$ is measurable with respect to $\tilde{\cS}^{j-1}$. Therefore for any $m \ge 1$, discovering the $\tilde \cS^j$ successively with decreasing $j$,
\begin{align*}
\E(e^{c_2\sum_{1 \le j \le m} |\cG_j |}) & = \E(e^{c_2\sum_{2 \le j \le m} |\cG_j |}  \E (e^{c_2 |\cG_1|} | \tilde {\cS^1}   )) \\
 & \le C_2\E(e^{c_2\sum_{2 \le j \le m} |\cG_j|} ) \le C_2^m
\end{align*}
using \eqref{G_supp} for the first inequality and the final inequality is done by iterating. Therefore
\begin{align*}
\P(\sum_{0 \le j \le I} |\cG_j | > n ) & \le \P(\sum_{0 \le j \le \ve n} |\cG_j| > n ) + \P(I \ge \ve n) \\
& \le (C_2)^{\ve n}e^{-n} + e^{-c_1\ve n}
\end{align*}
The result follows by choosing $\ve$ small enough depending only on $C_2$.
\end{proof}

\section{Comparisons of capacity}\label{SS:capacity}

In this section we prove \cref{lemma:coupling_Poisson_supp} which is Lemma 4.25 from the main file. Recall the setup: we have two domains $D$ and $\tilde D$, $z_0 \in D^\d \cap \tilde D^\d$ and we couple the brances $\gamma$ and $\tilde \gamma$ starting from $z_0$ for the wired USTs in $D^\d$ and $\tilde D^\d$ respectively.

Firstly, we drop the $\delta$ in this section for notational convenience.
Since we are only interested in a single point, it is also convenient to drop the linear shift in the parametrisation of $\gamma,\tilde \gamma$ by $\log R( z_0, D), \log R(z_0,\tilde D)$ respectively, hence we do it in this Section. As explained in the main file (and also in the following paragraph), the idea is to compare the rate of \emph{change} of capacity in the two domains, so this global shift in parametrisation is immaterial in the argument.

 For $z \in \gamma \cap \tilde \gamma$, call respectively $T$ and $\tilde T$ the corresponding capacity in $\gamma$ and $\tilde \gamma$ seen from their respective starting point up to $z$. The idea will be that $T$ and $\tilde T$ are so close as a function of $z$ that, if we sample the capacity at which we cut randomly (using a Poisson point process in practice), the law of corresponding random points along $\gamma$ and $\tilde \gamma$ are very close in total variation and can be made to agree exactly. The technical difficulty with that strategy is that a uniform bound on $T-\tilde T$ is not sufficient, we need to show that ``the derivative of $T-\tilde T$ with respect to $z$'' is small.

We first need some technical estimates on the comparison of the two capacities.
In this section $D$ will be a fixed domain. We denote by $\gamma$ and $\tilde \gamma$ two simple paths from a point $z_0 \in D$ to respectively $\partial D$ and infinity and we will always assume that there exists $I$ such that $\gamma \cap B(z_0, e^{-I}) = \tilde  \gamma \cap B(z_0, e^{-I})$. For points $z, z' \in \gamma$ we will write $\gamma(z, z')$ for the part of the path between $z$ and $z'$, $\gamma(z, \partial D)$ for the path between $z$ and the boundary and similarly for $\tilde \gamma$. We denote by $T$ and $\tilde T$ the capacity in $D$ or $\C$, i.e $T(z) = -\log R( z_0, D \setminus \gamma(z, \partial D) )$ and $\tilde T (z)= -\log R( z_0, \C \setminus \tilde \gamma(z, \infty) )$, where recall that $R(z,D)$ denotes the conformal radius of $z$ in $D$.

The following lemma is elementary and well known.

\begin{lemma}\label{lemma:representation_capacity_supp}
Let $D$ be a simply connected domain and $z_0$ in $D$. Let $B_t$ be a Brownian motion started in $z_0$ and $\tau$ the exit time of $D$. We have
\[
  \log R(z_0, D) = \E(\log \abs{B_\tau - z_0}).
\]
\end{lemma}
\begin{proof}
Let $g$ be a map sending $D$ to $\D$ and $z_0$ to $0$. Let $\phi(z)=\frac{g(z)}{z-z_0}$ for $z \neq z_0$ and $\phi(z_0) = g'(z_0)$. Then $\phi$ is holomorphic in $D$ therefore $\log \abs{ \phi} = \log \abs{g} - \log \abs{z-z_0}$ is harmonic. Furthermore by definition $R(z_0, D) = 1/|\phi(z_0)|$ and for $z \in \partial D$, $\abs{g(z)} = 1$ hence $
\log |\phi |(z) = - \log |z-z_0|$ on $\partial D$.

Applying the optional stopping theorem, we deduce that
$$
- \log R(z_0,D) = \E( - \log |B_{t \wedge \tau} - z_0|)
$$
Note that $\tau$ always has polynomial tail (even if $D$ is unbounded) and hence $\sup_{t \le \tau} \log |B_{t\wedge \tau } - z_0|$ has exponential tail and hence is integrable. So the use of the dominated convergence theorem is justified and hence the result follows.
\end{proof}

\begin{lemma}\label{lemma:absolute_difference_supp}
There exist positive constants $C, c$ such that for any pair of paths
$\gamma$ and $\tilde \gamma$ and any $z \in \gamma \cap B(z_0, e^{-I})$, we have
\[
 \abs{T(z) - \tilde T(z)} \leq C(T(z)+\tilde T(z)) e^{-c(T(z)-I)} .
\]
In particular, if $T-I$ is large enough, then
\[
\abs{T(z) - \tilde T(z)} \leq C' T(z) e^{-c(T(z)-I)}.
\]
\end{lemma}
\begin{proof}
Let $B_t$ be a Brownian motion starting from $z_0$ and let $\tau$ and $\tilde \tau$ denote respectively the hitting time of $\partial D \cup \gamma(z, \partial D)$ and $\tilde \gamma(z, \infty)$. Let $\tau_I$ denote the exit time of $B(z_0, e^{-I})$.

By \cref{lemma:representation_capacity_supp}, $T(z) - \tilde T(z) = \E_{z_0}[-\log \abs{B_\tau - z_0}  + \log \abs{B_{\tilde \tau} - z_0}]$ but the right hand side is $0$ whenever $\tau \leq \tau_I$. Furthermore, $d(z_0, \gamma(z, \partial D) ) \leq e^{-T(z)}$ by Schwarz's lemma. Hence let $z'$ be such that $z' \in \gamma(z, \partial D)$ and such that $|z'- z_0| \le e^{- T(z)}$.
Then by the Beurling estimate (see \cite[Theorem 3.76]{Lawler}), $\P(\tau_I \geq
\tau) \leq C \sqrt{|z'-z_0|/e^{-I}} \le C e^{-(1/2) (T-I)}$. Also by Beurling estimate, $\log|(B_\tau - z_0)|$ has an exponential tail and therefore its second moment is comparable to the square of its first moment. Hence we obtain, applying Cauchy--Schwarz,
\begin{multline*}
|\E\log|(B_\tau - z_0)/(B_{\tilde \tau} -z_0)| \mathbbm{1}_{\tau \ge \tau_I}| \\\le \sqrt{\E\log^2 |(B_\tau - z_0)/(B_{\tilde \tau} -z_0)|  \P(\tau \ge \tau_I)} \le C(T(z)+\tilde T (z))e^{-c'(T(z)-I)}.
\end{multline*}
This completes the proof as the second inequality is easily derivable from the first.
\end{proof}

Now we will compare the growth rate of the capacities of $\gamma$ and $\tilde \gamma$.
We will also need to introduce $\bar T(z) = - \log R (z_0; B(z_0, e^{-I} ) \setminus \gamma(z, \partial D))$. In other words this is the capacity of $\gamma(z, \partial D)$ within $B(z_0, e^{-I})$.
The next lemma depends on the geometry of the curves so it is convenient to go to the unit disc. Let $\ph: D \to \D$ be the conformal map to the unit disc sending $z_0$ to 0 and such that $\ph'(z_0) > 0$.

We parametrise the curve $\gamma$ by capacity: for $T \ge T_0 = - \log R(z_0, D)$, let $z_T$ be the point on $\gamma$ such that $T(z_T) = T$ and define $g_T: D \setminus  \gamma(z_T, \partial D) \to \D$. Let $K_T = \ph(\gamma(z_T, \partial D))$ for $T\ge T_0$. Then $(K_T)_{T \ge T_0}$ is a growing family of hulls in the unit disc. Let $(g_T)_{T\ge T_0}$ be the (radial) Loewner flow in $\D$ and $(W_T)_{T \ge T_0}$ be the driving function (with $W_T \in \R$) corresponding to $(K_T)_{T \ge T_0}$. Also set $B' = \ph (B(z_0, e^{-I}))$, $H_T = \D \setminus K_T$ and finally set $L_T = g_T( H_T \setminus B')$.

\begin{figure}
\begin{center}
\includegraphics[width = 0.8\textwidth]{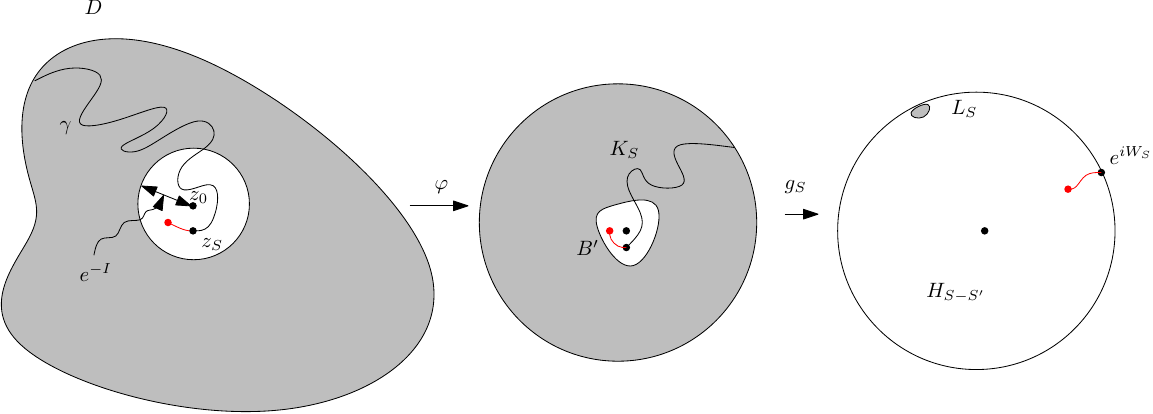}
\end{center}
\caption{Sketch of the maps in the proof of \cref{lemma:derivative_capacity_supp} }
\label{fig:cap_supp}
\end{figure}

\begin{lemma}\label{lemma:derivative_capacity_supp}
There exist positive constants $A, a,\ve_0$ such that the following holds.
Let $S$ and $S'$ be fixed with $0 < S' - S < \ve_0$. Let
$$
d = \inf_{S \leq T
\leq S' } d(L_T, e^{iW_T}).
$$ Let $w = z_S$ and $w'= z_{S'}$. If $S-I$ is large
enough and if $d$ is sufficiently large so that both $d \geq A e^{-\frac{a}{4}(S-I)}$
and $d^2  \ge A(S'-S)$ hold, then
\[
 1- Ae^{-a(S-I)}/d^2 < \frac{\bar T(w') - \bar T(w)}{S' - S} < 1 + Ae^{-a(S -
I)}/d^2.
\]
\end{lemma}

\noindent \emph{Proof.}
Let $\rho$
be the image of $\gamma(w', w)$ under $g_S \circ \varphi$, i.e, the red curve
in the rightmost image of
\cref{fig:cap_supp}.     First we note that by definition
\begin{align}
 \bar T(w) - S & = - \log R(0, \D\setminus
L_{S}), \label{eq:der1_supp} \\
\bar T (w') - S & = - \log R(0, \D \setminus (L_{S} \cup
\rho)) \label{eq:der2_supp},\\
S'-S & = - \log R(0, \D \setminus
\rho ).\label{eq:der3_supp}
\end{align}

Let $\tau_\rho$, $\tau_L$ and $\tau_\partial$ denote the hitting time of
respectively $\rho$, $L_S$ and $\partial \D$ by a Brownian motion $B$ starting from zero. Combining \eqref{eq:der2_supp},
\eqref{eq:der1_supp}, \eqref{eq:der3_supp} and \cref{lemma:representation_capacity_supp},
we have
\begin{equation}
\bar T (w') - \bar T (w) - (S' - S) = \E^0\left[ -\log \abs{B_{\tau_\partial
\wedge \tau_L \wedge \tau_\rho}} +\log \abs{B_{\tau_\partial \wedge \tau_\rho}}
+\log \abs{B_{\tau_\partial \wedge \tau_L}} \right] . \label{eq:key_eq_supp}
\end{equation}
Hence it suffices to show that
\begin{equation}\label{goalcap_supp}
\abs{ \E^0\left[ -\log \abs{B_{\tau_\partial
\wedge \tau_L \wedge \tau_\rho}} +\log \abs{B_{\tau_\partial \wedge \tau_\rho}}
+\log \abs{B_{\tau_\partial \wedge \tau_L}} \right]  }\le \frac{Ce^{-c(S-I)}}{d^2} (S'-S).
\end{equation}

Notice that if
$\tau_\partial <\tau_L \wedge \tau_\rho$ then the random variable in the right hand side of
\eqref{eq:key_eq_supp} is zero. We will consider separately the two cases where $\tau_L$ is smallest and also $\tau_\rho$ is smallest below in Steps 2 and 3 respectively, but in step $1$, we establish some geometric estimates on $L_S$ and $\rho$.

\textbf{Step 1.}  First we prove
that the distance between $L_S$ and $\rho$ is at least $d/10$ for small enough $S'-S$.
From the choice of
$d$,
we can draw an arc $I_\ell$ to the left of the leftmost point in $L_S \cap
\partial D$ and $I_r$ to the right of the rightmost point in $L_S \cap \partial
D$ both of length $d/4$ (and hence they do not intersect $e^{iW_S}$).
Let $b \in L_S \cap \partial \D$ and let $a',c$ be the two extremities of the
arcs $I_\ell, I_r$ which are farthest from $L_S$.
Using the radial Loewner equation applied to $h_t = g_{S + t} \circ g_S^{-1}$, at $z=b$ for now, we see
that
\begin{equation}\label{bound_der_b_supp}
|\partial_t h_t(b)| = |h_t(b) \frac{e^{iW_{S+t}}+h_t(b)}{e^{iW_{S+t}}-h_t(b)}| \le
\frac{2}{d}
\end{equation}
for all $t \le S'-S$ (this is because over this interval of time, we must have $d(h_t(b) , e^{iW_{t+S}}) \ge d$ and hence the denominator in the right hand side is greater than $d$).
Applying the same Loewner equation at $z=a'$ and $z=c$, so long as $d( h_t(a'), e^{iW_{t+S}}) \ge d/2$, and so long as that $t\le S'-S$, we get
$|\partial_t h_t(a')| \le 4/d.$ Hence $|\partial_t h_t(a') - \partial_t h_t(b)| \le 6/d$ on that interval.
Combining with the fact that $h_t(b)$ is at least $d$ away from $e^{iW_{t+S}}$ and \cref{bound_der_b_supp}, we deduce that the time it would take for $h_t(a')$ to be less than $d/2$ away from $e^{i W_{t+S}}$ is at least $d^2/24 $. Hence if
$S'-S \le d^2/C$ for some sufficiently large constant $C>24$, then the condition $d( h_t(a'), e^{iW_{t+S}}) \ge d/2$ is in fact always fulfilled throughout $t \in [0, S'-S]$. Observe that as $S'-S \to 0$, $d \to d(L_S,e^{iW_S} )>0$ and hence we can always make such a choice. Consequently, we deduce that a bound similar to \cref{bound_der_b_supp} holds at $z=a'$ and $z=c$ with a different constant: for $t \le S'-S$,
\begin{equation}\label{bound_der_ac_supp}
|\partial_t h_t(z)| = |h_t(z) \frac{e^{iW_{S+t}}+h_t(z)}{e^{iW_{S+t}}-h_t(z)}| \le
\frac{4}{d}, \ \ z=a',c.
\end{equation}
Applying the above bound for the extremities of both the arcs $I_\ell,I_r	$ and integrating this bound over $[0, S'-S]$ we see that $h_{S'-S}(I_\ell)$ and $h_{S'-S}(I_r)$ are arcs whose length is $(d/4) (1+ O( 1/ C))$. Hence this can be made arbitrarily close to $d/4$. Note that the lengths of $h_{S'-S}(I_\ell)$ and $h_{S'-S}(I_r)$ are nothing else but the harmonic measures seen from 0 in $\D\setminus \rho$ of $I_\ell, I_r$ respectively.

\begin{figure}
\begin{center}
\includegraphics{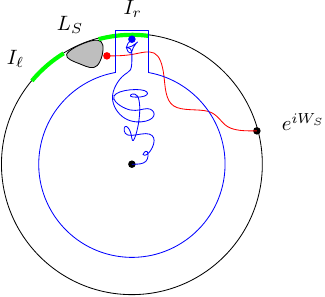}
\end{center}
\caption{Proof of step 1.}
\label{F:Lfar_supp}
\end{figure}

From this we can conclude that $\rho$ does not come within distance $d/10$ of $L_S$. Indeed, if it did, the harmonic measures of $I_r$ or $I_\ell$ would be small. More precisely, there is a universal constant $c>0$ such that conditionally on $B_{\tau_\partial} \in I_r$ say, $B$ crosses any set coming within distance $d/10$ of  $L_S$ with probability at least $c$: for instance this necessarily happens if $B$ stays in a certain deterministic set consisting of a union of a rectangle of dimensions $d\times (d/20) $ and a disc of radius $1- d/2$ (see blue set in the accompanying Figure \ref{F:Lfar_supp}). This would imply that the harmonic measure of $I_\ell$ or $I_r$ in $\D \setminus \rho$ is at most $d/4 -c$ which is a contradiction to the fact that they can be made arbitrarily close to $d/4$ by choosing $S'-S$ small enough.
Note also, for future reference, that $h_{S'-S}(I_\ell)$ and $h_{S'-S}(I_r)$ are arcs of length at least $d/8$ and which don't intersect $h(\rho)$ (since $\rho$ does not hit $\partial \D$).

\medskip \textbf{Step 2.}
Now we estimate the right hand side of \eqref{eq:key_eq_supp}, which was
$$\E^0\left[ -\log \abs{B_{\tau_\partial
\wedge \tau_L \wedge \tau_\rho}} +\log \abs{B_{\tau_\partial \wedge \tau_\rho}}
+\log \abs{B_{\tau_\partial \wedge \tau_L}} \right],
$$
in the case where the Brownian motion hits $\rho$ before $L$
or $\partial \D$. In this case the first and second terms in the random variable above cancel each other and we need to estimate $\E^0(1_{\{\tau_\rho < \tau_\partial \wedge \tau_L\}}\E^{B_{\tau_\rho}}(\log
\abs{B_{\tau_\partial \wedge \tau_L}})) $. Let $\cC$ denote the circle of radius $d/10$ centered at $B_{\tau_\rho}$.
Notice that by Step 1, since $\rho$ does not come within distance $d/10$ of $L_S$,
\begin{equation}
\abs{ \E^{B_{\tau_\rho}}(\log
\abs{B_{\tau_\partial \wedge \tau_L}})) } \le
\abs{\log(1-\diam(L_S)) }\P^{B_{\tau_\rho}}(\tau_\cC < \tau_\partial)   \label{eq:long_proof2_supp}
\end{equation}
where $\tau_\cC$ is the stopping time when the Brownian motion hits $\cC$.
Now, observe that
\begin{equation}\label{diamLS_supp}
\diam( L_{S}) \le C\  \harm_{\D\setminus L_{S}} (0;L_{S}) \le C e^{- C' (S-I)}
\end{equation}
by Beurling's estimate, where $\harm_D (z, \cdot)$ denotes harmonic measure in $D$ seen from $z$.
Hence
$$
 \E^{B_{\tau_\rho}}(\log
\abs{B_{\tau_\partial \wedge \tau_L}})) \le
Ce^{-c'(S-I)}\P^{B_{\tau_\rho}}(\tau_\cC < \tau_\partial)
$$
using $|\log(1-x)| =O(x)$. Now it remains to bound $\P^{B_{\tau_\rho}}(\tau_\cC < \tau_\partial)$ from above. Let $z = B_{\tau_\rho}$. Then it is elementary to check that we have $\P^z(\tau_\cC < \tau_\partial) \le c (1- |z|) /d$ for some constant $c>0$ (this is the probability of reaching distance $d/10$ within a half-plane from $i(1-|z|)$: this can be seen by applying the map $z \mapsto z^2$ and Beurling's estimate).
Plugging this
back into \eqref{eq:long_proof2_supp}, we
conclude:
$$
\abs{ \E^{B_{\tau_\rho}}(\log
\abs{B_{\tau_\partial \wedge \tau_L}})) } \le Ce^{-c'(S-I)} \frac{1- |B_{\tau_\rho}|}{d} \le  \frac{Ce^{-c'(S-I)} }{d} \abs{\log |B_{\tau_\rho}|}.
$$
Taking expectations with respect to $\E^0$ on the event $\tau_{\rho}< \tau_\partial \wedge \tau_L$,
we deduce that
\begin{multline*}
\abs{\E^0 [ 1_{\tau_{\rho}< \tau_\partial \wedge \tau_L} \E^{B_{\tau_\rho}}(\log
\abs{B_{\tau_\partial \wedge \tau_L}}))]  }\le  \frac{Ce^{-c'(S-I)} }{d}   \E^0  |\log |B_{\tau_\rho}| |\\
= \frac{Ce^{-c'(S-I)} }{d} (S'-S),
\end{multline*}
which is slightly better than the bound $ ({Ce^{-c'(S-I)} }/{d^2} )(S'-S)$ we are aiming for.

\medskip \textbf{Step 3.}
Finally we consider the case where $\tau_L< \tau_\partial \wedge \tau_\rho$, which is the most delicate. If the Brownian motion hits $L$ first then the first and third terms cancel each other and we need to estimate
 $\E^0(\E^{B_{\tau_L}}(\log
\abs{B_{\tau_\partial \wedge \tau_\rho}}) 1_{\tau_L < \tau_\partial \wedge \tau_\rho})$ using the strong Markov property.
We now claim that, almost surely,
\begin{equation}
 \Big |\frac{\E^{B_{\tau_L}}(\log \abs{B_{\tau_\partial
\wedge \tau_\rho}})}{ \E^{0}(\log \abs{B_{\tau_\partial
\wedge \tau_\rho}})   }\Big | \le
\frac{C}{d^2}e^{-c'(S-I)}\label{eq:long_proof!_supp}
\end{equation}
To do this we are going use
conformal invariance of harmonic measure and map out $\rho$ via $h_{S'-S}$.
Then we can use an explicit bound on the Poisson kernel on the disc which
allows us to compare the harmonic measure of a set seen from $0$ and from a
point in $L_S$. By conformal invariance of harmonic measure, letting $h = h_{S'-S}$
\begin{align*}
\E^{B_{\tau_L}} (- \log | B_{\tau_\partial \wedge \tau_\rho}|)  & = - \int_\rho \log |z| \ \harm_{\D \setminus \rho}(B_{\tau_L}, dz)  \\
& =- \int_{h (\rho)}  \log |h^{-1} (z)|   \harm_\D ( h(B_{\tau_L}) , dz)  \\
& = -  \int_{h (\rho)}  \log |h^{-1} (z)|   \frac{d \harm_\D  ( h(B_{\tau_L}) , dz) }{d \harm_\D ( 0 , dz)  } \harm_\D ( 0 , dz)
\end{align*}
Observe that if we write $h(B_{\tau_L}) = r e^{i \theta} \in L_{S'}$, then the Radon-Nikodym derivative above is simply the Poisson kernel and at the point $z = e^{it}$ it is thus equal to $P_r(\theta - t)$ where
\[
 P_r(\theta) = \frac{1-r^2}{1+r^2 - 2r\cos \theta}
\]
Note that when $e^{it} \in h(\rho)$ then $|\theta - t | \ge d/100$ by Step 1. Indeed, we know that $h(I_\ell)$ and $h(I_r) $ are arcs of length at least $d/8$ which do not intersect $h(\rho)$ (see the end remark of Step 1) and we know that the diameter of $L_{S'}$ is at most $C e^{- C' (S-I)} \le d /10^3$ by assumption for a choice of constants $A>C,a<C'$ and large enough $S-I$. Hence the difference in arguments for a point in $L_{S'}$ and a point in $h(\rho)$ is at least $d/100$.
Thus we have $\cos(\theta- t ) \le 1- d^2/10^5$. Hence
$$
P_{r} (\theta-t) \le \frac{1- r^2}{(1-r)^2 + 2r d^2/10^5} \le \frac{2C e^{- C' (S-I)}}{d^2}
$$
by our assumptions on $d$, the above choice of constants $a,A$ and the fact that
$$
1- r\le \diam( L_{S'}) \le  C e^{- C' (S'-I)} \le C e^{- C'(S-I)}
$$
by \cref{diamLS_supp}. Since $ \E^{0}(\log \abs{B_{\tau_\partial
\wedge \tau_\rho}} ) = S'-S$, putting everything together we obtain \cref{lemma:derivative_capacity_supp}. \qed

Taking the increment $S'-S$ to $0$ we get the following corollary:
\begin{lemma}\label{lemma:RNderivative_supp}
Let $a,A$ be as in \cref{lemma:derivative_capacity_supp}.
 Let $$d = \inf_{S \leq T
\leq S + 1/10 } d(L_T, e^{iW_T}).$$
Let $w$ and $w'$ be the two points on $\gamma$ corresponding respectively to $S$ and $S + 1/10$.
Let $\mu$ and $\bar \mu$ be the measures on $\gamma$ obtained by capacity in $D$ and $B (z_0, e^{-I})$ respectively: that is, if $z = z_T$ and $z' = z_{T'}$ then $\mu (\gamma (z, z')) = |T'-T|$, and likewise for $\bar \mu$.   If $S-I$ is large enough  and if $d \geq A e^{-\frac{a}{4}(S-I)}$, then, uniformly over $\gamma(w, w')$, we have
\[
  1- A e^{-a(S-I)}/d^2  \leq \frac{d \bar \mu}{d \mu} \leq 1 +  Ae^{-a(S-I)}/d^2.
\]
\end{lemma}

Now  we check that the assumption on $d$ in the previous lemma holds when the curve is SLE$_2$ with high probability.

\begin{lemma}\label{L:sledist_supp}
Suppose $\gamma$ is a radial SLE$_2$ process independent of $I$. Define $d$ for $\gamma$ as in \cref{lemma:RNderivative_supp}. For any $a,A>0$ we have
\begin{equation}\label{sledist_supp}
d\ge  A e^{-a(S-I) }\text{ with probability }\ge 1 - c\exp ( - c' ( S-I)).
\end{equation}
for some $c,c'>0$ depending only on $A,a$.
Now suppose $\gamma$ is a loop-erased random walk in $D^\d$  from $z_0$ to $\partial D$  and $I$ as defined in Theorem 4.21 in the main file (applied for a single point). We emphasise that $I$ might not be independent of $\gamma$.
Let $d$ be as above.\ Then there exists $\delta = \delta(S)$ such that conditioned on $I$, if $S-I$ is large enough and $\delta \le \delta(S)$, for any $a,A>0$, we have
\begin{equation}\label{lerwdist_supp}
d\ge  A e^{-a(S-I) }\text{ with probability }\ge 1 - \frac{c}{2}\exp ( - c' ( S-I)).
\end{equation}
\end{lemma}

\begin{proof}
Let $T$ be the smallest $t$ such that $\gamma(z(t)) \in B(z_0, e^{-I})$, and let $e^{i \theta_t}$ be one of the two images $ g_t   \circ \ph (z)$ for $T \le t \le S$ where $z = z(T)$. Then note that $e^{i \theta_S} \in \partial \D \cap L_S$. Moreover we claim that with high probability, if $S-I$ is large, then $e^{i \theta_S}$ is far away from $e^{i W_S}$.

We can follow the evolution of $e^{i \theta_t} $ under the Loewner flow $g_{t}$ for $T \le t \le S$ as follows. If  $Y_t = \theta_t - W_t$ then $Y$ solves a stochastic differential equation:
$$
dY_t = \cot(Y_t/2) - dW_t \ ; \  Y_T = 0.
$$
(see (6.13) in \cite{Lawler}). By comparing with a Bessel process (of dimension 5 in the case $\kappa = 2$), we can see that the probability for $Y_t$ to hit $[0, \delta]$ in any particular interval of length $O(1)$ is polynomial in $\delta$.
Thus for any given $S$ such that $S- I$ is large enough, then for any $c,C$,
$$
\P\left( \min_{S \leq t \leq S +  1/10} Y_t \leq 10 e^{-c(S-I)} \right) \le C'\exp ( - c' ( S-I))
$$
is exponentially small in $S-I$. Now, since $\diam(L_S) \le C e^{- c(S-I)}$ by Beurling's estimate (see \cref{diamLS_supp}) we see that
$$
d\ge  C e^{-C(S-I) }\text{ with probability }\ge 1 - C'\exp ( - c' ( S-I)).
$$
This proves \cref{sledist_supp}.

To deduce \cref{lerwdist_supp}, we recall that loop-erased random walk converges to SLE$_2$ under our assumptions by \cite{YY}. Then use Remark 4.24 in the main file to note that conditionally on $I=i$, the law of $\gamma$ after the first time it hits $B(z_0, e^{-I})$ is absolutely continuous with respect to the unconditional law $(\gamma_t, t\ge T_i)$ (where $T_i$ is the first time the path enters $B(z_0, e^{-i})$). Also the derivative of the conditional law with respect to the unconditional law is bounded by $C>0$.
Since the distance $d$ is a continuous function (in the mesh size $\delta$) of $\gamma$ the estimate \cref{sledist_supp} applies with $\kappa =2$ for all $\delta$ sufficiently small and $S-I$ large enough.
\end{proof}

Now we can finally prove our coupling estimate. Recall the full coupling $(\gamma, \tilde \gamma)$ (Theorem 4.21 with a single point), where $\gamma$ is a loop-erased random walk in $D^\d$ and $\tilde \gamma$ is a loop-erased random walk in $\tilde D^\d$ starting from a vertex $v$ where $\tilde D$ is arbitrary. It is recommended to think of $\tilde D$ as the full plane.

\begin{lemma}\label{lemma:coupling_Poisson_supp}
There exists a universal constant $c$ such that the following holds. For any $t>0$ there exists $\delta = \delta(t)$ such that for any $\delta \in (0, \delta(t))$,
we can find a pair of random variables $(X,\tilde X)$ such that individually, $X$ and $\tilde X$ are each independent of $(\gamma, \tilde\gamma )$
and
\[
\P[\gamma(t + X) = \tilde \gamma(t +  \tilde X)] \geq 1- Ce^{-ct }.
\]
Furthermore, both $X$ and $\tilde X$ are random variables which are bounded (by $1/20$).
\end{lemma}
In this proof, we will need to choose constants depending on each other so we number them.
\begin{proof}
Let $\mu$ be the measure on $\gamma$ defined as in \cref{lemma:RNderivative_supp} and let $\tilde \mu$ be the equivalent measure for the curve $\tilde \gamma$. Note that \cref{lemma:RNderivative_supp} allows us to compare $\mu$ to $\bar\mu$ and also $\tilde \mu$ to $\bar \mu$ since $\gamma$ and $\tilde \gamma$ are assumed to coincide within $B(z_0, e^{-I})$. Consequently, we will have a way of comparing $\mu $ and $\tilde \mu$ and show that the Radon-Nikodym derivative of one with respect to the other is very close to one, from which the result will follow.

Here are the details. Let $z=z(t)$ be the point of capacity $t$ seen from $v$ in $D$ for $\gamma$. Let $\tilde T(z)$ be the capacity of $\tilde \gamma$ at $z$ seen from $v$ in $\tilde D$. Let $\cA$ be the event that $I \le t/2$, $|\tilde T(z(t)) - t|  \le e^{-c_1t}$ and  both $\gamma, \tilde \gamma$ do not exit $B(z_0,e^{-t/2})$ after capacity $t$. Recall that $I$ has exponential tail (Theorem 4.21 in the main file) and for small enough $\delta = \delta(t)$, we see from \cref{lemma:absolute_difference_supp} and Schramm's estimate (Theorem 3.1 in the main file) that both $\gamma, \tilde \gamma$ do not exit $B(z_0,e^{-I})$ after capacity $t$ with exponentially high probability for large enough $t$. Thus overall, the probability of $\cA$ is at least $1-e^{-c_2t}$ for some $c_2>0$ for small enough $\delta(t)$.

Let $d,a,A$ be as in \cref{lemma:RNderivative_supp} for $\gamma,D$ and let $\tilde d,\tilde a, \tilde A$ be the equivalent quantities for $\tilde \gamma, \tilde D$. By \cref{lerwdist_supp}, we observe that the event $\cB:=\{d > Ae^{-a/4t}, \tilde d> \tilde Ae^{-\tilde a/4t}\}$ has probability at least $1-e^{-c_3t}$ for some $c_3>0$.

Applying \cref{lemma:RNderivative_supp}, we find that for $t$ sufficiently large on $\cA \cap \cB$,

\begin{equation}\label{RN_supp}
 1- e^{-c_4 t} \leq \frac{d \mu}{d \tilde \mu} \leq 1+ e^{-c_4 t}
\end{equation}
on a subset of the path including $\Gamma := \tilde \gamma( t + e^{-c_1 t}, t  - e^{-c_1 t}  + \frac{1}{20})$ for some small enough $\delta(t)$. 
We let $\nu = (1- e^{-c_4 t} ) 1_{\Gamma}\tilde \mu$.
Now we choose $0< c_5 < c_4$ and we define three independent Poisson processes on $\gamma$, say $P_1, P_2, P_3$, with densities respectively $e^{c_5 t} \nu$, $e^{c_5 t}(\tilde \mu - \nu)$ and $e^{c_5 t}(\mu - \nu)$.
Note also that the processes $P_1$ and $P_2$ depend only on $\tilde \gamma$.

Let $\tilde z$ be the first point in $P_1 \cup P_2$ after capacity $t$ in $\tilde D$ and let $z$ be the first point in $P_1 \cup P_3$ after capacity $t$ in $D$. Let $X = T(z) - t$ and $\tilde X = T(\tilde z) - t$; if there are no points in $P_1 \cup P_3$ or $P_1 \cup P_2$ we abort the coupling (this has probability smaller than $e^{-\exp(c_5t/30)}$). Note that $X, \tilde X \le 1/20$ by construction and they are each independent of $(\gamma, \tilde \gamma)$ (since the conditional marginal law of both $X$ and $\tilde X$ is exponential; however, note that the pair $(X, \tilde X)$ is \emph{not} independent from $(\gamma, \tilde \gamma)$).
Now observe that with very high probability both $z, \tilde z \in P_1$ and therefore are equal. Indeed, $\P( \tilde z \not \in P_1) \leq e^{(c_5-c_4)t} $ and similarly  $\P( z \notin P_1) \le e^{(c_5-c_4) t}$ using \cref{RN_supp} which concludes the proof.
\end{proof}
\section{General domains}\label{app:generaldomain_supp}

In this section we state the following theorem which is the main result of Section 3.5 of the main file.
 Recall that our definition of $u_{(D,x)}$ from the main file.

\begin{thm}\label{L:change_coord_supp}
Let $D$ be as above.
Let $f$ be any bounded Borel test function defined on $\bar D$. Let $ h = \hg^0 + \chi
u_{(D,x)}$
 be the GFF coupled to the UST according to the imaginary geometry coupling of
\cref{thm:coupling_intro_supp} and $u_{(D,x)}$ is as in the main file. Then
 $( h_t^D ,f) $ converges to $(\hg^D,f)$ in $L^2(\P)$ and in
probability as $t\to \infty$, where $\hg^D = \chi^{-1}h+\pi/2$.
\end{thm}
\begin{proof}
Let $h^D_t$ be the winding field in $D$ as defined as in (3.1) or (3.2) from the main file as appropriate. Let $\cT^D$ denote the continuum wired UST in $D$. Fix a conformal map $\ph:\D\to D$
sending the marked point $1$ to the marked point $x \in \partial D$ (note that since $\partial D$ is locally connected, any conformal map from $\D$ to $D$ extends continuously to $\partial D$, see \cite{Pommerenke}). Let $h^\D$
denote the intrinsic winding field associated to the continuum spanning tree
$\cT = \ph^{-1} (\cT^D)$.

Let $\tilde \gamma_w(t)$ denote the branch of the UST towards $w$ in $\cT^D$. Let $\cA^D(t,w)$ be the event that $|\tilde \gamma_w(t) -w| < e^{-t/2}R(w,D)$. Applying Koebe's $1/4$ theorem, we conclude using Theorem 2.11 from the main file that $\P(\cA^D(t,w)) \ge 1- e^{-ct}$.

First note that in the case of a smooth domain, using Lemma 2.4 from the main file (we can apply this since we are dealing with conformal images of continuous curves in $\D$ and we can also extend $\arg \ph'$ to $1$ using \cref{lem:geometric_condition_Supp}), we conclude that (writing $w = \ph(z))$,
\begin{equation}
h_t^D\circ \ph(z) 1_{\cA^D(t,w)} = \Big(h_t^\D(z)  + \arg_{\ph'(\D)} (\ph'(z)) + \epsilon(z) \Big)1_{\cA^D(t,w)}\label{eq:good_moment_supp}
\end{equation}
where $|\epsilon(z)| < e^{-ct}$. In the case of general boundary, using Lemma 2.4 from the main file, we get the same equation \eqref{eq:good_moment_supp} up to a global constant.
Using this and (3.11) from the main file, we immediately conclude that $$\E((h_t^D\circ \ph(z) - \arg_{\ph'(\D)} (\ph'(z)))^2 1_{\cA^D(t,w)}) \le C(1+t).$$
We now claim that the same estimate also holds on the complement of $\cA^D(t,w)$, which will follow from Lemma 2.10 of the main file.

Indeed, note that even when $\cA^D(t,w)$ does not hold, the curve $\tilde \gamma_w[-1,t]$ must come within distance $e^{-t/2}R(w,D)$ of $w$. So by Koebe's $1/4$ theorem there is a (random) time $t'$ satisfying $t/2 - \log 4<t'<t/2$ so that $|\tilde \gamma_w(t') -w| <e^{-t/2} R(w, D)$. We then simply apply Lemma 2.4 of the main file at $t'$ instead of $t$. More precisely, we write
\begin{equation}
h_t^D \circ \ph(z) = h_{t'}^D \circ \ph(z) + W(\tilde \gamma_w[t',t];w).
\end{equation}
Using \cref{eq:good_moment_supp} and applying Lemma 2.10 of the main file to $h_{t'}^\D$ in $\D$ we see that the second moment of $h_{t'}^D \circ \ph(z) - \arg_{\ph'(\D)} (\ph'(z))$ is bounded by $C(1+t)^2$. Moreover the second moment of $W(\tilde \gamma_w[t',t];w)$ is bounded by $C(1+ t-t')^2$ using Lemma 2.10 of the main file again, but this time in $D$.

 Thus overall, we get
\begin{equation}
\E\Big ((h_t^D\circ \ph(z) -\arg_{\ph'(\D)} (\ph'(z)) )^2\Big) \le C(1+t) \label{eq:good_moment1_supp}
\end{equation}
Also in the general case, \eqref{eq:good_moment1_supp} is valid up to a global constant (meaning that for every choice of constant in the left hand side, the inequality holds for an appropriate choice of $C$ in the right hand side).
Now note that
\begin{equation*}
\E(h_t^D \circ \ph - \hg^D \circ \ph , f \circ \ph)^2 \le 2\E(h_t^D \circ \ph - h_t^\D - \arg_{\ph'(\D)} \ph' ,f\circ \ph )^2 + 2\E(h_t^\D - \hg^\D ,f \circ \ph )^2
\end{equation*}
The second term on the right hand side above converges to $0$ via Theorem 3.13 of the main file. The first term can be written as
\begin{align*}
&\E(h_t^D \circ \ph - h_t^\D - \arg_{\ph'(\D)} \ph' ,f \circ \ph)^2  = \int_{\D^2} dzdy f \circ \ph(z)f \circ \ph (y) \\
&\times \E (h_t^D \circ \ph(z) - h_t^\D(z) - \arg_{\ph'(\D)} \ph'(z))
  (h_t^D \circ \ph(y) - h_t^\D(y) - \arg_{\ph'(\D)} \ph'(y))\\
 & \le \int_{\D^2} dzdy f \circ \ph(z)f \circ \ph (y)\big[\E (\epsilon(z) \epsilon(y) 1_{\cA^D(t, \ph(z)) \cap \cA^D(t, \ph(y))} ) + c(1+t)e^{-ct}\big]
\end{align*}
where $|\epsilon(z)| \le e^{-ct}$ by \eqref{eq:good_moment_supp}. The result follows.
\end{proof}

\begin{remark}
We point out that the integral of $\arg_{\ph'(\D)}(\ph'(\cdot))$ might be infinite even in the smooth case. Also for the joint convergence of the moments, we do not need the domain to be bounded.
\end{remark}



\bibliographystyle{abbrv}
\bibliography{winding.bib}

\def\cprime{$'$}
\begin{thebibliography}{10}

\bibitem{ber2013diffusion}
N.~Berestycki.
\newblock Diffusion in planar {L}iouville quantum gravity.
\newblock {\em Annales de l'Institut Henri Poincar{\'e}, Probabilit{\'e}s et
  Statistiques}, 51(3):947--964, 2015.

\bibitem{NBnotes}
N.~Berestycki.
\newblock Introduction to the {G}aussian free field and {L}iouville quantum
  gravity.
\newblock {\em Lecture notes, available on author's webpage}, 2015.

\bibitem{BLRannex}
N.~Berestycki, B.~Laslier, and G.~Ray.
\newblock A note on dimers and {T}-graphs.
\newblock {\em https://arxiv.org/abs/1610.07994}, 2016.

\bibitem{BN_SLE_notes}
N.~Berestycki and J.~Norris.
\newblock Lectures on {Schramm--Loewner} {E}volution.
\newblock {\em Cambridge University}, 2014.

\bibitem{Billingsley}
P.~Billingsley.
\newblock {\em Convergence of probability measures}.
\newblock John Wiley \& Sons, 2013.

\bibitem{BufetovGorin}
A.~Bufetov and V.~Gorin.
\newblock Fluctuations of particle systems determined by {S}chur generating
  functions.
\newblock {\em arXiv preprint arXiv:1604.01110}, 2016.

\bibitem{ChelkakSmirnov}
D.~Chelkak and S.~Smirnov.
\newblock Discrete complex analysis on isoradial graphs.
\newblock {\em Advances in Mathematics}, 228(3):1590--1630, 2011.

\bibitem{CohnKenyonPropp}
H.~Cohn, R.~Kenyon, and J.~Propp.
\newblock A variational principle for domino tilings.
\newblock {\em Journal of the American Mathematical Society}, 14(2):297--346,
  2001.

\bibitem{Tiliere07}
B.~{de Tili{\`e}re}.
\newblock {Scaling limit of isoradial dimer models and the case of triangular
  quadri-tilings}.
\newblock {\em Annales de L'Institut Henri Poincare Section (B) Probability and
  Statistics}, 43:729--750, Nov. 2007.

\bibitem{Dub1}
J.~Dub{\'e}dat.
\newblock S{LE} and the free field: partition functions and couplings.
\newblock {\em J. Amer. Math. Soc.}, 22(4):995--1054, 2009.

\bibitem{FieldLawler}
L.~S. Field and G.~F. Lawler.
\newblock Reversed radial {SLE} and the {B}rownian loop measure.
\newblock {\em Journal of Statistical Physics}, 150(6):1030--1062, 2013.

\bibitem{GarbanBourbaki}
C.~Garban.
\newblock Quantum gravity and the {KPZ} formula (after
  {D}uplantier--{S}heffield).
\newblock {\em Bourbaki seminar}, (352):315--354, 2013.

\bibitem{LBM13}
C.~Garban, R.~Rhodes, and V.~Vargas.
\newblock Liouville {B}rownian motion.
\newblock {\em Ann. Probab.}, 44(4):3076--3110, 2016.

\bibitem{giuliani2017haldane}
A.~Giuliani, V.~Mastropietro, and F.~L. Toninelli.
\newblock Haldane relation for interacting dimers.
\newblock {\em Journal of Statistical Mechanics: Theory and Experiment},
  2017(3):034002, 2017.

\bibitem{InteractingDimers}
A.~Giuliani, V.~Mastropietro, and F.~L. Toninelli.
\newblock Height fluctuations in interacting dimers.
\newblock {\em Annales de l'Institut Henri Poincar{\'e}, Probabilit{\'e}s et
  Statistiques}, 53(1):98--168, 2017.

\bibitem{Arctic}
W.~Jockusch, J.~Propp, and P.~Shor.
\newblock Random domino tilings and the arctic circle theorem.
\newblock {\em arXiv preprint math/9801068}, 1998.

\bibitem{Kasteleyn}
P.~W. Kasteleyn.
\newblock The statistics of dimers on a lattice {I}. the number of dimer
  arrangements on a quadratic lattice.
\newblock {\em Physica}, 27:1209--1225, 1961.

\bibitem{KenyonSurvey}
R.~Kenyon.
\newblock Lectures on dimers. {IAS}/{P}ark {C}ity mathematical series, vol. 16:
  Statistical mechanics, {AMS}, 2009.
\newblock {\em arXiv preprint arXiv:0910.3129}.

\bibitem{Kenyon_ci}
R.~Kenyon.
\newblock Conformal invariance of domino tiling.
\newblock {\em Ann. Probab.}, 28(2):759--795, 2000.

\bibitem{KenyonGFF}
R.~Kenyon.
\newblock Dominos and the {G}aussian free field.
\newblock {\em Ann. Probab.}, 29(3):1128--1137, 2001.

\bibitem{Kenyon_iso}
R.~Kenyon.
\newblock The {L}aplacian and {D}irac operators on critical planar graphs.
\newblock {\em Inventiones mathematicae}, 150(2):409--439, 2002.

\bibitem{KenyonHex}
R.~Kenyon.
\newblock Height fluctuations in the honeycomb dimer model.
\newblock {\em Communications in Mathematical Physics}, 281(3):675--709, 2008.

\bibitem{SixVertexSLE}
R.~Kenyon, J.~Miller, S.~Sheffield, and D.~B. Wilson.
\newblock Six-vertex model and schramm-loewner evolution.
\newblock {\em Physical Review E}, 95(5):052146, 2017.

\bibitem{KenyonOkounkov}
R.~Kenyon and A.~Okounkov.
\newblock Planar dimers and {H}arnack curves.
\newblock {\em Duke Mathematical Journal}, 131(3):499--524, 2006.

\bibitem{KenyonOkounkovSheffield}
R.~Kenyon, A.~Okounkov, and S.~Sheffield.
\newblock Dimers and amoebae.
\newblock {\em Ann. Math}, pages 1019--1056, 2006.

\bibitem{KPWtemperley}
R.~W. Kenyon, J.~G. Propp, and D.~B. Wilson.
\newblock Trees and matchings.
\newblock {\em Electron. J. Combin.}, 7, 2000.

\bibitem{dimer_tree}
R.~W. Kenyon and S.~Sheffield.
\newblock Dimers, tilings and trees.
\newblock {\em J. Combin. Theory Ser. B}, 92(2):295--317, 2004.

\bibitem{LaslierCLT}
B.~Laslier.
\newblock Central limit theorem for {T}-graphs.
\newblock {\em arXiv preprint arXiv:1312.3177}, 2013.

\bibitem{Lawler}
G.~F. Lawler.
\newblock {\em Conformally invariant processes in the plane}.
\newblock Number 114. American Mathematical Soc., 2008.

\bibitem{Lawler_RW}
G.~F. Lawler.
\newblock {\em Intersections of random walks}.
\newblock Springer Science \& Business Media, 2013.

\bibitem{LSW}
G.~F. Lawler, O.~Schramm, and W.~Werner.
\newblock Conformal invariance of planar loop-erased random walks and uniform
  spanning trees.
\newblock {\em Ann. Probab.}, 32(1):939--995, 2004.

\bibitem{Li}
Z.~Li.
\newblock Conformal invariance of dimer heights on isoradial double graphs.
\newblock {\em Annales de l'€™Institut Henri Poincar{\'e} D}, 4(3):273--307,
  2017.

\bibitem{LP:book}
R.~Lyons and Y.~Peres.
\newblock {\em Probability on trees and networks}, volume~42.
\newblock Cambridge University Press, 2016.

\bibitem{IG1}
J.~Miller and S.~Sheffield.
\newblock Imaginary geometry {I}: interacting {SLE}s.
\newblock {\em Probab. Theory Related Fields}, 164(3-4):553--705, 2016.

\bibitem{IG4}
J.~Miller and S.~Sheffield.
\newblock Imaginary geometry {IV}: interior rays, whole-plane reversibility,
  and space-filling trees.
\newblock {\em Probab. Theory Related Fields}, 169(3-4):729--869, 2017.

\bibitem{pemantle}
R.~Pemantle.
\newblock Choosing a spanning tree for the integer lattice uniformly.
\newblock {\em Ann. Probab.}, 19(4):1559--1574, 10 1991.

\bibitem{Petrov}
L.~Petrov.
\newblock Asymptotics of uniformly random lozenge tilings of polygons.
  {G}aussian free field.
\newblock {\em Ann. Probab.}, 43(1):1--43, 2015.

\bibitem{Pommerenke}
C.~Pommerenke.
\newblock {\em Boundary behaviour of conformal maps}, volume 299 of {\em
  Grundlehren der mathematischen Wissenschaften}.
\newblock Springer-Verlag, 2013.

\bibitem{Romik}
D.~Romik.
\newblock Arctic circles, domino tilings and square {Y}oung tableaux.
\newblock {\em Ann. Probab.}, 40(2):611--647, 2012.

\bibitem{SLE}
O.~Schramm.
\newblock Scaling limits of loop-erased random walks and uniform spanning
  trees.
\newblock {\em Israel J. Math.}, 118:221--288, 2000.

\bibitem{SchrammSheffield}
O.~Schramm and S.~Sheffield.
\newblock A contour line of the continuum {G}aussian free field.
\newblock {\em Probab. Theory Related Fields}, 157(1-2):47--80, 2013.

\bibitem{GFFShe}
S.~Sheffield.
\newblock Gaussian free fields for mathematicians.
\newblock {\em Probab. Theory Related Fields}, 139(3-4):521--541, 2007.

\bibitem{zipper}
S.~Sheffield.
\newblock Conformal weldings of random surfaces: {SLE} and the quantum gravity
  zipper.
\newblock {\em Ann. Probab.}, 44(5):3474--3545, 2016.

\bibitem{TemperleyFisher}
H.~N.~V. Temperley and M.~E. Fisher.
\newblock Dimer problem in statistical mechanics--an exact result.
\newblock {\em Philosophical Magazine}, 6(68):1061--1063, 1961.

\bibitem{Thurston}
W.~P. Thurston.
\newblock Conway's tiling groups.
\newblock {\em Amer. Math. Monthly}, 97(8):757--773, 1990.

\bibitem{YY}
A.~Yadin and A.~Yehudayoff.
\newblock Loop-erased random walk and {P}oisson kernel on planar graphs.
\newblock {\em Ann. Probab.}, pages 1243--1285, 2011.

\end{thebibliography}

\end{document}